\theoremstyle{plain}
\newtheorem{proposition}{Proposition}[section]
\newtheorem{theorem}[proposition]{Theorem}
\newtheorem{lemma}[proposition]{Lemma}
\newtheorem{corollary}[proposition]{Corollary}
\theoremstyle{definition}
\newtheorem{example}[proposition]{Example}
\newtheorem{definition}[proposition]{Definition}
\newtheorem{observation}[proposition]{Observation}
\theoremstyle{remark}
\newtheorem{remark}[proposition]{Remark}
\newtheorem{question}[proposition]{Question}
\DeclareMathOperator{\Aff}{Aff}
\DeclareMathOperator{\Arg}{Arg}
\DeclareMathOperator{\dimension}{dim}
\DeclareMathOperator{\diam}{diam}
\DeclareMathOperator{\Real}{Re}
\DeclareMathOperator{\Imaginary}{Im}
\DeclareMathOperator{\GL}{GL}
\DeclareMathOperator{\PGL}{PGL}
\DeclareMathOperator{\Hol}{Hol}
\DeclareMathOperator{\arcosh}{arcosh} 
\DeclareMathOperator{\arctanh}{tanh^{-1}} 
\DeclareMathOperator{\degree}{deg}
\DeclareMathOperator{\Cc}{\mathcal{C}}
\DeclareMathOperator{\Hc}{\mathcal{H}}
\DeclareMathOperator{\Lc}{\mathcal{L}}
\DeclareMathOperator{\Nc}{\mathcal{N}}
\DeclareMathOperator{\Oc}{\mathcal{O}}
\DeclareMathOperator{\Tc}{\mathcal{T}}
\DeclareMathOperator{\Vc}{\mathcal{V}}
\DeclareMathOperator{\Cb}{\mathbb{C}}
\DeclareMathOperator{\Nb}{\mathbb{N}}
\DeclareMathOperator{\Pb}{\mathbb{P}}
\DeclareMathOperator{\Rb}{\mathbb{R}}
\DeclareMathOperator{\Xb}{\mathbb{X}}
\newcommand{\abs}[1]{\left|#1\right|}
\newcommand{\norm}[1]{\left\|#1\right\|}
\newcommand{\wt}[1]{\widetilde{#1}}
\newcommand{\wh}[1]{\widehat{#1}}
\newcommand{\ip}[1]{\left\langle #1\right\rangle}
\begin{document}

\title[Gromov hyperbolicity and the Kobayashi metric]{Gromov hyperbolicity and the Kobayashi metric on convex domains of finite type}
\author{Andrew M. Zimmer}\address{Department of Mathematics, University of Michigan, Ann Arbor, MI 48109.}
\email{aazimmer@uchicago.edu}
\date{\today}
\keywords{Convex domains, Kobayashi metric, Gromov hyperbolicity, finite type}
\subjclass[2010]{53C23, 32F18, 32F45}

\begin{abstract} In this paper we prove necessary and sufficient conditions for the Kobayashi metric on a convex domain to be Gromov hyperbolic.  In particular we show that for convex domains with $C^\infty$ boundary being of finite type in the sense of D'Angelo is equivalent to the Gromov hyperbolicity of the Kobayashi metric. We also show that bounded domains which are locally convexifiable and have finite type in the sense of D'Angelo have Gromov hyperbolic Kobayashi metric. The proofs use ideas from the theory of the Hilbert metric. 
\end{abstract}

\maketitle

{\small
\setcounter{tocdepth}{1}
\tableofcontents
}
\section{Introduction}

In this paper we investigate the asymptotic geometry of the Kobasyashi metric on convex domains. Following Balogh and Bonk~\cite{BB2000} we are particularly interested in when the Kobayashi metric is Gromov hyperbolic. For convex domains with smooth boundary we will show that Gromov hyperbolicity of the Kobayashi metric is equivalent to having finite type in the sense of D'Angelo. Our main strategy for this investigation is to consider the action of affine transformation group on the space of convex sets endowed with the local Hausdorff topology.  This approach is motivated by Benoist's recent work on the Hilbert metric~\cite{B2003}. It is also related to the scaling methods of Pinchuk~\cite{P1991} and Frankel~\cite{F1989b, F1989, F1991} (for an overview see~\cite{KK2008}). 

More precisely, we call an open convex set \emph{$\Cb$-proper} if it does not contain any complex affine lines. These are exactly the convex sets for which the Kobayashi metric is complete~\cite{B1980}. Let $\Xb_d$ be the space of $\Cb$-proper convex open sets in $\Cb^d$ endowed with the local Hausdorff topology. Let $\Aff(\Cb^d)$ denote group of complex affine transformations of $\Cb^d$. By studying the closure of $\Aff(\Cb^d)$-orbits in $\Xb_d$ we will establish necessary and sufficient conditions for the Gromov hyperbolicity of the Kobayashi metric. The main application of these results is the following:

\begin{theorem}\label{thm:main}
Suppose $\Omega$ is a bounded convex set with $C^\infty$ boundary. Then $(\Omega, d_{\Omega})$ is Gromov hyperbolic if and only if $\Omega$ has finite type in the sense of D'Angelo.
\end{theorem}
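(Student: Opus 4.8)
The plan is to prove the two implications separately, in each case transferring the question to the closure of the affine orbit $\Aff(\Cb^d)\cdot\Omega$ in $\Xb_d$, as indicated in the introduction. I use two external facts. First, McNeal's theorem that for a convex domain D'Angelo finite type at a boundary point coincides with finite \emph{line type} — every complex affine line meets $\partial\Omega$ to finite order — with a uniform bound $L=L(\Omega)$ over all boundary points and directions. Second, the characterization of Gromov hyperbolicity obtained in the earlier sections: $(\Omega',d_{\Omega'})$ is Gromov hyperbolic if and only if no set in the closure of $\Aff(\Cb^d)\cdot\Omega'$ in $\Xb_d$ is a product $\Omega_1\times\Omega_2$ of two positive-dimensional $\Cb$-proper convex sets lying in complementary complex subspaces. (The ``only if'' direction is the easy one: a rescaling limit is an isometric limit and $\delta$-hyperbolicity with fixed $\delta$ is closed under such limits, while a product $\Omega_1\times\Omega_2$ of $\Cb$-proper convex sets carries the max of the two complete metrics and hence contains an isometrically, geodesically embedded copy of $(\Rb^2,\ell^\infty)$, so it is not Gromov hyperbolic. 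The ``if'' direction is the technical core of the earlier sections.)

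For the forward implication, assume $\Omega$ has finite type, hence line type $\le L$. Line type $\le L$ is essentially an affine-invariant, closed condition: if $q\in\Omega'$ is at Euclidean distance $\varepsilon$ from $\partial\Omega'$ and $v$ is a unit complex direction, then the convex planar region $\Omega'\cap(q+\Cb v)$ has ``radius'' $\lesssim\varepsilon^{1/L}$, a bound preserved by complex affine maps once $\varepsilon$ is normalized, and therefore inherited by every local Hausdorff limit. So every $\Cb$-proper domain in $\overline{\Aff(\Cb^d)\cdot\Omega}$ again has line type $\le L$. Feeding this into McNeal's description of the boundary geometry of convex finite type domains — the polydiscs adapted to the type and the resulting normal forms — one sees that such a limit is either an affine copy of $\Omega$, or, up to affine equivalence, a lower-dimensional polynomial model domain $\{\Real w_d>P(w_1,\dots,w_{d-1},\Imaginary w_d)\}$ with $P$ a convex polynomial of degree $\le L$ whose restriction to every complex line is non-constant. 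Neither is a nontrivial product of $\Cb$-proper convex sets: a smooth bounded convex domain is never a product, and in the model the coordinate $w_d$ couples all of the others. By the characterization, $(\Omega,d_\Omega)$ is Gromov hyperbolic.

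For the reverse implication I argue by contraposition. If $\Omega$ is not of finite type, McNeal gives $p\in\partial\Omega$ and a complex line $\ell\ni p$ meeting $\partial\Omega$ to infinite order at $p$. Normalize coordinates so that $p=0$, the outer normal is $\partial/\partial\Real z_d$, $\ell=\Cb e_1$, and $\Omega=\{\Real z_d>F\}$ near $0$ for a convex $F\ge0$ with $F(0)=0$ and $dF(0)=0$, so that $t\mapsto F(te_1,0,\dots,0)$ vanishes to infinite order at $t=0$. Take $q_n=(0,\dots,0,1/n)\in\Omega$ and affine maps $A_n$ sending $q_n$ to the fixed point $(0,\dots,0,1)$, dilating $z_d$ by $n$, dilating the ``good'' directions $z_2,\dots,z_{d-1}$ at the rates dictated by their finite types so that curvature is retained, and dilating the $\ell$-direction $z_1$ by a factor $\mu_n\to\infty$ chosen so that $n\,F(z_1/\mu_n,0,\dots,0)$ converges to a function equal to $0$ for $|z_1|<c$ and $+\infty$ for $|z_1|>c$, for some $c\in(0,\infty)$ — this is where the infinite order of vanishing, together with convexity of $F$ along $\ell$, is used. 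Then $A_n\Omega$ converges in the local Hausdorff topology to a domain $\Delta\times\{\Real w_d>Q(w_2,\dots,w_{d-1},\Imaginary w_d)\}$ with $\Delta$ a disc in the $w_1$-line and $Q$ a convex polynomial: a nontrivial product of $\Cb$-proper convex sets in complementary complex subspaces. Hence $(\Omega,d_\Omega)$ is not Gromov hyperbolic.

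The main obstacle is the uniformity hidden in the forward implication: one must show that the line-type bound, and with it McNeal's polydisc geometry and the Kobayashi metric estimates it yields, survive passage to \emph{every} domain in the affine orbit closure with constants depending only on $L$ and $d$, so that no rescaling limit degenerates into a product or any other non-hyperbolic object. This marries McNeal's analysis of convex finite type domains to a compactness argument in $\Xb_d$ for the local Hausdorff topology. By comparison the rescaling construction for the converse is routine once the infinite order of vanishing is exploited, the only delicate point there being to choose the dilation rate $\mu_n$ so that the limit genuinely splits off a disc rather than merely flattening or collapsing to a lower-dimensional set.
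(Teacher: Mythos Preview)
Your argument rests on a characterization that the paper does not establish: you assert that $(\Omega,d_\Omega)$ is Gromov hyperbolic if and only if no element of $\overline{\Aff(\Cb^d)\cdot\Omega}\cap\Xb_d$ is a nontrivial product. The paper's necessary condition (Corollary~\ref{cor:no_disks_orbit}) is that no orbit-closure element has an affine disk in its boundary, which is strictly stronger than ``no product''; and the paper's sufficient condition (Theorem~\ref{thm:suff}) is the technical requirement that every rescaling limit arise through a locally $m$-convex sequence and have well-behaved geodesics. These do not combine into your biconditional, and the ``if'' direction you need (no product $\Rightarrow$ hyperbolic) is neither stated nor proved anywhere in the paper.

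Consequently your forward implication has a genuine gap. The paper's route is: Gaussier's rescaling (Theorem~\ref{thm:gaussier}) shows every blow-up limit is a polynomial half-space $\{\Real z_0>P(z_1,\dots,z_d)\}$ reached through a locally $L$-convex sequence; then Sections~\ref{sec:gromov_prod}--\ref{sec:multi_infty} (culminating in Proposition~\ref{prop:geodesics_well_behaved}) prove that geodesics in such domains are well behaved, in particular that no geodesic can have $\infty$ as both forward and backward limit. This last step is the substantial analytic content --- it requires the multi-type construction of Proposition~\ref{prop:multi_type} and a second rescaling argument --- and your observation that the polynomial model ``is not a product'' does not substitute for it.

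Your reverse implication is closer in spirit to the paper's Proposition~\ref{prop:infinite_type_2}, but you overreach: you claim the rescaling limit is literally $\Delta\times\{\Real w_d>Q\}$, whereas cross-terms in $F$ coupling $z_1$ to the other variables need not disappear under your dilations, so the limit need not split as a product. The paper avoids this by proving only that the limit's boundary contains an affine disk (which follows from the infinite-order vanishing along $\ell$ alone) and then invoking Theorem~\ref{thm:no_affine_disk_body} together with Theorem~\ref{thm:nec}. That weaker conclusion is both easier to obtain and sufficient.
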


This answers a conjecture of Gaussier and Seshadri~\cite{GS2013}. This also provides a partial answer to a question of Balogh and Bonk~\cite[Section 6]{BB2000} who asked if the Kobayashi metric is Gromov hyperbolic for a general domain with finite type in the sense of D'Angelo. 

We will also show that the Gromov boundary $\Omega(\infty)$ can be identified with the topological boundary $\partial \Omega$:

\begin{proposition}\label{prop:main}
Suppose $\Omega$ is a bounded convex set of finite type in the sense of D'Angelo. Then the identity map $\Omega \rightarrow \Omega$ extends to a homeomorphism $\Omega \cup \Omega(\infty) \rightarrow \Omega \cup \partial\Omega$.
\end{proposition}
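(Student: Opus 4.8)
The plan is to combine the Gromov hyperbolicity of $(\Omega, d_\Omega)$ — which holds by Theorem~\ref{thm:main}, since $\Omega$ has finite type — with the visibility and rescaling estimates near finite type boundary points established in the preceding sections, following the strategy of Balogh and Bonk. Since $\Omega$ is bounded and $\Cb$-proper, $d_\Omega$ is a complete distance inducing the standard topology and $(\Omega, d_\Omega)$ is a proper geodesic metric space; hence $\Omega(\infty)$ is realized by equivalence classes of geodesic rays and $\overline{\Omega}^{\,G} := \Omega \cup \Omega(\infty)$, with the cone topology, is a compact space. Because $\overline{\Omega}^{\,G}$ is compact and $\overline{\Omega}$ is Hausdorff, it suffices to produce a continuous bijection $F\colon \overline{\Omega}^{\,G} \to \overline{\Omega}$ extending the identity on $\Omega$; such an $F$ is automatically a homeomorphism. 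I would define $F$ on $\Omega(\infty)$ via Euclidean limits of rays and then verify in turn that $F$ is well defined and continuous, surjective, and injective.

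\emph{Definition and continuity of $F$.} First I would record the one quantitative input used: for every $R>0$, if $d_\Omega(x,y)\le R$ then $|x-y|\to 0$ as $x\to\partial\Omega$. This ``narrowing'' property follows from the rescaling analysis near a D'Angelo finite type boundary point (it fails, for instance, for the bidisc). It implies at once that a geodesic ray $\gamma$, which must leave every compact subset of $\Omega$, satisfies $\delta_\Omega(\gamma(t))\to 0$ and is Euclidean–Cauchy, hence converges to a point $F([\gamma])\in\partial\Omega$, and that equivalent rays have the same limit; so $F$ is well defined on $\Omega(\infty)$, and we set $F|_\Omega = \mathrm{id}$. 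For continuity it is enough, by compactness of $\overline{\Omega}$, to rule out sequences $z_n\to\xi$ and $z_n'\to\xi$ in $\overline{\Omega}^{\,G}$ with $z_n\to p$ and $z_n'\to q$ in $\overline{\Omega}$ and $p\ne q$; but then the visibility property gives a compact $K\subset\Omega$ met by every geodesic $[z_n,z_n']$, so the Gromov products $(z_n\mid z_n')_{x_0}$ stay bounded, contradicting $z_n,z_n'\to\xi$. Hence $F$ is continuous.

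\emph{Surjectivity.} Fix $x_0\in\Omega$ and $p\in\partial\Omega$; choose $x_n\to p$ and geodesic segments $\sigma_n\colon[0,\ell_n]\to\Omega$ from $x_0$ to $x_n$, with $\ell_n=d_\Omega(x_0,x_n)\to\infty$. By properness and Arzelà–Ascoli a subsequence of the $\sigma_n$ converges locally uniformly to a geodesic ray $\sigma$ issuing from $x_0$; by the previous paragraph $\sigma(t)\to F([\sigma])=:q\in\partial\Omega$. If $q\ne p$, then for large $n$ the geodesic $\sigma_n$ passes close to $q$ at some parameter $t_n$ with $t_n\to\infty$ and then runs on to $x_n$ near $p$; applying visibility to the sub-geodesic $\sigma_n|_{[t_n,\ell_n]}$ forces $\sigma_n(s_n)\in K$ for some $s_n\ge t_n$, so $d_\Omega(x_0,\sigma_n(s_n))=s_n\ge t_n\to\infty$ while $\sigma_n(s_n)$ lies in the compact set $K$, contradicting compactness of closed $d_\Omega$-balls about $x_0$. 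Hence $q=p$ and $F$ is surjective.

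\emph{Injectivity and conclusion.} Since $F(\Omega)=\Omega$ and $F(\Omega(\infty))\subseteq\partial\Omega$, injectivity of $F$ reduces to injectivity of $F$ on $\Omega(\infty)$, i.e.\ to the assertion that two geodesic rays with the same Euclidean limit $p$ are equivalent. Suppose not; then $\xi:=[\gamma]\ne[\eta]=:\xi'$ are distinct points of the Gromov boundary, so (using properness and hyperbolicity) there is a bi-infinite geodesic $\ell\colon\Rb\to\Omega$ with $\ell(t)\to\xi$ and $\ell(-t)\to\xi'$ as $t\to\infty$, whence $\ell(t)\to p$ and $\ell(-t)\to p$ in $\overline{\Omega}$. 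Thus $\ell$ would be a bi-infinite Kobayashi geodesic both of whose ends converge to the single boundary point $p$, and I expect ruling this out to be the main obstacle — it is the one place finite type is genuinely used beyond Theorem~\ref{thm:main}: the rescaling analysis at the finite type point $p$ (where $\Omega$ converges, in the normalized local picture, to a $\Cb$-proper convex model domain on which $d_\Omega$ converges to the Kobayashi distance) excludes such a geodesic, equivalently shows that all geodesic rays landing at $p$ stay within bounded Hausdorff distance. Granting this, $F$ is a continuous bijection from the compact space $\overline{\Omega}^{\,G}$ onto the Hausdorff space $\overline{\Omega}$, hence a homeomorphism, which is the assertion of the Proposition.
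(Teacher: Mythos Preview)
Your strategy coincides with the paper's: define the extension by Euclidean limits of rays and verify it is a continuous bijection from the compact Gromov compactification to the Hausdorff closure $\overline\Omega$. Your ``visibility'' is exactly the content of Proposition~\ref{prop:gromov_prod_infinite} (Gromov product $\to\infty$ forces equal Euclidean limits), and the paper uses it in the same places you do. One small slip: the ``narrowing'' property does not by itself give that a ray is Euclidean--Cauchy, since $d_\Omega(\gamma(s),\gamma(t))=|t-s|$ is unbounded; the paper handles existence of $\lim_{t\to\infty}\gamma(t)$ via Corollary~\ref{cor:geodesic_limits}, which comes from local $m$-convexity of finite type domains (Proposition~\ref{prop:finite_type}) together with the compactness arguments of Section~\ref{sec:m_convex}. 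Your own visibility argument, applied to two hypothetical subsequential limits of a single ray, also closes this gap.

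The one genuine divergence is injectivity. You reduce to excluding a bi-infinite geodesic with both ends at a single $p\in\partial\Omega$ and propose doing this by rescaling at $p$. The paper instead uses that $\partial\Omega$ is $C^2$: inward normal segments are then $(1,\log\sqrt2)$-quasi-geodesics (Proposition~\ref{prop:rough_geodesics}), and a short Gromov-product estimate (Proposition~\ref{prop:gromov_prod_finite}) shows that $p_n,q_n\to p$ forces $(p_n|q_n)_o\to\infty$; by the standard Gromov-boundary criterion this means the two rays are asymptotic, contradicting $\xi\ne\xi'$. This is more elementary than rescaling and is precisely where the boundary regularity enters. Your rescaling idea would also work---it is essentially how the paper shows geodesics are well behaved in the unbounded polynomial limit domains (Proposition~\ref{prop:geodesics_well_behaved})---but for a bounded domain with $C^2$ boundary the direct argument is shorter.
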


Although the necessary and sufficient conditions developed in this paper are for convex domains, the techniques can be applied to domains which are locally convexifiable:

\begin{theorem}\label{thm:loc_main_i}
Suppose $\Omega$ is locally convexifiable and has finite type in the sense of D'Angelo. Then $(\Omega, d_{\Omega})$ is Gromov hyperbolic. Moreover, the identity map $\Omega \rightarrow \Omega$ extends to a homeomorphism $\Omega \cup  \Omega(\infty) \rightarrow \Omega \cup \partial\Omega$.
\end{theorem}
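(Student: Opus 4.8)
The plan is to reduce everything to the convex case by localizing the Kobayashi metric near the boundary. Both hypotheses are local and biholomorphically invariant: for each $p\in\partial\Omega$ there is a neighborhood $U_p$ and a biholomorphism $\Phi_p\colon U_p\to\Phi_p(U_p)$ with $\Phi_p(U_p\cap\Omega)$ convex, and since $\Omega$ has finite type the convex set $\Phi_p(U_p\cap\Omega)$ has finite type along the portion of its boundary coming from $\partial\Omega$. As $\Omega$ is bounded, finitely many such $U_p$ together with a compact $K\Subset\Omega$ cover $\overline{\Omega}$. The strategy is then: near $\partial\Omega$ the Kobayashi geometry of $\Omega$ agrees, up to a bounded additive error, with that of a $\Cb$-proper convex domain of finite type, for which the characterization of Gromov hyperbolicity obtained in the body of the paper is available; on $K$ the metric $d_\Omega$ is biLipschitz to the Euclidean metric. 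Assembling the local pictures will give Gromov hyperbolicity, and the local boundary identifications (as in Proposition~\ref{prop:main}) will give the homeomorphism $\Omega\cup\Omega(\infty)\to\Omega\cup\partial\Omega$.

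The first step is a localization estimate: for each $p$ there are $V_p\Subset U_p$ and $C_p>0$ with
\[
d_{U_p\cap\Omega}(z,w)-C_p \;\le\; d_\Omega(z,w) \;\le\; d_{U_p\cap\Omega}(z,w), \qquad z,w\in V_p\cap\Omega .
\]
The right inequality is monotonicity under inclusion. For the left inequality I would use Royden's localization lemma together with the lower bound $d_\Omega(z_0,z)\gtrsim\log\frac{1}{\delta_\Omega(z)}$ for $z$ near $\partial\Omega$ (which follows, via $\Phi_p$, from the supporting half-spaces of the convex models): this blow-up confines any $(1,C_p)$-almost-geodesic of $\Omega$ joining two points of $V_p\cap\Omega$ to $U_p\cap\Omega$, whence the estimate; the blow-up of the metric is also what makes the distortion additive rather than merely multiplicative. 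Since $\Phi_p$ is a Kobayashi isometry, $(V_p\cap\Omega,d_\Omega)$ is, up to the additive constant $C_p$, isometrically embedded in the convex finite-type domain $\Phi_p(U_p\cap\Omega)$ near its smooth boundary portion.

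Granting this, Gromov hyperbolicity of $(\Omega,d_\Omega)$ is proved by verifying the thin-triangles condition (equivalently, Gromov's four-point inequality) with a uniform constant. Configurations all of whose points lie in a single $V_p\cap\Omega$ are handled by transporting, via $\Phi_p$ and the localization estimate, the corresponding configuration into the convex finite-type model, which is hyperbolic there by the convex results of the paper. Configurations meeting the compact core $K$ in an essential way are controlled by the biLipschitz comparison with the Euclidean metric. The general configuration is reduced to these by the \emph{visibility} property near $\partial\Omega$ — Kobayashi geodesics joining points tending to two distinct boundary points must exit a neighborhood of each, and cannot re-enter a boundary region $V_p$ after leaving it — which is again inherited from the convex local models. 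Alternatively, in the orbit-closure spirit of the paper, one argues by contradiction: a sequence of geodesic triangles violating $\delta$-thinness for $\delta\to\infty$ would, after affine rescaling about suitable points and passage to a limit in $\Xb_d$, produce a $\Cb$-proper convex domain of finite type that fails to be Gromov hyperbolic, contradicting Theorem~\ref{thm:main}. Finiteness of the cover yields the uniform constant.

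The boundary identification follows the same pattern: visibility, globalized as above, shows that sequences tending to distinct points of $\partial\Omega$ have bounded Gromov product and hence determine distinct points of $\Omega(\infty)$; conversely the lower bound on $d_\Omega(z_0,\cdot)$ pushes any Gromov sequence toward $\partial\Omega$, and the local convex picture (Proposition~\ref{prop:main}) toward a single boundary point, with continuity in both directions a routine consequence of the estimates. I expect the main obstacle to be the mismatch between the local convex models $U_p\cap\Omega$ and the globally well-behaved domains of Theorem~\ref{thm:main}: $U_p\cap\Omega$ is convex and of finite type only along part of its boundary, so one must either have the convex results of the body stated in a local form (for $\Cb$-proper convex sets of finite type along a boundary portion) or replace $U_p\cap\Omega$ by a genuinely bounded convex domain of finite type that agrees with it near $\Phi_p(p)$, and then check that the Kobayashi estimates transfer with constants uniform over the finitely many $p$.
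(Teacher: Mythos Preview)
Your overall architecture is right and matches the paper: localize the Kobayashi metric near $\partial\Omega$ so that geodesics in $\Omega$ become \emph{almost}-geodesics in a convex finite-type model, then run a contradiction argument with rescaling. The localization estimate you describe is exactly what the paper proves (Theorems~\ref{thm:inf_loc} and~\ref{thm:global_loc}, via the Forstneri\v{c}--Rosay trick), and the boundary identification follows the same Gromov-product route.

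Two points deserve correction. First, your primary ``direct verification'' approach---handling configurations according to which $V_p$ they lie in---does not work as stated: the vertices of a geodesic triangle may lie in one $V_p$ while the geodesics themselves wander through several charts, and Gromov hyperbolicity is not a property one can glue from local hyperbolic pieces. The visibility you invoke to control this is precisely what requires proof, and establishing it amounts to the same rescaling argument. The paper abandons this route entirely and uses only the contradiction approach.

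Second, in the contradiction approach you write that the rescaled limit would be a $\Cb$-proper convex domain of finite type failing to be Gromov hyperbolic, ``contradicting Theorem~\ref{thm:main}.'' This is not quite the paper's argument, and the appeal to Theorem~\ref{thm:main} is illegitimate: the rescaled limit $\wh{\Omega}$ is an \emph{unbounded} polynomial domain, outside the scope of Theorem~\ref{thm:main}. The paper does not use hyperbolicity of $\wh{\Omega}$ at all. Instead it shows directly that almost-geodesics in $\wh{\Omega}$ have distinct endpoints in $\overline{\Cb^d}$ (Proposition~\ref{prop:rough_geodesics_well_behaved}), so the rescaled side $\sigma_{x_ny_n}$ through $u_n$ has distinct limits $\wh{x}_\infty\neq\wh{y}_\infty$; hence $\wh{z}_\infty$ differs from one of them, the corresponding opposite side has a subsequence converging to an almost-geodesic $\wh{\gamma}$ in $\wh{\Omega}$, and then $d_{\wh{\Omega}}(\wh{u}_\infty,\wh{\gamma}(0))<\infty$ contradicts $d_\Omega(u_n,\sigma_{x_nz_n})>n$. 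The obstacle you flag at the end---that the convex model is only of finite type along part of its boundary---is handled because Gaussier's rescaling (Theorem~\ref{thm:gaussier}) only needs finite type near the single point $\xi=\lim u_n$, and the almost-geodesic machinery (Propositions~\ref{prop:rough_geodesic_limits_exist}--\ref{prop:rough_geodesics_well_behaved}) then takes over in the polynomial limit.
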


We actually only require that the boundary $\partial \Omega$ is a $C^L$ hypersurface and near each $\xi \in \partial \Omega$ there exists holomorphic coordinates where $\partial \Omega$ is a convex hypersurface and has line type at most $L$ near $\xi$. Since every strongly pseudo-convex domain with $C^2$ boundary satisfies this hypothesis with $L=2$, we obtain a result of Balogh and Bonk as a corollary: 

\begin{corollary}\cite[Theorem 1.4]{BB2000}
Suppose $\Omega$ is a bounded strongly pseudo convex domain with $C^2$ boundary. Then $(\Omega, d_{\Omega})$ is Gromov hyperbolic.  Moreover, the identity map $\Omega \rightarrow \Omega$ extends to a homeomorphism $\Omega \cup \partial \Omega \rightarrow \Omega \cup \Omega(\infty)$.
\end{corollary}

\begin{remark}  Balogh and Bonk also show that the Carnot-Carath{\'e}odory metric $d_H$ on $\partial \Omega$ lies in the canonical class of snowflake equivalent metrics on $\Omega(\infty)$. It is unclear if our approach can be used to prove this.\end{remark}

We now describe the necessary and sufficient conditions for Gromov hyperbolicity established in this paper. Our first necessary condition is:

\begin{theorem}\label{thm:no_affine_disk}
Suppose $\Omega \subset \Cb^d$ is a  $\Cb$-proper convex open set. If $(\Omega, d_{\Omega})$ is Gromov hyperbolic then $\partial \Omega$ does not contain any non-trivial holomorphic disks.
\end{theorem}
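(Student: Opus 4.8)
The plan is to argue by contrapositive: assuming $\partial\Omega$ contains a non-trivial holomorphic disk, I will produce a quasi-isometrically embedded copy of a bad model space (a bi-infinite flat strip, or at least a copy of $(\Db\times\Db, d_{\Db\times\Db})$) inside $(\Omega, d_\Omega)$, which forces the failure of Gromov hyperbolicity because $\Db\times\Db$ is not Gromov hyperbolic. Concretely, suppose $f\colon\Db\to\partial\Omega$ is a non-constant holomorphic map. After composing with an affine map and rescaling I may assume $0\in\Omega$ and, by convexity, the segment from $0$ to any point $f(z)\in\partial\Omega$ lies in $\overline\Omega$; more usefully, by convexity the whole ``cone'' swept out by these segments is contained in $\overline\Omega$, and I want to extract from it a holomorphically embedded polydisk-like region whose intrinsic Kobayashi geometry is close to a product.

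First I would set up the comparison tool: on the convex domain $\Omega$, the Kobayashi metric agrees with the Lempert function and is realized by complex geodesics, and crucially the Kobayashi distance is decreasing under holomorphic maps, while on a totally geodesic or well-positioned submanifold one gets a lower bound. The key step is to build a holomorphic map $F\colon \Db\times\Db \to \Omega$ (or from a sub-strip of it) that is an isometric embedding for the Kobayashi metrics, or at least a quasi-isometric one. A natural candidate: fix a non-trivial disk $f\colon\Db\to\partial\Omega$, pick an interior point $p\in\Omega$, and consider $F(z,w) = (1-\varphi(w))p + \varphi(w) f(z)$ for a suitable conformal $\varphi\colon\Db\to(0,1)$ or a map into a sub-disk; convexity guarantees $F$ maps into $\overline\Omega$ and a small perturbation pushes it into $\Omega$. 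One then checks that the Kobayashi metric on the image dominates the product metric in one direction and is dominated in the other, yielding a quasi-isometric embedding of a space containing arbitrarily large flats.

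The main obstacle I anticipate is getting a clean \emph{lower} bound on $d_\Omega$ restricted to the image of $F$ — upper bounds come for free from distance-decreasing, but to see that $F$ does not collapse distances one needs to exploit that $f(\Db)\subset\partial\Omega$ (not just $\overline\Omega$), so that in the $w$-direction one is genuinely approaching the boundary and the Kobayashi metric blows up there, while in the $z$-direction the disk $f$ gives an honest hyperbolic direction along the boundary face. The standard way to make this rigorous is to project: postcompose with a supporting complex hyperplane functional at a boundary point of the face $f(\Db)$ to get a holomorphic map $\Omega\to\Db$ (after normalization) that, composed with $F$, is essentially the coordinate projection, forcing $d_\Omega(F(x),F(y)) \geq d_{\Db\times\Db}(x,y) - C$. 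Combining the two bounds shows $(\Omega,d_\Omega)$ contains a quasi-isometrically embedded $\Db\times\Db$, hence cannot be Gromov hyperbolic; this is exactly the contrapositive of the claim. I would also remark that the hypothesis of $\Cb$-properness is used only to ensure $d_\Omega$ is a genuine (complete) metric so that Gromov hyperbolicity is even meaningful.
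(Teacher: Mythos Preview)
Your strategy --- quasi-isometrically embed $\Delta\times\Delta$ --- is natural and is in fact the route taken by Gaussier--Seshadri and by Nikolov--Thomas--Trybula in the special cases they handle, but as written the construction has a genuine gap. The map $F(z,w)=(1-\varphi(w))p+\varphi(w)f(z)$ lands in $\Omega$ by convexity only when $\varphi(w)\in(0,1)$ is \emph{real}; a holomorphic $\varphi\colon\Delta\to\Cb$ taking only real values is constant, so your $F$ cannot be holomorphic in $w$. If instead $\varphi$ is genuinely complex-valued you lose the convex-combination argument and there is no reason $F(\Delta\times\Delta)\subset\Omega$. Even after replacing $f$ by an affine disk (via Fu--Straube), producing a holomorphic $F$ with image in $\Omega$ seems to require some control on $\partial\Omega$ --- which is precisely why the earlier proofs assume $C^\infty$ or $C^{1,1}$ boundary. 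Your lower-bound step has a related problem: a supporting-hyperplane projection $\Omega\to\Hc$ at an interior point of the affine face must be constant along the complex line containing that face (the hyperplane contains the whole line), so it detects only the transverse direction and cannot recover the $z$-coordinate; you would need a second independent holomorphic projection separating points along the boundary disk, and none is supplied.

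The paper takes a different and more elementary route that avoids embedding any product. It reduces to an affine disk via Fu--Straube, fixes $x$ in the relative interior $\Oc$ of that disk, $y\in\partial\Oc$, and $o\in\Omega$, and then uses only the crude two-sided estimate $\tfrac{\norm{v}}{2\delta_\Omega(p;v)}\le K_\Omega(p;v)\le\tfrac{\norm{v}}{\delta_\Omega(p;v)}$ to parametrize the straight real segments $[o,x)$, $[o,y)$ and $[x_T,y_T]$ as explicit $(A,0)$-quasi-geodesics (Lemma~\ref{lem:quasi_geodesic}). A direct computation with Lemma~\ref{lem:convex_lower_bd_2} then shows these quasi-geodesic triangles are arbitrarily thick. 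This argument needs no regularity hypothesis on $\partial\Omega$ whatsoever, which is the whole point of the theorem as stated.
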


\begin{remark} 
When $\Omega$ is convex, $\partial \Omega$ contains a non-trivial homolmorphic disk if and only if it contains a non-trivial complex affine disk (see~\cite{FS1998}). Moreover special cases of the above theorem are already known:
\begin{enumerate} \item Gaussier and Seshadri~\cite[Theorem 1.1]{GS2013} gave an argument when $\Omega$ is a bounded convex set with $C^\infty$ boundary. 
\item Nikolov, Thomas, and Trybula~\cite[Theorem 1]{NTT2014} gave an argument when $d=2$ and $\Omega$ has $C^{1,1}$ boundary. 
\end{enumerate}
In our proof we use the elementary estimates on the infinitesimal Kobayashi metric
\begin{align*}
\frac{\norm{v}}{2\delta_{\Omega}(p;v)} \leq K_{\Omega}(p;v) \leq \frac{\norm{v}}{\delta_{\Omega}(p;v)} 
\end{align*}
(valid for any convex set $\Omega$, point $p \in \Omega$, and vector $v \in \Cb^d$) to give a general condition on when a line segment in $\Omega$ can be parametrized to be a quasi-geodesic. We then use these quasi-geodesics to construct ``fat'' triangles near any complex affine disk in the boundary. 
\end{remark}

Theorem~\ref{thm:no_affine_disk} becomes an useful tool for demonstrating non-hyperbolicity when combined with the second necessary condition we establish:

\begin{theorem}\label{thm:nec_i}
Suppose $\Omega \subset \Cb^d$ is a $\Cb$-proper convex open set and $(\Omega, d_{\Omega})$ is Gromov hyperbolic. If $\wh{\Omega} \in \overline{\Aff(\Cb^d) \Omega} \cap \Xb_d$ then $(\wh{\Omega}, d_{\wh{\Omega}})$ is Gromov hyperbolic.\end{theorem}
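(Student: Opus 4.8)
The plan is to pass Gromov hyperbolicity through the limit using the four-point formulation of hyperbolicity. Recall that for a geodesic metric space, being $\delta$-hyperbolic in the thin-triangles sense is equivalent, up to a universal change of constant, to the four-point inequality $(x|y)_w \geq \min\{(x|z)_w,(y|z)_w\}-\delta$ for all $x,y,z,w$, where $(x|y)_w = \tfrac12\big(d(w,x)+d(w,y)-d(x,y)\big)$. Since $\Cb$-proper convex sets have complete Kobayashi metric and the Kobayashi distance on a convex set induces the Euclidean topology, both $(\Omega,d_\Omega)$ and $(\wh{\Omega},d_{\wh{\Omega}})$ are proper geodesic metric spaces by the Hopf--Rinow theorem; this is exactly the role of the hypothesis $\wh{\Omega}\in\Xb_d$ rather than merely $\wh{\Omega}\in\overline{\Aff(\Cb^d)\Omega}$. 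So it suffices to produce a constant $\delta_0$ with which $(\wh{\Omega},d_{\wh{\Omega}})$ satisfies the four-point inequality.

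First I would note that if $A_n\in\Aff(\Cb^d)$ is chosen with $A_n\Omega\to\wh{\Omega}$ in the local Hausdorff topology, then, since complex affine maps are biholomorphisms and hence Kobayashi isometries, each $(A_n\Omega,d_{A_n\Omega})$ is isometric to $(\Omega,d_\Omega)$ and therefore satisfies the four-point inequality with one fixed constant $\delta_0$ coming from the Gromov hyperbolicity of $(\Omega,d_\Omega)$. Next I would invoke the continuity of the Kobayashi distance under local Hausdorff convergence inside $\Xb_d$: for every $p,q\in\wh{\Omega}$ one has $d_{A_n\Omega}(p,q)\to d_{\wh{\Omega}}(p,q)$ (for $n$ large $p,q\in A_n\Omega$, since $\wh{\Omega}$ is open). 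Granting this, fix four points $x,y,z,w\in\wh{\Omega}$; for $n$ large they all lie in $A_n\Omega$, the six pairwise $d_{A_n\Omega}$-distances converge to the corresponding $d_{\wh{\Omega}}$-distances, so the Gromov products computed in $A_n\Omega$ converge to those computed in $\wh{\Omega}$, and letting $n\to\infty$ in the four-point $\delta_0$-inequality --- which holds in every $A_n\Omega$ --- yields the same inequality in $\wh{\Omega}$. By the equivalence recalled above, $(\wh{\Omega},d_{\wh{\Omega}})$ is Gromov hyperbolic.

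The hard part will be the continuity statement $d_{A_n\Omega}\to d_{\wh{\Omega}}$, which I would isolate as a separate lemma. The bound $\limsup_n d_{A_n\Omega}(p,q)\leq d_{\wh{\Omega}}(p,q)$ is the easy half: by convexity a slight dilation of $\wh{\Omega}$ about an interior point is relatively compact in $\wh{\Omega}$, hence contained in $A_n\Omega$ for all large $n$, and monotonicity of the Kobayashi distance under inclusion together with its continuity under dilations of $\wh{\Omega}$ gives the bound. The reverse inequality $\liminf_n d_{A_n\Omega}(p,q)\geq d_{\wh{\Omega}}(p,q)$ is delicate, because $A_n\Omega$ may extend far beyond $\wh{\Omega}$ away from $p$ and $q$: one bounds $d_{A_n\Omega}(p,q)$ from below by integrating $K_{A_n\Omega}$ along an almost-geodesic of $(A_n\Omega,d_{A_n\Omega})$ joining $p$ and $q$, and the real work is to confine all of these almost-geodesics to a single compact subset of $\Cb^d$ independent of $n$. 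This is where $\Cb$-properness of $\wh{\Omega}$ is essential: completeness and properness of $(\wh{\Omega},d_{\wh{\Omega}})$ give $d_{\wh{\Omega}}(p,\zeta)\to\infty$ as $\zeta\to\infty$, and this coercivity must be upgraded --- with some care, using the elementary lower bound $K_{\Omega}(p;v)\geq\norm{v}/(2\,\delta_{\Omega}(p;v))$ and a supporting-hyperplane description of $\wh{\Omega}$ --- to a bound uniform over the sequence $A_n\Omega$; that uniform coercivity estimate is the technical heart of the argument.
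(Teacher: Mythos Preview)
Your proof is correct and follows the paper's approach exactly: pass the four-point Gromov-product inequality through the limit using that affine maps are Kobayashi isometries and that $d_{A_n\Omega}\to d_{\wh{\Omega}}$ pointwise. The only difference is in your sketch of the continuity lemma's lower bound: you propose to confine real almost-geodesics of $A_n\Omega$ to a fixed compact set and integrate the infinitesimal metric, whereas the paper instead uses \emph{complex} geodesics $\varphi_n:\Delta\to A_n\Omega$ (available by Royden--Wong/Bracci--Saracco) and shows that a fixed shrinkage $\varphi_n(B_\delta(0))$ lands inside $\wh{\Omega}$, which yields the bound directly via the distance-decreasing property without any integration; the underlying coercivity input from $\Cb$-properness of $\wh{\Omega}$ is the same in both routes.
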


\begin{remark} \ \begin{enumerate}
\item Here we taking the closure of $\Aff(\Cb^d) \Omega$ in the space of open convex sets endowed with the local Hausdorff topology.
\item Clearly one can blow up any open convex set by affine transformations to be all of $\Cb^d$. Thus it is important to only consider limits in $\Xb_d$. 
\item The main step in the proof is showing that the Kobayashi metric is continuous in the local Hausdorff topology. Then Theorem~\ref{thm:nec_i} follows from the Gromov product formulation of hyperbolicity. 
\end{enumerate}
\end{remark}

Using Theorem~\ref{thm:no_affine_disk} and Theorem~\ref{thm:nec_i}  we will demonstrate:

\begin{proposition}\label{prop:infinite_type}
Suppose $\Omega \subset \Cb^d$ is a $\Cb$-proper convex open set with $0 \in \partial \Omega$ and 
\begin{align*}
\Omega \cap \Oc = \{(z_1, \dots, z_d) \in \Cb^d :  \Imaginary(z_1) > f(\Real(z_1), z_2, \dots, z_d)\}
\end{align*}
where $\Oc$ is a neighborhood of the origin and $f:\Rb \times \Cb^{d-1} \rightarrow \Rb$ is a convex non-negative function. If 
\begin{align*}
\lim_{z \rightarrow 0} \frac{f(0,z,0,\dots,0)}{\abs{z}^n} = 0
\end{align*}
for all $n>0$ then $(\Omega, d_{\Omega})$ is not Gromov hyperbolic.
\end{proposition}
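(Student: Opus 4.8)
\emph{Proof proposal.} The plan is to argue by contradiction and reduce the statement to constructing a single rescaling limit. If $(\Omega,d_\Omega)$ were Gromov hyperbolic then, by Theorem~\ref{thm:nec_i}, every $\wh\Omega\in\overline{\Aff(\Cb^d)\Omega}\cap\Xb_d$ would have $(\wh\Omega,d_{\wh\Omega})$ Gromov hyperbolic, and hence, by Theorem~\ref{thm:no_affine_disk}, $\partial\wh\Omega$ would contain no non-trivial holomorphic disk. So it is enough to produce one $\wh\Omega\in\overline{\Aff(\Cb^d)\Omega}\cap\Xb_d$ whose boundary \emph{does} contain a non-trivial holomorphic disk. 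One may assume $f(0)=0$ (otherwise $0\notin\partial\Omega$) and that $\{z\in\Cb:f(0,z,0,\dots,0)=0\}$ has empty interior (otherwise $\partial\Omega$ already contains a holomorphic disk through $0$). Then $\psi(r):=\max\{f(0,z,0,\dots,0):\abs{z}\le r\}$ is continuous and non-decreasing with $\psi(0)=0$, $\psi(r)>0$ for small $r>0$, and $\psi(r)/r^n\to0$ as $r\to0^+$ for every $n>0$.

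The domain $\wh\Omega$ will be a blow-up of $\Omega$ at the origin in which the flat direction $z_2$ is dilated faster than the normal direction $ie_1$. Choose a pseudo-inverse $\beta(s):=\sup\{r:\psi(r)\le s\}$, so that $\psi(\beta(s))\le s$ while $\beta(s)/s^{1/n}\to\infty$ for every $n$ (this encodes the infinite-order flatness); for $j=3,\dots,d$ let $\gamma_j(s)\to0$ be auxiliary factors, chosen comparable to the size of the sublevel set $\{w'\in\Cb^{d-1}:f(0,w')<s\}$ in the $z_j$-direction. Consider the complex affine maps
\begin{align*}
A_s(z_1,z_2,z_3,\dots,z_d)=\Bigl(\tfrac{z_1}{s},\ \tfrac{z_2}{\beta(s)},\ \tfrac{z_3}{\gamma_3(s)},\ \dots,\ \tfrac{z_d}{\gamma_d(s)}\Bigr),\qquad 0<s<s_0.
\end{align*}
Because $\Omega$ is convex and agrees with the graph region of $f$ near $0$, for every $R>0$ and all small $s$ one has $A_s(\Omega)\cap B(0,R)=\{(w_1,w'):\Imaginary w_1>g_s(\Real w_1,w')\}\cap B(0,R)$, where $g_s(a,w'):=\tfrac1s f(sa,\beta(s)w_2,\gamma_3(s)w_3,\dots,\gamma_d(s)w_d)$ is convex, non-negative, and vanishes at the origin (the image of $\Omega\setminus\Oc$ under $A_s$ eventually leaves every compact set). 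After passing to a subsequence $s_j\to0$, the domains $A_{s_j}(\Omega)$ converge in the local Hausdorff topology to a convex open set $\wh\Omega=\{(w_1,w'):\Imaginary w_1>g(\Real w_1,w')\}$, where $g$ is the limit of the convex functions $g_{s_j}$. Since $\psi(c\,\beta(s_j))/\psi(\beta(s_j))\to0$ for every $c<1$ — again because $\psi$ is flat to infinite order — we get $g_{s_j}(0,w_2,0,\dots,0)\le\psi(\abs{w_2}\beta(s_j))/\psi(\beta(s_j))\to0$ for $\abs{w_2}<1$; hence $g(0,w_2,0,\dots,0)\equiv0$ on a disk about the origin, so $\partial\wh\Omega$ contains a non-trivial holomorphic disk.

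The crux is to arrange, by the choice of the $\gamma_j$ and of the subsequence, that $\wh\Omega$ is $\Cb$-proper, so that $\wh\Omega\in\Xb_d$ and Theorem~\ref{thm:nec_i} applies. Since $f\ge0$, the hyperplane $\{\Imaginary z_1=0\}$ supports $\Omega$ at $0$, hence $\Omega\subseteq\{\Imaginary z_1>0\}$ and $\wh\Omega\subseteq\{\Imaginary w_1>0\}$; so any complex affine line contained in $\wh\Omega$ must lie in a slice $\{w_1=c_1\}$ at bounded height, which forces the convex function $g(\Real c_1,\cdot)$ to be constant along a complex affine line in $\Cb^{d-1}$ — equivalently, some sublevel set of $g(\Real c_1,\cdot)$ to be unbounded. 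One excludes this by showing these sublevel sets are bounded in \emph{every} direction: in the $w_3,\dots,w_d$ directions from the choice of $\gamma_j$, and in the $w_2$ direction from the fact that infinite-order flatness is strictly local — quantitatively, $\limsup_{r\to0}\psi(Mr)/\psi(r)=\infty$ for each $M>1$ (otherwise $\psi$ would be bounded below by a positive power of $r$), so that along the chosen subsequence $g(0,w_2,0,\dots,0)\to+\infty$ as $\abs{w_2}\to\infty$ — together with control of the mixed directions from the convexity of $g$ and the hypotheses on $f$. This last step is where the real work lies: the rescaling must be tuned so that the flat direction collapses to an honest disk in $\partial\wh\Omega$ while the limit does not acquire a full complex line, and some care is needed in degenerate configurations, for instance when $f(0,\cdot,0,\dots,0)$ vanishes along a ray. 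Granting $\wh\Omega\in\Xb_d$, its boundary contains a holomorphic disk, contradicting the consequence of Theorems~\ref{thm:nec_i} and~\ref{thm:no_affine_disk} recorded above; hence $(\Omega,d_\Omega)$ is not Gromov hyperbolic.
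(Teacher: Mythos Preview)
Your overall strategy is the same as the paper's: find $A_n\in\Aff(\Cb^d)$ so that $A_n\Omega$ converges to a $\Cb$-proper set whose boundary contains a complex affine disk, then invoke Theorems~\ref{thm:nec_i} and~\ref{thm:no_affine_disk}. But the execution has a real gap. The assertion ``$\psi(c\beta(s_j))/\psi(\beta(s_j))\to 0$ for every $c<1$ --- again because $\psi$ is flat to infinite order'' is false in general, even for convex $\psi$. Take $\psi$ piecewise linear with $\psi'_+(r)=2^{-k^2}$ on $(2^{-k-1},2^{-k}]$; this is convex, satisfies $\psi(r)/r^n\to0$ for all $n$, yet along $r_k=2^{-k}$ one computes $\psi(\tfrac34 r_k)/\psi(r_k)\to\tfrac12$. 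So the disk in $\partial\wh\Omega$ is not guaranteed by your argument, and you cannot first pass to a subsequence for local Hausdorff convergence and then assert this limit. The same issue recurs in your $\Cb$-properness step: ``$\limsup_{r\to0}\psi(Mr)/\psi(r)=\infty$'' is true, but it only produces \emph{some} subsequence, which need not be compatible with the one you chose. You acknowledge that ``this last step is where the real work lies,'' but the difficulty is not resolved.

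The paper avoids all of this by two devices you are missing. First, it reduces to $d=2$ via Frankel's slicing lemma (Lemma~\ref{lem:slices}), eliminating the auxiliary $\gamma_j$. Second --- and this is the key idea --- it does not use a pseudo-inverse of $\psi$; instead, for each $n$ it picks $z_n$ (and $a_n\to0$) with $f(0,z_n)=a_n\abs{z_n}^n$ and $f(0,w)\le a_n\abs{w}^n$ for $\abs{w}\le\abs{z_n}$, then rescales by $A_n=\operatorname{diag}(f(0,z_n)^{-1},z_n^{-1})$. This normalization gives the exact identity $f_n(0,1)=1$, so $(i,1)\in\partial(A_n\Omega)$ for \emph{every} $n$, while simultaneously $f_n(0,w)\le\abs{w}^n\to0$ for $\abs w<1$. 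The disk in $\partial\Omega_\infty$ is then immediate, and $\Cb$-properness follows from a tangent-cone argument: the slice $\Cb\times\{1\}$ lies in $\partial\Omega_\infty$ (because $(i,1)$ does and the cone $\Cc(\alpha_0,\beta_0)\times\{0\}$ is contained in $\Omega_\infty$), hence misses $\Omega_\infty$ entirely. The precise control at the single scale $\abs{w}=1$ replaces the two competing asymptotic requirements you are trying to satisfy simultaneously.
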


\begin{remark} \
\begin{enumerate}
\item If $\Omega \subset \Cb^d$ is  a bounded convex open set with $C^{\infty}$ boundary and a point of infinite linear type then, up to an affine transformation, $\Omega$ satisfies the hypothesis of Proposition~\ref{prop:infinite_type}. 
\item In the proof we will show that it possible to find affine maps $A_n \in \Aff(\Cb^d)$ such that $A_n\Omega$ converges in the local Hausdorff topology to a $\Cb$-proper convex open set whose boundary contains a non-trivial complex affine disk. Theorem~\ref{thm:no_affine_disk} and Theorem~\ref{thm:nec_i} then imply that $(\Omega, d_{\Omega})$ is not Gromov hyperbolic. Using this approach, we avoid the need to develop precise estimates for the Kobayashi metric near the point of infinite linear type. 
\item Nikolov, Thomas, and Trybula~\cite[Theorem 2]{NTT2014} gave an argument for the above proposition when $d=2$ and $f$ satisfies some additional conditions (including being $C^{1,1}$). 
\end{enumerate}
\end{remark}

Delaying the definition of locally m-convex sequences to Section~\ref{sec:m_convex} and the definition of well behaved geodesics to Section~\ref{sec:suff}, our sufficient condition for the Kobasyashi metric to be Gromov hyperbolic is:

\begin{theorem}\label{thm:suff_i}
Suppose $\Omega \subset \Cb^d$ is a $\Cb$-proper convex open set. If for every sequence $u_n \in \Omega$ there exists $n_k \rightarrow \infty$, affine maps $A_k \in \Aff(\Cb^d)$, and a $\Cb$-proper convex open set $\wh{\Omega}$ such that 
\begin{enumerate}
\item $A_k \Omega \rightarrow \wh{\Omega}$ in the local Hausdorff topology,
\item $A_k u_{n_k} \rightarrow u_{\infty} \in \wh{\Omega}$,
\item $(A_k \Omega )_{k \in \Nb}$ is a locally m-convex sequence, and
\item geodesics in $\wh{\Omega}$ are well behaved,
\end{enumerate}
then $(\Omega, d_{\Omega})$ is Gromov hyperbolic.
\end{theorem}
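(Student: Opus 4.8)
The plan is to prove Gromov hyperbolicity of $(\Omega, d_\Omega)$ by verifying the Gromov four-point condition (equivalently, thinness of triangles) uniformly, using a compactness/contradiction argument driven by the hypotheses. The key principle is that $\delta$-hyperbolicity is a statement about configurations of four points, so it suffices to produce a uniform $\delta$ that works for all such configurations. First I would suppose for contradiction that $(\Omega, d_\Omega)$ is not Gromov hyperbolic: then for every $n$ there is a quadruple of points (or a geodesic triangle) in $\Omega$ witnessing failure of the $n$-thin / $n$-Gromov-product inequality. Rescaling so that the relevant Gromov product or ``center'' of the triangle sits at a fixed bounded location, one obtains a base sequence $u_n \in \Omega$. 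Now apply the hypothesis: pass to a subsequence $n_k$, affine maps $A_k$, and a limit $\Cb$-proper convex domain $\wh\Omega$ with $A_k\Omega \to \wh\Omega$ in the local Hausdorff topology, $A_k u_{n_k} \to u_\infty \in \wh\Omega$, the sequence $(A_k\Omega)$ locally m-convex, and geodesics in $\wh\Omega$ well behaved.

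The second step is to transport the bad configurations through the affine maps $A_k$ — since $A_k$ is a biholomorphism, $d_{A_k\Omega} = d_\Omega \circ (A_k^{-1} \times A_k^{-1})$, so the images of the bad quadruples are bad quadruples in $A_k\Omega$ with the same quantitative failure, and after the rescaling their ``centers'' accumulate near $u_\infty$. The third step — and this is where the two technical hypotheses earn their keep — is a limiting argument: local m-convexity of $(A_k\Omega)$ should give the uniform lower and upper control on the Kobayashi metrics $d_{A_k\Omega}$ needed to pass configurations to the limit, so that (using continuity of the Kobayashi metric in the local Hausdorff topology, Theorem~\ref{thm:nec_i}'s main step) the rescaled bad quadruples converge to a genuine bad quadruple in $(\wh\Omega, d_{\wh\Omega})$, or else converge to a configuration in which geodesics degenerate in a controlled way. ``Well behaved geodesics'' in $\wh\Omega$ then provide the uniformly thin triangles in the limit that contradict the surviving failure. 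The upshot is that $\wh\Omega$ cannot simultaneously receive arbitrarily bad configurations and have well behaved geodesics, which is the desired contradiction.

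A point that needs care is that the points of the bad quadruples may escape to the boundary at different rates, so some of the four limiting points could land on $\partial\wh\Omega$ rather than inside $\wh\Omega$; handling this requires the m-convexity hypothesis to ensure the relevant Kobayashi distances and Gromov products converge (or diverge) in a way compatible with the definition of well behaved geodesics, and one must check the hypothesis that $A_k u_{n_k}$ stays interior pins down at least the center of the configuration. I expect the main obstacle to be exactly this limiting step: showing that the quantitative failure of hyperbolicity is preserved in the limit, i.e.\ that nothing is lost when passing $\delta$-hyperbolicity inequalities through the local Hausdorff limit with the scaling sequence $A_k$. This is where the precise formulations of ``locally m-convex'' (Section~\ref{sec:m_convex}) and ``well behaved geodesics'' (Section~\ref{sec:suff}) must be invoked in full strength, together with the continuity of $d_\Omega$ in the local Hausdorff topology and the standard fact that $\delta$-hyperbolicity via the Gromov product is a closed condition under pointwise convergence of metrics.
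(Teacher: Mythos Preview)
Your overall architecture---assume $(\Omega,d_\Omega)$ is not Gromov hyperbolic, extract a sequence of bad configurations centered at points $u_n$, apply the hypothesis to rescale by $A_k$, and pass to a limit in $\wh\Omega$---matches the paper exactly. But there is a genuine gap in how you plan to close the argument, and it comes from misreading what ``well behaved geodesics'' is for.

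You write that well behaved geodesics in $\wh\Omega$ ``provide the uniformly thin triangles in the limit that contradict the surviving failure.'' This is not the mechanism, and in fact $\wh\Omega$ is \emph{not} assumed to be Gromov hyperbolic. The definition of ``well behaved'' is only that for any geodesic $\sigma:\Rb\to\wh\Omega$, the limits $\lim_{t\to\pm\infty}\sigma(t)$ exist in $\overline{\Cb^d}$ and are \emph{distinct}. The paper uses geodesic triangles (not the four-point Gromov product), and the argument runs as follows. Take the side $\sigma_{x_ny_n}$ containing $u_n$; since $A_nu_n\to u_\infty\in\wh\Omega$, Arzel\`a--Ascoli gives a limit geodesic $\sigma:\Rb\to\wh\Omega$. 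The m-convexity hypothesis (via Proposition~\ref{prop:limits_exist}) forces $\lim_{t\to-\infty}\sigma(t)=x_\infty$ and $\lim_{t\to+\infty}\sigma(t)=y_\infty$, where $x_\infty,y_\infty,z_\infty$ are the limits of $A_nx_n,A_ny_n,A_nz_n$ in $\overline{\Cb^d}$. Now ``well behaved'' says $x_\infty\neq y_\infty$; hence $z_\infty$ differs from at least one of them, say $x_\infty$. Since $x_\infty\neq z_\infty$, at least one is finite, and Proposition~\ref{prop:m_convex} (this is exactly where local m-convexity is used) lets you extract from the side $A_n\sigma_{x_nz_n}$ a limiting geodesic $\wh\sigma:\Rb\to\wh\Omega$. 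But then $d_{\wh\Omega}(u_\infty,\wh\sigma(0))=\lim d_\Omega(u_n,\sigma_{x_nz_n}(0))\geq\lim d_\Omega(u_n,\sigma_{x_nz_n})=\infty$, a contradiction.

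So the contradiction is not ``$\wh\Omega$ has thin triangles,'' but rather ``one of the opposite sides survives in the limit as an honest geodesic at finite distance from $u_\infty$.'' Your Gromov-product formulation would struggle here precisely because three of the four points typically escape to $\partial\wh\Omega\cup\{\infty\}$, and you cannot pass the Gromov-product inequality to the limit directly; the triangle formulation with $u_n$ as basepoint sidesteps this because only $u_n$ needs to stay interior.
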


\begin{remark}\ \begin{enumerate}
\item Using language of Frankel~\cite{F1989b, F1989, F1991}, Theorem~\ref{thm:suff_i} says that $(\Omega, d_{\Omega})$ is Gromov hyperbolic given some conditions on every ``affine blow up'' of $\Omega$. 
\item The idea of the proof is to assume that $(\Omega, d_{\Omega})$ is not Gromov hyperbolic. Then for each $n$ there exists  a geodesic triangle $\Tc_n$ with vertices $x_n,y_n,z_n$, edges $\sigma_{x_ny_n}, \sigma_{y_nz_n}, \sigma_{z_nx_n}$ joining them, and a point $u_n \in \sigma_{x_n y_n}$ such that 
\begin{align*}
d_{\Omega}(u_n, \sigma_{y_nz_n} \cup \sigma_{z_nx_n} ) > n.
\end{align*}
Now assume $n_k \rightarrow \infty$ and $A_k \in \Aff(\Cb^d)$ are as in the statement of the theorem. The goal will be to show that the geodesic triangle $A_k \Tc_{n_k}$ in $A_k\Omega$ converges to a geodesic triangle $\Tc_{\infty}$ in $\wh{\Omega}$. Once this is established, the continuity of the Kobayashi metric in the local Hausdorff topology implies that
\begin{align*}
\liminf_{n \rightarrow \infty} d_{\Omega}(u_n, \sigma_{y_nz_n} \cup \sigma_{z_nx_n} ) < \infty.
\end{align*}
which is a contradiction.
\end{enumerate}
\end{remark}

To show that a bounded convex domain $\Omega$ of finite type is Gromov hyperbolic we will first use an argument of Gaussier~\cite{G1997}  to deduce that for any sequence $u_n \in \Omega$ there exists $n_k \rightarrow \infty$, affine maps $A_k \in \Aff(\Cb^d)$, and a $\Cb$-proper convex open set $\wh{\Omega}$ such that 
\begin{enumerate}
\item $A_k \Omega \rightarrow \wh{\Omega}$ in the local Hausdorff topology,
\item $A_k u_{n_k} \rightarrow u_{\infty} \in \wh{\Omega}$,
\item $(A_k \Omega )_{k \in \Nb}$ is a locally m-convex sequence.
\end{enumerate}
Moreover, $\wh{\Omega}$ has the form 
\begin{align*}
\wh{\Omega} = \{ (z_1 \dots, z_d) \in \Cb^{d} : \Real(z_1)  > P(z_2,z_3, \dots, z_d) \}
\end{align*}
where $P$ is a non-negative non-degenerate convex polynomial with $P(0)=0$. In Section~\ref{sec:gromov_prod} and Section~\ref{sec:multi_infty} we will show that geodesics in such a domain are well behaved. Thus, by Theorem~\ref{thm:suff_i}, $(\Omega, d_{\Omega})$ is Gromov hyperbolic. 

\subsection{Motivation from Hilbert geometry} Every proper open convex set $\Omega \subset \Pb(\Rb^{d+1})$ has a projectively invariant metric $H_{\Omega}$ called the \emph{Hilbert metric}. This metric is usually defined using cross ratios, but it has an equivalent formulation which makes it a real projective analogue of the Kobayashi metric (see for instance~\cite{K1977},~\cite{L1986}, or~\cite{G2009}). Thus results about the Hilbert metric can serve as guide to understanding the Kobayashi metric. 

The convex domains for which the Hilbert metric is Gromov hyperbolic are completely understood: Karlsson and Noskov~\cite{KN2002} showed that if $(\Omega, H_{\Omega})$ is Gromov hyperbolic then $\partial \Omega$ is a $C^1$ hypersurface and Benoist~\cite{B2003} characterized the convex domains for which the Hilbert metric is Gromov hyperbolic in terms of the first derivatives of local defining functions for $\partial \Omega$.

A key step in Benoist's characterization is the following:

\begin{theorem}\cite[Proposition 1.6]{B2003}
Suppose $\Omega \subset \Pb(\Rb^{d+1})$ is a proper open convex set. Let $H_{\Omega}$ be the Hilbert metric on $\Omega$. Then $(\Omega, H_{\Omega})$ is Gromov hyperbolic if and only if every proper convex open set in $ \overline{\PGL(\Rb^{d+1})\Omega}$ is strictly convex.
\end{theorem}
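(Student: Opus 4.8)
\emph{The plan} is to recast Gromov hyperbolicity of $(\Omega, H_{\Omega})$ in terms of the $\PGL(\Rb^{d+1})$--action on the space of proper convex open subsets of $\Pb(\Rb^{d+1})$ with the Hausdorff topology, exactly in the spirit of the present paper. Three facts drive the argument. First, the Hilbert metric varies continuously in the Hausdorff topology: if $g_n \Omega \to \wh\Omega$, $a_n \to a_\infty \in \wh\Omega$, and $b_n \to b_\infty \in \wh\Omega$, then $H_{g_n\Omega}(a_n, b_n) \to H_{\wh\Omega}(a_\infty, b_\infty)$; this is the Hilbert analogue of the continuity statement underlying Theorem~\ref{thm:nec_i}. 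Second, projective line segments are always $H_{\Omega}$--geodesics, and when $\Omega$ is strictly convex they are the only geodesics. Third, for any proper convex open $\Omega$ and any $p \in \Omega$ there is $g \in \PGL(\Rb^{d+1})$ with $g p = o$ and $g\Omega$ \emph{uniformly round}, $B(o, r) \subseteq g\Omega \subseteq B(o, R)$ with $0 < r < R$ depending only on $d$; so by the Blaschke selection principle any such family $(g_n\Omega)$ subconverges in the Hausdorff topology to a bounded --- hence proper --- convex open set containing $o$.

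\emph{Necessity.} Suppose $(\Omega, H_{\Omega})$ is $\delta$--hyperbolic and $\wh\Omega \in \overline{\PGL(\Rb^{d+1})\Omega}$ is a proper convex open set, say $g_n \Omega \to \wh\Omega$. Each $(g_n\Omega, H_{g_n\Omega})$ is isometric to $(\Omega, H_{\Omega})$, hence $\delta$--hyperbolic; since hyperbolicity of a geodesic space is equivalent to the four--point inequality for the Gromov product and the Gromov products converge by the first fact, $(\wh\Omega, H_{\wh\Omega})$ is again $\delta$--hyperbolic. So it suffices to show that if $\partial\wh\Omega$ contains a nontrivial segment $[a,b]$ then $(\wh\Omega, H_{\wh\Omega})$ is not Gromov hyperbolic --- the Hilbert analogue of Theorem~\ref{thm:no_affine_disk}. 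As in the proof of that theorem, one parametrizes line segments in $\wh\Omega$ running close and parallel to $[a,b]$ as quasi--geodesics, and then assembles three of them into a sequence of arbitrarily fat geodesic triangles. Hence every proper convex open set in $\overline{\PGL(\Rb^{d+1})\Omega}$ is strictly convex.

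\emph{Sufficiency.} Assume every proper convex open set in $\overline{\PGL(\Rb^{d+1})\Omega}$ is strictly convex, and suppose for contradiction that $(\Omega, H_{\Omega})$ is not Gromov hyperbolic. By the Gromov product formulation and the second fact, for each $n$ there is a geodesic triangle with projective--segment sides $\sigma^1_n = [x_n, y_n]$, $\sigma^2_n = [y_n, z_n]$, $\sigma^3_n = [z_n, x_n]$ and a point $u_n \in \sigma^1_n$ with $H_{\Omega}(u_n, \sigma^2_n \cup \sigma^3_n) > n$. Choose $g_n \in \PGL(\Rb^{d+1})$ with $g_n u_n = o$ and $g_n \Omega$ uniformly round; passing to subsequences, $g_n \Omega \to \wh\Omega$ (proper, hence strictly convex, with $o \in \wh\Omega$) and $g_n x_n \to \xi^+$, $g_n y_n \to \xi^-$, $g_n z_n \to \eta$ in $\overline{\wh\Omega}$. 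Since $x_n, y_n, z_n \in \sigma^2_n \cup \sigma^3_n$, each is at $H_{\Omega}$--distance $> n$ from $u_n$, so the first fact forces $\xi^+, \xi^-, \eta \in \partial\wh\Omega$. If $\xi^+ \neq \eta$, pick for each $p \in (\xi^+, \eta)$ a point $p_n \in g_n \sigma^3_n = [g_n z_n, g_n x_n]$ with $p_n \to p$; then $H_{g_n\Omega}(o, p_n) \geq H_{\Omega}(u_n, \sigma^3_n) > n$, so $p \notin \wh\Omega$ by the first fact, whence $(\xi^+, \eta) \subseteq \partial\wh\Omega$ --- a nontrivial segment in $\partial\wh\Omega$, contradicting strict convexity. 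The same argument with $\sigma^2_n$ rules out $\xi^- \neq \eta$. Finally, if $\xi^+ = \xi^- = \eta =: \xi$, then $o = g_n u_n$ is a convex combination of $g_n x_n \to \xi$ and $g_n y_n \to \xi$, so $o = \xi \in \partial\wh\Omega$, contradicting $o \in \wh\Omega$. All cases are impossible, so $(\Omega, H_{\Omega})$ is Gromov hyperbolic. This mirrors the strategy behind Theorem~\ref{thm:suff_i}.

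\emph{Main obstacle.} The triangle--limiting argument above is elementary; the real content is the infrastructure it rests on. The two essential inputs are the continuity of the Hilbert metric in the Hausdorff topology --- used to push the three limiting vertices, and the limiting middle side, onto $\partial\wh\Omega$ --- and the uniform--roundness normalization, which is what makes the blow--up $\wh\Omega$ a genuine proper convex open set containing the basepoint rather than a half--space or all of $\Pb(\Rb^{d+1})$; this is precisely why the closure must be taken inside the space of \emph{proper} convex sets. A lesser point is the reduction, in the sufficiency argument, to triangles whose sides are projective segments, which relies on the fact that a geodesic space in which the triangles of some geodesic bicombing are uniformly slim already satisfies the Gromov four--point inequality.
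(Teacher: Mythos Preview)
The paper does not prove this statement; it is quoted verbatim from Benoist \cite[Proposition~1.6]{B2003} as motivation for the Kobayashi-metric results, and no argument is given here. So there is no ``paper's own proof'' to compare against.

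Your sketch is essentially a correct reconstruction of Benoist's original argument, recast in the language of the present paper. The necessity direction (hyperbolicity passes to Hausdorff limits via the four-point condition, and a boundary segment kills hyperbolicity) is exactly right. The sufficiency direction is also sound: the John-type normalization guarantees that the rescaled domains subconverge to a \emph{proper} convex set containing the basepoint, and your case analysis on the limiting vertices $\xi^+,\xi^-,\eta$ correctly forces either a boundary segment or the degenerate case $o\in\partial\wh\Omega$.

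One simplification you missed: the ``lesser point'' about reducing to straight-line triangles is unnecessary. Since $\Omega$ itself lies in $\overline{\PGL(\Rb^{d+1})\Omega}$, the hypothesis already forces $\Omega$ to be strictly convex, and in a strictly convex Hilbert geometry the projective segments are the \emph{unique} geodesics. So every geodesic triangle already has straight-line sides, and no bicombing argument is needed.
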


This paper can be seen as an attempt to find analogues of Benoist's results for the Kobayashi metric. In particular, Theorem~\ref{thm:no_affine_disk}, Theorem~\ref{thm:nec_i}, and Theorem~\ref{thm:suff_i} can be seen as an analogue of the above theorem. 

We should observe that the Hilbert metric has several important features that the Kobayashi metric lacks. First there is an explicit formula for the Hilbert distance between two points. Second straight lines are geodesics and so the behavior of some geodesics is easy to understand. Finally, convexity is invariant under real projective transformations. This implies that every proper convex set in $\Pb(\Rb^{d+1})$ can be realized as a bounded convex set in some affine chart. Convexity is not invariant under complex projective transformations and this creates many problems. In particular, one is forced to consider unbounded convex sets in many of the arguments in this paper.

Finally, motivated by Benoist's characterization of Gromov hyperbolicity for the Hilbert metric we ask the following question:

\begin{question}
Suppose $\Omega$ is a bounded convex open set with $C^1$ boundary. If $\partial \Omega$ is quasi-symmetric in the sense of Benoist~\cite{B2003} is the Kobayashi metric Gromov hyperbolic?
\end{question}

\subsection*{Acknowledgments} 

I would like to thank my advisor Ralf Spatzier for many helpful conversations and also Yves Benoist for explaining to me some of the motivation behind his work on the Hilbert metric. This material is based upon work supported by the National Science Foundation under Grant Number NSF 1045119.

\section{Preliminaries}\label{sec:prelim}

\subsection{Some Notation}

\begin{itemize}
\item Let $\Delta: = \{ z \in \Cb : \abs{z} < 1\}$ and $\Hc = \{ z \in \Cb : \Imaginary(z) > 0\}$. 
\item For $z \in \Cb^d$ let $\norm{z}$ denote the standard Euclidean norm of $z$. 
\item For $z_0 \in \Cb^d$ and $R>0$ let $B_R(z_0) := \{ z \in \Cb^d : \norm{z-z_0} < R\}$.
\item Given a open set $\Omega \subset \Cb^d$ and $p \in \Omega$ let 
\begin{align*}
\delta_{\Omega}(p):= \inf \left\{ \norm{q-p} : q \in  \Cb^d \setminus \Omega \right\}.
\end{align*}
\item Given a open set $\Omega \subset \Cb^d$, $p \in \Omega$, and $v \in \Cb^d$ nonzero let 
\begin{align*}
\delta_{\Omega}(p;v):= \inf \left\{ \norm{q-p} : q \in (p+\Cb \cdot v)  \cap (\Cb^d \setminus \Omega) \right\}.
\end{align*}
\end{itemize}

\subsection{The Kobayashi Metric} In this section we will review some basic properties of the Kobayashi metric. A nice introduction to the Kobayashi metric and its properties can be found in~\cite{K2005} or~\cite{A1989}.

Given a domain $\Omega \subset \Cb^d$ the \emph{(infinitesimal) Kobayashi metric} is the pseudo-Finsler metric
\begin{align*}
K_{\Omega}(x;v) = \inf \left\{ \abs{\xi} : f \in \Hol(\Delta, \Omega), \ f(0) = x, \ df(\xi) = v \right\}
\end{align*}
and the \emph{Kobayashi pseudo-distance} is 
\begin{align*}
d_{\Omega}(x,y) = \inf \left\{ \int_{0}^1 K_{\Omega}(\gamma(t); \gamma'(t)) dt : \gamma \in C^\infty([0,1],\Omega),  \gamma(0)=x, \text{ and } \gamma(1)=y\right\}.
\end{align*}
 Directly from the definitions one obtains that:
 
 \begin{proposition}
 \label{prop:basic_kob} \
 \begin{enumerate}
  \item Suppose $\Omega \subset \Cb^d$ is an open domain then
 \begin{align*}
 K_{\Omega}(p;v) \leq \frac{\norm{v}}{\delta_{\Omega}(p;v)}.
 \end{align*}
\item  Suppose $f: \Omega_1 \rightarrow \Omega_2$ is a holomorphic map then 
 \begin{align*}
 K_{\Omega_2}\left(f(p); df(v)\right) \leq K_{\Omega_1}\left(p;v\right) 
 \end{align*}
 and
 \begin{align*}
 d_{\Omega_2}\left(f(p_1),f(p_2)\right) \leq d_{\Omega_1}\left(p_1,p_2\right).
 \end{align*}
 \end{enumerate}
 \end{proposition}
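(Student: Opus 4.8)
The plan is to read both statements directly off the definition of the Kobayashi metric, exhibiting in each case an explicit competitor in the relevant infimum; no work beyond the definitions is required.

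For part (1), I would fix $p \in \Omega$ and a nonzero $v \in \Cb^d$ and first treat the case $\delta_{\Omega}(p;v) < \infty$. Consider the affine disk
\begin{align*}
f(\zeta) = p + \frac{\delta_{\Omega}(p;v)}{\norm{v}}\, \zeta\, v, \qquad \zeta \in \Delta.
\end{align*}
This $f$ is holomorphic and satisfies $f(0) = p$. For $\zeta \in \Delta$ one has $\norm{f(\zeta) - p} = \delta_{\Omega}(p;v)\abs{\zeta} < \delta_{\Omega}(p;v)$ and $f(\zeta) \in p + \Cb\cdot v$, so by the definition of $\delta_{\Omega}(p;v)$ the point $f(\zeta)$ lies in $\Omega$; hence $f \in \Hol(\Delta,\Omega)$. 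Its differential at $0$ sends $\xi \in \Cb$ to $\frac{\delta_{\Omega}(p;v)}{\norm{v}}\xi v$, so the choice $\xi = \norm{v}/\delta_{\Omega}(p;v)$ gives $df(\xi) = v$, and therefore $K_{\Omega}(p;v) \leq \abs{\xi} = \norm{v}/\delta_{\Omega}(p;v)$. If instead $\delta_{\Omega}(p;v) = \infty$, the same disks with $\delta_{\Omega}(p;v)$ replaced by an arbitrarily large constant show $K_{\Omega}(p;v) = 0$, which is the (now vacuous) bound.

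For part (2), let $f\colon \Omega_1 \to \Omega_2$ be holomorphic. Given any $g \in \Hol(\Delta,\Omega_1)$ with $g(0) = p$ and $dg(\xi) = v$, the composite $f\circ g$ belongs to $\Hol(\Delta,\Omega_2)$, satisfies $(f\circ g)(0) = f(p)$, and by the chain rule $d(f\circ g)(\xi) = df(v)$; thus $K_{\Omega_2}(f(p); df(v)) \leq \abs{\xi}$, and taking the infimum over all such $g$ gives the infinitesimal inequality. For the distance inequality I would take a smooth curve $\gamma\colon[0,1]\to\Omega_1$ from $p_1$ to $p_2$; then $f\circ\gamma$ joins $f(p_1)$ to $f(p_2)$ in $\Omega_2$ and, pointwise in $t$,
\begin{align*}
K_{\Omega_2}\big((f\circ\gamma)(t); (f\circ\gamma)'(t)\big) = K_{\Omega_2}\big(f(\gamma(t)); df(\gamma'(t))\big) \leq K_{\Omega_1}\big(\gamma(t);\gamma'(t)\big)
\end{align*}
by the infinitesimal inequality just proved. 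Integrating over $[0,1]$ and taking the infimum over $\gamma$ yields $d_{\Omega_2}(f(p_1),f(p_2)) \leq d_{\Omega_1}(p_1,p_2)$.

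I do not expect any genuine obstacle here: the proposition is a formal consequence of the definitions, which is why it is stated as following ``directly from the definitions.'' The only points that need a moment's care are the normalization of the competitor disk in part (1) so that its differential hits $v$ exactly, the harmless degenerate case $\delta_{\Omega}(p;v) = \infty$, and the observation in part (2) that a composite of holomorphic maps is holomorphic, so that $f\circ g$ is a legitimate competitor for $K_{\Omega_2}$.
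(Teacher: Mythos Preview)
Your proposal is correct and is exactly the argument one would give; the paper itself provides no proof at all, simply stating that the proposition follows ``directly from the definitions,'' so your write-up is the standard expansion of that remark.
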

 
 Using the Kobayashi pseudo-metric one can establish the following compactness result for holomorphic maps of $\Delta$ into $\Omega$. 

 \begin{proposition}\label{prop:taut}
 Suppose $\Omega \subset \Cb^d$ is open and $d_{\Omega}$ is a complete metric on $\Omega$. If $\varphi_n: \Delta \rightarrow \Omega$ is a sequence of holomorphic maps then either 
 \begin{enumerate}
 \item there exists a subsequence which converges uniformly on compact sets to a holomorphic function $\varphi: \Delta \rightarrow \Omega$ or 
 \item for all $x \in \Delta$ and all compact subsets $K \subset \Omega$ there exists $N>0$ such that  $\varphi_n(x) \notin K$ for all $n >N$.
 \end{enumerate}
 \end{proposition}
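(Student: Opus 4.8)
The plan is to run the classical normal-families argument characterizing taut domains. The only inputs about $d_\Omega$ I need, besides completeness, are that holomorphic maps are distance decreasing (Proposition~\ref{prop:basic_kob}(2)) and that $d_\Omega$ induces the standard topology on $\Omega$ (a theorem of Barth; see~\cite{K2005} or~\cite{A1989}, and for bounded $\Omega$ it follows immediately by comparison with the Kobayashi distance of a polydisk containing $\Omega$). Combining the latter with completeness and with the fact that $d_\Omega$ is a length metric, the Hopf--Rinow theorem for complete locally compact length spaces shows that every closed $d_\Omega$-bounded subset of $\Omega$ is compact; this is the one place where completeness is genuinely used.

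Next I would assume alternative~(2) fails and deduce alternative~(1). Failure of~(2) yields $x_0 \in \Delta$, a compact set $K_0 \subset \Omega$, and a subsequence $(\varphi_{n_k})_k$ with $\varphi_{n_k}(x_0) \in K_0$ for all $k$. For any compact $L \subset \Delta$ put $C_L := \diam_{d_\Delta}(L \cup \{x_0\})$, which is finite since $d_\Delta$ is the finite-valued Poincar\'e distance. The distance-decreasing property gives $d_\Omega(\varphi_{n_k}(x), \varphi_{n_k}(x_0)) \le d_\Delta(x, x_0) \le C_L$ for every $x \in L$, so $\varphi_{n_k}(L) \subset N_L := \{z \in \Omega : d_\Omega(z, K_0) \le C_L\}$. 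Since $N_L$ is closed in $\Omega$ and $d_\Omega$-bounded, it is a compact subset of $\Omega$ by the first paragraph; in particular $(\varphi_{n_k})_k$ is locally uniformly bounded when regarded as a sequence of holomorphic maps $\Delta \to \Cb^d$.

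By Montel's theorem applied to each component, some further subsequence $(\varphi_{n_{k_j}})_j$ converges uniformly on compact subsets of $\Delta$ to a holomorphic map $\varphi : \Delta \to \Cb^d$. It remains to check $\varphi(\Delta) \subset \Omega$: for a fixed $x \in \Delta$, all the points $\varphi_{n_{k_j}}(x)$ lie in $N_{\{x_0, x\}}$, a compact subset of $\Omega$ and hence closed in $\Cb^d$, so the limit $\varphi(x)$ lies there as well. Thus $\varphi$ is a holomorphic map $\Delta \to \Omega$ and alternative~(1) holds.

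I expect the only non-formal ingredient to be the compactness of closed $d_\Omega$-bounded subsets of $\Omega$, i.e. upgrading ``$d_\Omega$ is a complete metric'' to the Hopf--Rinow conclusion using the length-space structure together with Barth's topology theorem; the rest is a routine assembly of the distance-decreasing property with Montel's theorem.
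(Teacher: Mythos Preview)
Your argument is correct and is precisely the standard proof that a complete hyperbolic domain is taut. The paper does not give its own proof of this proposition but simply refers the reader to~\cite[Theorem 2.3.18]{A1989}, whose argument is essentially the one you have written out: use Barth's theorem and Hopf--Rinow to pass from completeness of $d_\Omega$ to compactness of closed $d_\Omega$-balls, then combine the distance-decreasing property with Montel's theorem. So there is nothing to compare; you have supplied the details behind the citation.
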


For a proof see~\cite[Theorem 2.3.18]{A1989}.

\subsection{The disk and the upper half plane}

Let $\Delta = \{ z \in \Cb : \abs{z} < 1\}$. Then 
\begin{align*}
K_{\Delta}(\zeta; v) = \frac{\abs{v}}{1-\abs{\zeta}^2}\end{align*}
and 
\begin{align*}
d_{\Delta}(\zeta_1,\zeta_2) = \arctanh \abs{\frac{\zeta_1-\zeta_2}{1-\zeta_1\overline{\zeta_2}}}.
\end{align*}
Next let $\Hc = \{ z \in \Cb : \Imaginary(z) >0\}$. Then 
\begin{align*}
K_{\Hc}(\zeta; v) = \frac{\abs{v}}{2\Imaginary(\zeta)}
\end{align*}
and 
\begin{align*}
d_{\Hc}(\zeta_1,\zeta_2) = \frac{1}{2} \arcosh \left( 1+ \frac{\abs{\zeta_1-\zeta_2}^2}{2\Imaginary(\zeta_1)\Imaginary(\zeta_2) } \right).
\end{align*}

 \subsection{Convex sets}
 
For convex sets there is a nice characterization of when $d_{\Omega}$ is a complete metric on $\Omega$.
 
 \begin{definition} An open convex set $\Omega \subset \Cb^d$ is called \emph{$\Cb$-proper} if $\Omega$ does not contain any complex affine lines. \end{definition}
 
The term ``proper'' is motivated by its use in the study of convex sets in real vector spaces (see for instance~\cite{B2005}). Other authors have used other language, for instance Frankel calls such sets \emph{affine hyperbolic}~\cite{F1989} but we use the word $\Cb$-proper to avoid confusion with the other meaning of hyperbolic.
 
\begin{proposition}\cite{B1980}
For an open convex set $\Omega$, the following are equivalent:
\begin{enumerate}
\item $d_{\Omega}$ is a complete metric on $\Omega$,
\item $\Omega$ is a $\Cb$-proper convex set.
\end{enumerate}
\end{proposition}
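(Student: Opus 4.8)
The plan is to prove the two implications separately; $(1)\Rightarrow(2)$ is elementary and $(2)\Rightarrow(1)$ carries all the work.

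\emph{Proof that $(1)\Rightarrow(2)$.} I would argue the contrapositive. Suppose $\Omega$ is not $\Cb$-proper, so $\Omega$ contains a complex affine line $L=p+\Cb v$ with $v\neq 0$, and let $\iota\colon\Cb\to\Cb^d$ be the affine map $\iota(t)=p+tv$, a holomorphic map into $\Omega$. Since $K_{\Cb}\equiv 0$ (compose $\Delta$ with arbitrarily large dilations), integrating along $t\mapsto\iota(t)$, $t\in[0,1]$, and using Proposition~\ref{prop:basic_kob}(2) gives $d_{\Omega}(p,p+v)\leq d_{\Cb}(0,1)=0$. Thus $d_{\Omega}$ is not even a genuine distance, in particular not a complete metric.

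\emph{Proof that $(2)\Rightarrow(1)$.} Assume $\Omega$ is $\Cb$-proper. The core of the argument is to manufacture enough one-dimensional holomorphic projections of $\Omega$. For each $\xi\in\partial\Omega$ choose a real supporting hyperplane $H_{\xi}$ of $\Omega$ at $\xi$, let $H_{\xi}^{\Cb}$ be the largest complex subspace contained in $H_{\xi}$ (a complex hyperplane), and let $\pi_{\xi}\colon\Cb^d\to\Cb^d/H_{\xi}^{\Cb}\cong\Cb$ be the quotient map. Since $H_{\xi}^{\Cb}\subset H_{\xi}$, the open half-space of $H_{\xi}$ containing $\Omega$ is $H_{\xi}^{\Cb}$-saturated, so $\pi_{\xi}(\Omega)$, which is a nonempty open convex subset of $\Cb$, is contained in an open half-plane $P_{\xi}$ with $\pi_{\xi}(\xi)\in\partial P_{\xi}$; note $P_{\xi}$ is affinely biholomorphic to $\Hc$. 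The key lemma is that $\bigcap_{\xi\in\partial\Omega}\ker\pi_{\xi}=\{0\}$: if $0\neq w$ lay in every $H_{\xi}^{\Cb}$, then for any $p\in\Omega$ the complex line $p+\Cb w$ would lie in $\Omega$ — otherwise it would meet $\partial\Omega$ at some $q$, and since $w\in H_{q}^{\Cb}\subset H_{q}$ while $q\in H_{q}$, the whole line $p+\Cb w=q+\Cb w$ would lie in $H_{q}$, contradicting $p\in\Omega$ — contradicting $\Cb$-properness. By finite-dimensionality there are $\xi_{1},\dots,\xi_{m}$ with $\bigcap_{j=1}^{m}\ker\pi_{\xi_{j}}=\{0\}$; write $\pi_{j}=\pi_{\xi_{j}}$, $P_{j}=P_{\xi_{j}}$, and $\Pi=(\pi_{1},\dots,\pi_{m})\colon\Cb^d\to\Cb^m$, a complex-linear injection with $\pi_{j}(\Omega)\subset P_{j}$ for each $j$.

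Everything now reduces to one-variable comparisons. Each $\pi_{j}\colon\Omega\to P_{j}$ is holomorphic, so Proposition~\ref{prop:basic_kob}(2) gives $d_{\Omega}(x,y)\geq d_{P_{j}}(\pi_{j}x,\pi_{j}y)$. For nondegeneracy: if $x\neq y$ then $\pi_{j}x\neq\pi_{j}y$ for some $j$, and $d_{P_{j}}$ is a metric since $P_{j}\cong\Hc$, so $d_{\Omega}(x,y)>0$. For completeness, let $(x_{n})$ be $d_{\Omega}$-Cauchy and fix $x_{0}\in\Omega$; then $d_{\Omega}(x_{0},x_{n})\leq R$ for some $R$ and all $n$, so each $\pi_{j}(x_{n})$ lies in the closed $d_{P_{j}}$-ball of radius $R$ about $\pi_{j}(x_{0})$, which is a compact subset of $P_{j}$ (transport the analogous fact for $\Hc$), hence bounded in $\Cb$; therefore $(\Pi(x_{n}))$ is bounded in $\Cb^m$, and since $\Pi$ is a linear isomorphism onto its (closed) image, $(x_{n})$ is bounded in $\Cb^d$. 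Passing to a subsequence, $x_{n_{k}}\to x_{\infty}\in\overline{\Omega}$. If $x_{\infty}\in\partial\Omega$, then in the coordinate $\pi_{x_{\infty}}$ we have $\pi_{x_{\infty}}(x_{n_{k}})\to\pi_{x_{\infty}}(x_{\infty})\in\partial P_{x_{\infty}}$, and the explicit formula for $d_{\Hc}$ (with the imaginary part tending to $0$ while the points stay bounded) shows $d_{P_{x_{\infty}}}(\pi_{x_{\infty}}x_{0},\pi_{x_{\infty}}x_{n_{k}})\to\infty$, forcing $d_{\Omega}(x_{0},x_{n_{k}})\to\infty$, a contradiction. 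Hence $x_{\infty}\in\Omega$; since Euclidean convergence inside $\Omega$ implies $d_{\Omega}$-convergence (compare with the metric of a small Euclidean ball about $x_{\infty}$ using Proposition~\ref{prop:basic_kob}(2)), $x_{n_{k}}\to x_{\infty}$ in $d_{\Omega}$, and a Cauchy sequence with a convergent subsequence converges. Thus $(\Omega,d_{\Omega})$ is a complete metric space.

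\emph{Main obstacle.} The one genuinely nontrivial point is the lemma $\bigcap_{\xi}\ker\pi_{\xi}=\{0\}$ and, with it, the extraction of a finite separating family $\pi_{1},\dots,\pi_{m}$: this is exactly where $\Cb$-properness is used, via supporting hyperplanes and their complex tangent spaces. Everything before it is convex geometry bookkeeping, and everything after it is routine manipulation of the Kobayashi distance together with the explicit metrics on $\Delta$ and $\Hc$ recalled above.
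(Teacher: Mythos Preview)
The paper does not actually prove this proposition; it is stated with a citation to Barth~\cite{B1980} and used as background. So there is no in-paper argument to compare against.

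Your argument is correct and is essentially the classical one. Two small comments on presentation. First, when you write ``the largest complex subspace contained in $H_{\xi}$,'' the hyperplane $H_{\xi}$ is affine (it passes through $\xi$, not $0$), so strictly speaking it contains no linear subspaces; you of course mean the maximal complex subspace of the direction space of $H_{\xi}$, i.e.\ the complex tangent hyperplane, and the quotient $\pi_{\xi}$ confirms this is your intent. Second, in the key lemma you should note that $\partial\Omega\neq\emptyset$: this follows because a $\Cb$-proper convex open set cannot be all of $\Cb^d$. With those clarifications the proof goes through exactly as you outline: the finite family $\pi_{1},\dots,\pi_{m}$ gives nondegeneracy and Euclidean boundedness of Cauchy sequences, and the extra projection $\pi_{x_{\infty}}$ rules out subsequential limits on $\partial\Omega$.
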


We also have a well known estimate of the Kobayashi metric on convex domains (see for instance~\cite[Theorem 4.1]{BP1994}, \cite[Theorem 5]{G1991}, or~\cite[Theorem 2.2]{F1991}):

\begin{lemma}\label{lem:convex_inf}
Suppose $\Omega \subset \Cb^d$ is an open convex set. If $p \in \Omega$ and $v \in \Cb^d$ is nonzero then 
\begin{align*}
\frac{\norm{v}}{2\delta_{\Omega}(p;v)} \leq K_{\Omega}(p;v).
\end{align*}
\end{lemma}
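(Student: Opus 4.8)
The plan is to prove the lower bound $\frac{\norm{v}}{2\delta_{\Omega}(p;v)} \leq K_{\Omega}(p;v)$ by reducing it to the one-dimensional case via a supporting hyperplane, exploiting the distance-decreasing property of holomorphic maps (Proposition~\ref{prop:basic_kob}(2)). The key observation is that since $\Omega$ is convex, for any point $p \in \Omega$ and nonzero $v \in \Cb^d$, the quantity $\delta_{\Omega}(p;v)$ records how far along the complex line $p + \Cb \cdot v$ one can travel before leaving $\Omega$, and convexity lets us trap $\Omega$ inside a half-space whose boundary is tangent to $\Omega$ at the relevant boundary point.

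First I would restrict attention to the complex line $\ell = p + \Cb \cdot v$. After an affine change of coordinates we may assume $p = 0$, $v = e_1$, and that the nearest point of $\partial\Omega$ along $\ell$ in the $v$-direction sits at distance $r := \delta_{\Omega}(p;v)$; call this boundary point $q$. Since $\Omega$ is convex and open, there is a supporting real hyperplane $\Pi$ of $\Omega$ passing through $q$, so $\Omega$ lies entirely on one side of $\Pi$. The complex tangent hyperplane at $q$ (the largest complex affine hyperplane contained in $\Pi$) then defines a complex linear functional $L:\Cb^d \to \Cb$ with $L(q)$ on $\partial(L(\Omega))$ and $L(\Omega) \subset H$ for some half-plane $H \subset \Cb$ with $q$ on its boundary. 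Composing, we get a holomorphic map $L:\Omega \to H$, and by the monotonicity of the infinitesimal Kobayashi metric,
\begin{align*}
K_{\Omega}(p;v) \geq K_{H}\big(L(p); dL(v)\big).
\end{align*}
Now $H$ is a half-plane in $\Cb$, biholomorphic to $\Hc$, and on $\Hc$ one has the explicit formula $K_{\Hc}(\zeta;w) = \frac{\abs{w}}{2\Imaginary(\zeta)}$; transporting this to $H$, the right-hand side equals $\frac{\abs{dL(v)}}{2\,\mathrm{dist}(L(p), \partial H)}$. The remaining task is purely Euclidean: since $L$ is a $1$-Lipschitz linear functional (we normalize $\norm{L} = 1$, equivalently $\abs{L(v)} \le \norm{v}$ only after care — see below) and $\partial H$ is the image under $L$ of the supporting hyperplane $\Pi$, one checks $\mathrm{dist}(L(p),\partial H) \le \norm{p - q} = \delta_{\Omega}(p;v)$, while $\abs{dL(v)}$ needs to be compared with $\norm{v}$ from below. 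The cleanest route is to choose $L$ so that $L$ is precisely the orthogonal projection onto the complex line spanned by $v$ composed with identification with $\Cb$, i.e.\ take $L(z) = \ip{z,v}/\norm{v}$ — but then $L(\Omega)$ need not be contained in a half-plane. So instead I would pick the supporting complex functional at $q$ and argue that, since $q - p$ is a positive real multiple of $v$ and $q \in \Pi$, the functional $L$ does not vanish on $v$ and in fact $\abs{L(v)} \ge \abs{L(q-p)}/r \cdot$ (something); more simply, restricting everything to $\ell$ first reduces to a disk/half-plane in one complex variable where $\delta$ and the Kobayashi metric are computed directly, and the half-space estimate in $\Cb^d$ then only needs $\delta_{\Omega}(p;v) = \delta_{\Omega \cap \ell}(p;v)$, which is immediate.

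The step I expect to be the main obstacle is the bookkeeping that ensures the constant is exactly $2$ and not worse: one must verify that the supporting half-plane produced by convexity, when pulled back to the line $\ell$, gives $\delta_{H}(L(p)) \le \delta_{\Omega}(p;v)$ with the correct normalization of $dL$, i.e.\ that the linear functional realizing the support can be chosen with $dL(v)$ of modulus exactly the Euclidean length that makes $K_H(L(p);dL(v)) = \frac{\norm{v}}{2\delta_\Omega(p;v)}$. This is where convexity is used essentially — for a non-convex domain no such global supporting half-space exists — and it is worth isolating as a short lemma that every convex $\Omega$ with $q \in \partial\Omega$ admits a complex affine functional $L$ with $\Real\big(L(\cdot) - L(q)\big) < 0$ on $\Omega$; the estimate then follows by a direct computation with $K_{\Hc}$ and the identity $\delta_{\Hc}(\zeta) = \Imaginary(\zeta)$.
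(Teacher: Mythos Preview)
Your approach is the same as the paper's: pick the nearest boundary point $q$ on the complex line $p+\Cb v$, use a supporting real hyperplane there to map $\Omega$ into a half-plane by a $\Cb$-linear functional, and invoke the explicit formula $K_{\Hc}(\zeta;w)=|w|/(2\Imaginary\zeta)$. The ``bookkeeping obstacle'' you flag is not a genuine obstacle, and your attempt to resolve it contains a small slip.

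The point you are missing is that $p-q$ is a \emph{complex} multiple of $v$ (not ``a positive real multiple'' as you wrote): since $q\in p+\Cb v$ we have $p-q=\lambda v$ with $|\lambda|=\norm{p-q}/\norm{v}=\delta_\Omega(p;v)/\norm{v}$. Now let $f$ be any $\Cb$-linear functional with $f(q)=0$ and $\Real f>0$ on $\Omega$ (this is exactly what the supporting hyperplane provides). Then $f(p)=\lambda f(v)$, so
\[
K_\Omega(p;v)\ \ge\ \frac{|f(v)|}{2\,\Real f(p)}\ \ge\ \frac{|f(v)|}{2\,|f(p)|}\ =\ \frac{|f(v)|}{2\,|\lambda|\,|f(v)|}\ =\ \frac{\norm{v}}{2\,\delta_\Omega(p;v)}.
\]
The factor $|f(v)|$ simply cancels; there is no need to compare it with $\norm{v}$, and no need to try $L(z)=\ip{z,v}/\norm{v}$.

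The paper packages this same computation by first applying an affine change of coordinates so that $q=0$, the line $p+\Cb v$ becomes the $z_1$-axis, and the supporting hyperplane becomes $\{\Imaginary z_1=0\}$; the functional is then just $P(z)=z_1$, and one reads off $K_\Omega(p;v)\ge |v_1|/(2|p_1|)=\norm{v}/(2\,\delta_\Omega(p;v))$ directly. This normalization is legitimate because the ratio $\norm{v}/\delta_\Omega(p;v)$ is invariant under complex affine maps (both numerator and denominator scale by the same factor along the line $p+\Cb v$), as is $K_\Omega$.
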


Since the proof is short we include it:

\begin{proof}
Let $L:= p + \Cb v$ and $\xi \in L \setminus \Omega \cap L$ such that $\norm{\xi-p} = \delta_{\Omega}(p;v)$. Let $H$ be a real hyperplane through $\xi$ which does not intersect $\Omega$. By rotating and translating we may assume $\xi=0$, $p=(p_1,0,\dots, 0)$, $H=\{ (z_1,\dots, z_d) \in \Cb^d : \Imaginary(z_1) =0\}$, and $\Omega \subset\{ (z_1,\dots,z_d) \in \Cb^d : \Imaginary(z_1) >0\}$. With this choice of normalization $v=(v_1,0,\dots, 0)$ for some $v_1 \in \Cb$.

Then if $P : \Cb^d \rightarrow \Cb$ is the projection onto the first component we have 
\begin{align*}
K_{\Omega}(p;v) \geq K_{P(\Omega)}(p_1; v_1) \geq K_{\Hc}(p_1; v_1) = \frac{\abs{v_1}}{2\Imaginary(p_1)} \geq \frac{\abs{v_1}}{2\abs{p_1}}.
\end{align*}
Since $\abs{p_1} = \norm{\xi-p} =  \delta_{\Omega}(p;v)$ and $\abs{v_1}=\norm{v}$ this completes the proof.
\end{proof}

We also have a global version of the above lemma:

\begin{lemma}\label{lem:convex_lower_bd_2}
Suppose $\Omega \subset \Cb^d$ is an open convex set and $p,q \in \Omega$. If $L$ is the complex line containing $p, q$ and $\xi \in L \setminus L \cap \Omega$ then 
\begin{align*}
 \frac{1}{2}\log \left( \frac{\norm{p-\xi}}{\norm{q-\xi}} \right) \leq d_{\Omega}(p,q).
 \end{align*}
 \end{lemma}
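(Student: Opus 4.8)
The plan is to reduce Lemma~\ref{lem:convex_lower_bd_2} to the already-established infinitesimal estimate of Lemma~\ref{lem:convex_inf} by restricting attention to the complex line $L$ and using the decreasing property of the Kobayashi metric under the inclusion $\Omega \cap L \hookrightarrow \Omega$. Concretely, let $\gamma$ be a smooth path in $\Omega$ from $p$ to $q$; its length with respect to $K_\Omega$ is what we must bound below. After an affine change of coordinates we may assume $L$ is the first coordinate axis, $\xi = 0$, and $p = (p_1, 0, \dots, 0)$, $q = (q_1, 0, \dots, 0)$ with $p_1, q_1 \in \Cb$. Projecting $\Omega$ onto the first coordinate via $P$, the path $P\circ\gamma$ joins $p_1$ to $q_1$ inside the open convex set $P(\Omega) \subset \Cb$, and by Proposition~\ref{prop:basic_kob}(2) the $K_\Omega$-length of $\gamma$ is at least the $K_{P(\Omega)}$-length of $P\circ\gamma$; hence $d_\Omega(p,q) \geq d_{P(\Omega)}(p_1, q_1)$. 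Since $0 = \xi \notin L \cap \Omega$ and $P(\Omega)$ is convex, $0 \notin P(\Omega)$, so $P(\Omega)$ is contained in an open half-plane $\{\Real(e^{i\theta} z) > 0\}$ through the origin for some $\theta$; without loss of generality (rotating) $P(\Omega) \subset \{\Real(z) > 0\}$.

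The next step is to bound $d_{P(\Omega)}(p_1,q_1)$ below by the distance in the right half-plane $\Hc' = \{\Real(z) > 0\}$, again by monotonicity (Proposition~\ref{prop:basic_kob}(2) applied to the inclusion $P(\Omega) \hookrightarrow \Hc'$), so $d_\Omega(p,q) \geq d_{\Hc'}(p_1, q_1)$. Now I would invoke the explicit formula for the half-plane: conformally $\Hc'$ is a copy of $\Hc$, and the distance formula quoted in the excerpt shows $d_{\Hc'}(p_1, q_1) \geq \frac{1}{2}\big|\log \frac{|p_1|}{|q_1|}\big|$. The cleanest way to see this last inequality is to further project $\Hc'$ onto the positive real axis by $z \mapsto \Real(z)$ — which is the restriction of a holomorphic (indeed linear) map only after we note $\Real$ is not holomorphic, so instead one uses that $d_{\Hc'}(p_1,q_1) \geq d_{\Hc'}(\Real(p_1), \Real(q_1))$ fails directly; the correct move is to use the comparison $d_{\Hc'}(p_1,q_1) = d_{\Hc}(\text{images})$ and the one-dimensional computation $\int K$ along any path, which gives $\geq \frac{1}{2}|\log(\Real(p_1)/\Real(q_1))|$, and then $\Real(p_1) \leq |p_1| = \norm{p-\xi}$ while $\Real(q_1) \leq |q_1| = \norm{q-\xi}$. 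Combining, $d_\Omega(p,q) \geq \frac{1}{2}\log\big(\norm{p-\xi}/\norm{q-\xi}\big)$ once we are in the regime $\norm{p-\xi} \geq \norm{q-\xi}$; since the statement only claims this one-sided bound, we are done.

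A slightly more streamlined alternative, which I would probably prefer to write out, is to bypass the explicit half-plane formula entirely and argue directly with Lemma~\ref{lem:convex_inf}: for any path $\gamma$ in $\Omega$ from $p$ to $q$, at each time $t$ the tangent vector $\gamma'(t)$ has $K_\Omega(\gamma(t);\gamma'(t)) \geq \frac{\norm{\gamma'(t)}}{2\,\delta_\Omega(\gamma(t);\gamma'(t))}$, and comparing $\delta_\Omega$ with the distance to the supporting hyperplane through $\xi$ one gets $K_\Omega(\gamma(t);\gamma'(t)) \geq \frac{1}{2}\big|\frac{d}{dt}\log\, g(\gamma(t))\big|$ where $g$ is the affine function vanishing on that hyperplane and positive on $\Omega$; integrating yields the bound with $\norm{p-\xi}$, $\norm{q-\xi}$ replaced by the (comparable, up to the half-plane normalization) values $g(p), g(q)$. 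The main obstacle — really the only subtlety — is the bookkeeping of the reduction: choosing the supporting hyperplane at $\xi$, checking that after normalization $g(p) = \Real(p_1)$ is comparable to $\norm{p-\xi}$ and likewise for $q$ (in fact with the normalization $\xi = 0$, $p = (p_1,0,\dots)$ one has $g(p) = \Imaginary(p_1)$ or $\Real(p_1)$ depending on convention, and $\norm{p - \xi} = |p_1| \geq g(p)$), so the inequality points the right way. None of this requires new ideas beyond those already used in the proof of Lemma~\ref{lem:convex_inf}; it is essentially the same argument integrated along a path rather than applied at a single vector.
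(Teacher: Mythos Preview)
Your overall strategy matches the paper's: normalize so that $\xi=0$ and $p,q$ lie on the $z_1$-axis, choose a supporting real hyperplane through $\xi$, project to the first coordinate, and compare with the half-plane metric. The gap is in the final step, precisely at what you correctly flag as ``the only subtlety.'' From the half-plane (or from integrating Lemma~\ref{lem:convex_inf} against the affine function $g$ vanishing on the supporting hyperplane) you obtain
\[
d_\Omega(p,q)\ \ge\ \tfrac12\bigl|\log\bigl(g(p)/g(q)\bigr)\bigr|,
\]
with $g(p)=\Imaginary(p_1)$ (or $\Real(p_1)$). You then observe $g(p)\le|p_1|=\norm{p-\xi}$ and $g(q)\le|q_1|=\norm{q-\xi}$ and assert the inequality ``points the right way.'' It does not: these two inequalities give no control on the ratio $g(p)/g(q)$ in terms of $|p_1|/|q_1|$. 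For instance, in the right half-plane with $p_1=1+10i$, $q_1=1$ one has $g(p)/g(q)=1$ while $|p_1|/|q_1|=\sqrt{101}$, so your bound yields $0$ whereas the lemma demands $\tfrac14\log 101$.

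What the paper does instead is keep the full explicit formula
\[
d_{\Hc}(p_1,q_1)=\tfrac12\arcosh\Bigl(1+\frac{|p_1-q_1|^2}{2\,\Imaginary(p_1)\Imaginary(q_1)}\Bigr)
\]
and use the two inequalities $|p_1-q_1|\ge\bigl||p_1|-|q_1|\bigr|$ and $\Imaginary(p_1)\Imaginary(q_1)\le|p_1||q_1|$, which both push the argument of $\arcosh$ in the right direction. A short algebraic identity then gives $\tfrac12\arcosh\bigl(\tfrac{|p_1|}{2|q_1|}+\tfrac{|q_1|}{2|p_1|}\bigr)=\tfrac12\bigl|\log(|p_1|/|q_1|)\bigr|$. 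The point is that the numerator and denominator in the $\arcosh$ argument are handled simultaneously; your approach, by first collapsing to the one-variable quantity $g$, loses the numerator information $|p_1-q_1|$ and cannot recover.
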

 
 \begin{remark} For our purposes the above estimate suffices, but the more precise estimate 
 \begin{align*}
  \frac{1}{2} \log \left(1+  \frac{\norm{p-q}}{\min\{ \delta_\Omega(p; q-p), \delta_\Omega(q; p-q)\} } \right) \leq d_{\Omega}(p,q)
  \end{align*}
  follows from the proof of Proposition 2 part (ii) in~\cite{NT2015}. 
 \end{remark}
  
 \begin{proof}
Since $p,q,\xi$ are all co-linear both sides of the desired inequality are invariant under affine transformations, in particular we can replace $\Omega$ by $A\Omega$ for some affine map $A$. Now let $H$ be a real hyperplane through $\xi$ which does not intersect $\Omega$. Using an affine transformation we may assume $\xi=0$, $p=(p_1,0,\dots, 0)$, $q=(q_1,0,\dots, 0)$, $H=\{ (z_1,\dots, z_d) \in \Cb^d : \Imaginary(z_1) =0\}$, and $\Omega \subset\{ (z_1,\dots, z_d) \in \Cb^d : \Imaginary(z_1) >0\}$. Then if $P : \Cb^d \rightarrow \Cb$ is the projection onto the first coordinate then we have 
 \begin{align*}
 d_{\Omega}(p,q) 
  & \geq d_{P(\Omega)}(p_1, q_1) \geq d_{\Hc}(p_1,q_1) 
 = \frac{1}{2} \arcosh \left( 1+ \frac{ \abs{p_1-q_1}^2 }{2\Imaginary(p_1)\Imaginary(q_1)} \right)\\
 & \geq  \frac{1}{2} \arcosh \left( 1+ \frac{ (\abs{p_1}-\abs{q_1})^2  }{2\abs{p_1}\abs{q_1}} \right) =  \frac{1}{2} \arcosh \left( \frac{\abs{p_1}}{2\abs{q_1}} +\frac{\abs{q_1}}{2\abs{p_1}} \right) \\
 & = \frac{1}{2} \abs{\log \left( \frac{\abs{p_1}}{\abs{q_1}} \right) }.
 \end{align*}
 Since $\norm{p-\xi} = \abs{p_1}$ and $\norm{q-\xi}=\abs{q_1}$ the lemma follows.
 \end{proof}
 
\subsection{Complex Geodesics}

 \begin{definition}
Suppose $\Omega \subset \Cb^d$ and  $\varphi : \Delta \rightarrow \Omega$ is a holomorphic map. If 
\begin{align*}
d_{\Delta}(p,q) = d_{\Omega}(\varphi(p),\varphi(q))
\end{align*}
for all $p,q\in \Delta$ then $\varphi$ is called a \emph{complex geodesic}.
\end{definition}

For bounded convex sets, a fundamental result of Royden and Wong~\cite{RW1983} states that every two points are contained in a complex geodesic. This result was recently generalized to $\Cb$-proper convex sets by Bracci and Saracco:

\begin{proposition}\label{prop:connecting}\cite[Lemma 3.3]{BS2009}
Suppose $\Omega \subset \Cb^d$ is a $\Cb$-proper open convex set. Then every two points are contained in a complex geodesic. 
\end{proposition}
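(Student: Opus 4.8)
# Proof Proposal for Proposition~\ref{prop:connecting}

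\textbf{Overall approach.} The plan is to reduce the $\Cb$-proper case to the classical Royden--Wong theorem for bounded convex domains by an exhaustion argument, using the completeness of the Kobayashi distance (guaranteed by $\Cb$-properness via Barth's theorem) together with the tautness compactness statement of Proposition~\ref{prop:taut}. Fix two points $p, q \in \Omega$. First I would exhaust $\Omega$ by an increasing sequence of bounded convex open sets $\Omega_n$ with $\overline{\Omega_n} \subset \Omega_{n+1}$ and $\bigcup_n \Omega_n = \Omega$; one can take, for instance, $\Omega_n = \{ x \in \Omega : \norm{x} < n, \ \delta_\Omega(x) > 1/n \}$ after checking this is convex (intersection of $\Omega$ with a ball and with the convex set $\{\delta_\Omega > 1/n\}$—the latter is convex since $\delta_\Omega$ is concave on a convex set). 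For $n$ large enough that $p, q \in \Omega_n$, the Royden--Wong theorem~\cite{RW1983} gives a complex geodesic $\varphi_n : \Delta \to \Omega_n$ for the pair $(p,q)$: after precomposing with an automorphism of $\Delta$ we may assume $\varphi_n(0) = p$ and $\varphi_n(r_n) = q$ for some $r_n \in [0,1)$, where $r_n = \arctanh|\,\cdot\,|$ is determined by $d_{\Omega_n}(p,q) = d_\Delta(0, r_n)$.

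\textbf{Extracting a limit.} The maps $\varphi_n : \Delta \to \Omega$ form a sequence of holomorphic maps into the complete space $(\Omega, d_\Omega)$, and they do not escape every compact set since $\varphi_n(0) = p$ for all $n$. Hence Proposition~\ref{prop:taut} applies: a subsequence converges uniformly on compact subsets of $\Delta$ to a holomorphic map $\varphi : \Delta \to \Omega$. Since $\Omega_n \subset \Omega_{n+1} \subset \dots \subset \Omega$, the distances $d_{\Omega_n}(p,q)$ are non-increasing in $n$ and bounded below by $d_\Omega(p,q)$; in fact $d_{\Omega_n}(p,q) \to d_\Omega(p,q)$ because any smooth path from $p$ to $q$ in $\Omega$ lies in some $\Omega_n$ for $n$ large (compactness of its image), so the infimum defining $d_\Omega$ is approximated within the $\Omega_n$. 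Consequently $r_n$ converges to $r_\infty := \arctanh|\,\cdot\,|^{-1}$-preimage of $d_\Omega(p,q)$, i.e.\ the value with $d_\Delta(0, r_\infty) = d_\Omega(p,q)$, and passing to the limit $\varphi(0) = p$, $\varphi(r_\infty) = q$.

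\textbf{Verifying the limit is a complex geodesic.} For any $z, w \in \Delta$ and any $n$ with $z,w$ in the domain where the subsequence is defined, the distance-decreasing property of holomorphic maps (Proposition~\ref{prop:basic_kob}(2)) gives $d_\Omega(\varphi_n(z), \varphi_n(w)) \le d_{\Omega_n}(\varphi_n(z), \varphi_n(w)) = d_\Delta(z,w)$, where the equality is because $\varphi_n$ is a complex geodesic in $\Omega_n$. Letting $n \to \infty$ along the subsequence, continuity of $d_\Omega$ yields $d_\Omega(\varphi(z), \varphi(w)) \le d_\Delta(z,w)$. The reverse inequality $d_\Delta(z,w) \le d_\Omega(\varphi(z), \varphi(w))$ is automatic from Proposition~\ref{prop:basic_kob}(2) applied to the holomorphic map $\varphi : \Delta \to \Omega$ itself. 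Hence $d_\Omega(\varphi(z),\varphi(w)) = d_\Delta(z,w)$ for all $z,w \in \Delta$, so $\varphi$ is a complex geodesic, and it contains $p = \varphi(0)$ and $q = \varphi(r_\infty)$.

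\textbf{Main obstacle.} The delicate point is the convergence $d_{\Omega_n}(p,q) \to d_\Omega(p,q)$ and, relatedly, ensuring the exhausting sets $\Omega_n$ are genuinely convex and increasing with $\overline{\Omega_n} \subset \Omega_{n+1}$; the concavity of $\delta_\Omega$ on the convex set $\Omega$ is what makes the super-level sets $\{\delta_\Omega > 1/n\}$ convex, and one must check this carefully. A secondary subtlety is that the convergence in Proposition~\ref{prop:taut} is only guaranteed along a subsequence and only on compact subsets of $\Delta$, so one should be slightly careful that $r_\infty < 1$ (which holds precisely because $d_\Omega(p,q) < \infty$) so that $q = \varphi(r_\infty)$ is attained at an interior point of $\Delta$ where the locally uniform convergence is valid.
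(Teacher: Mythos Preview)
The paper does not give its own proof of this proposition; it simply quotes Bracci--Saracco~\cite{BS2009}. Your exhaustion-and-limit strategy is the natural one and is essentially how the cited result is proved, so the overall architecture is fine.

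There is, however, a genuine error in your verification that the limit $\varphi$ is a complex geodesic. You write that the inequality $d_\Delta(z,w) \le d_\Omega(\varphi(z),\varphi(w))$ is ``automatic from Proposition~\ref{prop:basic_kob}(2) applied to the holomorphic map $\varphi:\Delta\to\Omega$ itself.'' This is backwards: Proposition~\ref{prop:basic_kob}(2) says holomorphic maps are distance \emph{decreasing}, so applying it to $\varphi:\Delta\to\Omega$ yields $d_\Omega(\varphi(z),\varphi(w)) \le d_\Delta(z,w)$, which is the inequality you already had, not its reverse. As written, you have proved only one direction twice.

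The fix is to upgrade your convergence $d_{\Omega_n}(p,q)\to d_\Omega(p,q)$ from the single pair $(p,q)$ to locally uniform convergence on $\Omega\times\Omega$. For an increasing exhaustion $\Omega_n\uparrow\Omega$ with $\overline{\Omega_n}\subset\Omega_{n+1}$, the infinitesimal metrics satisfy $K_{\Omega_n}\searrow K_\Omega$ pointwise; since $K_\Omega$ is continuous (tautness) and the convergence is monotone, Dini's theorem gives locally uniform convergence, and integrating gives $d_{\Omega_n}\to d_\Omega$ locally uniformly. Then for any fixed $z,w\in\Delta$,
\[
d_\Delta(z,w) \;=\; d_{\Omega_n}\bigl(\varphi_n(z),\varphi_n(w)\bigr) \;\longrightarrow\; d_\Omega\bigl(\varphi(z),\varphi(w)\bigr),
\]
using that $\varphi_n(z)\to\varphi(z)$, $\varphi_n(w)\to\varphi(w)$ and the locally uniform convergence of distances. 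This gives the missing direction. Your ``main obstacle'' paragraph correctly flags the $d_{\Omega_n}\to d_\Omega$ issue but only for the single pair $(p,q)$; the point is that you need it for all pairs, and that is exactly what closes the gap.
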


\subsection{Finite type}

Given a function $f: \Cb \rightarrow \Rb$ with $f(0)=0$ let $\nu(f)$ denote the order of vanishing of $f$ at $0$. Suppose that $\Omega = \{ z \in \Cb^d : r(z) < 0\}$ where $r$ is a $C^\infty$ function with $\nabla r \neq 0$ near $\partial \Omega$. We say that a point $x \in \partial \Omega$ has \emph{finite line type $L$} if 
\begin{align*}
\sup \{ \nu( r \circ \ell) | \ell : \Cb \rightarrow \Cb^d \text{ is a non-trivial affine map and $\ell(0)=x$} \} = L.
\end{align*}
Notice that $\nu(r\circ \ell) \geq 2$ if and only if $\ell(\Cb)$ is tangent to $\Omega$. McNeal~\cite{M1992} proved that if $\Omega$ is convex  then $x \in \partial \Omega$ has finite line type if and only if it has finite type in the sense of D'Angelo (see also~\cite{BS1992}). In this paper, we say a convex domain $\Omega$ with $C^\infty$ boundary has \emph{finite line type $L$} if the line type of all $x \in \partial \Omega$ is at most $L$ and this bound is realized at some boundary point.

\subsection{Gromov hyperbolic metric spaces}\label{sec:prelim_gromov}

Suppose $(X,d)$ is a metric space. A curve $\sigma: [a,b] \rightarrow X$ is a \emph{geodesic} if $d(\sigma(t_1),\sigma(t_2)) = \abs{t_1-t_2}$ for all $t_1, t_2 \in [a,b]$.  A \emph{geodesic triangle} in a metric space is a choice of three points in $X$ and geodesic segments  connecting these points. A geodesic triangle is said to be \emph{$\delta$-thin} if any point on any of the sides of the triangle is within distance $\delta$ of the other two sides. 

\begin{definition}
A proper geodesic metric space $(X,d)$ is called \emph{$\delta$-hyperbolic} if every geodesic triangle is $\delta$-thin. If $(X,d)$ is $\delta$-hyperbolic for some $\delta\geq0$ then $(X,d)$ is called \emph{Gromov hyperbolic}.
\end{definition}

Bridson and Haefliger's book~\cite{BH1999} is one of the standard references for Gromov hyperbolic metric spaces. 

In this paper we will also use an equivalent formulation of Gromov hyperbolicity. Given $o,y,z \in X$ the \emph{Gromov product} is 
\begin{align*}
(x|y)_o = \frac{1}{2}(d(o,x)+d(o,y)-d(x,y)).
\end{align*}
Using the Gromov product it is possible to give an alternative definition of Gromov hyperbolicity (for a proof see~\cite[Proposition 2.1.2, Proposition 2.1.3]{BS2007}):

\begin{theorem}
A proper geodesic metric space $(X,d)$ is Gromov hyperbolic if and only if there exists $\delta \geq 0$ such that 
\begin{align*}
(x|y)_o \geq \min \{ (x|z)_o, (z|y)_o\} - \delta
\end{align*}
for all $o,x,y,z \in X$.
\end{theorem}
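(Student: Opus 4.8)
The plan is to prove the two implications separately, using throughout the identity $(x|y)_o = \frac{1}{2}\big(d(o,x)+d(o,y)-d(x,y)\big)$ together with the elementary bound $(x|y)_o \le d(o,w)$ valid for every point $w$ on any geodesic $[x,y]$ (so in particular $(x|y)_o \le d(o,[x,y])$): this is immediate from $d(o,x) \le d(o,w)+d(w,x)$, $d(o,y) \le d(o,w)+d(w,y)$, and $d(w,x)+d(w,y) = d(x,y)$. Properness plays no role in the equivalence itself; it is present only so that the notion of $\delta$-hyperbolic space makes sense as stated.

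Suppose first that every geodesic triangle is $\delta$-thin. The key preliminary step is the complementary estimate: in a $\delta$-thin space the two geodesics $[o,x]$ and $[o,y]$, parametrized by arclength from $o$, remain within a universal multiple $C\delta$ of one another for all parameters $s \le (x|y)_o$ (equivalently $d(o,[x,y]) \le (x|y)_o + C\delta$); this is the standard divergence-of-geodesics consequence obtained by applying $\delta$-thinness to the triangle $\triangle oxy$. Granting this, fix $o,x,y,z$, set $t = \min\{(x|z)_o, (z|y)_o\}$, and choose the triangles $\triangle oxz$ and $\triangle ozy$ so that they share the side $[o,z]$. Then $[o,x]$ is $C\delta$-close to $[o,z]$ for parameters $\le t$, and $[o,z]$ is $C\delta$-close to $[o,y]$ for parameters $\le t$, so $[o,x]$ and $[o,y]$ are $2C\delta$-close for parameters $\le t$. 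Cutting both geodesics at parameter $t$ and applying the triangle inequality gives $d(x,y) \le (d(o,x)-t) + 2C\delta + (d(o,y)-t)$, hence $(x|y)_o \ge t - C\delta = \min\{(x|z)_o,(z|y)_o\} - C\delta$, which is the four-point inequality.

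For the converse, assume the four-point inequality holds with constant $\delta$, and let $x,y,z$ be the vertices of a geodesic triangle with $p$ a point on the side $[x,y]$. Since $p$ lies between $x$ and $y$ along a geodesic, $(x|y)_p = 0$, so the four-point inequality based at $p$ forces $\min\{(x|z)_p, (z|y)_p\} \le \delta$; without loss of generality $(x|z)_p \le \delta$. Set $a = d(x,p)$ and $c = d(x,z)$. Expanding $(x|z)_p \le \delta$ gives $d(p,z) \le c - a + 2\delta$; in particular $a \le c + 2\delta$, and if $a \ge c$ this already yields $d(p,z) \le 2\delta$, placing $p$ within $2\delta$ of the endpoint $z \in [x,z]$. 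Otherwise let $q$ be the point of $[x,z]$ at distance $a$ from $x$; then $(q|z)_x = a$, while $(p|z)_x = \frac{1}{2}\big(a + c - d(p,z)\big) \ge a - \delta$, so the four-point inequality based at $x$ applied to the triple $p,q,z$ gives $(p|q)_x \ge \min\{(p|z)_x,(q|z)_x\} - \delta \ge a - 2\delta$; since $d(x,p) = d(x,q) = a$ this reads $d(p,q) \le 4\delta$. In every case $d\big(p, [x,z] \cup [z,y]\big) \le 4\delta$, so geodesic triangles are $4\delta$-thin.

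The routine portions are the constant bookkeeping and, in the first implication, the well-known estimate that $\delta$-thinness of $\triangle oxy$ forces $[o,x]$ and $[o,y]$ to fellow-travel up to parameter $\approx (x|y)_o$. The genuinely subtle point is the converse: the obvious attempt to bound $d(p,[x,z])$ by repeatedly bisecting the (possibly very long) geodesic $[x,z]$ and iterating the four-point inequality incurs an error of order $\delta \log d(x,z)$, which is not uniform; the argument above avoids this by comparing $p$ to the single auxiliary point $q$ chosen to match the distance from $x$, so that only two applications of the inequality are needed. I expect this to be the crux of the proof.
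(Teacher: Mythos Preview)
The paper does not prove this statement at all: it is quoted from the literature with the parenthetical ``(for a proof see~\cite[Proposition 2.1.2, Proposition 2.1.3]{BS2007})'', so there is nothing to compare your argument against in the paper itself.

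Your proposal is a correct proof of the equivalence. The forward direction is handled by the standard fellow-traveling lemma you invoke (that in a $\delta$-thin space the initial segments of $[o,x]$ and $[o,y]$ stay $C\delta$-close up to parameter $(x|y)_o$); this is indeed routine, and your deduction of the four-point inequality from it via the shared side $[o,z]$ is clean. The converse is the more delicate half, and your argument is the efficient one: using $(x|y)_p=0$ to force $\min\{(x|z)_p,(z|y)_p\}\le\delta$, and then comparing $p$ to the single matched point $q\in[x,z]$ with $d(x,q)=d(x,p)$ via one further application of the four-point inequality at $x$. The arithmetic checks (yielding $d(p,q)\le 4\delta$), and your remark that naive bisection would only give a $\delta\log$ bound correctly identifies why this step needs care.
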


A curve $\sigma: [a,b] \rightarrow X$ is an $(A,B)$-\emph{quasi-geodesic} if 
\begin{align*}
\frac{1}{A}\abs{t_1-t_2}-B \leq d(\sigma(t_1),\sigma(t_2)) \leq A \abs{t_1-t_2}+B
\end{align*}
for all $t_1, t_2 \in [a,b]$. An important property of $\delta$-hyperbolic spaces is that every quasi-geodesic is close to an actual geodesic (see for instance~\cite[Theorem 1.3.2]{BS2007}) which implies:

\begin{proposition}
\label{prop:thin_tri}
For $A\geq 1$, $B\geq 0$, and $\delta \geq 0$ there exists $M>0$ such that if $(X,d)$ is $\delta$-hyperbolic then every $(A,B)$-quasi-geodesic triangle is $M$-thin.
\end{proposition}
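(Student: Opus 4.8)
The plan is to deduce this directly from the stability of quasi-geodesics in $\delta$-hyperbolic spaces, which is quoted just above the statement. First I would fix $A \geq 1$, $B \geq 0$, and $\delta \geq 0$, and invoke the stability lemma (e.g.\ \cite[Theorem 1.3.2]{BS2007}) to obtain a constant $H = H(A,B,\delta) > 0$ with the property that in any $\delta$-hyperbolic proper geodesic space, whenever $\sigma$ is an $(A,B)$-quasi-geodesic with endpoints $p$ and $q$, the image of $\sigma$ lies in the $H$-neighborhood of every geodesic segment $[p,q]$ joining $p$ to $q$, and conversely such a segment $[p,q]$ lies in the $H$-neighborhood of the image of $\sigma$. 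I would set $M := 2H + \delta$, which depends only on $A$, $B$, $\delta$.

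Next, let $(X,d)$ be $\delta$-hyperbolic and let $T$ be an $(A,B)$-quasi-geodesic triangle with vertices $x,y,z$ and $(A,B)$-quasi-geodesic sides $\sigma_{xy}$, $\sigma_{yz}$, $\sigma_{zx}$. Since $X$ is a proper geodesic metric space, I would choose genuine geodesic segments $[x,y]$, $[y,z]$, $[z,x]$ with the same endpoints, forming an honest geodesic triangle $T'$, which is $\delta$-thin by hypothesis. Now take any point $p$ on one of the quasi-geodesic sides, say $p \in \sigma_{xy}$. By the stability lemma there is a point $p' \in [x,y]$ with $d(p,p') \leq H$; since $T'$ is $\delta$-thin there is a point $q'$ on $[y,z] \cup [z,x]$ with $d(p',q') \leq \delta$; and applying the stability lemma once more to whichever of $\sigma_{yz}$, $\sigma_{zx}$ corresponds to the side containing $q'$ gives a point $q$ on $\sigma_{yz} \cup \sigma_{zx}$ with $d(q',q) \leq H$. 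The triangle inequality then yields $d(p,q) \leq 2H + \delta = M$. By symmetry the same bound holds for a point on any of the three sides, so $T$ is $M$-thin.

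I do not expect any genuine obstacle here: the statement is a bookkeeping consequence of the quoted stability theorem together with the definition of $\delta$-thinness. The only point that requires a little care is to use the stability lemma in its two-sided form — each of the geodesic segment and the quasi-geodesic side lies in a bounded neighborhood of the other — so that one can pass from the quasi-geodesic triangle $T$ to the geodesic triangle $T'$ and back again.
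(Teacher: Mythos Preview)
Your proposal is correct and is exactly the argument the paper has in mind: the paper does not spell out a proof but simply states that the stability of quasi-geodesics \cite[Theorem 1.3.2]{BS2007} ``implies'' the proposition, and your derivation of $M = 2H + \delta$ via a geodesic comparison triangle is the standard way to unpack that implication.
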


A proper geodesic Gromov hyperbolic metric space $(X,d)$ also has a natural boundary $X(\infty)$ called the Gromov boundary. Two geodesic rays $\sigma_1,\sigma_2:[0,\infty) \rightarrow X$ are said to \emph{asymptotic} if 
\begin{align*}
\sup_{t \geq 0} d(\sigma_1(t),\sigma_2(t)) < \infty.
\end{align*}
Let the \emph{Gromov boundary} $X(\infty)$ be the set of the equivalence classes of asymptotic geodesic rays in $X$. 

The set $\overline{X} = X \cup X(\infty)$ has a natural topology making it a compactification of $X$ (see for instance~\cite[Chapter III.H.3]{BH1999}). To understand this topology we introduce some additional notation: given a geodesic ray $\sigma:[0,\infty) \rightarrow X$ let $\sigma(\infty)$ denote the equivalence class of $\sigma$ and given a geodesic segment $\sigma:[0,R] \rightarrow X$ let $\sigma(\infty)$ denote the point $\sigma(R)$. Now $\overline{X} = X \cup X(\infty)$ has a topology where $\xi_n \rightarrow \xi$ if and only if for every choice of geodesics $\sigma_n$ with $\sigma_n(0)=o$ and $\sigma_n(\infty)=\xi_n$ every subsequence of $\{\sigma_n\}$ has a subsequence which converges locally uniformly to a geodesic $\sigma$ with $\sigma(\infty)=\xi$.

\part{Necessary conditions}

\section{Holomorphic disks in the boundary}\label{sec:no_disks}

In this section we prove Theorem~\ref{thm:no_affine_disk} whose statement we recall:

\begin{theorem}\label{thm:no_affine_disk_body}
Suppose $\Omega$ is a $\Cb$-proper open convex set. If $(\Omega, d_{\Omega})$ is Gromov hyperbolic then $\partial \Omega$ does not contain any non-trivial holomorphic disks.
\end{theorem}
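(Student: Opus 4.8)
The plan is to argue by contradiction: assume that $\partial\Omega$ contains a non-trivial holomorphic disk, which (by the remark following the statement, using \cite{FS1998}) means $\partial\Omega$ contains a non-trivial complex affine disk, and then construct geodesic triangles in $\Omega$ that are not $\delta$-thin for any fixed $\delta$. After an affine change of coordinates, I would arrange that the disk $D = \{(0,\zeta,0,\dots,0) : |\zeta|<1\}$ lies in $\partial\Omega$, that $\Omega$ lies on the side $\Real(z_1)>0$ of the supporting hyperplane, and that a fixed interior point, say $p_0 = (1,0,\dots,0)$, belongs to $\Omega$. The key mechanism is the two-sided infinitesimal estimate recalled in the excerpt,
\begin{align*}
\frac{\norm{v}}{2\delta_{\Omega}(p;v)} \leq K_{\Omega}(p;v) \leq \frac{\norm{v}}{\delta_{\Omega}(p;v)},
\end{align*}
together with Lemma~\ref{lem:convex_lower_bd_2}, to control the $\Omega$-geometry of line segments pointing toward the disk.

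Concretely, for small $\epsilon>0$ consider the three points $x_\epsilon = (\epsilon, 1-\epsilon, 0,\dots,0)$, $y_\epsilon = (\epsilon, -(1-\epsilon),0,\dots,0)$ (both deep in $\Omega$, very close to the two ``ends'' of the disk along the real $\zeta$-axis) and $z_\epsilon = p_0$ a fixed base point. I would first show that the Euclidean segment $[x_\epsilon,y_\epsilon]$ (which stays at distance $\approx\epsilon$ from $\partial\Omega$ in the $z_1$-direction since it is parallel to the disk and at ``height'' $\epsilon$) can be reparametrized to be an $(A,B)$-quasi-geodesic with $A,B$ \emph{independent} of $\epsilon$: the upper Kobayashi bound gives length $\lesssim \epsilon^{-1}$, and Lemma~\ref{lem:convex_lower_bd_2} applied with the puncture $\xi$ chosen on the boundary hyperplane gives a matching lower bound on the distance between the endpoints, because along that segment $\delta_\Omega$ in the transverse direction is comparable to $\epsilon$ uniformly. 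This is precisely the ``general condition on when a line segment can be parametrized to be a quasi-geodesic'' alluded to in the remark. Then I would estimate $d_\Omega$ of the midpoint $m_\epsilon = (\epsilon,0,\dots,0)$ of this segment from the other two sides of the triangle: any path from $m_\epsilon$ to a point of $\sigma_{x_\epsilon z_\epsilon}\cup\sigma_{z_\epsilon y_\epsilon}$ must either travel a long way within the thin slab near the disk (cost $\to\infty$ as $\epsilon\to 0$, again by the lower estimate) or move away from the disk in the $z_1$-direction, and I would show geodesics from $x_\epsilon$ (resp.\ $y_\epsilon$) to the fixed point $z_\epsilon$ stay uniformly away from $m_\epsilon$. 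Hence $d_\Omega(m_\epsilon, \sigma_{x_\epsilon z_\epsilon}\cup\sigma_{z_\epsilon y_\epsilon})\to\infty$, so by Proposition~\ref{prop:thin_tri} the space cannot be $\delta$-hyperbolic for any $\delta$ — contradiction.

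The main obstacle I anticipate is the rigorous lower bound showing the midpoint $m_\epsilon$ is $\Omega$-far from the two ``long'' sides $\sigma_{x_\epsilon z_\epsilon}$ and $\sigma_{z_\epsilon y_\epsilon}$. One has good control of $\delta_\Omega$ only near the disk, and the geodesics to the fixed point $z_\epsilon$ a priori leave that region; so the estimate must combine (i) a lower bound of the form $d_\Omega(m_\epsilon, w) \geq \tfrac12\log(1/\epsilon) - C$ whenever $w$ still lies in the thin slab $\{\Real(z_1)\lesssim \sqrt\epsilon\}$, obtained by projecting to the $z_1$-line and using the half-plane distance as in Lemma~\ref{lem:convex_lower_bd_2}, with (ii) a separate argument that the portion of either side $\sigma_{x_\epsilon z_\epsilon}$, $\sigma_{z_\epsilon y_\epsilon}$ lying \emph{outside} that slab is at $\Omega$-distance bounded below from $m_\epsilon$ — this second part uses that such points have $|\zeta|$-coordinate bounded away from $0$ (since the side starts near $\zeta=\pm(1-\epsilon)$ and ends at $\zeta=0$, $z_1=1$) only after leaving the slab, which requires a convexity/projection argument to make uniform in $\epsilon$. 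Packaging these two estimates into a single clean lower bound, uniform in $\epsilon$ and matching the divergence rate $\tfrac12\log(1/\epsilon)$, is where the real work lies; everything else is a routine application of the recalled estimates and of Proposition~\ref{prop:thin_tri}.
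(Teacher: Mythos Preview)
Your setup has a concrete gap. You arrange coordinates so that the disk $D=\{(0,\zeta,0,\dots,0):|\zeta|<1\}$ lies in $\partial\Omega$, but you never ensure that $D$ is the \emph{maximal} affine disk in that complex line. If it is not, your points $x_\epsilon,y_\epsilon$ need not be Kobayashi-far apart. Take for instance the $\Cb$-proper convex set $\Omega=\{\Real(z_1)>0\}\times\{|z_2|<2\}$: your $D$ sits in $\partial\Omega$, but $x_\epsilon=(\epsilon,1-\epsilon)$ and $y_\epsilon=(\epsilon,-(1-\epsilon))$ project to fixed interior points of the radius-$2$ disk, so $d_\Omega(x_\epsilon,y_\epsilon)$ stays bounded as $\epsilon\to 0$. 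Your appeal to Lemma~\ref{lem:convex_lower_bd_2} with ``$\xi$ chosen on the boundary hyperplane'' cannot work here: the complex line through $x_\epsilon$ and $y_\epsilon$ is $\{z_1=\epsilon\}$, which never meets $\{\Real(z_1)=0\}$, and any $\xi$ on that line outside $\Omega$ has $|\zeta|\geq 2$, giving only a bounded ratio in the lemma. Consequently $[x_\epsilon,y_\epsilon]$ is not a long quasi-geodesic, $m_\epsilon$ is a bounded distance from $x_\epsilon$, and the triangle is not fat.

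The paper's argument sidesteps this by working with the maximal open disk $\Oc=\interior(L\cap\partial\Omega)$ in a complex line $L$ and making an \emph{asymmetric} choice: $x\in\Oc$ but $y\in\partial\Oc$ (such $y$ exists since $\Omega$ is $\Cb$-proper, so $\Oc\neq L$). The triangle has sides $[o,x_T]$, $[o,y_T]$, $[x_T,y_T]$ for a fixed $o\in\Omega$, and the fat point lies on $[o,x)$. The crucial estimate --- that this point is far from $[o,y)$ --- falls out of Lemma~\ref{lem:convex_lower_bd_2} in one line, precisely because the real line through $x_t$ and a point $y_s$ on $[o,y)$ exits $\Omega$ at a point $b$ with $b\to y$; this is exactly where $y\in\partial\Oc$ is used. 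So what you flag as the ``main obstacle'' disappears once the boundary point $y$ is placed on $\partial\Oc$ rather than both endpoints being placed symmetrically near $\partial D$.
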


We will construct ``fat'' quasi-geodesic triangles when $\partial \Omega$ contains a non-trivial holomorphic disk. The first step is to construct quasi-geodesics.

\begin{lemma}\label{lem:quasi_geodesic}
Suppose $\Omega \subset \Cb^d$ is an open convex set, $p \in \Omega$, and $x \in \partial \Omega$ such that 
\begin{align*}
\delta_{\Omega}(p; \vec{px}) \geq \epsilon\norm{x-p}
\end{align*}
for some $\epsilon >0$. If 
\begin{align*}
x_t = x+e^{-2t}(p-x)
\end{align*}
then 
\begin{align*}
\abs{t_1-t_2} \leq d_{\Omega}(x_{t_1}, x_{t_2}) \leq 2\epsilon^{-1}\abs{t_1-t_2}
\end{align*}
for all $t_1,t_2 \geq 0$. In particular, the line segment $[p,x)$ can be parametrized to be an $\left(2\epsilon^{-1},0\right)$-quasi-geodesic in $(\Omega, d_{\Omega})$.
\end{lemma}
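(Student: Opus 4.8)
The strategy is to prove the two inequalities separately, each by comparing $d_\Omega$ along the segment $[p,x)$ to the hyperbolic metric on a half-plane, using the two estimates on the Kobayashi metric already available in the excerpt. For the parametrization $x_t = x + e^{-2t}(p-x)$, note first that $x_t - x_s = (e^{-2t}-e^{-2s})(p-x)$, so all the points $x_t$ lie on the complex line $L = x + \Cb(p-x)$ and $\norm{x_t - x} = e^{-2t}\norm{p-x}$.

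\textbf{Lower bound.} For $t_1 < t_2$, apply Lemma \ref{lem:convex_lower_bd_2} with $p = x_{t_1}$, $q = x_{t_2}$, and $\xi = x \in L \setminus (L\cap\Omega)$ (since $x\in\partial\Omega$). This gives
\begin{align*}
d_\Omega(x_{t_1}, x_{t_2}) \geq \frac{1}{2}\log\frac{\norm{x_{t_1}-x}}{\norm{x_{t_2}-x}} = \frac{1}{2}\log\frac{e^{-2t_1}\norm{p-x}}{e^{-2t_2}\norm{p-x}} = t_2 - t_1,
\end{align*}
which is the desired lower bound $\abs{t_1-t_2} \leq d_\Omega(x_{t_1},x_{t_2})$.

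\textbf{Upper bound.} Here I would bound the length of the curve $s \mapsto x_s$ from $t_1$ to $t_2$ using the infinitesimal estimate $K_\Omega(q;v) \leq \norm{v}/\delta_\Omega(q;v)$ from Proposition \ref{prop:basic_kob}(1). The velocity is $x_s' = -2e^{-2s}(p-x)$, a vector parallel to $\vec{px}$, so $\delta_\Omega(x_s; x_s') = \delta_\Omega(x_s; \vec{px})$. The key geometric point — and the main obstacle — is to show $\delta_\Omega(x_s;\vec{px}) \geq \epsilon\, e^{-2s}\norm{x-p}$ for all $s \geq 0$, i.e. that the hypothesis $\delta_\Omega(p;\vec{px}) \geq \epsilon\norm{x-p}$ propagates along the segment with the correct scaling. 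This should follow from convexity: the real $2$-plane spanned (over $\Rb$) by $p-x$ and $i(p-x)$ meets $\Omega$ in a convex planar region containing the segment $[p,x)$ on its boundary line through $x$; the hypothesis says this region contains a disk of radius $\epsilon\norm{x-p}$ around $p$ in the $\vec{px}$-direction, and convexity of the region together with $x\in\partial\Omega$ forces, for each $x_s$ on the segment $[p,x)$, an available disk of radius at least $\epsilon\norm{x_s - x} = \epsilon e^{-2s}\norm{x-p}$ in that direction (shrink the original configuration toward $x$ by the factor $e^{-2s}$ and use convexity to stay inside $\Omega$). Granting this,
\begin{align*}
K_\Omega(x_s; x_s') \leq \frac{\norm{x_s'}}{\delta_\Omega(x_s;\vec{px})} \leq \frac{2e^{-2s}\norm{p-x}}{\epsilon\, e^{-2s}\norm{x-p}} = \frac{2}{\epsilon},
\end{align*}
so integrating from $t_1$ to $t_2$ (assuming $t_1<t_2$) gives $d_\Omega(x_{t_1},x_{t_2}) \leq \int_{t_1}^{t_2} K_\Omega(x_s;x_s')\,ds \leq 2\epsilon^{-1}(t_2-t_1)$.

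\textbf{Conclusion.} Combining the two bounds yields $\abs{t_1-t_2} \leq d_\Omega(x_{t_1},x_{t_2}) \leq 2\epsilon^{-1}\abs{t_1-t_2}$ for all $t_1,t_2\geq 0$, and reparametrizing $t \mapsto x_t$ by arclength-type scaling exhibits $[p,x)$ as a $(2\epsilon^{-1},0)$-quasi-geodesic (the stated two-sided linear bound is exactly the $(A,B)$ quasi-geodesic inequality with $A = 2\epsilon^{-1}$, $B=0$, after noting $A\geq 1$ since $\epsilon$ may be taken $\leq 2$, or absorbing this into the constant). I expect the propagation of the $\delta_\Omega$-lower bound along $[p,x)$ to be the only nontrivial step; everything else is a direct application of the quoted lemmas.
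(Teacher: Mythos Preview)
Your proposal is correct and follows essentially the same approach as the paper: the lower bound comes directly from Lemma~\ref{lem:convex_lower_bd_2} with $\xi=x$, and the upper bound from the infinitesimal estimate $K_\Omega(q;v)\le \norm{v}/\delta_\Omega(q;v)$ together with the convexity argument that the disk $B_{\epsilon\norm{p-x}}(p)\cap L\subset \Omega\cap L$ scales toward $x$ to give $\delta_\Omega(x_s;\vec{px})\ge \epsilon e^{-2s}\norm{p-x}$. The only cosmetic difference is that the paper first normalizes coordinates so that $x=0$ and $p=(e^{i\theta},0,\dots,0)$ before running the same convex-hull argument; also note that $\epsilon\le 1$ is automatic since $x\in\partial\Omega$ lies on $L$, so $2\epsilon^{-1}\ge 1$ without any adjustment.
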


\begin{proof}[Proof of Lemma~\ref{lem:quasi_geodesic}]

Using a complex affine transformation, we can assume 
\begin{enumerate}
\item $x=0$,
\item $p=\left(e^{i\theta},0,\dots,0\right)$ for some $\theta \in \Rb$,
\item $H = \left\{\vec{z} \in \Cb^d : \Imaginary(z_1)=0\right\}$ is a supporting hyperplane of $\Omega$ at $0$,
\item $\Omega \subset \left\{ \vec{z} \in \Cb^d : \Imaginary(z_1) >0\right\}$.
\end{enumerate}

With respect to this choice of coordinates our parametrization of $[p,x)$ is given by
\begin{align*}
x_t = (e^{-2t}e^{i\theta},0 \dots, 0)=e^{-2t}p.
\end{align*} 
By Lemma~\ref{lem:convex_lower_bd_2} we have 
\begin{align*}
d_{\Omega}(x_{t_1}, x_{t_2}) \geq \frac{1}{2} \abs{ \log \frac{\norm{x_{t_1}-0}}{\norm{x_{t_2}-0}}} = \abs{t_1-t_2}.
\end{align*}
To see the upper bound, first let $L$ be the complex line $L = \{ (z,0,\dots, 0) : z \in \Cb\}$. Now in these coordinates 
\begin{align*}
\delta_{\Omega \cap L}(p) = \delta_{\Omega}(p; \vec{px}) \geq \epsilon \norm{x-p} = \epsilon
\end{align*}
so $B_{\epsilon}(p) \subset \Omega \cap L$. Since $\Omega$ is convex and $0 \in \partial \Omega$, $\Omega \cap L$ contains the interior of the convex hull of $B_{\epsilon}(p)$ and $0$. So for $\lambda \in (0,1)$ the set $B_{\lambda \epsilon}(\lambda p)$ is in $\Omega \cap L$. Thus 
\begin{align*}
\delta_{\Omega \cap L}(x_t) \geq \epsilon e^{-2t}.
\end{align*}
Then, by Proposition~\ref{prop:basic_kob},
\begin{align*}
K_{\Omega}(x_t; \dot{x}_t) \leq \frac{\norm{\dot{x}_t}}{\delta_{\Omega}(x_t ; \dot{x}_t)} = 2\frac{e^{-2t}}{\delta_{\Omega \cap L}(x_t)} \leq \frac{2}{\epsilon}.
\end{align*}
So for $t_1 < t_2$ we have
\begin{equation*}
d_{\Omega}(x_{t_1},x_{t_2}) \leq \int_{t_1}^{t_2} K_{\Omega}(x_t ; \dot{x}_t) dt \leq  \frac{2}{\epsilon}\abs{t_1-t_2}. \qedhere
\end{equation*}

\end{proof}  

It will be helpful to know that the boundary contains a holomorphic disk if and only if it contains a complex affine disk. 

\begin{lemma}\label{lem:affine_disk}\cite{FS1998}
Suppose $\Omega$ is a convex open set and $\varphi: \Delta \rightarrow \Cb^{d}$ is a non-trivial holomorphic map with $\varphi(\Delta) \subset \partial \Omega$. Then there exists a complex line $L$ such that $L \cap \partial \Omega$ is open in $L$. 
\end{lemma}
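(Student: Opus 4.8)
The plan is to read the flat part of $\partial\Omega$ straight off the image of $\varphi$, using only the maximum principle and the existence of supporting hyperplanes of convex sets. Let $E$ be the real affine hull of $\varphi(\Delta)$ in $\Cb^d$; since $\varphi$ is non-constant, $\dim_\Rb E\geq1$. Fix any $\zeta_0\in\Delta$ and set $x_0=\varphi(\zeta_0)\in\partial\Omega$.

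First I would show that $E$ is in fact a \emph{complex} affine subspace. Recall that $E$ is the common zero set of the real-affine functions vanishing on $\varphi(\Delta)$. If $h=\Real\lambda+c$ is such a function, with $\lambda$ complex-linear, then $\lambda\circ\varphi$ is holomorphic on $\Delta$ with constant real part, hence equals a constant $\alpha$; so the complex-affine function $z\mapsto\lambda(z)-\alpha$ also vanishes on $\varphi(\Delta)$, and its zero set is contained in $\{h=0\}$. Thus $E$ is already cut out by complex-affine equations, so it is a complex affine subspace; being complex and of positive real dimension, $\dim_\Cb E=n\geq1$.

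The main step is to show $E\cap\Omega=\emptyset$. Suppose not, and pick $p\in E\cap\Omega$. For each $\zeta\in\Delta$ the point $\varphi(\zeta)$ lies in $\overline\Omega$, so the half-open segment $[p,\varphi(\zeta))$ lies in $\Omega$ (as $p\in\Omega$ and $\varphi(\zeta)\in\overline\Omega$) and in $E$ (as $E$ is affine); letting $\zeta$ range over $\Delta$ shows $\varphi(\Delta)$ is contained in the closure of the open convex set $E\cap\Omega$ taken inside $E$, and $x_0\notin\Omega$ shows $x_0$ lies on the boundary of $E\cap\Omega$ inside $E$. Choose a supporting real hyperplane of $E\cap\Omega$ in $E$ through $x_0$; its affine functional can be written $\Real\mu+c_0$ with $\mu$ complex-linear, $\mu\neq0$, normalized so that $\Real\mu+c_0\leq0$ on the closure of $E\cap\Omega$ — hence on $\varphi(\Delta)$ — with equality at $x_0$. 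Then $\Real(\mu\circ\varphi)\leq-c_0$ on $\Delta$ with equality at the interior point $\zeta_0$, so the maximum principle forces $\mu\circ\varphi\equiv\mu(x_0)$, i.e.\ $\varphi(\Delta)$ lies in the proper complex-affine subspace $\{\mu=\mu(x_0)\}$ of $E$ — contradicting $E=\operatorname{aff}\varphi(\Delta)$.

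Given $E\cap\Omega=\emptyset$, the set $K:=E\cap\overline\Omega$ is a closed convex subset of $E\cong\Cb^n$ contained in $\overline\Omega\setminus\Omega=\partial\Omega$; and since $\varphi(\Delta)\subseteq K$ has affine hull all of $E$, the set $K$ has nonempty interior in $E$. Picking $p_0\in\inte_E K$ and any complex line $L\subseteq E$ through $p_0$, the set $L\cap\inte_E K$ is a nonempty, relatively open subset of $L$ lying in $\partial\Omega$, which is the desired conclusion. (I read ``$L\cap\partial\Omega$ is open in $L$'' as asserting that $\partial\Omega$ contains a nonempty relatively open piece of a complex line, equivalently a non-trivial complex affine disk; since $L\subseteq E$ forces $L\cap\partial\Omega=L\cap\overline\Omega$ to be a closed convex subset of the line, it need not be literally open.) The only real obstacle is the middle step: it is where convexity of $\Omega$ genuinely enters and where the affine-hull bookkeeping closes up, since a supporting hyperplane at $x_0$ would otherwise trap the disk in a still smaller subspace; Steps 1 and 3 are soft. (An alternative for Step 1: a holomorphic map whose image affinely spans a real subspace $V$ has image inside the largest complex subspace of $V$, because a holomorphic map into a totally real subspace is constant.)
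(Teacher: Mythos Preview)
Your proof is correct. The paper does not actually prove this lemma---it is quoted from Fu--Straube \cite{FS1998} and used as a black box---so there is no argument in the paper to compare against; your approach (the real affine hull $E$ of $\varphi(\Delta)$ is complex affine by the open mapping theorem, a supporting hyperplane in $E$ forces $E\cap\Omega=\emptyset$ by the maximum principle, then any complex line through the relative interior of $E\cap\overline\Omega$ works) is a clean self-contained proof of the cited fact. Your reading of the conclusion as ``$L\cap\partial\Omega$ has nonempty interior in $L$'' is the intended one and matches how the lemma is invoked in the proof of Theorem~\ref{thm:no_affine_disk_body}.
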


\begin{remark} Fu and Straube~\cite{FS1998} actually proved that the boundary of a convex set contains a holomorphic variety of dimension $q$ if and only if it contains an affine ball of dimension $q$. \end{remark}

We can now prove the theorem:

\begin{proof}[Proof of Theorem~\ref{thm:no_affine_disk_body}]
Suppose that $\partial \Omega$ contains a non-trivial holomorphic disk, we will show that $(\Omega, d_{\Omega})$ is not Gromov hyperbolic. Using Lemma~\ref{lem:affine_disk} there exists a complex line $L$ such that $L \cap \partial \Omega$ contains an open set in $L$. Let $\Oc \subset L \cap \partial \Omega$ be the interior of $L \cap \partial \Omega$ in $L$. Fix a point $x \in \Oc$. Since $\Omega$ is $\Cb$-proper $\Oc \neq L$ and so there exists some $y \in \partial \Oc$. Finally fix a point $o \in \Omega$. 

By Lemma~\ref{lem:quasi_geodesic} there exists $A>1$ and parametrizations $x_t$ of $[o,x)$ and $y_t$ of $[o,y)$ making them $(A,0)$-quasi-geodesics. Recall that 
\begin{align*}
x_t = x+e^{-2t}(o-x) \text{ and } y_t = y+e^{-2t}(o-y).
\end{align*}

\noindent \textbf{Claim 1:} After possibly increasing $A$, there exists $T_0 \geq 0$ such that for all $T>T_0$ the line segments $[x_T, y_T]$ can be parametrized to be a $(A,0)$-quasi-geodesic in $(\Omega, d_{\Omega})$.

\begin{proof}[Proof of Claim 1] Since $o \in \Omega$ there exists $\delta_1 >0$ such that $B_{\delta_1}(o) \subset \Omega$ and since $x \in \Oc$ there exists $\delta_2 >0$ such that $B_{\delta_2}(x) \cap L \subset \partial \Omega$. Then, because $y_T-x_T = (1-e^{-2T})(y-x) \in L$ and $\Omega$ is convex, we have that 
\begin{align}
\label{eq:qg1}
\delta_{\Omega}(x_T; y_T-x_T) \geq \min\{\delta_1, \delta_2\}
\end{align}
for all $T>0$. 

Now let $\{ a_{T}, b_{T} \} = \partial \Omega \cap \overline{x_T y_T}$ with the ordering $a_{T}, x_T, y_T, b_{T}$ along the line $\overline{x_T y_T}$. Since $y \in \partial \Oc$, we must have that $b_{T}\rightarrow y$ as $T \rightarrow \infty$. In particular, there exists $T_0 > 0$ such that 
\begin{align}
\label{eq:qg2}
\sup_{T \geq T_0} \norm{x_T-b_T} < \infty.
\end{align}

Then using Equation~\ref{eq:qg1}, Equation~\ref{eq:qg2}, and Lemma~\ref{lem:quasi_geodesic} we may assume that $[x_T,y_T]$ is a $(A,0)$-quasi-geodesic for all $T\geq T_0$. \end{proof}

\noindent \textbf{Claim 2:} $\lim_{t \rightarrow \infty} d_{\Omega}( x_t, [o,y))= \infty$. 

\begin{proof}[Proof of Claim 2]Suppose not, then there exists $R >0$ and a sequence $t_n \rightarrow \infty$ and $s_n >0$ such that
\begin{align*}
d_{\Omega}(x_{t_n}, y_{s_n}) < R.
\end{align*}
Now
\begin{align*}
d_{\Omega}(x_{t_n}, y_{s_n}) \geq d_{\Omega}(x_{t_n}, o) - d_{\Omega}(o, y_{s_n}) \geq \frac{t_n}{A} - A s_n
\end{align*}
and so we must have that $s_n \rightarrow \infty$. 
Let $\{ a_n, b_n \} = \partial \Omega \cap \overline{x_{t_n}y_{s_n}}$ with the ordering $a_n, x_{t_n}, y_{s_n}, b_n$ along the line $\overline{x_{t_n}y_{s_n}}$. Since $y \in \partial \Oc$ we see that $b_n \rightarrow y$.  Now by Lemma~\ref{lem:convex_lower_bd_2}
\begin{align*}
d_{\Omega}(x_{t_n}, y_{s_n}) \geq \frac{1}{2} \log \frac{\norm{x_{t_n}-b_n}}{\norm{y_{s_n}-b_n}}.
\end{align*}
Since $b_n \rightarrow y$, $x_{t_n}\rightarrow x$, and $y_{s_n} \rightarrow y$ we then have
\begin{align*}
\lim_{n \rightarrow \infty} d_{\Omega}(x_{t_n}, y_{s_n}) = \infty
\end{align*}
which is a contradiction. 
\end{proof}

Now by Proposition~\ref{prop:thin_tri}, if $(\Omega,d_{\Omega})$ is $\delta$-hyperbolic then there exists $M>0$ such that any $(A,0)$-quasi-geodesic triangle is $M$-thin. Thus the proposition will follow from the next claim: \newline

\noindent  \textbf{Claim 3:} For any $M>0$ there exists $T>0$ such that $[o,x_T], [x_T,y_T], [y_T,o]$ is not $M$-thin. 

\begin{proof}[Proof of Claim 3] By Claim 2 there exists $t_0>0$ such that $d_{\Omega}(x_{t_0}, [o,y)) > M$. Next, Lemma~\ref{lem:convex_lower_bd_2} implies that 
\begin{align*}
\lim_{T \rightarrow \infty} d_{\Omega}(x_{t_0}, [x_T, y_T])=\infty.
\end{align*}
Thus there exists $T > t_0$ such that 
\begin{align*}
d_{\Omega}(x_{t_0}, [x_T ,y_T] \cup [y_T, o] ) > M.
\end{align*}
So the $(A,0)$-quasi-geodesic triangle $[o,x_T], [x_T,y_T], [y_T,o]$ is not $M$-thin.\end{proof}

This completes the proof.
\end{proof}

\begin{remark}
Lemma~\ref{lem:quasi_geodesic} can also be used to provide a different proof of Theorem~\ref{thm:no_affine_disk_body}. It is well known that for any $\delta \geq 0$, $A\geq 1$, and $B\geq 0$ there exists $M>0$ such that whenever $(X,d)$ is a $\delta$-hyperbolic metric space and $\sigma_1,\sigma_2 : [0,\infty) \rightarrow X$ are $(A,B)$-quasi-geodesics with $\sigma_1(0)=\sigma_2(0)$ then either 
\begin{align*}
\sup_{t \geq 0} d_{\Omega}(\sigma_1(t), \sigma_2) < M
\end{align*}
or
 \begin{align*}
\sup_{t \geq 0} d_{\Omega}(\sigma_1(t), \sigma_2) =\infty.
\end{align*}
One way to prove this assertion is to use the geodesic shadowing property~\cite[Chapter III.H, Theorem 1.7]{BH1999} to reduce to the case when $\sigma_1$ and $\sigma_2$ are geodesic rays and then use the exponential divergence of geodesic rays~\cite[Chapter III.H, Proposition 1.25]{BH1999}.

Then let $\Oc$, $x \in \Oc$, and $y \in \partial \Oc$ be as in the proof of Theorem~\ref{thm:no_affine_disk}. Next let $y_n \in \Oc$ be a  sequence such that $y_n \rightarrow y$. Finally let $\sigma:[0,\infty) \rightarrow \Omega$ be a quasi-geodesic parameterizing $[o,x)$ and for each $n>0$ let $\sigma_n:[0,\infty) \rightarrow \Omega$ be a quasi-geodesic parameterizing $[0,y_n)$. Then one can show that 
\begin{align*}
\sup_{t \geq 0} d_{\Omega}(\sigma_n(t), \sigma) < \infty
\end{align*}
for all $n >0$ but 
\begin{align*}
\lim_{n \rightarrow \infty} \sup_{t \geq 0} d_{\Omega}(\sigma_n(t), \sigma) = \infty.
\end{align*}
Thus $(\Omega, d_{\Omega})$ is not Gromov hyperbolic. This is similar to the argument that Ivanov~\cite{I2002} gave showing that Teichm\"uller space endowed with the Teichm\"uller metric is not Gromov hyperbolic. 
\end{remark}

\section{Local Hausdorff topology and the Kobayashi metric}\label{sec:local_haus}

Given a set $A \subset \Cb^d$, let $\Nc_{\epsilon}(A)$ denote the \emph{$\epsilon$-neighborhood of $A$} with respect to the Euclidean distance. The \emph{Hausdorff distance} between two compact sets $A,B$ is given by
\begin{align*}
d_{H}(A,B) = \inf \left\{ \epsilon >0 : A \subset \Nc_{\epsilon}(B) \text{ and } B \subset \Nc_{\epsilon}(A) \right\}.
\end{align*}
Equivalently, 
\begin{align*}
d_H(A,B) = \max\left\{\sup_{a \in A}\inf_{b \in B} \norm{a-b}, \sup_{b \in B} \inf_{a \in A} \norm{a-b} \right\}.
\end{align*}
The Hausdorff distance is a complete metric on the space of compact sets in $\Cb^d$.

The space of all closed convex sets in $\Cb^d$ can be given a topology from the local Hausdorff semi-norms. For $R >0$ and a set $A \subset \Cb^d$ let $A^{(R)} := A \cap B_R(0)$. Then define the \emph{local Hausdorff semi-norms} by
\begin{align*}
d_H^{(R)}(A,B) := d_H(A^{(R)}, B^{(R)}).
\end{align*}
Since an open convex set is completely determined by its closure, we say a sequence of open convex sets $A_n$ converges in the local Hausdorff topology to an open convex set $A$ if $d_H^{(R)}(\overline{A}_n,\overline{A}) \rightarrow 0$ for all $R>0$. 

We now show that the Kobayashi metric is continuous with respect to the local Hausdorff topology.

\begin{theorem}
\label{thm:dist_conv}
Suppose $\Omega_n$ is a sequence of $\Cb$-proper convex open sets converging to a $\Cb$-proper convex open set $\Omega$ in the local Hausdorff topology. Then 
\begin{align*}
d_{\Omega}(x,y) = \lim_{n \rightarrow \infty} d_{\Omega_n}(x,y)
\end{align*}
for all $x,y \in \Omega$ uniformly on compact sets of $\Omega \times \Omega$.
\end{theorem}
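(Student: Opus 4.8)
The plan is to prove two inequalities. For the upper bound $\limsup_n d_{\Omega_n}(x,y) \le d_{\Omega}(x,y)$, I will use the fact that the $\Omega_n$ converge from "outside" on large balls: since $d_H^{(R)}(\overline{\Omega}_n, \overline{\Omega}) \to 0$, for any fixed $R$ and any $\lambda \in (0,1)$ we have $\lambda \Omega \cap B_R(0) \subset \Omega_n$ for all large $n$ (here I would center things so that $0 \in \Omega$, which is harmless after a translation, or more carefully work with a fixed interior point). Given $x,y$ in a compact set $K \subset \Omega$, pick $\lambda$ close to $1$ so that $K \subset \lambda\Omega$; the inclusion $\lambda\Omega \hookrightarrow \Omega_n$ is holomorphic, so $d_{\Omega_n}(x,y) \le d_{\lambda\Omega}(x,y) = d_{\Omega}(x/\lambda, y/\lambda)$, and as $\lambda \to 1$ the right side tends to $d_\Omega(x,y)$ uniformly on $K$ by continuity of the Kobayashi distance on the fixed domain $\Omega$. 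This handles the upper bound with explicit uniformity.

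For the lower bound $\liminf_n d_{\Omega_n}(x,y) \ge d_{\Omega}(x,y)$, the natural route is via the infinitesimal metric: it suffices to show $K_{\Omega_n}(p;v) \to K_{\Omega}(p;v)$, and in fact only the lower semicontinuity $\liminf_n K_{\Omega_n}(p;v) \ge K_\Omega(p;v)$ locally uniformly, since then for any curve $\gamma$ from $x$ to $y$ one integrates and takes a limit. To get a handle on $K_{\Omega_n}$ I would take near-extremal disks $\varphi_n \in \Hol(\Delta, \Omega_n)$ with $\varphi_n(0) = p$, $d\varphi_n(\xi_n) = v$, $|\xi_n|$ within $1/n$ of $K_{\Omega_n}(p;v)$. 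The upper bound already shows $K_{\Omega_n}(p;v)$ is bounded above near $p$ (use $d_{\Omega_n} \le d_{\lambda\Omega}$ and take an infinitesimal version, or directly use $K_{\Omega_n}(p;v) \le K_{\lambda\Omega}(p;v)$), so the $\varphi_n$ are uniformly bounded on compact subsets of $\Delta$ — this needs that they land in $\Omega_n \cap B_R(0)$ for $R$ not too large, which follows because the $\Omega_n$ on bounded balls are contained in a fixed neighborhood of $\overline{\Omega}$. By Montel, pass to a subsequence converging locally uniformly to $\varphi \in \Hol(\Delta, \overline{\Omega})$; the key point is that $\varphi(\Delta) \subset \Omega$ (not just $\overline\Omega$): if $\varphi$ touched $\partial\Omega$ it would be constant equal to a boundary point by the maximum principle applied to a supporting real-linear functional, contradicting $d\varphi(\xi_\infty) = v \ne 0$ where $\xi_\infty = \lim \xi_n$ (after noting $\xi_n$ stays bounded away from... well, bounded, since $K_{\Omega_n}$ is bounded above and $\|v\|$ is fixed — actually $\xi_n$ could go to $0$ only if $K_{\Omega_n}\to\infty$, which is excluded). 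Then $\varphi$ competes in the definition of $K_\Omega(p;v)$, giving $K_\Omega(p;v) \le |\xi_\infty| = \lim |\xi_n| = \liminf_n K_{\Omega_n}(p;v)$.

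The main obstacle is the lower bound, specifically controlling the limit disk: ensuring the near-extremal disks $\varphi_n$ are equibounded on compacta of $\Delta$ (which requires the local Hausdorff convergence to pin the $\Omega_n$ inside a fixed bounded region near $p$, and a bound on how far a disk through $p$ of bounded derivative can wander — here I would invoke that $\Omega$, hence eventually $\Omega_n$ on the relevant ball, is $\Cb$-proper, so it sits in a half-space and the disk cannot escape to infinity without the metric blowing up), and then ruling out that the limit touches the boundary. Once $K_{\Omega_n} \to K_\Omega$ locally uniformly is in hand, the distance convergence follows by the standard argument: the upper bound gives $\limsup d_{\Omega_n} \le d_\Omega$, and for the reverse, given an $\varepsilon$-almost geodesic for $d_{\Omega_n}(x,y)$ one shows its $\Omega$-length is not much larger using uniform convergence of the infinitesimal metrics along the (precompact, staying in a fixed compact subset of $\Omega$ — one must first confine the competing curves, e.g. using the upper bound to know the optimal $\Omega_n$-length is bounded and the global lower estimate Lemma~\ref{lem:convex_lower_bd_2} to keep the curve away from $\partial\Omega_n$) curves. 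Finally, uniformity on compact subsets of $\Omega \times \Omega$ comes for free from Dini-type reasoning since all the estimates above were already uniform in $(x,y) \in K$.
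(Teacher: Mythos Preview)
Your approach differs from the paper's in both halves, and while the overall strategy is sound, there are genuine gaps.

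For the upper bound you use the scaling $\lambda\Omega$, but the step ``the inclusion $\lambda\Omega \hookrightarrow \Omega_n$ is holomorphic'' fails when $\Omega$ is unbounded: you only know $\lambda\Omega \cap B_R(0) \subset \Omega_n$, not $\lambda\Omega \subset \Omega_n$. (Take $\Omega = \{\Real z > 0\}\subset\Cb$ and $\Omega_n = \{\Real z > -1/n\}\cap B_n(0)$.) You can repair this by using $d_{\Omega_n}\le d_{\lambda\Omega\cap B_R(0)}$ and then letting $R\to\infty$, but that is an extra limit to justify. The paper avoids the issue entirely by taking a complex geodesic $\varphi:\Delta\to\Omega$ through $p,q$ (which exists by Proposition~\ref{prop:connecting}) and observing that $\varphi(B_\delta(0))$ lies in a Kobayashi ball around $p$, hence in a fixed compact $K'\subset\Omega$, hence eventually in $\Omega_n$; then $\varphi_\delta(z)=\varphi(\delta z)$ competes in $\Omega_n$.

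For the lower bound your route via the infinitesimal metric and Montel is legitimate, but the step you flag as ``the main obstacle'' really is the whole difficulty, and you do not address it. Concretely you need: for $K\subset\Omega$ compact and $\delta<1$ fixed, every holomorphic $\varphi:\Delta\to\Omega_n$ with $\varphi(0)\in K$ satisfies $\varphi(B_\delta(0))\subset\Omega$ for $n$ large. This is exactly the content of the paper's Lemma~\ref{lem:containment}, whose proof uses Lemma~\ref{lem:convex_lower_bd_2} twice (once to show $\varphi_n(\zeta_n)$ stays bounded, once to show it stays away from $\partial\Omega_n$). Your phrase ``sits in a half-space'' is not enough: $\Cb$-properness does not put $\Omega$ in a single half-space when $d>1$, and in any case you need the estimate to be uniform in $n$. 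Also, your claim that if the limit disk $\varphi$ touched $\partial\Omega$ it would be constant is not correct (the boundary may contain affine disks); the right contradiction is simply that the supporting-functional argument forces $\varphi(\Delta)\subset H$, whereas $\varphi(0)=p\in\Omega$. Finally, the integration step (confining $\varepsilon$-almost geodesics in $\Omega_n$ to a fixed compact in $\Omega$) again reduces to Lemma~\ref{lem:containment}.

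The paper sidesteps both the integration and the Montel argument by working directly with distances: once Lemma~\ref{lem:containment} is established, take a complex geodesic $\varphi:\Delta\to\Omega_n$ through $p,q$, note $\varphi_\delta(\Delta)\subset\Omega$, and conclude $d_\Omega(p,q)\le d_\Delta(0,\zeta_0/\delta)\le(1+\epsilon)d_{\Omega_n}(p,q)$. This is shorter because complex geodesics already realize the distance, so no curve-confinement or length comparison is needed.
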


As an application of Theorem~\ref{thm:dist_conv} we will establish the following normal family result:

\begin{proposition}\label{prop:normal_family}
Suppose $\Omega_n$ is a sequence of $\Cb$-proper convex open sets converging to a $\Cb$-proper convex  open set $\Omega$ in the local Hausdorff topology. If $\varphi_n: \Delta \rightarrow \Omega_n$ is a sequence of holomorphic maps then either
\begin{enumerate}
\item $\varphi_n(x) \rightarrow \infty$ for all $x \in \Delta$ or
\item there exists a subsequence which converges uniformly on compact sets to a holomorphic map $\varphi: \Delta \rightarrow \overline{\Omega}$. Moreover, either $\varphi(\Delta) \subset \partial \Omega$ or $\varphi(\Delta) \subset \Omega$.
\end{enumerate}
\end{proposition}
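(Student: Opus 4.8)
The plan is to show that whenever alternative (1) fails, alternative (2) holds. First I would reduce to a convenient base point: if it is not the case that $\varphi_n(x)\to\infty$ for all $x\in\Delta$, then along some subsequence $\varphi_n(x_0)$ stays in a compact set for some $x_0\in\Delta$; precomposing every $\varphi_n$ with a fixed automorphism of $\Delta$ carrying $0$ to $x_0$ (which affects neither alternative) and passing to that subsequence, I may assume $\varphi_n(0)\to p_0$, and $p_0\in\overline{\Omega}$ since $\varphi_n(0)\in\Omega_n$ and $\Omega_n\to\Omega$ in the local Hausdorff topology.

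The heart of the proof is to make $\{\varphi_n\}$ a normal family. For this I would prove that for each $R>0$ the Kobayashi balls $\{z\in\Omega_n:d_{\Omega_n}(\varphi_n(0),z)\le R\}$ all lie in one fixed bounded subset of $\Cb^d$ once $n$ is large. Suppose not: choose $z_{n_k}\in\Omega_{n_k}$ with $d_{\Omega_{n_k}}(\varphi_{n_k}(0),z_{n_k})\le R$ and $\norm{z_{n_k}}\to\infty$, and, after a further subsequence, let $v_0$ be the limit of the unit vectors $v_k=(z_{n_k}-\varphi_{n_k}(0))/\norm{z_{n_k}-\varphi_{n_k}(0)}$. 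Since $\Omega$ is $\Cb$-proper, its closure contains no complex affine line (an open subset of $\overline{\Omega}$ lies in $\Omega=\interior\overline{\Omega}$, so a complex line in $\overline{\Omega}$ would force a complex line into $\Omega$), hence $p_0+\Cb v_0\not\subset\overline{\Omega}$; pick $c_0\in\Cb$ with $\eta_0:=p_0+c_0v_0\notin\overline{\Omega}$ and set $\eta_k:=\varphi_{n_k}(0)+c_0v_k$. Then $\eta_k\to\eta_0$, so $\eta_k$ is bounded away from $\overline{\Omega}$ and, by local Hausdorff convergence, $\eta_k\notin\Omega_{n_k}$ for all large $k$; moreover $\eta_k$ lies on the complex line through $\varphi_{n_k}(0)$ and $z_{n_k}$, with $\norm{\eta_k-\varphi_{n_k}(0)}=\abs{c_0}$ fixed and $\norm{z_{n_k}-\eta_k}\ge\norm{z_{n_k}-\varphi_{n_k}(0)}-\abs{c_0}\to\infty$. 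Lemma~\ref{lem:convex_lower_bd_2} then gives $R\ge d_{\Omega_{n_k}}(\varphi_{n_k}(0),z_{n_k})\ge\frac12\log(\norm{z_{n_k}-\eta_k}/\abs{c_0})\to\infty$, a contradiction. Combined with $d_{\Omega_n}(\varphi_n(0),\varphi_n(x))\le d_\Delta(0,x)$ (Proposition~\ref{prop:basic_kob}), this bounds $\{\varphi_n\}$ uniformly on each disk $\abs{x}\le r<1$, so by Montel's theorem a subsequence converges locally uniformly to a holomorphic map $\varphi:\Delta\to\Cb^d$; and $\varphi(\Delta)\subset\overline{\Omega}$ because each $\varphi(x)$ is a limit of points of $\Omega_n$.

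Finally I would settle the dichotomy with a maximum-principle argument. Suppose $\varphi(x_*)\in\Omega$ for some $x_*$ but $\varphi(x_1)\in\partial\Omega$ for some $x_1$. Choose a supporting real hyperplane of $\overline{\Omega}$ at $\varphi(x_1)$, written $\{\Real\ell=c\}$ for a nonzero $\Cb$-linear functional $\ell$ with $\Real\ell\le c$ on $\overline{\Omega}$. Then $\Real(\ell\circ\varphi)$ is harmonic on $\Delta$, bounded above by $c$, and equals $c$ at $x_1$, hence $\Real(\ell\circ\varphi)\equiv c$; but $\Omega$, being open and contained in the closed half-space $\{\Real\ell\le c\}$, lies in $\{\Real\ell<c\}$, so $\Real\ell(\varphi(x_*))<c$, a contradiction. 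Therefore $\varphi(\Delta)$ is either disjoint from $\partial\Omega$ (hence $\varphi(\Delta)\subset\Omega$) or contained in $\partial\Omega$, which is exactly alternative (2).

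The step I expect to be the main obstacle is the uniform boundedness of the Kobayashi balls: one must control balls of a fixed radius simultaneously across a sequence of possibly unbounded, varying convex domains, where the continuity statement of Theorem~\ref{thm:dist_conv} does not directly apply (the limit may be a boundary point). The device above---using that $\overline{\Omega}$ carries no complex line, together with the local Hausdorff convergence, to plant a point outside every $\Omega_{n_k}$ on the relevant complex line a bounded distance from $\varphi_{n_k}(0)$---is what makes the lower estimate of Lemma~\ref{lem:convex_lower_bd_2} bite.
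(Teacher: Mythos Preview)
Your proof is correct and follows the same overall outline as the paper: reduce to a base point, establish local boundedness of the family via Lemma~\ref{lem:convex_lower_bd_2}, extract a limit by Montel, and then settle the dichotomy. The boundedness step is essentially the paper's argument (the paper just cites ``an argument similar to the proof of Lemma~\ref{lem:containment}''); your version is a bit more careful in allowing the limit point $p_0$ to lie on $\partial\Omega$ and in justifying that $\overline{\Omega}$ contains no complex affine line.

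The one genuine difference is the dichotomy step. The paper invokes Theorem~\ref{thm:dist_conv}: once $\varphi(\zeta_0)\in\Omega$, the uniform convergence $d_{\Omega_n}\to d_\Omega$ on compact subsets of $\Omega$, together with $d_{\Omega_n}(\varphi_n(\zeta_0),\varphi_n(\zeta))\le d_\Delta(\zeta_0,\zeta)$, forces $\varphi(\zeta)\in\Omega$ for every $\zeta$. Your argument instead uses a supporting hyperplane $\{\Real\ell=c\}$ at a hypothetical boundary value and the maximum principle for the harmonic function $\Real(\ell\circ\varphi)$. Your route is more elementary and self-contained (it does not need the machinery of Theorem~\ref{thm:dist_conv}); the paper's route has the advantage that it reuses a tool already developed for other purposes in the section. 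Both are perfectly valid.
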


The proof of Theorem~\ref{thm:dist_conv} will require a series of lemmas.

\begin{lemma}
\label{lem:1}
For any $\epsilon >0$ there exists a $\delta >0$ such that whenever $\Omega_1$ and $\Omega_2$ are bounded open convex sets in $\Cb^d$, $B_{\epsilon}(p) \subset \Omega_1$, and $d_H(\overline{\Omega}_1,\overline{\Omega}_2) < \delta$  then $p \in \Omega_2$.
\end{lemma}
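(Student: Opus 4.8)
The plan is to prove Lemma~\ref{lem:1} directly from the definition of Hausdorff distance, exploiting convexity of $\Omega_2$. The key geometric idea is that if $p$ fails to lie in $\Omega_2$, then convexity of $\Omega_2$ provides a real supporting hyperplane $H$ through a point of $\partial\Omega_2$ close to $p$, with $\Omega_2$ lying entirely on one (closed) side of $H$; but $\Omega_1$ contains the full ball $B_\epsilon(p)$, which pokes through to the ``wrong'' side of $H$ by a definite amount, contradicting $\overline{\Omega}_1 \subset \Nc_\delta(\overline{\Omega}_2)$ once $\delta$ is small enough relative to $\epsilon$.

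Concretely, I would set $\delta := \epsilon/2$ (any value less than $\epsilon$ works, but a specific choice is cleanest) and argue by contradiction: suppose $d_H(\overline{\Omega}_1,\overline{\Omega}_2) < \delta$, $B_\epsilon(p)\subset\Omega_1$, but $p\notin\Omega_2$. Since $\Omega_2$ is open and convex and $p\notin\Omega_2$, there is a point $q\in\partial\Omega_2$ with $\|q-p\| = \delta_{\Omega_2}(p)$ and a real affine hyperplane $H$ through $q$ that supports $\Omega_2$, so that $\Omega_2$ lies in the closed half-space $H^-$ bounded by $H$ not containing the open ray from $q$ through $p$ beyond $q$ — more precisely, writing $H = \{x : \langle x - q, \nu\rangle = 0\}$ for a unit normal $\nu$ with $\langle p - q,\nu\rangle = \|p-q\| \ge 0$, we have $\langle x-q,\nu\rangle \le 0$ for all $x\in\overline{\Omega}_2$. (Here one must be slightly careful if $p\in\partial\Omega_2$, i.e. $\|p-q\|=0$: then just take $\nu$ to be any supporting normal at $p=q$; the argument below still applies since it only uses $\langle p-q,\nu\rangle \ge 0$.) Now consider the point $w := p + \tfrac{\epsilon}{2}\nu$. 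Then $w\in B_\epsilon(p)\subset\Omega_1\subset\overline{\Omega}_1$, while for every $x\in\overline{\Omega}_2$,
\begin{align*}
\|w - x\| \ge \langle w - x,\nu\rangle = \langle w - q,\nu\rangle + \langle q - x,\nu\rangle \ge \langle p - q,\nu\rangle + \tfrac{\epsilon}{2} \ge \tfrac{\epsilon}{2} > \delta.
\end{align*}
Hence $w\notin\Nc_\delta(\overline{\Omega}_2)$, contradicting $\overline{\Omega}_1\subset\Nc_\delta(\overline{\Omega}_2)$, which is part of $d_H(\overline{\Omega}_1,\overline{\Omega}_2)<\delta$. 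This contradiction forces $p\in\Omega_2$; but I should note $\Omega_2$ is open and I have only shown $p$ is not in the exterior, so I would phrase the supporting-hyperplane step to actually produce $p\in\inte\Omega_2$ — equivalently, apply the argument to the open set $\Omega_2$ and use that $p\notin\Omega_2$ gives a separating hyperplane (possibly through $p$ itself), which is exactly the dichotomy handled above.

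I do not anticipate a serious obstacle; the only point requiring care is the degenerate case $p\in\partial\Omega_2$ (as opposed to $p$ in the open exterior), where the ``nearest point'' $q$ equals $p$ and one must pick a supporting hyperplane at $p$ directly rather than via the nearest-point projection — but the separation inequality above was written to cover this uniformly. A secondary bookkeeping point is that the lemma as stated only asks for bounded $\Omega_1,\Omega_2$, so all the sets involved are compact after taking closures and the Hausdorff distance is genuinely a metric; no truncation to $B_R(0)$ is needed here. The choice $\delta = \epsilon/2$ is uniform in $p$, $\Omega_1$, $\Omega_2$ and depends only on $\epsilon$, as required.
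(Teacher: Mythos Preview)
Your argument is correct and is a genuinely different route from the paper's proof. The paper argues constructively: it picks $4d$ points $v_1,\dots,v_{4d}$ on the coordinate axes through $p$ at distance $\epsilon$, notes that their convex hull contains $p$ in its interior, and uses that this property is stable under small perturbations of the $v_i$; since $d_H(\overline{\Omega}_1,\overline{\Omega}_2)<\delta$ provides points $v_i'\in\overline{\Omega}_2$ close to the $v_i$, convexity of $\overline{\Omega}_2$ then traps $p$ in the interior. Your approach instead uses a single supporting hyperplane of $\Omega_2$ to push a point $w\in B_\epsilon(p)$ a definite distance away from $\overline{\Omega}_2$, contradicting $\overline{\Omega}_1\subset\Nc_\delta(\overline{\Omega}_2)$. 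Your argument is shorter, uses convexity of $\Omega_2$ via separation rather than via convex hulls, and yields an explicit $\delta$ (any $\delta\le\epsilon/2$ works), whereas the paper's $\delta$ comes from an abstract perturbation-stability step. One tiny slip: with $\delta=\epsilon/2$ your displayed chain ends with ``$\ge \tfrac{\epsilon}{2}>\delta$'', but in fact $\tfrac{\epsilon}{2}=\delta$; the contradiction still stands because $d_H<\delta$ forces $\operatorname{dist}(w,\overline{\Omega}_2)<\delta$, and you have shown it is $\ge\delta$. Either fix the inequality to ``$\ge\delta$'' and note that this already contradicts the strict inclusion, or set $\delta=\epsilon/3$ to make the strict inequality literal.
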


\begin{proof}
Let $e_1,\dots, e_d$ be the standard complex basis of $\Cb^d$. For $1 \leq i \leq d$ let
\begin{align*}
v_{4i-3} &=p-\epsilon e_i, \quad v_{4i-2}=p+\epsilon e_i, \\
 v_{4i-1}&=p-\epsilon i e_i, \quad v_{4i}=p+\epsilon ie_i.
\end{align*}
 Then the convex hull of $v_1,\dots, v_{4d}$ contains $p$ in its interior. Moreover there exists $\delta >0$ such that if $v_1^\prime, \dots, v_{4d}^\prime$ are points with $\norm{v_i-v_i^\prime}<\delta$ for all $1 \leq i \leq 4d$ then the convex hull of $v_1^\prime, \dots, v_{4d}^\prime$ contains $p$ in its interior. If $d_H(\overline{\Omega}_1,\overline{\Omega}_2) < \delta$ then $\overline{\Omega}_1\subset \Nc_{\delta}(\overline{\Omega}_2)$ and hence $\overline{\Omega}_2$ contains such points. 
\end{proof}

\begin{lemma}\label{lem:lem2}
Suppose $\Omega_n$ is a sequence of $\Cb$-proper convex open sets converging to a $\Cb$-proper convex open set $\Omega$ in the local Hausdorff topology. If $K$ is a compact subset of $\Omega$ then there exists $N$ such that  $K \subset \Omega_n$ for all $n >N$.
\end{lemma}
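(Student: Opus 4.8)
The plan is to use the characterization of Hausdorff convergence together with the fact that an interior point of a convex set is an interior point of the convex hull of finitely many nearby points, much as in Lemma~\ref{lem:1}, but now exploiting compactness of $K$ to get a uniform $\epsilon$. First I would observe that since $K$ is compact and $\Omega$ is open, there exists $\epsilon > 0$ such that $\Nc_{2\epsilon}(K) \subset \Omega$; in particular $B_{2\epsilon}(p) \subset \Omega$ for every $p \in K$. Next, cover $K$ by finitely many balls $B_{\epsilon}(p_1), \dots, B_{\epsilon}(p_m)$ with $p_j \in K$, so that every point of $K$ lies in some $B_{\epsilon}(p_j)$ with $B_{2\epsilon}(p_j) \subset \Omega$.

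Now I would apply Lemma~\ref{lem:1} with the fixed radius $\epsilon$: there is a $\delta > 0$ such that whenever $\Omega'$ is a bounded convex open set with $d_H(\overline{\Omega'}, \overline{\Omega}) < \delta$ and $B_\epsilon(q) \subset \Omega$, one gets $q \in \Omega'$ — strictly, Lemma~\ref{lem:1} is phrased with $B_\epsilon(q) \subset \Omega_1$ and $d_H(\overline{\Omega}_1, \overline{\Omega}_2) < \delta$, so I would apply it with $\Omega_1$ a bounded convex open subset of $\Omega$ containing $B_{\epsilon}(p_j)$ (for instance $\Omega \cap B_S(0)$ for $S$ large enough that this set still contains all the $B_{2\epsilon}(p_j)$, which works since each $p_j \in K$ is bounded) and $\Omega_2 = \Omega_n$. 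The point is that local Hausdorff convergence on a ball $B_S(0)$ large enough to contain $\Nc_{2\epsilon}(K)$ gives $d_H^{(S)}(\overline{\Omega}_n, \overline{\Omega}) \to 0$, and on that ball $\overline{\Omega}_n^{(S)}$ and $\overline{\Omega}^{(S)}$ are within $\delta$ for $n$ large; since all the relevant points $v_i$ constructed in Lemma~\ref{lem:1} lie well inside $B_S(0)$, the argument of that lemma goes through verbatim to conclude $p_j \in \Omega_n$, and more generally that the convex hull of the perturbed points $v_i'$ — which lies in $\overline{\Omega}_n$ — contains a fixed ball around each $p_j$.

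Thus for $n$ large enough (uniformly, since there are only finitely many $p_j$) we get $B_{\epsilon/2}(p_j) \subset \Omega_n$ for all $j$, hence $K \subset \bigcup_j B_{\epsilon}(p_j) \subset \bigcup_j \overline{B_{\epsilon/2}(p_j)}$... here I should be slightly careful: $B_\epsilon(p_j)$ need not be contained in $B_{\epsilon/2}(p_j)$. The cleaner route is to choose the finite cover at scale $\epsilon/4$: cover $K$ by $B_{\epsilon/4}(p_1), \dots, B_{\epsilon/4}(p_m)$ with $p_j \in K$, note $B_{2\epsilon}(p_j) \subset \Omega$, apply Lemma~\ref{lem:1} with radius $\epsilon$ to conclude not just $p_j \in \Omega_n$ but (inspecting the proof) that $B_{\epsilon/2}(p_j) \subset \Omega_n$ for $n$ large, since the convex hull of the $v_i'$ contains a neighborhood of $p_j$ of radius bounded below independent of the perturbation. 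Then $K \subset \bigcup_j B_{\epsilon/4}(p_j) \subset \bigcup_j B_{\epsilon/2}(p_j) \subset \Omega_n$ for all $n > N$.

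The main obstacle is the bookkeeping to extract from Lemma~\ref{lem:1} a \emph{uniform} interior-ball conclusion rather than just membership, and to make sure the truncation radius $S$ in the local Hausdorff semi-norm is chosen once and for all (large enough to contain $\Nc_{2\epsilon}(K)$ and all the auxiliary points $v_i$) so that the single bound $d_H^{(S)}(\overline{\Omega}_n, \overline{\Omega}) < \delta$ suffices for every ball in the finite cover simultaneously. Everything else is a routine compactness argument. I would write it as follows.

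\begin{proof}
Since $K$ is compact and $\Omega$ is open, there exists $\epsilon > 0$ such that $\Nc_{2\epsilon}(K) \subset \Omega$. Choose $S > 0$ large enough that $\Nc_{2\epsilon}(K) \subset B_{S/2}(0)$. By compactness there exist finitely many points $p_1, \dots, p_m \in K$ with
\begin{align*}
K \subset \bigcup_{j=1}^m B_{\epsilon/4}(p_j).
\end{align*}
For each $j$ we have $B_{2\epsilon}(p_j) \subset \Omega$, so $B_{2\epsilon}(p_j) \subset \Omega \cap B_S(0) =: \Omega'$, and $\Omega'$ is a bounded convex open set. Examining the proof of Lemma~\ref{lem:1}: with $p = p_j$ and radius $\epsilon$, the points $v_1, \dots, v_{4d}$ all lie in $B_{2\epsilon}(p_j) \subset B_S(0)$, and there is $\rho_j > 0$ and $\delta_j > 0$ so that whenever $\norm{v_i - v_i'} < \delta_j$ for all $i$ the convex hull of $v_1', \dots, v_{4d}'$ contains $B_{\rho_j}(p_j)$. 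Set $\delta = \min_j \delta_j$ and $\rho = \min_j \rho_j$, and note $\rho \le \epsilon$ after shrinking if necessary; we may also assume $\rho \le \epsilon/4$ is false is irrelevant — we only need $\rho > 0$.

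By local Hausdorff convergence there exists $N$ such that $d_H^{(S)}(\overline{\Omega}_n, \overline{\Omega}) < \delta$ for all $n > N$. Fix $n > N$ and fix $j$. Since $v_1, \dots, v_{4d} \in B_{S/2}(0) \cap \overline{\Omega}$ and $d_H^{(S)}(\overline{\Omega}_n, \overline{\Omega}) < \delta$, there exist $v_i' \in \overline{\Omega}_n \cap B_S(0)$ with $\norm{v_i - v_i'} < \delta$. Hence the convex hull of $v_1', \dots, v_{4d}'$, which is contained in $\overline{\Omega}_n$ by convexity, contains $B_\rho(p_j)$; in particular $p_j \in \Omega_n$. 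As this holds for every $j$, and $\rho \le \epsilon$, we may after replacing $p_j$ by nearby centers deduce that for every $p \in K$ there is $j$ with $p \in B_{\epsilon/4}(p_j)$, and since $B_{\epsilon/4}(p_j) \subset B_\rho(p_j)$ when $\rho \ge \epsilon/4$ — if instead $\rho < \epsilon/4$, refine the original cover to scale $\rho/2$ from the start. In either case $K \subset \Omega_n$ for all $n > N$.
\end{proof}
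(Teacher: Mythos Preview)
Your approach is essentially the paper's, but you are making it much harder than it needs to be. The crucial observation you are missing is that in Lemma~\ref{lem:1} the number $\delta$ depends \emph{only} on $\epsilon$, not on the particular point $p$. Consequently there is no need for a finite cover, and no need to squeeze an interior-ball conclusion out of the proof of Lemma~\ref{lem:1}.

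The paper's argument is three lines. Choose $R>0$ large enough that $K$ is a compact subset of $\Omega^{(R)} = \Omega \cap B_R(0)$; by compactness there is a single $\epsilon>0$ with $B_\epsilon(p) \subset \Omega^{(R)}$ for \emph{every} $p\in K$. Lemma~\ref{lem:1} then supplies a single $\delta = \delta(\epsilon)$, and since $d_H\big(\overline{\Omega}^{(R)}, \overline{\Omega}_n^{(R)}\big)\to 0$ there is $N$ with this distance below $\delta$ for $n>N$. Applying Lemma~\ref{lem:1} with $\Omega_1 = \Omega^{(R)}$, $\Omega_2 = \Omega_n^{(R)}$ gives $p\in \Omega_n^{(R)}\subset \Omega_n$ for every $p\in K$ simultaneously.

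Your version is salvageable --- the $\rho_j$ you define are in fact all equal (the simplex in the proof of Lemma~\ref{lem:1} is a translate of a fixed configuration depending only on $\epsilon$), which is what makes the ``refine the cover to scale $\rho/2$'' step legitimate --- but you never state this independence, and the sentence ``we may also assume $\rho \le \epsilon/4$ is false is irrelevant'' is garbled. Once you notice $\delta$ is uniform in $p$, the entire finite-cover apparatus evaporates.
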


\begin{proof}
There exists $R>0$ such that $K$ is a compact subset of $\Omega^{(R)} := \Omega \cap B_R(0)$. Since $\Omega_n \rightarrow \Omega$ we have that $d_H(\overline{\Omega}^{(R)}, \overline{\Omega}_n^{(R)}) \rightarrow 0$. Moreover since $K$ is a compact subset of $\Omega^{(R)}$ there exists $\epsilon>0$ such that $B_{\epsilon}(p) \subset \Omega^{(R)}$ for all $p \in K$. Then the lemma follows from Lemma~\ref{lem:1}.
\end{proof}

\begin{lemma}
\label{lem:upper}
Suppose $\Omega_n$ is a sequence of $\Cb$-proper convex open sets converging to a $\Cb$-proper convex open set $\Omega$ in the local Hausdorff topology. If $K \subset \Omega$ is compact and $\epsilon >0$ then there exists $N$ such that 
\begin{align*}
d_{\Omega_n}(p,q) \leq (1+\epsilon)d_{\Omega}(p,q)
\end{align*}
for all $n >N$ and all $p,q \in K$.
\end{lemma}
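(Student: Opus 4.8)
The plan is to build, for each pair $p,q\in K$, a holomorphic disk in $\Omega$ that realizes $d_{\Omega}(p,q)$ up to the factor $1+\epsilon$ and whose image lies in \emph{one} fixed compact subset of $\Omega$ depending only on $K$ and $\epsilon$. Lemma~\ref{lem:lem2} then forces that compact set, hence the disk, to lie inside $\Omega_n$ for all large $n$, and the distance-decreasing property of the Kobayashi metric does the rest. The right disk to use is a complex geodesic through $p$ and $q$ (Proposition~\ref{prop:connecting}): being an isometrically embedded copy of $\Delta$, its restriction to a relatively compact subdisk automatically stays in a controlled compact subset of $\Omega$, so the idea is to reparametrize the complex geodesic so that only such a subdisk is actually used.

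Concretely, first set $D:=\sup_{p,q\in K}d_{\Omega}(p,q)$, which is finite since $K$ is compact, and put $\tau_0:=\tanh D<1$. Given $\epsilon>0$, I would use the elementary inequality $\arctanh(s/\tau')-\arctanh(s)\le \frac{(1-\tau')/\tau'}{1-(\tau_0/\tau')^2}\,\arctanh(s)$, valid for $0<s\le\tau_0<\tau'<1$ (integrate $1/(1-t^2)$ and use $\arctanh(s)\ge s$), to choose $\tau'\in(\tau_0,1)$ close enough to $1$ that $\arctanh(s/\tau')\le(1+\epsilon)\arctanh(s)$ for all $s\in(0,\tau_0]$. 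Let $\wh{K}:=\{z\in\Omega:d_{\Omega}(z,K)\le\arctanh\tau'\}$; since $(\Omega,d_{\Omega})$ is a proper geodesic metric space, $\wh{K}$ is a compact subset of $\Omega$, so by Lemma~\ref{lem:lem2} there is $N$ with $\wh{K}\subset\Omega_n$ for all $n>N$. Now fix $n>N$ and $p,q\in K$ (the case $p=q$ being trivial). Using Proposition~\ref{prop:connecting} and precomposing with an automorphism of $\Delta$, pick a complex geodesic $\varphi:\Delta\to\Omega$ with $\varphi(0)=p$ and $\varphi(\zeta_0)=q$, so that $\abs{\zeta_0}=\tanh(d_{\Omega}(p,q))\le\tau_0$. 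For $\abs{\zeta}\le\tau'$ the complex geodesic property gives $d_{\Omega}(\varphi(\zeta),p)=d_{\Delta}(\zeta,0)=\arctanh\abs{\zeta}\le\arctanh\tau'$, so $\varphi(\overline{\tau'\Delta})\subset\wh{K}\subset\Omega_n$ and $\psi(\zeta):=\varphi(\tau'\zeta)$ is a holomorphic map $\Delta\to\Omega_n$ with $\psi(0)=p$ and $\psi(\zeta_0/\tau')=q$. By the distance-decreasing property (Proposition~\ref{prop:basic_kob}),
\[
d_{\Omega_n}(p,q)\le d_{\Delta}(0,\zeta_0/\tau')=\arctanh(\abs{\zeta_0}/\tau')\le(1+\epsilon)\arctanh\abs{\zeta_0}=(1+\epsilon)\,d_{\Omega}(p,q),
\]
and since $N$ depends only on $K$ and $\epsilon$ this is the assertion.

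The point I expect to require the most care is the possible unboundedness of the domains. One is tempted to argue that a slightly rescaled copy $r\Omega$ (with $r<1$) of $\Omega$ sits inside $\Omega_n$ for large $n$, and then compare $d_{\Omega_n}$ with $d_{r\Omega}=d_{\Omega}$; but local Hausdorff convergence only controls the $\Omega_n$ inside balls, and $r\Omega$ need not be compactly contained in $\Omega$, so this is false in general. Rescaling the \emph{disk parameter} of the complex geodesic rather than the domain is the maneuver that sidesteps this: it confines the portion of the geodesic that is actually used to the fixed compact set $\wh{K}$, to which Lemma~\ref{lem:lem2} does apply. The only input beyond the results cited above is that closed $d_{\Omega}$-balls are compact subsets of $\Omega$ when $\Omega$ is $\Cb$-proper convex, i.e.\ that $(\Omega,d_{\Omega})$ is proper, which is in force throughout the Gromov hyperbolicity discussion of Section~\ref{sec:prelim_gromov}.
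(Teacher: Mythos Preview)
Your proof is correct and follows essentially the same approach as the paper: both choose a shrinking factor (your $\tau'$, the paper's $\delta$) so that $d_{\Delta}(0,\zeta/\tau')\le(1+\epsilon)d_{\Delta}(0,\zeta)$ for all $\zeta$ in the relevant range, observe that the image of the shrunken complex geodesic $\zeta\mapsto\varphi(\tau'\zeta)$ lies in a fixed compact $d_{\Omega}$-neighborhood of $K$, and then invoke Lemma~\ref{lem:lem2} to place that compact inside $\Omega_n$ for large $n$. Your explicit estimate justifying the existence of $\tau'$ is a bit more detailed than the paper's ``fix $\delta<1$ such that\dots'', but the argument is the same.
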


\begin{proof}
Since $K$ is compact there exists $R>0$ such that 
\begin{align*}
 d_{\Omega}(p,q) < R
\end{align*}
for all $p,q \in K$. Fix $\delta <1$ such that
\begin{align*}
d_{\Delta}(0,\zeta/\delta) \leq (1+\epsilon)d_{\Delta}(0,\zeta)
\end{align*}
for all $\zeta \in \Delta$ with $d_{\Delta}(0,\zeta) \leq R$.

Now let 
\begin{align*}
K^\prime = \{ p \in \Omega : d_{\Omega}(p,K) \leq d_{\Delta}(\delta, 0)\}.
\end{align*}
Then $K^\prime$ is a compact subset of $\Omega$ and hence there exists $N$ such that $K^\prime \subset \Omega_n$ for all $n>N$. 

Now fix $p,q \in K$ and let $\varphi: \Delta \rightarrow \Omega$ be a complex geodesic with $\varphi(0)=p$ and $\varphi(\zeta_0)=q$ for some $\zeta_0 \in \Delta$. Notice that  $\varphi(B_{\delta}(0)) \subset K^\prime$ since $\varphi(0) \in K$ and 
\begin{align*}
\sup_{\zeta \in B_{\delta}(0)} d_{\Omega}(\varphi(\zeta), \varphi(0))=\sup_{\zeta \in B_{\delta}(0)} d_{\Delta}(\zeta, 0) = d_{\Delta}(\delta, 0).
\end{align*}
In particular if $\varphi_\delta: \Delta \rightarrow \Cb^d$ is defined by $\varphi_{\delta}(z) = \varphi(\delta z)$ then 
\begin{align*}
\varphi_\delta(\Delta)=\varphi(B_{\delta}(0)) \subset K^\prime \subset \Omega_n
\end{align*}
for $n >N$. Then
\begin{align*}
 d_{\Omega_n}(p,q) 
 &= d_{\Omega_n}(\varphi_{\delta}(0), \varphi_{\delta}(\zeta_0/\delta)) \leq d_{\Delta}(0,\zeta_0/\delta)\\
 & \leq  (1+\epsilon)d_{\Delta}(0,\zeta_0) = (1+\epsilon)d_{\Omega}(p,q).
\end{align*}
Since $p,q$ were arbitrary points in $K$ the lemma follows.
\end{proof}

\begin{lemma}\label{lem:containment}
Suppose $\Omega_n$ is a sequence of $\Cb$-proper convex open sets converging to a $\Cb$-proper  convex open set $\Omega$ in the local Hausdorff topology. If $K \subset \Omega$ is compact and $\delta < 1$ then there exists $N >0$ so that if $\varphi : \Delta \rightarrow \Omega_n$ is a holomorphic map with $\varphi(0) \in K$ then $\varphi(B_{\delta}(0)) \subset \Omega$.
\end{lemma}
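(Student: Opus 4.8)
The plan is to argue by contradiction. Suppose the lemma fails for some compact $K\subset\Omega$ and some $\delta<1$. Then there are $n_j\to\infty$ and holomorphic maps $\varphi_j:\Delta\to\Omega_{n_j}$ with $\varphi_j(0)=:p_j\in K$ and points $\zeta_j\in B_\delta(0)$ with $q_j:=\varphi_j(\zeta_j)\notin\Omega$. After passing to a subsequence we may assume $p_j\to p_\infty\in K$ and $\zeta_j\to\zeta_\infty$ with $|\zeta_\infty|\le\delta<1$. By the distance-decreasing property of the Kobayashi metric, $d_{\Omega_{n_j}}(p_j,q_j)\le d_\Delta(0,\zeta_j)\le d_\Delta(0,\delta)=:\rho<\infty$, and the whole argument will consist of contradicting this bound.

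The first — and, I expect, the hardest — step is to show that the sequence $(q_j)$ is bounded; this is the main obstacle, since the $\Omega_{n_j}$ need not lie in any fixed bounded (or even $\Cb$-proper) set. Here one must exploit $\Cb$-properness of the limit $\Omega$. If $\|q_j\|\to\infty$ along a subsequence, then since $p_j\in\Omega_{n_j}$ (Lemma~\ref{lem:lem2}) and $q_j\in\Omega_{n_j}$, the segment $[p_j,q_j]$ lies in $\Omega_{n_j}$; passing to a further subsequence so that $(q_j-p_j)/\|q_j-p_j\|\to u$, local Hausdorff convergence forces $p_\infty+\Rb_{\ge0}u\subset\overline\Omega$. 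Combining $d_{\Omega_{n_j}}(p_j,q_j)\le\rho$ with Lemma~\ref{lem:convex_lower_bd_2} (to bound how far along $[p_j,q_j)$ one may travel before leaving $\Omega_{n_j}$) and the infinitesimal estimate of Lemma~\ref{lem:convex_inf}, one shows that $\Omega_{n_j}$ must contain complex affine discs of radius tending to $\infty$ based at a bounded sequence of points; a Hausdorff limit of these discs is a complex affine line in $\overline\Omega$, and hence (moving slightly towards an interior point) a complex affine line in $\Omega$, contradicting $\Cb$-properness. Thus $(q_j)$ is bounded, and we may pass to a subsequence with $q_j\to q_\infty$. Since $q_j\in\Omega_{n_j}$ and $\overline\Omega_{n_j}\cap B_R(0)\to\overline\Omega\cap B_R(0)$ for every $R$, we get $q_\infty\in\overline\Omega$; as $q_j\notin\Omega$ and $\Cb^d\setminus\Omega$ is closed, $q_\infty\notin\Omega$, so $q_\infty\in\partial\Omega$, and in particular $q_\infty\ne p_\infty$.

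Next I set up a supporting hyperplane. Parametrize the real line through $p_\infty,q_\infty$ by $\gamma(t)=p_\infty+t(q_\infty-p_\infty)$. Since $\Omega$ is open and convex with $p_\infty\in\Omega$ and $q_\infty\in\partial\Omega$, a standard convexity argument gives $\gamma([0,1))\subset\Omega$, $\gamma(1)=q_\infty$, and $\gamma(t)\notin\overline\Omega$ for $t>1$. Choosing an affine $\ell$ with $\|\nabla\ell\|=1$, $\ell(q_\infty)=0$, and $\overline\Omega\subset\{\ell\ge0\}$ (so $\ell(p_\infty)>0$), one gets $\mathrm{dist}(\gamma(1+s),\overline\Omega)\ge cs$ for all $s\ge0$, where $c:=\ell(p_\infty)>0$.

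Finally I produce, on the complex line $L_j$ through $p_j$ and $q_j$, a point outside $\Omega_{n_j}$ very close to $q_j$, and feed it into Lemma~\ref{lem:convex_lower_bd_2}. Fix $R>\|p_\infty\|+\|q_\infty\|+1$ and put $\epsilon_j:=d_H^{(R)}(\overline\Omega_{n_j},\overline\Omega)\to0$, $\beta_j:=2(\|p_j-p_\infty\|+\|q_j-q_\infty\|)\to0$, $s_j:=(2\epsilon_j+\beta_j)/c\to0$, and $\xi_j:=p_j+(1+s_j)(q_j-p_j)\in L_j$. Then $\xi_j\to q_\infty$, so $\xi_j\in B_R(0)$ for $j$ large; and since $\|\xi_j-\gamma(1+s_j)\|\le\beta_j$ we get $\mathrm{dist}(\xi_j,\overline\Omega)\ge cs_j-\beta_j=2\epsilon_j>\epsilon_j$, so $\xi_j\notin\Nc_{\epsilon_j}(\overline\Omega)\supseteq\overline\Omega_{n_j}\cap B_R(0)$, i.e. $\xi_j\notin\Omega_{n_j}$. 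Lemma~\ref{lem:convex_lower_bd_2} then gives
\[
d_{\Omega_{n_j}}(p_j,q_j)\ \ge\ \tfrac12\log\frac{\|p_j-\xi_j\|}{\|q_j-\xi_j\|}\ =\ \tfrac12\log\frac{1+s_j}{s_j}\ \longrightarrow\ \infty ,
\]
contradicting $d_{\Omega_{n_j}}(p_j,q_j)\le\rho$. This contradiction proves the lemma.
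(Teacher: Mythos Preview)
Your argument is correct and follows the same overall strategy as the paper: set up a contradiction with the uniform bound $d_{\Omega_{n_j}}(p_j,q_j)\le\rho$, first show the ``bad'' points stay bounded, then use Lemma~\ref{lem:convex_lower_bd_2} on the complex line through $p_j$ and $q_j$ with an exterior point arbitrarily close to the limit to force $d_{\Omega_{n_j}}(p_j,q_j)\to\infty$.

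The execution differs in a couple of places worth noting. For boundedness, the paper fixes $o\in K$, looks at the complex line $L_n=\overline{oz_n}$, passes to a limit line $L$, and uses $\Cb$-properness of $\Omega$ to pick a single $\xi\in L\setminus\Omega$, which it then perturbs to $\xi_n\in L_n\setminus\Omega_n$; Lemma~\ref{lem:convex_lower_bd_2} directly bounds $\|z_n\|$. Your route---Lemma~\ref{lem:convex_lower_bd_2} forces $\delta_{\Omega_{n_j}}(p_j;q_j-p_j)\to\infty$, hence large complex discs based at $p_j\in K$, hence a complex affine line in $\Omega$---is a fine alternative, though as written it is a bit compressed (the reference to Lemma~\ref{lem:convex_inf} is not actually needed; Lemma~\ref{lem:convex_lower_bd_2} alone gives $\|p_j-\xi\|\ge \frac{e^{-2\rho}}{1+e^{-2\rho}}\|p_j-q_j\|$ for every $\xi\in L_j\setminus\Omega_{n_j}$). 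For the final step, the paper works with an intermediate point $z_n\in\Omega$ with $\delta_\Omega(z_n)\to0$, deduces $\delta_{\Omega_n}(z_n)\to0$ from local Hausdorff convergence, and applies Lemma~\ref{lem:convex_lower_bd_2} to the nearest boundary point of $\Omega_n$; your explicit supporting-hyperplane construction of $\xi_j$ achieves the same thing more directly, at the cost of a small computation. Both approaches are of comparable length and difficulty.
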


\begin{proof}
Suppose not, then by passing to a subsequence for each $n$ there exists a holomorphic map $\varphi_n:\Delta \rightarrow \Omega_n$ with $\varphi_n(0) \in K$ and $\varphi_n(B_{\delta}(0)) \not\subset \Omega$. Next let $\zeta_n^\prime \in B_{\delta}(0)$ be such that $\varphi_n(\zeta_n^\prime) \notin \Omega$. Now pick $t_n \in (0,1)$ so that $\varphi_n(t_n \zeta^\prime) \in \Omega$ but $\delta_{\Omega}(\varphi_n(t_n \zeta^\prime)) \rightarrow 0$. g let $\zeta_n = t_n \zeta_n^\prime$ and $z_n = \varphi_n(\zeta_n)$. 

By passing to another subsequence we can suppose that $K \subset \Omega_n$ for all $n$. Then by Lemma~\ref{lem:upper} if we fix a point $o \in K$ the quantity
\begin{align*}
R = \sup\{ d_{\Omega_n}(k,o) : k \in K, n \in \Nb\}
\end{align*}
is finite. 
Since $\zeta_n \in B_{\delta}(0)$ we see that
\begin{align*}
d_{\Omega_n}(z_n, o) \leq d_{\Omega_n}(z_n, \varphi_n(0))+R \leq d_{\Delta}(\zeta_n,0) + R \leq d_{\Delta}(0,\delta)+R.
\end{align*}

Now let $L_n$ be the complex line containing $o$ and $z_n$. By passing to a subsequence we can suppose that the sequence $L_n$ converges to a complex line $L$. Since $\Omega$ is $\Cb$-proper there exists $\xi \in L \setminus L \cap \Omega$. Since $\Omega_n$ converges to $\Omega$ in the local Hausdorff topology there exists $\xi_n \in L_n \setminus L_n \cap \Omega_n$ such that $\xi_n \rightarrow \xi$. By passing to a subsequence we can suppose that $\norm{\xi_n-\xi} < 1$ for all $n$.

Then by Lemma~\ref{lem:convex_lower_bd_2}
\begin{align*}
d_{\Omega_n}(o,z_n)
& \geq \frac{1}{2} \log \frac{ \norm{z_n-\xi_n}}{\norm{o-\xi_n}} \geq \frac{1}{2}\log \frac{ \norm{z_n-\xi}-\norm{\xi-\xi_n}}{\norm{o-\xi}+\norm{\xi-\xi_n}} \\
& \geq \frac{1}{2}\log \frac{ \norm{z_n-\xi}-1}{\norm{o-\xi}+1}.
\end{align*}
So $z_n$ must be a bounded sequence. Since $\Omega_n \rightarrow \Omega$ in the local Hausdorff topology and $\delta_{\Omega}(z_n) \rightarrow 0$ this implies that
\begin{align*}
\delta_{\Omega_n}(z_n) \rightarrow 0.
\end{align*}
Since $\Omega_n$ is convex this in turn implies that
\begin{align*}
\delta_{\Omega_n}(z_n; \vec{z_no}) \rightarrow 0.
\end{align*}
Since $o \in \Omega$ there exists $\epsilon >0$ such that $B_{\epsilon}(o) \subset \Omega_n$ for $n$ large enough. But then by Lemma~\ref{lem:convex_lower_bd_2}
\begin{align*}
d_{\Omega_n}(o,z_n) \geq \frac{1}{2} \log \frac{ \epsilon }{\delta_{\Omega_n}(z_n; \vec{z_no}) }
\end{align*}
which contradicts the fact that $d_{\Omega_n}(o,z_n)$ is bounded. 
\end{proof}

\begin{lemma}\label{lem:lower}
Suppose $\Omega_n$ is a sequence of $\Cb$-proper convex open sets converging to a $\Cb$-proper convex open set $\Omega$ in the local Hausdorff topology. If $K \subset \Omega$ is compact and $\epsilon >0$ then there exists $N$ such that 
\begin{align*}
d_{\Omega}(p,q) \leq (1+\epsilon)d_{\Omega_n}(p,q)
\end{align*}
for all $n >N$ and all $p,q \in K$.
\end{lemma}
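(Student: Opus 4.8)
Lemma~\ref{lem:lower} asserts the reverse inequality to Lemma~\ref{lem:upper}: on a fixed compact set $K \subset \Omega$, for large $n$ one has $d_\Omega(p,q) \leq (1+\epsilon) d_{\Omega_n}(p,q)$. The plan is to mirror the proof of Lemma~\ref{lem:upper}, but now using complex geodesics for the domains $\Omega_n$ (whose existence is guaranteed by Proposition~\ref{prop:connecting}, since each $\Omega_n$ is $\Cb$-proper) and then pushing them into $\Omega$ via the containment result Lemma~\ref{lem:containment}. The key new ingredient over Lemma~\ref{lem:upper} is precisely Lemma~\ref{lem:containment}, which tells us that a holomorphic disk in $\Omega_n$ based in a compact set of $\Omega$ stays inside $\Omega$ once we shrink its radius by a fixed factor $\delta < 1$.

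Here is the order of steps I would carry out. First, fix $R > 0$ with $d_{\Omega}(p,q) < R$ for all $p,q \in K$; enlarging $K$ if necessary I would also want a uniform bound $d_{\Omega_n}(p,q) < R'$ for all $p,q \in K$ and all large $n$, which follows from Lemma~\ref{lem:upper}. Second, choose $\delta < 1$ so that $d_\Delta(0, \zeta/\delta) \leq (1+\epsilon) d_\Delta(0,\zeta)$ for all $\zeta \in \Delta$ with $d_\Delta(0,\zeta) \leq R'$, exactly as before. Third, apply Lemma~\ref{lem:containment} to the compact set $K$ and this $\delta$ to get $N$ such that any holomorphic $\varphi : \Delta \to \Omega_n$ with $\varphi(0) \in K$ satisfies $\varphi(B_\delta(0)) \subset \Omega$, for all $n > N$ (also arranging $K \subset \Omega_n$ for $n > N$). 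Fourth, given $p,q \in K$ and $n > N$, use Proposition~\ref{prop:connecting} to pick a complex geodesic $\varphi : \Delta \to \Omega_n$ with $\varphi(0) = p$ and $\varphi(\zeta_0) = q$ for some $\zeta_0 \in \Delta$; note $d_\Delta(0,\zeta_0) = d_{\Omega_n}(p,q) < R'$, so in particular $|\zeta_0|/\delta < 1$ after possibly shrinking — wait, one must be careful: $\zeta_0/\delta$ need not lie in $\Delta$.

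To handle that last point cleanly, rather than rescaling $\varphi$ directly I would instead set $\varphi_\delta(z) = \varphi(\delta z)$, which maps $\Delta$ into $\Omega_n$ with $\varphi_\delta(\Delta) = \varphi(B_\delta(0)) \subset \Omega$ by Lemma~\ref{lem:containment}; then $q = \varphi(\zeta_0) = \varphi_\delta(\zeta_0/\delta)$, and we need $\zeta_0/\delta \in \Delta$, i.e. $|\zeta_0| < \delta$. This fails in general, so the correct move is the standard one: do not take a geodesic all the way to $q$, but rather observe that it suffices to prove the estimate for $p, q$ with $d_{\Omega_n}(p,q)$ bounded, and one can chain together finitely many short steps, OR — the cleaner route taken in proofs of this type — use that $\varphi_\delta$ is a holomorphic map into $\Omega$ with $\varphi_\delta(0) = p$ and $\varphi_\delta$ sends $\zeta_0/\delta$ to $q$ only when $|\zeta_0| < \delta$; to force this, first reparametrize the geodesic $\varphi$ by a disk automorphism so that the two target points are symmetric about $0$, reducing to the case $\varphi(-\rho) = p$, $\varphi(\rho) = q$ with $2\,d_\Delta(0,\rho) \approx d_\Delta(-\rho,\rho) = d_{\Omega_n}(p,q)$, and then absorbing the factor into the choice of $\delta$.

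With that normalization in hand the computation closes exactly as in Lemma~\ref{lem:upper}:
\begin{align*}
d_{\Omega}(p,q) = d_{\Omega}(\varphi_\delta(-\rho/\delta), \varphi_\delta(\rho/\delta)) \leq d_\Delta(-\rho/\delta, \rho/\delta) \leq (1+\epsilon)\, d_\Delta(-\rho,\rho) = (1+\epsilon)\, d_{\Omega_n}(p,q),
\end{align*}
using the distance-decreasing property of the holomorphic map $\varphi_\delta : \Delta \to \Omega$ for the first inequality and the choice of $\delta$ for the second. The main obstacle, as flagged above, is the bookkeeping to ensure the rescaled argument stays in $\Delta$ — i.e. getting a uniform-in-$n$ bound on the $\Omega_n$-geodesic distances on $K$ (which is where Lemma~\ref{lem:upper} is invoked) and then exploiting the freedom to re-center the geodesic so that $\delta$ can be chosen once and for all depending only on that bound and on $\epsilon$. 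Everything else is a direct transcription of the argument for Lemma~\ref{lem:upper} with the roles of $\Omega$ and $\Omega_n$ swapped and Lemma~\ref{lem:containment} supplying the needed inclusion $\varphi_\delta(\Delta) \subset \Omega$.
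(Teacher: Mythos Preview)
Your approach is essentially the same as the paper's, and all the key ingredients are correctly identified: use Lemma~\ref{lem:upper} to get a uniform bound $R'=(1+\epsilon)R$ on $d_{\Omega_n}(p,q)$ over $K$, choose $\delta<1$ so that $d_\Delta(0,\zeta/\delta)\le(1+\epsilon)d_\Delta(0,\zeta)$ whenever $d_\Delta(0,\zeta)\le R'$, invoke Lemma~\ref{lem:containment}, then take a complex geodesic $\varphi:\Delta\to\Omega_n$ with $\varphi(0)=p$, $\varphi(\zeta_0)=q$ and conclude via $\varphi_\delta(z)=\varphi(\delta z)$.

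However, your digression about $\zeta_0/\delta$ possibly leaving $\Delta$ is unnecessary, and the fix you propose creates a small new gap. The very condition you impose on $\delta$ in your step two already forces $|\zeta_0|<\delta$: for $d_\Delta(0,\zeta/\delta)$ to be finite one needs $|\zeta|/\delta<1$, so choosing $\delta$ with the stated property on the set $\{d_\Delta(0,\zeta)\le R'\}$ automatically entails $\delta>\tanh(R')\ge|\zeta_0|$. This is exactly what the paper does (with $R'=(1+\epsilon)R$), keeping $\varphi(0)=p\in K$ throughout. Your symmetric reparametrization, by contrast, moves $\varphi(0)$ to the midpoint of the geodesic, which need not lie in $K$; Lemma~\ref{lem:containment} would then not apply without first enlarging $K$ to a $d_\Omega$-neighborhood, a step you do not make explicit. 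Drop the reparametrization and the argument closes exactly as in the paper.
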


\begin{proof} Since $K$ is compact there exists $R >0$ such that 
\begin{align*}
d_{\Omega}(p,q) < R
\end{align*}
for all $p,q \in K$. By Lemma~\ref{lem:upper} we can pick $N^\prime >0$ such that 
\begin{align*}
d_{\Omega_n}(p,q) \leq (1+\epsilon)d_{\Omega}(p,q) < (1+\epsilon)R
\end{align*}
for all $n > N^\prime$ and all $p,q \in K$. Fix $\delta <1$ such that
\begin{align*}
d_{\Delta}(0,\zeta/\delta) \leq (1+\epsilon)d_{\Delta}(0,\zeta)
\end{align*}
for all $\zeta \in \Delta$ with $d_{\Delta}(0,\zeta) \leq (1+\epsilon)R$.

By the last lemma there exists $N \geq N^\prime$ such that for all $n > N$ and every holomorphic map $\varphi : \Delta \rightarrow \Omega_n$ with $\varphi(0) \in K$ we have 
\begin{align*}
\varphi(B_{\delta}(0)) \subset \Omega.
\end{align*}
Now suppose $n > N$ and $p,q \in K$ then there exists a complex geodesic $\varphi:\Delta \rightarrow \Omega_n$ with $\varphi(p)=0$ and $\varphi(\zeta_0) =q$ for some $\zeta_0 \in \Delta$. Since $d_{\Omega_n}(p,q) \leq (1+\epsilon)R$ we see that $d_{\Delta}(0,\zeta_0) \leq (1+\epsilon)R$. By construction the map $\varphi_{\delta}: \Delta \rightarrow \Cb^d$ given by $\varphi_{\delta}(\zeta) = \varphi(\delta \zeta)$ has image in $\Omega$. Moreover,
\begin{align*}
d_{\Omega}(p,q) 
&= d_{\Omega}( \varphi_{\delta}(0), \varphi_{\delta}(\zeta_0/\delta)) \leq d_{\Delta}(0, \zeta_0/\delta) \\
& \leq (1+\epsilon)d_{\Delta}(0,\zeta_0) = (1+\epsilon)d_{\Omega_n}(p,q). 
\end{align*}
Since $p,q \in K$ and $n >N$ were arbitrary the lemma follows. 
\end{proof}

\begin{proof}[Proof of Theorem~\ref{thm:dist_conv}] 
This is just Lemma~\ref{lem:upper} and Lemma~\ref{lem:lower}.
\end{proof}

\begin{proof}[Proof of Proposition~\ref{prop:normal_family}]
Suppose that case one does not hold, that is by passing to a subsequence there exists $x \in \Delta$ such that $\varphi_n(x) \rightarrow y \in \Cb^d$. By reparametrizing $\Delta$ we may assume $x=0$. 

Now fix $\delta < 1$. Using an argument similar to the proof of Lemma~\ref{lem:containment} we see that there exists an $R>0$ such that $\varphi_n(B_{\delta}(0)) \subset B_R(0)$. Then using Proposition~\ref{prop:taut}, there exists a subsequence such that $\varphi_n$ converges locally uniformly on $B_{\delta}(0)$. Now since $\delta < 1$ was arbitrary, a diagonal argument implies that there exists a subsequence such that $\varphi_n$ converges locally uniformly to a holomorphic map $\varphi : \Delta \rightarrow \Cb^d$. Since $\varphi_n(\Delta) \subset \Omega_n$ for all $n$ and $\Omega_n \rightarrow \Omega$ we see that $\varphi(\Delta) \subset \overline{\Omega}$.

Now suppose that $\varphi(\Delta) \cap \Omega \neq \emptyset$. Let $\zeta_0 \in \Delta$ be such that $\varphi(\zeta_0) \in \Omega$. Now using the fact that 
\begin{align*}
d_{\Omega_n}(p,q) \rightarrow d_{\Omega}(p,q)
\end{align*}
uniformly on compact subsets of $\Omega$ we see that $\varphi(\Delta) \subset \Omega$.
\end{proof}

\section{Orbits of convex sets}\label{sec:nec}

Recall that $\Xb_d$ is the set of $\Cb$-proper convex open sets in $\Cb^d$.

\begin{theorem}\label{thm:nec}
Suppose $\Omega$ is $\Cb$-proper convex open set and $(\Omega, d_{\Omega})$ is Gromov hyperbolic. Then $(\wh{\Omega}, d_{\wh{\Omega}})$ is Gromov hyperbolic whenever $\wh{\Omega} \in \overline{\Aff(\Cb^d) \Omega} \cap \Xb_d$.
\end{theorem}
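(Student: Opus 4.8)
The plan is to deduce Gromov hyperbolicity of $(\wh{\Omega}, d_{\wh{\Omega}})$ from that of $(\Omega, d_{\Omega})$ by combining two ingredients: the characterization of Gromov hyperbolicity via the Gromov product (the four-point inequality) and the continuity of the Kobayashi distance in the local Hausdorff topology (Theorem~\ref{thm:dist_conv}). First I would fix $\wh{\Omega} \in \overline{\Aff(\Cb^d)\Omega} \cap \Xb_d$ and choose affine maps $A_n \in \Aff(\Cb^d)$ with $A_n \Omega \to \wh{\Omega}$ in the local Hausdorff topology. Since each $A_n$ is a biholomorphism, $(A_n\Omega, d_{A_n\Omega})$ is isometric to $(\Omega, d_{\Omega})$, hence $\delta$-hyperbolic for the same $\delta \geq 0$ for all $n$; equivalently, the Gromov product inequality
\begin{align*}
(x|y)^{A_n\Omega}_o \geq \min\{(x|z)^{A_n\Omega}_o, (z|y)^{A_n\Omega}_o\} - \delta
\end{align*}
holds for all $o,x,y,z \in A_n\Omega$, where the superscript records which metric defines the Gromov product.

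Next I would transfer this inequality to $\wh{\Omega}$ in the limit. Fix $o, x, y, z \in \wh{\Omega}$. By Lemma~\ref{lem:lem2}, since $\{o,x,y,z\}$ is a compact subset of $\wh{\Omega}$, there is $N$ with $\{o,x,y,z\} \subset A_n\Omega$ for all $n > N$, so the four-point inequality above applies with these specific points. Now let $n \to \infty$: by Theorem~\ref{thm:dist_conv}, $d_{A_n\Omega}(p,q) \to d_{\wh{\Omega}}(p,q)$ for each pair $p,q$ drawn from $\{o,x,y,z\}$, hence each Gromov product $(\cdot|\cdot)^{A_n\Omega}_o$ converges to the corresponding $(\cdot|\cdot)^{\wh{\Omega}}_o$. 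Passing to the limit in the inequality (the min of two convergent sequences converges to the min of the limits) yields
\begin{align*}
(x|y)^{\wh{\Omega}}_o \geq \min\{(x|z)^{\wh{\Omega}}_o, (z|y)^{\wh{\Omega}}_o\} - \delta
\end{align*}
for all $o,x,y,z \in \wh{\Omega}$. Since $\wh{\Omega}$ is $\Cb$-proper, $d_{\wh{\Omega}}$ is a complete (hence proper, by local compactness and the standard Hopf–Rinow-type argument for the Kobayashi metric on $\Cb$-proper convex sets) geodesic metric space, so by the Gromov-product characterization of hyperbolicity quoted in Section~\ref{sec:prelim_gromov}, $(\wh{\Omega}, d_{\wh{\Omega}})$ is Gromov hyperbolic.

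The only real subtlety — and the step I would treat most carefully — is making sure the hypotheses of the Gromov-product characterization theorem are genuinely met by $\wh{\Omega}$: namely that $(\wh{\Omega}, d_{\wh{\Omega}})$ is a \emph{proper geodesic} metric space. Completeness comes from $\Cb$-properness via the cited result of Barth~\cite{B1980}; properness (closed balls are compact) then follows because $\wh{\Omega}$ is finite-dimensional and $d_{\wh{\Omega}}$ induces the standard topology with bounded sets relatively compact in $\wh{\Omega}$ (this uses that $d_{\wh{\Omega}}$ is complete together with the fact that on any $\Cb$-proper convex set the Kobayashi balls are bounded in the Euclidean sense, which can be extracted from Lemma~\ref{lem:convex_lower_bd_2}); and existence of geodesics follows from Proposition~\ref{prop:connecting}, since a complex geodesic through any two points restricts to a $d_{\wh{\Omega}}$-geodesic segment between them. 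Everything else is a routine limiting argument once Theorem~\ref{thm:dist_conv} is in hand.
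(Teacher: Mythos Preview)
Your proof is correct and follows essentially the same approach as the paper: use the four-point Gromov product characterization, note that each $A_n\Omega$ is isometric to $\Omega$ and hence satisfies the same $\delta$-inequality, and pass to the limit using Theorem~\ref{thm:dist_conv}. Your additional care in verifying that $(\wh{\Omega}, d_{\wh{\Omega}})$ is a proper geodesic metric space is a welcome elaboration that the paper leaves implicit.
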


\begin{remark}
The proof will show that there exists a $\delta >0$ such that for any $\wh{\Omega} \in \overline{\Aff(\Cb^d) \Omega} \cap \Xb_d$ and any four points $p,x,y,z \in \wh{\Omega}$ we have 
\begin{align*}
\min \{ (x,y)_p, (y,z)_{p}\}-(x,z)_p \geq \delta.
\end{align*}
This implies that there exists a $\delta_1 >0$ such that any geodesic triangle in any $\wh{\Omega} \in \overline{\Aff(\Cb^d) \Omega} \cap \Xb_d$ is $\delta_1$-thin. 
\end{remark}

\begin{proof}[Proof of Theorem~\ref{thm:nec}]
Suppose $\Omega_n:=A_n \Omega \rightarrow \wh{\Omega}$ in the local Hausdorff topology. Let $(\cdot,\cdot)_{\cdot}^{(n)}$ denote the Gromov product on $(\Omega_n, d_{\Omega_n})$ and $(\cdot | \cdot)_{\cdot}$ denote the Gromov product on $(\wh{\Omega}, d_{\wh{\Omega}})$. Now the affine map $A_n$ induces an isometry between $(\Omega, d_{\Omega})$ and $(\Omega_n, d_{\Omega_n})$. In particular there exists an $\delta >0$ such that for any $n$ and any four points $p, x,y,z \in \Omega_n$ we have
\begin{align*}
\min \{ (x,y)_p^{(n)}, (y,z)_{p}^{(n)}\}-(x,z)_p^{(n)} \geq \delta.
\end{align*}
Now suppose that $p, x,y,z \in \wh{\Omega}$ then by Theorem~\ref{thm:dist_conv}
\begin{align*}
\min \{ (x,y)_p, (y,z)_{p}\}-(x,z)_p = \lim_{n \rightarrow \infty} \min \{ (x,y)_p^{(n)}, (y,z)_{p}^{(n)}\}-(x,z)_p^{(n)} \geq \delta
\end{align*}
Thus $(\wh{\Omega}, d_{\wh{\Omega}})$ is Gromov hyperbolic.
\end{proof}

As a corollary to Theorem~\ref{thm:nec} and Theorem~\ref{thm:no_affine_disk_body} we obtain:

\begin{corollary}\label{cor:no_disks_orbit}
Suppose $\Omega$ is a $\Cb$-proper convex open set and $(\Omega, d_{\Omega})$ is Gromov hyperbolic. If $\wh{\Omega} \in \overline{\Aff(\Cb^d) \Omega} \cap \Xb_d$ then the boundary of $\wh{\Omega}$ has no non-trivial holomorphic disks.
\end{corollary}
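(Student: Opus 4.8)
The plan is to combine the two previously established results directly, with no additional work needed. Specifically, Theorem~\ref{thm:nec} (= Theorem~\ref{thm:nec_i}) tells us that Gromov hyperbolicity of $(\Omega, d_{\Omega})$ propagates to every $\wh{\Omega} \in \overline{\Aff(\Cb^d)\Omega} \cap \Xb_d$, and Theorem~\ref{thm:no_affine_disk_body} (= Theorem~\ref{thm:no_affine_disk}) tells us that a $\Cb$-proper convex open set with Gromov hyperbolic Kobayashi metric cannot have a non-trivial holomorphic disk in its boundary. So the only step is to chain these two implications.

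First I would invoke Theorem~\ref{thm:nec} with the hypothesis that $(\Omega, d_{\Omega})$ is Gromov hyperbolic: this yields that $(\wh{\Omega}, d_{\wh{\Omega}})$ is Gromov hyperbolic for the given $\wh{\Omega} \in \overline{\Aff(\Cb^d)\Omega} \cap \Xb_d$. Note $\wh{\Omega}$ is by hypothesis already a $\Cb$-proper convex open set (it lies in $\Xb_d$), so the hypotheses of Theorem~\ref{thm:no_affine_disk_body} are satisfied. Then I would apply Theorem~\ref{thm:no_affine_disk_body} to $\wh{\Omega}$ to conclude that $\partial\wh{\Omega}$ contains no non-trivial holomorphic disk.

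There is essentially no obstacle here — this corollary is a formal consequence of the two theorems it cites. The only thing to be careful about is confirming that membership in $\Xb_d$ really does give us that $\wh{\Omega}$ is $\Cb$-proper convex open (which it does, by the definition of $\Xb_d$ recalled at the start of Section~\ref{sec:nec}), so that Theorem~\ref{thm:no_affine_disk_body} applies verbatim.

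\begin{proof}[Proof of Corollary~\ref{cor:no_disks_orbit}]
Suppose $\wh{\Omega} \in \overline{\Aff(\Cb^d)\Omega} \cap \Xb_d$. By Theorem~\ref{thm:nec}, since $(\Omega, d_{\Omega})$ is Gromov hyperbolic, $(\wh{\Omega}, d_{\wh{\Omega}})$ is Gromov hyperbolic. Moreover, $\wh{\Omega} \in \Xb_d$ is a $\Cb$-proper convex open set. Hence Theorem~\ref{thm:no_affine_disk_body} applies to $\wh{\Omega}$ and shows that $\partial\wh{\Omega}$ contains no non-trivial holomorphic disks.
\end{proof}
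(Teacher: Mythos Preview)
Your proof is correct and matches the paper's approach exactly: the paper presents this corollary as an immediate consequence of Theorem~\ref{thm:nec} and Theorem~\ref{thm:no_affine_disk_body} without giving a separate proof, and your two-line argument is precisely the intended chaining of those results.
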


\section{Infinite type boundary points}

In this section we prove Proposition~\ref{prop:infinite_type} whose statement we recall:

\begin{proposition}\label{prop:infinite_type_2}
Suppose $\Omega \subset \Cb^d$ is a $\Cb$-proper convex open set with $0 \in \partial \Omega$ and 
\begin{align*}
\Omega \cap \Oc = \{\vec{z} \in \Oc :  \Imaginary(z_1) > f(\Real(z_1), z_2, \dots, z_d)\}
\end{align*}
where $\Oc$ is a neighborhood of the origin and $f:\Rb \times \Cb^{d-1} \rightarrow \Rb$ is a convex non-negative function. If 
\begin{align*}
\lim_{z \rightarrow 0} \frac{f(0,z,0,\dots,0)}{\abs{z}^n} = 0
\end{align*}
for all $n>0$ then $(\Omega, d_{\Omega})$ is not Gromov hyperbolic.
\end{proposition}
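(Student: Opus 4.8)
The strategy is exactly the one advertised in the second remark following Proposition~\ref{prop:infinite_type}: produce a sequence of affine maps $A_n \in \Aff(\Cb^d)$ so that $A_n\Omega$ converges in the local Hausdorff topology to some $\wh{\Omega} \in \Xb_d$ whose boundary contains a non-trivial complex affine disk, and then invoke Corollary~\ref{cor:no_disks_orbit} (equivalently Theorem~\ref{thm:no_affine_disk_body} together with Theorem~\ref{thm:nec}). We never need sharp estimates for $d_\Omega$ near the origin.

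\textbf{Step 1: Rescaling along the $z_2$-direction.} Write the real variable as $x=\Real(z_1)$, $y=\Imaginary(z_1)$. Because $f$ is convex, non-negative, and $f(0,0,\dots,0)=0$, the one-variable function $g(z):=f(0,z,0,\dots,0)$ on $\Cb$ is convex, non-negative, vanishes at $0$, and by hypothesis satisfies $g(z)/\abs{z}^n\to 0$ as $z\to0$ for every $n$. For $\lambda>0$ define the anisotropic dilation
\begin{align*}
A_\lambda(z_1,z_2,z_3,\dots,z_d) = \left( \tfrac{1}{\rho(\lambda)} z_1, \ \tfrac{1}{\lambda} z_2, \ \tfrac{1}{\lambda}z_3, \dots, \tfrac{1}{\lambda} z_d \right),
\end{align*}
where $\rho(\lambda)>0$ is a normalizing scale to be chosen. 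The image $A_\lambda(\Omega)$, near the rescaled origin, is $\{\Imaginary(z_1) > \tfrac{1}{\rho(\lambda)} f(\rho(\lambda)\Real(z_1),\lambda z_2,\dots,\lambda z_d)\}$. Restricting attention to the slice $z_1\in i\Rb$, $z_3=\dots=z_d=0$ one sees the defining function $\tfrac{1}{\rho(\lambda)} g(\lambda z_2)$. The key point is to choose $\rho(\lambda)$ so that this stays bounded and bounded away from $0$ on a fixed disk $\abs{z_2}\le 1$: e.g. set $\rho(\lambda) := g(\lambda)$ (assuming, after discarding the trivial case $g\equiv0$ near $0$ — handled separately since then $\partial\Omega$ already contains an affine disk — that $g(\lambda)>0$ for small $\lambda$). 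Then on $\abs{z_2}\le1$ we have $0\le \tfrac{1}{g(\lambda)} g(\lambda z_2) \le 1$ by monotonicity of the convex function $g$ along rays, and more importantly, for \emph{fixed} $z_2\ne 0$, the superlinear-decay hypothesis $g(\lambda z_2)/g(\lambda)\to 0$: indeed for any $n$, $g(\lambda z_2) \le C_n (\lambda\abs{z_2})^n$ eventually while $g(\lambda)\ge c\lambda$ eventually (a non-negative convex function vanishing at $0$ is $\le$ linear near $0$ but we need a lower bound — instead use that $g$ is not identically $0$, so $g(\lambda)\ge g(\lambda_0)\cdot(\lambda/\lambda_0)$ fails; rather, pick the lower bound from the fact that $g(\lambda)/\lambda^n\to0$ gives no help, so one should instead define $\rho(\lambda)$ implicitly). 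The clean choice is: given $\lambda$, let $\rho(\lambda) := \sup_{\abs{w}\le 1} g(\lambda w) = g(\lambda)$, and use the hypothesis in the form: for each fixed $w\neq 0$, $g(\lambda w)/g(\lambda) \to 0$, which follows because $g(\lambda w)\le C_n\lambda^n\abs{w}^n$ whereas $g(\lambda) = g(\lambda\cdot 1)$ cannot decay faster than every polynomial unless... — this is precisely the hypothesis applied to $w=1$ read in reverse. A cleaner route avoiding this circularity: directly pick $\lambda_n\to0$ and scale so that $g(\lambda_n)=c$ for a fixed constant $c>0$ along the sequence (possible since $g$ is continuous, non-negative, $g(0)=0$ and $g\not\equiv 0$), i.e. set $\lambda_n\to 0$ with $g(\lambda_n)$ decreasing to $0$ and renormalize the $z_1$-scale by $\rho_n := g(\lambda_n)$.

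\textbf{Step 2: Identify the limit.} With $A_n := A_{\lambda_n}$ chosen as above and after translating by the appropriate amount in $\Real(z_1)$ (which is harmless, being affine), I claim a subsequence of $A_n\Omega$ converges in the local Hausdorff topology to a convex open set $\wh\Omega$ of the form $\{\Imaginary(z_1) > F(\Real(z_1),z_2,\dots,z_d)\}$ where $F$ is a limit (locally uniform, extracted via convexity and the Arzelà–Ascoli-type compactness for convex functions) of the rescaled defining functions, with the crucial property that $F(x, z_2, 0,\dots,0) = 0$ for all $(x,z_2)$ in a neighborhood — because $\tfrac{1}{g(\lambda_n)} g(\lambda_n z_2)\to 0$ pointwise in $z_2$ by the infinite-type hypothesis, while convexity upgrades pointwise to locally uniform convergence. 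The limit $\wh\Omega$ is still $\Cb$-proper: it is contained in a half-space $\{\Imaginary(z_1)>0\}$ (the supporting hyperplane survives), and one checks it contains no complex line using that the rescalings are contractions in the last $d-1$ coordinates and bounded below in the first. Thus $\wh\Omega\in\Xb_d$, $\wh\Omega\in\overline{\Aff(\Cb^d)\Omega}$, and the complex affine disk $\{(x_0+iy_0, z_2, 0,\dots,0) : \abs{z_2}<r\}$ lies in $\partial\wh\Omega$ for a suitable point $(x_0,y_0)$ and radius $r>0$, since there $F\equiv 0$ but $F$ is a boundary defining function.

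\textbf{Step 3: Conclude.} By Theorem~\ref{thm:no_affine_disk_body}, $(\wh\Omega, d_{\wh\Omega})$ is not Gromov hyperbolic (its boundary contains a non-trivial holomorphic disk). Since $\wh\Omega\in\overline{\Aff(\Cb^d)\Omega}\cap\Xb_d$, the contrapositive of Theorem~\ref{thm:nec} gives that $(\Omega,d_\Omega)$ is not Gromov hyperbolic, which is the desired conclusion. (Equivalently, cite Corollary~\ref{cor:no_disks_orbit} directly: if $(\Omega,d_\Omega)$ were Gromov hyperbolic then $\partial\wh\Omega$ would have no non-trivial holomorphic disks, a contradiction.)

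\textbf{Main obstacle.} The delicate point is Step~2: making precise the choice of the two rescaling parameters (the $z_1$-scale $\rho_n$ versus the transverse scale $\lambda_n$) so that the limit $\wh\Omega$ is genuinely $\Cb$-proper (neither degenerating to a half-space nor blowing up to all of $\Cb^d$ in some directions) \emph{and} the transverse defining function flattens to $0$ on a disk. The infinite-type hypothesis $f(0,z,0,\dots,0)=o(\abs{z}^n)$ for all $n$ is exactly what is needed: it forces $g(\lambda_n z_2)/g(\lambda_n)\to 0$ for fixed $z_2$, because otherwise $g$ would be bounded below by a positive multiple of some power of its argument along the sequence, contradicting super-polynomial decay. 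Handling the directions $z_3,\dots,z_d$ requires only that they be scaled by a factor no larger than $\lambda_n$ so that the limit function in those variables is dominated by the (convex, hence controllable) original $f$; convexity of $f$ plus non-negativity keeps the limit a legitimate convex boundary. One must also verify the convergence is in the local Hausdorff topology on all of $\Cb^d$, not merely near the origin — this follows because outside a neighborhood of the origin the sets $A_n\Omega$ exhaust $\Cb^{d-1}$ in the transverse directions (the $\tfrac{1}{\lambda_n}$ factors blow up), so the limit is precisely the "tube-like" set cut out by $F$, and the local Hausdorff convergence is routine from convexity and the pointwise convergence of support functions.
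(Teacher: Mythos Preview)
Your overall architecture is exactly right, and Step~3 is fine. The genuine gap is in Steps~1--2: the claim that $g(\lambda_n z_2)/g(\lambda_n)\to 0$ for all $\abs{z_2}<1$ when $\lambda_n\to0$ is an arbitrary real sequence is \emph{false}. Take $g(x+iy)=e^{-1/x^2}+e^{-1/\abs{y}}$ (convex near $0$, vanishing to infinite order). Then $g(\lambda_n)=e^{-1/\lambda_n^2}$ but $g(\lambda_n\cdot\tfrac{i}{2})=e^{-2/\lambda_n}$, and the ratio tends to $+\infty$, not $0$. Convexity only gives $g(\lambda_n z_2)\le \abs{z_2}\,g(\lambda_n z_2/\abs{z_2})$, which compares $g$ at points of the same modulus but different argument; without radial symmetry this gives nothing. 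Your justification (``otherwise $g$ would be bounded below by a power'') confuses an upper bound on $g$ (which is the hypothesis) with a lower bound (which you would need).

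The paper repairs this by making an \emph{extremal} choice of rescaling point rather than an arbitrary one: for each $n$ it selects $z_n\in\Cb$ (not merely a real $\lambda_n$) maximizing $g(w)/\abs{w}^n$ over a shrinking ball, so that $g(z_n w)/g(z_n)\le\abs{w}^n$ for all $\abs{w}\le1$ automatically, and the dilation uses $z_n^{-1}$ in the second coordinate. This single inequality gives the flat disk in the limit with no further analysis. Two other points you glossed over also require work: the paper first reduces to $d=2$ via Frankel's slice lemma (Lemma~\ref{lem:slices}), avoiding your improvised treatment of $z_3,\dots,z_d$; and the $\Cb$-properness of the limit is not a one-liner---the paper uses the normalization $f_n(0,1)=1$ to keep $(i,1)\in\partial\Omega_n$, then a tangent-cone argument to show $\Omega_\infty\cap(\Cb\times\{1\})=\emptyset$, which rules out complex lines in the $z_2$-direction. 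Your sentence ``one checks it contains no complex line using that the rescalings are contractions\dots'' does not survive contact with the details.
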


We begin by recalling a result of Frankel:

\begin{lemma}\cite[Theorem 9.3]{F1991}\label{lem:slices}
Suppose $\Omega \subset \Cb^d$ is a $\Cb$-proper convex open set. If $V \subset \Cb^d$ is a complex affine subspace intersecting $\Omega$ and $A_n \in \Aff(V)$ is a sequence of affine maps such that $A_n(\Omega \cap V)$ converges in the local Hausdorff topology to a  $\Cb$-proper convex open set $\wh{\Omega}_V \subset V$, then there exists affine maps $B_n \in \Aff(\Cb^d)$ such that $B_n\Omega$ converges in the local Hausdorff topology to a  $\Cb$-proper convex open set $\wh{\Omega}$ with $\wh{\Omega} \cap V = \wh{\Omega}_V$. 
\end{lemma}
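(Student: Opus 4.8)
The plan is to lift the maps $A_n$ to $\Cb^d$, post-compose with linear maps that fix $V$ pointwise and renormalize the directions transverse to $V$, and then pass to a limit by Blaschke selection; the whole difficulty is to choose the transverse renormalization so that the limit is again $\Cb$-proper with slice exactly $\wh\Omega_V$. If $V=\Cb^d$ take $B_n=A_n$. Otherwise fix a complementary complex subspace $W$, so $\Cb^d=V\oplus W$ with $\dim_{\Cb}W\geq 1$, and let $\pi_V,\pi_W$ be the projections. After translating $\Omega$ and $\wh\Omega_V$ (which only modifies the $A_n$ by translations) assume $0\in\Omega\cap V$ and $0\in\wh\Omega_V$. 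Writing $A_nv=L_nv+b_n$ with $L_n\in\GL(V)$, $b_n\in V$, lift $A_n$ to $\wt A_n\in\Aff(\Cb^d)$ by $\wt A_n(v+w)=L_nv+b_n+w$; then $\wt A_n\Omega\in\Xb_d$ and $(\wt A_n\Omega)\cap V=A_n(\Omega\cap V)$.

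For $\Omega'\in\Xb_d$ with $0\in\Omega'$ set $\Omega'^{\flat}:=\bigcap_{\abs{\lambda}\geq 1}\lambda\Omega'$, the largest balanced open subset of $\Omega'$. Since $\Omega'$ contains no complex line, $\Omega'^{\flat}$ is a bounded convex body with $0$ in its interior, and by Lemma~\ref{lem:convex_inf} and Proposition~\ref{prop:basic_kob} its gauge is $v\mapsto\norm{v}/\delta_{\Omega'}(0;v)$, which lies between $K_{\Omega'}(0;v)$ and $2K_{\Omega'}(0;v)$. For $v\in V$ the complex line $0+\Cb v$ lies in $V$, hence $\delta_{\wt A_n\Omega}(0;v)=\delta_{A_n(\Omega\cap V)}(0;v)$; as $A_n(\Omega\cap V)\to\wh\Omega_V$ in the local Hausdorff topology of $V$ and $0$ is interior to the $\Cb$-proper set $\wh\Omega_V$, this stays between two fixed positive constants, uniformly for unit $v\in V$, so the slice $(\wt A_n\Omega)^{\flat}\cap V$ lies between two fixed balls of $V$ centered at $0$. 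A ``John position relative to $V$'' argument then provides linear maps $C_n$ fixing $V$ pointwise — of block form $\left(\begin{smallmatrix}I&N_n\\0&M_n\end{smallmatrix}\right)$ for $V\oplus W$ — such that, with $B_n:=C_n\wt A_n$ and $\Omega_n:=B_n\Omega$, there are constants $0<c\leq C$ independent of $n$ with $B_c(0)\subseteq\Omega_n^{\flat}\subseteq B_C(0)$ for all large $n$. Since $C_n$ fixes $V$ pointwise and preserves $V$, still $\Omega_n\cap V=A_n(\Omega\cap V)\to\wh\Omega_V$, and $\Omega_n\supseteq\Omega_n^{\flat}\supseteq B_c(0)$.

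Each $\overline{\Omega_n}\cap B_R(0)$ is compact convex and contains $\overline{B_c(0)}$, so by Blaschke selection and a diagonal argument a subsequence converges for every $R$; let $\wh\Omega$ be the interior of the resulting closed convex set, so $\Omega_n\to\wh\Omega$ locally Hausdorff (we work henceforth along this subsequence, reindexing the $B_n$), $\wh\Omega$ is open convex, $B_c(0)\subseteq\wh\Omega$, and $\pi_W(\wh\Omega)\supseteq\pi_W(B_c(0))$. Now $\Omega_n^{\flat}\subseteq B_C(0)$ forces $\delta_{\Omega_n}(0;v)\leq C$ for every unit $v$, so no $\Omega_n\cap(\Cb v)$ contains $B_{2C}(0)\cap(\Cb v)$; passing to the limit (otherwise that compact set would lie in $\Omega_n$ for large $n$) gives $\wh\Omega\cap(\Cb v)\neq\Cb v$, so $\wh\Omega$ has no complex line through $0$. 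But a convex set containing $0$ and a complex line $q_0+\Cb d$ contains $tq_0+\Cb d$ for all $t\in(0,1]$, hence its closure contains $\Cb d$, and since a line through an interior point of a convex set which lies in its closure lies in the interior, it contains $\Cb d$; therefore $\wh\Omega$ has no complex line at all, i.e.\ $\wh\Omega\in\Xb_d$. For the slice: if $q\in\wh\Omega\cap V$ some $B_\rho(q)\subseteq\wh\Omega$, so $B^V_{\rho/2}(q)\subseteq\Omega_n\cap V$ for large $n$ and, these open convex subsets of $V$ converging to $\wh\Omega_V$, $q\in\wh\Omega_V$; conversely if $q\in\wh\Omega_V\setminus\wh\Omega$, then $q\in\partial\wh\Omega$ and a supporting real hyperplane $H$ of $\wh\Omega$ at $q$ contains $\overline{B^V_{\rho/2}(q)}\subseteq\overline{\wh\Omega}$ in its closed half-space, which (as $q$ is the center of this $V$-ball) forces $V\subseteq H$; then the functional cutting out $H$ vanishes on $V$, factors through $\pi_W$, and confines $\pi_W(\wh\Omega)$ to a half-space of $W$ with $0$ on its boundary, contradicting $\pi_W(\wh\Omega)\supseteq\pi_W(B_c(0))$. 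Hence $\wh\Omega\cap V=\wh\Omega_V$.

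The step I expect to be the main obstacle is the ``John position relative to $V$'': the $C_n$ must be trivial on $V$, so as to preserve $\Omega_n\cap V=A_n(\Omega\cap V)$, yet must round out the balanced hulls $\Omega_n^{\flat}$ uniformly in $n$. This is possible exactly because the $V$-part of $\Omega_n^{\flat}$ is already controlled — it is governed by the convergent slices — so only the transverse and mixed directions need adjusting, by a Cholesky/Gram--Schmidt type construction adapted to the splitting $V\oplus W$. Once this is in hand, the uniform sandwiching $B_c(0)\subseteq\Omega_n^{\flat}\subseteq B_C(0)$ is precisely what drives both the $\Cb$-properness of $\wh\Omega$ and the identification of its $V$-slice.
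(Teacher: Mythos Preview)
The paper does not give its own proof of this lemma; it simply cites Frankel~\cite[Theorem 9.3]{F1991}. Your outline is essentially Frankel's normalization argument and is correct, including the parts you carry out in full (the Blaschke selection step, the $\Cb$-properness of $\wh\Omega$ from the uniform bound $\delta_{\Omega_n}(0;v)\leq C$, and the supporting-hyperplane argument identifying the slice).

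The step you flag as the obstacle---the ``John position relative to $V$''---is indeed the heart of the matter, and it can be executed exactly along the lines you suggest. Since $(\wt A_n\Omega)^{\flat}$ is circled, its John ellipsoid $E_n$ is circled as well (by uniqueness), hence given by a positive Hermitian form $A^{(n)}=\left(\begin{smallmatrix}A_{11}&A_{12}\\A_{12}^*&A_{22}\end{smallmatrix}\right)$ in the splitting $V\oplus W$. Your control on $(\wt A_n\Omega)^{\flat}\cap V$ together with $E_n\subset(\wt A_n\Omega)^{\flat}\subset\sqrt{2d}\,E_n$ pins the eigenvalues of $A_{11}$ in a fixed interval. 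Now set $N_n=A_{11}^{-1}A_{12}$ and let $M_n$ be a square root of the Schur complement $A_{22}-A_{12}^*A_{11}^{-1}A_{12}$; then $C_n=\left(\begin{smallmatrix}I&N_n\\0&M_n\end{smallmatrix}\right)$ fixes $V$ pointwise and carries $E_n$ to the ellipsoid with form $\left(\begin{smallmatrix}A_{11}&0\\0&I\end{smallmatrix}\right)$, whose eigenvalues lie in a fixed range. The John inequality then transfers the sandwich to $C_n(\wt A_n\Omega)^{\flat}=\Omega_n^{\flat}$ with constants depending only on $d$ and your $V$-slice bounds. With this filled in, your proof is complete.
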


\begin{proof}[Proof of Proposition~\ref{prop:infinite_type_2}]
We claim that there exists affine maps $A_n \in \Aff(\Cb^d)$ such that the sequence $A_n \Omega$ converges in the local Hausdorff topology to a $\Cb$-proper convex open set $\wh{\Omega}$ where $\partial \wh{\Omega}$ contains a non-trivial complex affine disk. By Corollary~\ref{cor:no_disks_orbit} this will imply that $(\Omega,d_{\Omega})$ is not Gromov hyperbolic. Using Lemma~\ref{lem:slices} we may assume that $d=2$.  

Let $V,W \subset \Rb$ and $U \subset \Cb$ be neighborhoods of $0$ such that $f: V \times U \rightarrow W$ and 
\begin{align*}
\Omega \cap \Oc = \{ (x+iy, z) : x \in V, z \in U, y > f(x,z)\}
\end{align*}
where $\Oc = (V+iW) \times U$. By rescaling we may assume that $B_1(0) \subset U$.

We may assume that $\partial \Omega$ does not contain any non-trivial complex affine disks (otherwise there is nothing to prove). In particular, for any neighborhood $U^\prime \subset U$ of $0$ in $\Cb$ there exists an $z \in U^\prime$ such that $f(0,z) \neq 0$.

Since 
\begin{align*}
\lim_{z \rightarrow 0} \frac{ f(0,z)}{\abs{z}^n} = 0,
\end{align*}
we can find $a_n \rightarrow 0$ and $z_n \in B_1(0)$ such that $f(0,z_n) = a_n \abs{z_n}^n$ and for all $w \in \Cb$ with $\abs{w} \leq \abs{z_n}$ we have 
\begin{align*}
f(0,w) \leq a_n \abs{w}^n.
\end{align*}
Since $\partial \Omega$ has no non-trivial complex affine disks, we see that $z_n \rightarrow 0$ and hence $f(0,z_n) \rightarrow 0$. By passing to a subsequence we may assume that $\abs{f(0,z_n)} < 1$.

Consider the linear transformations
\begin{align*}
A_n= \begin{pmatrix} \frac{1}{f(0,z_n)}  & 0 \\ 0 & z_n^{-1} \end{pmatrix} \in \GL(\Cb^2)
\end{align*}
and let $\Omega_n = A_n \Omega$. Now there exists $\epsilon_1,\epsilon_2 >0$ such that $B_{\epsilon_1}( (\epsilon_2 i,0)) \subset \Omega$. Since $\vec{0} \in \partial \Omega$ and $\abs{z_n},f(0,z_n) < 1$ this implies that $B_{\epsilon_1}( (\epsilon_2 i,0)) \subset \Omega_n$ for all $n$. Moreover for any $R>0$ the set 
\begin{align*}
\{ \Omega^\prime \text{ is open and convex } : B_{\epsilon_1}( (\epsilon_2 i,0)) \subset \Omega^\prime \subset B_R(0) \} \subset \Xb_d
\end{align*}
is compact in the Hausdorff topology. Thus we can pass to a subsequence such that $\Omega_n$ converges in the local Hausdorff topology to a convex open set $\Omega_{\infty}$. 

We claim that $\partial \Omega_{\infty}$ contains a non-trivial complex affine disk and that $\Omega_{\infty}$ is $\Cb$-proper. By the remarks at the start of the proof this will imply the proposition. 

We first show that $\partial \Omega_{\infty}$ contains a non-trivial complex affine disk. If $\Oc_n = A_n \Oc$ we have
\begin{align*}
 \Omega_n \cap \Oc_n = \{ (x+iy,z ) : x \in V_n, z \in U_n,  y > f_n(x,z)\}
\end{align*}
where $V_n = f(z_n,0)^{-1}V$, $U_n = z_n^{-1}U$, and
\begin{align*}
f_n(x,z) =  \frac{1}{f(0,z_n)} f\left( f(0,z_n)x, z_n z\right).
\end{align*}
For $\abs{w} < 1$ we then have
\begin{align}
f_n(0,w) =\frac{f\left( 0, z_n w\right)}{f(0, z_n)} \leq \frac{ a_n \abs{z_n}^n \abs{w}^n }{f(0,z_n)} = \abs{w}^n
\end{align}
which implies that
\begin{align*}
\{( 0, z) : \abs{z} \leq 1\} \subset \partial \Omega_{\infty}.
\end{align*}
Thus $\partial \Omega_{\infty}$ contains a non-trivial complex affine disk.

Establishing that $\Omega_{\infty}$ is $\Cb$-proper is slightly more involved. As a preliminary step we will show that
\begin{align*}
\Omega_{\infty} \cap (\Cb \times \{1 \})= \emptyset.
\end{align*}
For $0 \leq \alpha < \beta \leq \pi$ let $\Cc(\alpha,\beta)$ be the open convex cone in $\Cb$ defined by 
\begin{align*}
\Cc(\alpha,\beta) = \{ z \in \Cb :z \neq 0 \text{ and } \alpha <  \Arg(z) < \beta \}.
\end{align*}
Let $0 \leq \alpha_0 < \beta_0 \leq \pi$ be such that $C(\alpha_0, \beta_0)\times\{0\}$ is the tangent cone of $\Omega \cap (\Cb \times \{0\})$ at $0$, that is 
\begin{align*}
C(\alpha_0, \beta_0) \times \{0\}= \cup_{t > 0} t\Big(\Omega \cap (\Cb \times \{0\})\Big). 
\end{align*}
Notice that $\alpha_0 < \pi/2 < \beta_0$ since $(\epsilon_2 i,0) \in \Omega$. Since 
\begin{align*}
\Omega_n \cap (\Cb \times \{0\}) =  \frac{1}{f(0,z_n)} \Big(\Omega \cap (\Cb \times \{0\}) \Big)
\end{align*}
and $f_n(0,z_n) \rightarrow 0$ there exists $\alpha_n \rightarrow \alpha_0$, $\beta_n \rightarrow \beta_0$, and $R_n \rightarrow \infty$ such that 
\begin{align}
\Omega_n \supset \Big(C(\alpha_n,\beta_n) \times \{0\}\Big) \cap B_{R_n}(0).
\end{align}
Then
\begin{align*}
\Cc(\alpha_0,\beta_0) \times \{0\} \subset \Omega_{\infty}.
\end{align*}
Since $\Omega_{\infty}$ is open and convex this implies that 
\begin{align*}
\vec{z} + \Cc(\alpha_0,\beta_0) \times \{0\} \subset \Omega_{\infty}
\end{align*}
for any $\vec{z} \in \Omega_{\infty}$. Since $(0,1) \in \partial \Omega_{\infty}$ we then have that 
\begin{align*}
\Cc(\alpha_0,\beta_0) \times \{1\} \subset \overline{\Omega}_{\infty}.
\end{align*}
Since $\Omega_{\infty}$ is open and convex either 
\begin{align*}
\Cc(\alpha_0,\beta_0) \times \{1\} \subset \Omega_{\infty}\text{ or }  \Cc(\alpha_0,\beta_0) \times \{1\} \subset \partial \Omega_{\infty}.
\end{align*}
We claim that the latter situation holds. Since $f_n(0,1)=1$ we see that $(i,1) \in \partial \Omega_n$ for all $n$ and so $(i,1) \in \partial \Omega_\infty$. Since $(i,1) \in \Cc(\alpha_0, \beta_0) \times \{1\}$ this implies that
\begin{align*}
\Cc(\alpha_0,\beta_0) \times \{1\} \subset \partial \Omega_{\infty}.
\end{align*}
Which in turn implies that
\begin{align*}
\Omega_{\infty} \cap (\Cb \times \{1 \} )= \emptyset.
\end{align*}
We can now show that $\Omega_{\infty}$ is $\Cb$-proper. Suppose that an affine map $z \rightarrow (a_1,a_2)z+(b_1,b_2)$ has image in $\Omega_{\infty}$. Since 
\begin{align*}
\Omega_{\infty} \subset \{ (z_1, z_2) \in \Cb^2 : \Imaginary(z_1) > 0\}
\end{align*}
 we see that $a_1=0$. And since $\Omega_{\infty} \cap (\Cb \times \{1 \} )= \emptyset$ we also see that $a_2=0$. So $\Omega_{\infty}$ does not contain any non-trivial complex affine lines and hence is $\Cb$-proper.
\end{proof}

\part{A sufficient condition}

\section{M-convexity}\label{sec:m_convex}

In this section we study limits of geodesics $\sigma_n : \Rb \rightarrow \Omega_n$ when $\Omega_n$ converges in the local Hausdorff topology to a set $\Omega$. The first step is to understand limits of complex geodesics $\varphi_n : \Delta \rightarrow \Omega_n$. Using Proposition~\ref{prop:normal_family}, we can pass to a subsequence such that $\varphi_n$ converges locally uniformly to a holomorphic function $\varphi:\Delta \rightarrow \overline{\Omega}$ and either $\varphi(\Delta) \subset \Omega$ or $\varphi(\Delta) \subset \partial \Omega$. When the sequence $( \Omega_n)_{n \in \Nb}$ has uniform convexity properties, we will give conditions on the $\varphi_n$ so that $\varphi(\Delta) \subset \Omega$. This in turn will give us information about limits of geodesics. 

The next definition and Theorem~\ref{thm:m_convex} are motivated by Mercer's work on m-convex sets~\cite{M1993}. 

\begin{definition} \
\begin{enumerate}
\item A $\Cb$-proper convex open set $\Omega$ is called \emph{locally m-convex} if for every $R>0$ there exists $C >0$ such that 
\begin{align*}
\delta_{\Omega}(p; v) \leq C \delta_{\Omega}(p)^{1/m}
\end{align*}
for all $p \in \Omega \cap B_R(0)$ and nonzero $v \in \Cb^d$.

\item Suppose $(\Omega_n)_{n \in \Nb}$ is a sequence of $\Cb$-proper convex open sets, then $\Omega_n$ is called a \emph{locally m-convex sequence} if for every $R>0$ there exists $N,C >0$ such that 
\begin{align*}
\delta_{\Omega_n}(p; v) \leq C \delta_{\Omega_n}(p)^{1/m}
\end{align*}
for all $n > N$, $p \in \Omega_n \cap B_R(0)$, and nonzero $v \in \Cb^d$.
\end{enumerate}
\end{definition}

\begin{observation}\label{obs:no_bd_disks} \
\begin{enumerate}
\item If $\Omega$ is locally $m$-convex then $\partial \Omega$ contains no non-trivial holomorphic disks. To see this, recall that $\partial \Omega$ contains a non-trivial holomorphic disk if and only if $\partial \Omega$ contains a non-trivial complex affine disk (see  Lemma~\ref{lem:affine_disk}), but the latter is clearly impossible for a locally $m$-convex set.
\item Suppose $\Omega_n$ is a locally m-convex sequence of $\Cb$-proper convex open sets converging to a $\Cb$-proper convex open set $\Omega$ is the local Hausdorff topology. Then $\Omega$ is locally $m$-convex. 
\end{enumerate}
\end{observation}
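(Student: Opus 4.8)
The first assertion is handled in the statement modulo one elementary point, which I would fill in as follows. By Lemma~\ref{lem:affine_disk} it is enough to show that a locally $m$-convex $\Omega$ cannot contain a non-trivial complex affine disk $D=\{p_0+\zeta v:\abs{\zeta}<\rho\}\subset\partial\Omega$ (with $v\neq 0$). Pick any $p\in\Omega$ and set $q_\lambda=\lambda p+(1-\lambda)p_0$ for $\lambda\in(0,1)$. Since $p$ is an interior point of $\Omega$ and each $p_0+\zeta v$ lies in $\overline\Omega$, the point $\lambda p+(1-\lambda)(p_0+\zeta v)$ lies in $\Omega$ for all $\lambda\in(0,1)$ and $\abs{\zeta}<\rho$, i.e. $\{q_\lambda+wv:\abs{w}<(1-\lambda)\rho\}\subset\Omega$, whence $\delta_\Omega(q_\lambda;v)\geq(1-\lambda)\rho\norm{v}$; on the other hand $\delta_\Omega(q_\lambda)\leq\norm{q_\lambda-p_0}=\lambda\norm{p-p_0}\to 0$. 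Choosing $R$ with $q_\lambda\in B_R(0)$ for all small $\lambda$, local $m$-convexity would force $(1-\lambda)\rho\norm{v}\leq C\,(\lambda\norm{p-p_0})^{1/m}$, which fails as $\lambda\to 0$.

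For the second assertion the plan is simply to pass the $m$-convexity estimate to the limit one point at a time. Fix $R>0$. Since $(\Omega_n)_n$ is a locally $m$-convex sequence, there are $N$ and $C>0$ with $\delta_{\Omega_n}(q;w)\leq C\,\delta_{\Omega_n}(q)^{1/m}$ for all $n>N$, all $q\in\Omega_n\cap B_R(0)$, and all nonzero $w$. Now fix $p\in\Omega\cap B_R(0)$ and nonzero $v$. By Lemma~\ref{lem:lem2} applied to the compact set $\{p\}$ we have $p\in\Omega_n$ for all large $n$, so $p\in\Omega_n\cap B_R(0)$. I would then invoke the (routine) fact that local Hausdorff convergence of these convex sets forces
\begin{align*}
\delta_{\Omega_n}(p)\longrightarrow\delta_\Omega(p)\qquad\text{and}\qquad\delta_{\Omega_n}(p;v)\longrightarrow\delta_\Omega(p;v),
\end{align*}
the first because a nearest exterior point of $\Omega$ to $p$ is approximated by exterior points of $\Omega_n$ while a near-minimizing exterior point of $\Omega_n$ cannot accumulate in $\Omega$ (using Lemma~\ref{lem:1}), and the second by the same argument carried out inside the fixed complex line $p+\Cb v$, which meets $\Omega$ in a nonempty open subset of that line. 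Granting this, for $n$ sufficiently large we get $\delta_{\Omega_n}(p)\leq 2\delta_\Omega(p)$ and $\delta_\Omega(p;v)\leq 2\delta_{\Omega_n}(p;v)$, so
\begin{align*}
\delta_\Omega(p;v)\leq 2\delta_{\Omega_n}(p;v)\leq 2C\,\delta_{\Omega_n}(p)^{1/m}\leq 2C\bigl(2\delta_\Omega(p)\bigr)^{1/m}=2^{1+1/m}C\,\delta_\Omega(p)^{1/m}.
\end{align*}
Since $2^{1+1/m}C$ depends only on $R$, this is precisely the statement that $\Omega$ is locally $m$-convex.

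I do not expect a serious obstacle here: once one has continuity of $p\mapsto\delta_\Omega(p)$ and $p\mapsto\delta_\Omega(p;v)$ under local Hausdorff limits, the rest is bookkeeping. The only point needing a little care is that $\Omega\cap B_R(0)$ is not compact, so the index $n$ must be chosen depending on $(p,v)$; this is harmless precisely because the resulting constant $2^{1+1/m}C$ is independent of $n$. Accordingly, the write-up of the $\delta$-convergence (which is already implicit in the proofs of Lemma~\ref{lem:containment} and Lemma~\ref{lem:lower}) is the only place I would expect to need genuine argument.
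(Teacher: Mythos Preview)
Your argument is correct. The paper does not supply a separate proof for this observation: the justification for part~(1) is embedded in the statement itself (``the latter is clearly impossible for a locally $m$-convex set''), and part~(2) is simply asserted. What you have written is exactly the routine verification the author is leaving to the reader, and your sketch of why $\delta_{\Omega_n}(p)\to\delta_\Omega(p)$ and $\delta_{\Omega_n}(p;v)\to\delta_\Omega(p;v)$ under local Hausdorff convergence is sound (for the directional version, the cleanest way to organize it is to note that $\delta_\Omega(p;v)$ is the radius of the largest disk in $p+\Cb v$ centered at $p$ and contained in $\Omega$, then argue $\liminf$ via Lemma~\ref{lem:lem2} and $\limsup$ via the fact that a compact subset of $\overline{\Omega_n}\cap B_R(0)$ for infinitely many $n$ lies in $\overline\Omega$, together with the standard convexity fact that the open segment from an interior point to a point of the closure stays in the interior).
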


\begin{example}\label{ex:L_convex}
Suppose that $\Omega = \{ r(z) < 0\}$ is $\Cb$-proper convex open set of finite line type $L$. Then by Proposition~\ref{prop:finite_type}, $\Omega$ is locally $L$-convex.
\end{example}

We first give a sufficient condition for a sequence of complex geodesics $\varphi_n : \Delta \rightarrow \Omega_n$ to converge to a complex geodesic $\varphi:\Delta \rightarrow \Omega$.

\begin{theorem}\label{thm:m_convex}
Suppose $\Omega_n$ is a locally m-convex sequence of $\Cb$-proper convex open sets converging to a $\Cb$-proper convex open set $\Omega$ in the local Hausdorff topology. If  $\varphi_n : \Delta \rightarrow \Omega_n$ is a sequence of complex geodesics and there exists numbers $R >0$ and $a_n,b_n \in (-1,1)$ such that for all $n$
\begin{enumerate}
\item $0 \in [a_n,b_n]$,
\item $\varphi_n([a_n,b_n]) \subset B_R(0)$
\item $\lim_{n \rightarrow \infty} \norm{\varphi_n(a_n)-\varphi_n(b_n)}>0$, and
\item $\delta_{\Omega_n}(\varphi_n(0)) = \max \{ \delta_{\Omega_n}(\varphi_n(t)) : t \in [a_n,b_n]\}$,
\end{enumerate}
then there exists $n_k \rightarrow \infty$ such that $\varphi_{n_k}$ converges locally uniformly to a complex geodesic $\varphi:\Delta \rightarrow \Omega$.
\end{theorem}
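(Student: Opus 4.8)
The plan is to extract a limit $\varphi$ of the maps $\varphi_n$ from the normal‑family statement (Proposition~\ref{prop:normal_family}), to observe that $\varphi$ is automatically a complex geodesic as soon as it takes a value in $\Omega$ (by the continuity of the Kobayashi metric, Theorem~\ref{thm:dist_conv}), and then to exclude the only remaining possibility — that $\varphi$ degenerates to a boundary point — using the local $m$-convexity. Since $0\in[a_n,b_n]$ and $\varphi_n([a_n,b_n])\subset B_R(0)$, the sequence $(\varphi_n(0))$ is bounded, so the first alternative in Proposition~\ref{prop:normal_family} is impossible; after passing to a subsequence, $\varphi_n\to\varphi$ locally uniformly with $\varphi:\Delta\to\overline\Omega$ holomorphic and either $\varphi(\Delta)\subset\Omega$ or $\varphi(\Delta)\subset\partial\Omega$. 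If $\varphi(\Delta)\subset\Omega$, then for fixed $s,t\in\Delta$ the points $\varphi_n(s),\varphi_n(t)$ eventually lie in a fixed compact subset of $\Omega$, so Theorem~\ref{thm:dist_conv} gives $d_\Omega(\varphi(s),\varphi(t))=\lim_n d_{\Omega_n}(\varphi_n(s),\varphi_n(t))=\lim_n d_\Delta(s,t)=d_\Delta(s,t)$, and $\varphi$ is the desired complex geodesic. Moreover, if $\limsup_n\delta_{\Omega_n}(\varphi_n(0))>0$ we may arrange $\delta_{\Omega_n}(\varphi_n(0))\to c>0$ along the subsequence; then $B_{c/2}(\varphi_n(0))\subset\Omega_n$ for large $n$, so $B_{c/2}(\varphi(0))\subset\overline\Omega$ and $\varphi(0)\in\Omega$, which is the case just treated. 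Hence it suffices to rule out $\epsilon_n:=\delta_{\Omega_n}(\varphi_n(0))\to0$.

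So suppose $\epsilon_n\to0$; we seek a contradiction. First $\varphi(0)=\lim_n\varphi_n(0)\in\partial\Omega$ (otherwise a small ball about $\varphi(0)$ would lie in $\Omega_n$ for large $n$, forcing $\epsilon_n$ bounded below), hence $\varphi(\Delta)\subset\partial\Omega$. By Observation~\ref{obs:no_bd_disks} the limit $\Omega$ is again locally $m$-convex, so $\partial\Omega$ contains no non-trivial holomorphic disk; since by Lemma~\ref{lem:affine_disk} a non-constant holomorphic map $\Delta\to\partial\Omega$ would produce one, $\varphi$ is constant, say $\varphi\equiv q\in\partial\Omega$.

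Now recall that a complex geodesic is in particular an infinitesimal isometry, so $K_{\Omega_n}(\varphi_n(t);\varphi_n'(t))=K_\Delta(t;1)=(1-t^2)^{-1}$. Combining this with Lemma~\ref{lem:convex_inf}, with the local $m$-convexity of $(\Omega_n)_{n\in\Nb}$ (which, for the radius $R$ of hypothesis (2), supplies $C=C(R)>0$ and $N$ with $\delta_{\Omega_n}(p;v)\le C\,\delta_{\Omega_n}(p)^{1/m}$ for $n>N$ and $p\in\Omega_n\cap B_R(0)$), and with hypothesis (4), we get for $t\in[a_n,b_n]$ and $n$ large
\[
\frac{1}{1-t^2}=K_{\Omega_n}(\varphi_n(t);\varphi_n'(t))\ \ge\ \frac{\norm{\varphi_n'(t)}}{2\,\delta_{\Omega_n}(\varphi_n(t);\varphi_n'(t))}\ \ge\ \frac{\norm{\varphi_n'(t)}}{2C\,\delta_{\Omega_n}(\varphi_n(t))^{1/m}}\ \ge\ \frac{\norm{\varphi_n'(t)}}{2C\,\epsilon_n^{1/m}},
\]
i.e. $\norm{\varphi_n'(t)}\le 2C\epsilon_n^{1/m}(1-t^2)^{-1}$ on $[a_n,b_n]$. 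Integrating,
\[
\norm{\varphi_n(a_n)-\varphi_n(b_n)}\ \le\ 2C\epsilon_n^{1/m}\int_{a_n}^{b_n}\frac{dt}{1-t^2}\ =\ 2C\epsilon_n^{1/m}\,d_\Delta(a_n,b_n).
\]
If $a_n,b_n$ stay in a fixed compact subset of $(-1,1)$ then $d_\Delta(a_n,b_n)$ is bounded and the right-hand side tends to $0$, contradicting hypothesis (3).

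The remaining case — $a_n\to-1$ or $b_n\to1$, so that $d_\Delta(a_n,b_n)=d_{\Omega_n}(\varphi_n(a_n),\varphi_n(b_n))\to\infty$ — is the main obstacle: one still needs $\epsilon_n^{1/m}d_\Delta(a_n,b_n)\to0$, i.e. an upper bound $d_{\Omega_n}(\varphi_n(a_n),\varphi_n(b_n))=o(\epsilon_n^{-1/m})$. Integrating $K_{\Omega_n}(\cdot;v)\le\norm{v}/\delta_{\Omega_n}(\cdot;v)$ along the Euclidean segment from $p\in\Omega_n\cap B_R(0)$ to a fixed $o$ with $\inf_n\delta_{\Omega_n}(o)>0$, and using the concavity of $\delta_{\Omega_n}$, yields $d_{\Omega_n}(p,o)\le C_R\bigl(1+\log(1/\delta_{\Omega_n}(p))\bigr)$; so it would suffice to show that the endpoints $\varphi_n(a_n),\varphi_n(b_n)$ cannot approach $\partial\Omega_n$ faster than a fixed power of $\epsilon_n$, for then $d_\Delta(a_n,b_n)\le C_R'\log(1/\epsilon_n)$ and $\epsilon_n^{1/m}\log(1/\epsilon_n)\to0$ closes the argument. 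Establishing such a lower bound on $\delta_{\Omega_n}(\varphi_n(a_n))$ and $\delta_{\Omega_n}(\varphi_n(b_n))$ — or else performing an anisotropic rescaling of $\Omega_n$ centered at the deepest point $\varphi_n(0)$, in the spirit of the scaling methods recalled in the introduction, in order to extract a non-degenerate limit geodesic there — is the delicate step, and is where I expect the bulk of the work to lie; it is also precisely the place where the quantitative content of local $m$-convexity, as opposed to the mere absence of boundary disks used above, is needed.
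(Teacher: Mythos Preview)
Your reduction to the case $\epsilon_n:=\delta_{\Omega_n}(\varphi_n(0))\to 0$ is fine, and you have correctly identified the gap: your derivative estimate $\|\varphi_n'(t)\|\le 2C\epsilon_n^{1/m}(1-t^2)^{-1}$ is too weak, because the singularity $(1-|t|)^{-1}$ is not integrable and you have no control on how fast $a_n\to-1$ or $b_n\to 1$. The paper fills this gap not by bounding $\delta_{\Omega_n}(\varphi_n(a_n))$ from below (which seems hard), nor by rescaling, but by using hypothesis~(4) in the \emph{opposite} direction from the way you used it.

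You already noted the upper bound $d_{\Omega_n}(p,o)\le C_1-\tfrac{\alpha}{2}\log\delta_{\Omega_n}(p)$ for $p\in\Omega_n\cap B_R(0)$ (the paper's first lemma, with $\alpha>1$). Apply it to \emph{both} $\varphi_n(0)$ and $\varphi_n(t)$ and add, using that $\varphi_n$ is a complex geodesic:
\[
\tfrac{1}{2}\log\tfrac{1}{1-|t|}\ \le\ d_\Delta(0,t)\ =\ d_{\Omega_n}(\varphi_n(0),\varphi_n(t))\ \le\ 2C_1-\tfrac{\alpha}{2}\log\bigl(\delta_{\Omega_n}(\varphi_n(0))\,\delta_{\Omega_n}(\varphi_n(t))\bigr).
\]
Now hypothesis~(4) gives $\delta_{\Omega_n}(\varphi_n(t))\le\delta_{\Omega_n}(\varphi_n(0))$, so the product on the right dominates $\delta_{\Omega_n}(\varphi_n(t))^2$, and one obtains the \emph{uniform-in-$n$} bound
\[
\delta_{\Omega_n}(\varphi_n(t))\ \le\ C_2\,(1-|t|)^{1/(2\alpha)}\qquad(t\in[a_n,b_n]).
\]
Feeding this (rather than the crude bound $\le\epsilon_n$) into your computation yields
\[
\|\varphi_n'(t)\|\ \le\ \frac{2C\,\delta_{\Omega_n}(\varphi_n(t))^{1/m}}{1-t^2}\ \le\ \frac{M}{(1-|t|)^{1-\beta}},\qquad \beta:=\tfrac{1}{2\alpha m}>0,
\]
and now the singularity \emph{is} integrable. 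A Hardy--Littlewood argument then gives the uniform H\"older estimate $\|\varphi_n(t_1)-\varphi_n(t_2)\|\le C_3|t_1-t_2|^{\beta}$ on $[a_n,b_n]$. This forces (by hypothesis~(3)) that $a_\infty:=\lim a_n\neq\lim b_n=:b_\infty$, and, comparing $\varphi_n(a_\infty+\epsilon)$ and $\varphi_n(b_\infty-\epsilon)$ with the endpoints via the same H\"older bound, that $\varphi(a_\infty+\epsilon)\neq\varphi(b_\infty-\epsilon)$ for small $\epsilon$. Hence $\varphi$ is non-constant, and your earlier argument finishes the proof.
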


The proof of Theorem~\ref{thm:m_convex} closely follows arguments found in Mercer~\cite{M1993} and Chang, Hu, and Lee~\cite{CHL1988}. 

\begin{lemma}
Suppose $\Omega_n$ is a sequence of $\Cb$-proper convex open sets converging to a $\Cb$-proper convex open set $\Omega$ in the local Hausdorff topology. Fix $R>0$ and a point $o \in \Omega$ then there exists $C_1>0$, $N>0$,  and $\alpha >1$ such that 
\begin{align*}
d_{\Omega_n}(p, o) \leq C_1-\frac{\alpha}{2}\log  \delta_{\Omega_n}(p)
\end{align*}
for all for $n>N$ and all $p \in \Omega_n \cap B_R(o)$.
\end{lemma}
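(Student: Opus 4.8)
The plan is to bound $d_{\Omega_n}(p,o)$ from above by the Kobayashi length of a cleverly chosen curve from $o$ to $p$, and to exploit the fact that — by Lemma~\ref{lem:upper} combined with the upper estimate $K_{\Omega_n}(q;v)\le \norm{v}/\delta_{\Omega_n}(q;v)$ — distances in $\Omega_n$ are comparable to distances in $\Omega$ up to a bounded error on compact sets, while the cost of approaching the boundary grows only logarithmically in $\delta_{\Omega_n}(p)$. Concretely, first I would fix $\epsilon>0$ so that $B_{\epsilon}(o)\subset\Omega$, and use Lemma~\ref{lem:lem2} to find $N$ with $B_{\epsilon}(o)\subset\Omega_n$ for $n>N$; shrinking $\epsilon$ I may also assume $B_\epsilon(o)\subset B_R(o)$. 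Then for $p\in\Omega_n\cap B_R(o)$, connect $o$ to $p$ by the straight segment $[o,p]$, parametrized so that the portion near $p$ is handled by Lemma~\ref{lem:quasi_geodesic}.

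More precisely, here are the steps in order. \textbf{Step 1:} Let $\xi_p\in\partial\Omega_n$ be the point where the ray from $o$ through $p$ exits $\Omega_n$, so $p\in[o,\xi_p)$. Since $B_\epsilon(o)\subset\Omega_n$ and $\xi_p\in\partial\Omega_n$, the cone hypothesis of Lemma~\ref{lem:quasi_geodesic} holds: $\delta_{\Omega_n}(o;\vec{o\xi_p})\ge\epsilon$, and since $o,p,\xi_p$ lie in a ball of radius $O(R)$, we have $\delta_{\Omega_n}(o;\vec{o\xi_p})\ge\epsilon'\norm{\xi_p-o}$ for a uniform $\epsilon'>0$ depending only on $R$ and $\epsilon$. \textbf{Step 2:} Apply Lemma~\ref{lem:quasi_geodesic} with $x=\xi_p$: the segment $[o,\xi_p)$ parametrized by $x_t=\xi_p+e^{-2t}(o-\xi_p)$ satisfies $d_{\Omega_n}(x_{t_1},x_{t_2})\le 2(\epsilon')^{-1}\abs{t_1-t_2}$. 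Writing $p=x_{t_p}$ with $e^{-2t_p}\norm{o-\xi_p}=\norm{p-\xi_p}$, we get $d_{\Omega_n}(o,p)=d_{\Omega_n}(x_0,x_{t_p})\le 2(\epsilon')^{-1}t_p = (\epsilon')^{-1}\log\!\big(\norm{o-\xi_p}/\norm{p-\xi_p}\big)$. \textbf{Step 3:} Bound the numerator by $\norm{o-\xi_p}\le 2R$ (say, enlarging constants), and bound the denominator below by $\norm{p-\xi_p}\ge\delta_{\Omega_n}(p)$, since $\xi_p\notin\Omega_n$. This yields $d_{\Omega_n}(o,p)\le (\epsilon')^{-1}\big(\log(2R)-\log\delta_{\Omega_n}(p)\big)$, which is exactly the asserted inequality with $\alpha=2(\epsilon')^{-1}$ (so $\alpha/2=(\epsilon')^{-1}>1$ after possibly shrinking $\epsilon'$) and $C_1=(\epsilon')^{-1}\log(2R)$.

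The main subtlety — really the only place one must be slightly careful — is making the constant $\epsilon'$ in Step 1 uniform in $p$ and $n$: one needs that the segment from $o$ towards any boundary point $\xi\in\partial\Omega_n$ stays away from $\partial\Omega_n$ at a rate proportional to $\norm{\xi-o}$, with a proportionality constant independent of $n$. This follows because $B_\epsilon(o)\subset\Omega_n$ uniformly for $n>N$ (Lemma~\ref{lem:lem2}) and convexity then forces the convex hull of $B_\epsilon(o)$ and $\xi$ to lie in $\Omega_n$, giving the cone estimate $\delta_{\Omega_n}(x_t)\ge\epsilon e^{-2t}$ with the same constant $\epsilon$ for every $n>N$ and every exit point $\xi$, exactly as in the proof of Lemma~\ref{lem:quasi_geodesic}. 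The requirement $\alpha>1$ is then arranged by choosing $\epsilon<2$ at the outset (legitimate since $B_\epsilon(o)$ can always be shrunk), so that $\epsilon'<2$ and $\alpha=2(\epsilon')^{-1}>1$. Everything else is a direct application of Lemma~\ref{lem:convex_lower_bd_2}'s companion upper bound as packaged in Lemma~\ref{lem:quasi_geodesic}.
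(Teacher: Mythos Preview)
Your approach is essentially the same as the paper's (Case~2 of its proof), but you have overlooked a genuine case. In Step~1 you assert that ``$o,p,\xi_p$ lie in a ball of radius $O(R)$'' and in Step~3 that $\norm{o-\xi_p}\le 2R$. Neither holds in general: the $\Omega_n$ are only assumed $\Cb$-proper, hence may be unbounded, and the real ray from $o$ through $p$ may exit $\Omega_n$ arbitrarily far from $o$, or never exit at all. Without a uniform bound on $\norm{\xi_p-o}$ you cannot produce the uniform $\epsilon'$ of Step~1, and Lemma~\ref{lem:quasi_geodesic} gives a quasi-geodesic constant $2(\epsilon')^{-1}$ that blows up with $\norm{\xi_p-o}$; the required $\alpha$ would then depend on $p$ and $n$.

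The fix is exactly the paper's two-case split. If the ray $\{o+s(p-o):s\ge 0\}$ does meet $\partial\Omega_n$ at some $x\in B_{R+1}(o)$, your argument goes through verbatim with $\epsilon'=\delta/(R+1)$, giving the logarithmic bound. If it does not, then the convex hull of $B_\delta(o)$ and the segment out to radius $R+1$ lies in $\Omega_n$, so $\delta_{\Omega_n}(o+s(p-o))\ge \delta/(R+1)$ for all $s\in[0,1]$; integrating $K_{\Omega_n}\le \norm{v}/\delta_{\Omega_n}$ along the straight segment gives $d_{\Omega_n}(o,p)\le (R+1)R/\delta$, a bound independent of $\delta_{\Omega_n}(p)$, which is then absorbed into $C_1$ (using that $\delta_{\Omega_n}(p)$ is uniformly bounded above on $B_R(o)$). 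Adding this case completes the proof.
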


\begin{proof}
By Lemma~\ref{lem:lem2}, there exists $\delta >0$ such that for all $n$ sufficiently large $B_{\delta}(o) \subset \Omega_n$. Next define
\begin{align*}
T:= \sup\{ \delta_{\Omega_n}(p) : p \in \Omega_n \cap B_R(o), n \in \Nb\},
\end{align*}
\begin{align*}
\alpha := 2(R+1)/\delta,
\end{align*}
and
\begin{align*}
C_1 := \max\left\{ \frac{(R+1)R}{\delta} + \frac{\alpha}{2}\log(T), \frac{\alpha}{2}\log(R+1)\right\}.
\end{align*}
 
Let $p \in B_R(o) \cap \Omega_n$. We will consider two cases. \newline

\noindent \textbf{Case 1:} Suppose that the line segment $\{ o+s (p-o) : s\geq 0\}$ does not intersect $\partial \Omega_n$ at a point in $B_{R+1}(o)$. Then since $\Omega_n$ contains the interior of the convex hull of $B_{\delta}(o)$ and $\{ o + s(p-o) : s\geq0\} \cap B_{R+1}(o)$ we see that 
\begin{align*}
\delta_{\Omega_n}( o+s (p-o)) \geq \frac{\delta}{R+1}
\end{align*}
for $s \in [0,1]$. Which implies that 
\begin{align*}
d_{\Omega_n}(x,p)
& \leq \int_0^1 K_{\Omega_n}(o+s(p-o); p-o) ds \leq \int_0^1 \frac{ \norm{p-o}}{\delta_{\Omega_n}( o+s (p-o))} ds \\
& \leq \frac{R+1}{\delta}\norm{p-o} \leq \frac{(R+1)R}{\delta}.
\end{align*}
So
\begin{align*}
d_{\Omega_n}(x,p) 
&\leq  \frac{(R+1)R}{\delta} +\frac{\alpha}{2} \log\left(T\right)-\frac{\alpha}{2}\log \delta_{\Omega_n}(p) \\
&\leq C_1-\frac{\alpha}{2}\log  \delta_{\Omega_n}(p).
\end{align*}

\noindent \textbf{Case 2:} Suppose that the line segment $\{ o+s (p-o) : s\geq 0\}$ intersects $\partial \Omega_n$ at a point in $x \in B_{R+1}(o)$. Then
\begin{align*}
\delta_{\Omega_n}(o; \vec{ox}) \geq \delta_{\Omega_n}(o) \geq \delta \geq \frac{\delta}{R+1} \norm{x-o}.
\end{align*}
So by Lemma~\ref{lem:quasi_geodesic}
\begin{align*}
d_{\Omega_n}(o, x+e^{-2t}(o-x)) \leq \alpha t
\end{align*}
for $t \geq 0$. Since  $p = x+e^{-2t_p}(o-x)$ where
\begin{align*}
t_p = -\frac{1}{2}\log\left( \frac{\norm{p-x}}{\norm{x-o}} \right) \leq \frac{1}{2}\log(R+1) -\frac{1}{2}\log\delta_{\Omega_n}(p)
\end{align*}
we then have 
\begin{align*}
d_{\Omega_n}(o,p) 
&= d_{\Omega_n}(o, x+e^{-2t_p}(o-x)) \leq \alpha t_p \leq  \frac{\alpha}{2} \log(R+1)  - \frac{\alpha}{2}\log\delta_{\Omega_n}(p) \\
& \leq C_1-\frac{\alpha}{2}\log  \delta_{\Omega_n}(p).
\end{align*}
\end{proof}

\begin{lemma}
Suppose $\Omega_n, \Omega, \varphi_n, R, a_n, b_n$ are as in the statement of Theorem~\ref{thm:m_convex}. Then there exists $N>0$, $C_2>0$, and $\alpha >1$ such that 
\begin{align*}
\delta_{\Omega_n}(\varphi_n(t)) \leq C_2 (1- \abs{t})^{1/(2\alpha)}.
\end{align*}
for all $n > N$ and $t \in [a_n,b_n]$.
\end{lemma}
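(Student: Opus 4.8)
The plan is to combine the preceding lemma --- the logarithmic bound $d_{\Omega_n}(p,o) \leq C_1 - \tfrac{\alpha}{2}\log\delta_{\Omega_n}(p)$ for $p \in \Omega_n \cap B_R(o)$ --- with the geodesic identity $d_{\Omega_n}(\varphi_n(s),\varphi_n(t)) = d_{\Delta}(s,t)$ and the extremality hypothesis (4) of Theorem~\ref{thm:m_convex}. Neither m-convexity nor hypothesis (3) will be needed here.

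First I would fix an auxiliary point $o \in \Omega$ and apply the preceding lemma with radius $R + \norm{o}$, so that it covers every point of $B_R(0) \cap \Omega_n$; this produces constants $C_1 > 0$, $N > 0$, and $\alpha > 1$ (which I will carry over to the statement), and rewriting its conclusion gives
\[
\delta_{\Omega_n}(p) \leq e^{2C_1/\alpha}\, e^{-\frac{2}{\alpha} d_{\Omega_n}(p,o)}
\]
for all $n > N$ and all $p \in \Omega_n \cap B_R(0)$. By hypothesis (2) of Theorem~\ref{thm:m_convex} this applies both at $p = \varphi_n(t)$, for any $t \in [a_n,b_n]$, and at $p = \varphi_n(0)$.

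Next, writing $\delta_n := \delta_{\Omega_n}(\varphi_n(0))$, I would estimate, for $t \in [a_n,b_n]$,
\[
d_{\Omega_n}(\varphi_n(t),o) \geq d_{\Omega_n}(\varphi_n(t),\varphi_n(0)) - d_{\Omega_n}(\varphi_n(0),o) \geq d_{\Delta}(0,t) - C_1 + \tfrac{\alpha}{2}\log\delta_n ,
\]
using that $\varphi_n$ is a complex geodesic for the first term and the preceding lemma at $\varphi_n(0)$ for the second. Feeding this into the displayed bound, and using $e^{-\frac{2}{\alpha} d_{\Delta}(0,t)} = \left(\frac{1-\abs{t}}{1+\abs{t}}\right)^{1/\alpha} \leq (1-\abs{t})^{1/\alpha}$, produces
\[
\delta_{\Omega_n}(\varphi_n(t)) \leq \frac{e^{4C_1/\alpha}}{\delta_n}\,(1-\abs{t})^{1/\alpha}.
\]
On the other hand, the extremality hypothesis (4) gives the trivial bound $\delta_{\Omega_n}(\varphi_n(t)) \leq \delta_n$.

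The one genuine idea is the final interpolation: for $a,b > 0$ one has $\min\{a,\, b/a\} \leq \sqrt{b}$, so applying this with $a = \delta_n$ and $b = e^{4C_1/\alpha}(1-\abs{t})^{1/\alpha}$ gives
\[
\delta_{\Omega_n}(\varphi_n(t)) \leq e^{2C_1/\alpha}\,(1-\abs{t})^{1/(2\alpha)} ,
\]
which is the lemma with $C_2 = e^{2C_1/\alpha}$, the same $\alpha$, and the same $N$. The reason for this device --- and the obstacle it sidesteps --- is that it removes any need to bound $\delta_n$ away from $0$, equivalently to bound $d_{\Omega_n}(\varphi_n(0),o)$ from above uniformly in $n$: when $\delta_n$ is small the second bound is weak but the trivial bound is strong, and conversely. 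A more direct argument that only lower-bounds $d_{\Omega_n}(\varphi_n(t),o)$ by the triangle inequality runs aground exactly at that point.
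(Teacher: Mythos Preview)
Your proof is correct and is essentially the same as the paper's: the paper combines the two applications of the preceding lemma via the triangle inequality $d_{\Omega_n}(\varphi_n(0),\varphi_n(t)) \leq d_{\Omega_n}(\varphi_n(0),o)+d_{\Omega_n}(\varphi_n(t),o)$ to obtain $\delta_{\Omega_n}(\varphi_n(0))^{\alpha}\delta_{\Omega_n}(\varphi_n(t))^{\alpha} \leq e^{4C_1}(1-|t|)$, and then uses hypothesis~(4) to replace $\delta_{\Omega_n}(\varphi_n(0))$ by $\delta_{\Omega_n}(\varphi_n(t))$. Your reverse-triangle-inequality followed by the $\min\{a,b/a\}\leq\sqrt{b}$ trick is an algebraically equivalent repackaging of the same computation, yielding the identical constant $C_2=e^{2C_1/\alpha}$.
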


\begin{proof}
By the previous Lemma there exists $C_1>0$ and $\alpha >1$ such that
\begin{align*}
d_{\Omega}(\varphi_n(0), \varphi_n(t))
& \leq d_{\Omega}(\varphi_n(0), o) + d_{\Omega}(\varphi_n(t),o)  \\
&\leq 2C_1 - \frac{1}{2}\log \Big( \delta_{\Omega_n}(\varphi_n(0))^{\alpha} \delta_{\Omega_n}(\varphi_n(t))^{\alpha} \Big).
\end{align*}
for all $n$ sufficiently large and $t \in [a_n,b_n]$. 

Since $\varphi_n$ is a complex geodesic 
\begin{align*}
d_{\Omega}(\varphi_n(0), \varphi_n(t)) = d_{\Delta}(0,t) =\frac{1}{2} \log \frac{1+\abs{t}}{1-\abs{t}} \geq \frac{1}{2} \log \frac{1}{1-\abs{t}}.
\end{align*}
Hence
\begin{align*}
 \delta_{\Omega_n}(\varphi_n(0))^{\alpha} \delta_{\Omega_n}(\varphi_n(t))^{\alpha} \leq e^{4C_1} (1-\abs{t}).
 \end{align*}
Since  $\delta_{\Omega_n}(\varphi_n(t)) \leq \delta_{\Omega_n}(\varphi_n(0))$ we have that 
\begin{equation*}
 \delta_{\Omega_n}(\varphi_n(t)) \leq e^{2C_1/\alpha} (1-\abs{t})^{1/(2\alpha)}. \qedhere
 \end{equation*}
\end{proof}

\begin{lemma}\label{lem:lipschitz}
Suppose $\Omega_n, \Omega, \varphi_n, R, a_n, b_n$ are as in the statement of Theorem~\ref{thm:m_convex}. Then there exists $N>0$, $C_3>0$, and $\alpha >1$ such that 
\begin{align*}
\norm{ \varphi_n(t_1) - \varphi_n(t_2)} \leq C_3\abs{t_1-t_2}^{1/(2\alpha m)}
\end{align*}
for all $n >N$ and $t_1,t_2 \in [a_n, b_n]$.
\end{lemma}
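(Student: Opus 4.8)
The plan is to estimate $\norm{\varphi_n(t_1)-\varphi_n(t_2)}$ by integrating the infinitesimal Kobayashi metric along the geodesic and using the two preceding lemmas together with the local $m$-convexity hypothesis. Since $\varphi_n$ is a complex geodesic, for $t_1, t_2 \in [a_n,b_n]$ we have $d_{\Omega_n}(\varphi_n(t_1),\varphi_n(t_2)) = d_{\Delta}(t_1,t_2)$, and the derivative satisfies $K_{\Omega_n}(\varphi_n(t);\varphi_n'(t)) = K_{\Delta}(t;1) = \frac{1}{1-t^2}$. Combining the lower estimate from Lemma~\ref{lem:convex_inf}, namely $K_{\Omega_n}(\varphi_n(t);\varphi_n'(t)) \geq \frac{\norm{\varphi_n'(t)}}{2\delta_{\Omega_n}(\varphi_n(t);\varphi_n'(t))}$, with the local $m$-convexity bound $\delta_{\Omega_n}(\varphi_n(t);\varphi_n'(t)) \leq C\,\delta_{\Omega_n}(\varphi_n(t))^{1/m}$ (valid on $B_R(0)$, which contains $\varphi_n([a_n,b_n])$ by hypothesis (2)), I get
\begin{align*}
\norm{\varphi_n'(t)} \leq 2\,\delta_{\Omega_n}(\varphi_n(t);\varphi_n'(t)) \cdot K_{\Omega_n}(\varphi_n(t);\varphi_n'(t)) \leq 2C\, \delta_{\Omega_n}(\varphi_n(t))^{1/m} \cdot \frac{1}{1-t^2}.
\end{align*}

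Next I would substitute the bound $\delta_{\Omega_n}(\varphi_n(t)) \leq C_2(1-\abs{t})^{1/(2\alpha)}$ from the previous lemma. This yields, for $t \in [a_n,b_n]$,
\begin{align*}
\norm{\varphi_n'(t)} \leq 2C\, C_2^{1/m} (1-\abs{t})^{1/(2\alpha m)} \cdot \frac{1}{1-t^2} \leq C_4 (1-\abs{t})^{1/(2\alpha m) - 1}
\end{align*}
for a suitable constant $C_4$ (using $1-t^2 = (1-\abs{t})(1+\abs{t}) \geq 1-\abs{t}$). Then for $a_n \leq t_1 \leq t_2 \leq b_n$ I integrate:
\begin{align*}
\norm{\varphi_n(t_1)-\varphi_n(t_2)} \leq \int_{t_1}^{t_2} \norm{\varphi_n'(t)}\, dt \leq C_4 \int_{t_1}^{t_2} (1-\abs{t})^{1/(2\alpha m)-1}\, dt.
\end{align*}
Since the exponent $\beta := 1/(2\alpha m) \in (0,1)$, the antiderivative of $(1-\abs{t})^{\beta-1}$ is $-\frac{1}{\beta}(1-\abs{t})^{\beta}$ for $t \geq 0$ (and $+\frac{1}{\beta}(1+t)^\beta$ for $t \leq 0$), a function that is Hölder-$\beta$ in $t$; one checks directly that $\big|(1-\abs{t_2})^\beta - (1-\abs{t_1})^\beta\big| \leq \abs{t_1-t_2}^\beta$ for the relevant range, giving $\norm{\varphi_n(t_1)-\varphi_n(t_2)} \leq C_3 \abs{t_1-t_2}^{1/(2\alpha m)}$ with $C_3 = C_4/\beta$ (times a harmless constant). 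The case where $t_1 < 0 < t_2$ or both are negative is handled by splitting the integral at $0$ or by symmetry, using the same elementary Hölder inequality for the map $s \mapsto s^\beta$ on $[0,1]$.

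The main technical point — really the only place any care is needed — is the elementary estimate that the function $F(t) = -\frac{1}{\beta}(1-\abs t)^\beta$, the integral of $(1-\abs t)^{\beta-1}$, is Hölder continuous of exponent $\beta$ with a controlled constant on the interval in question; this is just the inequality $\abs{x^\beta - y^\beta} \leq \abs{x-y}^\beta$ for $x,y \in [0,\infty)$ and $\beta \in (0,1)$, applied with $x = 1-\abs{t_1}$, $y=1-\abs{t_2}$, together with $\abs{(1-\abs{t_1})-(1-\abs{t_2})} = \big|\abs{t_2}-\abs{t_1}\big| \leq \abs{t_1-t_2}$. Everything else is a direct chaining of the two previous lemmas, Lemma~\ref{lem:convex_inf}, and the definition of a locally $m$-convex sequence, with the hypothesis $\varphi_n([a_n,b_n]) \subset B_R(0)$ ensuring the $m$-convexity constant applies. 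Note hypothesis (3) of Theorem~\ref{thm:m_convex} is not needed for this lemma; it will be used afterward to rule out the degenerate limit.
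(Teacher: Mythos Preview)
Your proof is correct and follows essentially the same route as the paper: you obtain the identical derivative bound $\norm{\varphi_n'(t)} \leq M(1-\abs{t})^{\beta-1}$ with $\beta = 1/(2\alpha m)$ by combining Lemma~\ref{lem:convex_inf}, the complex-geodesic identity $K_{\Omega_n}(\varphi_n(t);\varphi_n'(t)) = (1-t^2)^{-1}$, the local $m$-convexity of the sequence, and the previous lemma, and then integrate. The only difference is cosmetic: the paper packages the integration step as the classical Hardy--Littlewood argument (splitting into the cases $t_2-t_1 \lessgtr 1-t_2$ for same-sign $t_i$, and handling $t_1<0<t_2$ by a crude bound for $\abs{t_1-t_2}<\epsilon$), whereas you invoke the concavity inequality $\abs{x^\beta - y^\beta}\leq\abs{x-y}^\beta$ directly, which is slightly cleaner. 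One small caution: the function $-\tfrac{1}{\beta}(1-\abs{t})^\beta$ is \emph{not} an antiderivative of $(1-\abs{t})^{\beta-1}$ for $t<0$ (the sign flips), so the cross-zero case really does require the split at $0$ you mention; after splitting one gets $\tfrac{1}{\beta}\big[(1-(1-t_2)^\beta)+(1-(1+t_1)^\beta)\big] \leq \tfrac{1}{\beta}(t_2^\beta + \abs{t_1}^\beta) \leq \tfrac{2}{\beta}\abs{t_2-t_1}^\beta$, which is fine.
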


\begin{proof}
By Lemma~\ref{lem:convex_inf} we have
\begin{align*}
\frac{\norm{ \varphi_n^\prime(t)}}{2\delta_{\Omega_n}(\varphi_n(t); \varphi_n^\prime(t))} \leq K_{\Omega_n}(\varphi_n(t); \varphi_n^\prime(t)) 
\end{align*}
and since $\varphi_n$ is a complex geodesic 
\begin{align*}
K_{\Omega_n}(\varphi_n(t); \varphi_n^\prime(t)) = K_{\Delta}(t; 1) = \frac{1}{1-\abs{t}^2} \leq \frac{1}{1-\abs{t}}.
\end{align*}
Since $(\Omega_n)_{n \in \Nb}$ is a locally convex sequence there exists $C>0$ such that 
\begin{align*}
\delta_{\Omega_n}(\varphi_n(t); \varphi_n^\prime(t)) \leq C \delta_{\Omega_n}(\varphi_n(t))^{1/m}
\end{align*}
for $t \in [a_n,b_n]$ and $n$ sufficiently large. Then by the previous lemma there exists $M >0$ such that 
\begin{align*}
\norm{ \varphi_n^\prime(t)} 
&\leq \frac{2 \delta_{\Omega_n}(\varphi_n(t); \varphi_n^\prime(t))}{1-\abs{t}} \leq  \frac{2 C \delta_{\Omega_n}(\varphi_n(t))^{1/m}}{1-\abs{t}} \\
& \leq \frac{M}{(1-\abs{t})^{1-1/(2\alpha m)}}.
\end{align*}

The rest of the lemma follows the proof of the Hardy-Littlewood theorem (see for instance~\cite[Theorem 2.6.26]{A1989}), but for convenience we will provide the argument. Let $\beta = 1/(2\alpha m)$. If we fix $\epsilon \in (0,1)$ then it is enough to show that there exists $C_3 >0$ such that
\begin{align*}
\norm{\varphi_n(t_1) - \varphi_n(t_2)} \leq C_3\abs{t_1-t_2}^{\beta}
\end{align*}
for all $t_1, t_2 \in [a_n,b_n]$ with $\abs{t_1-t_2} < \epsilon$.  

First suppose that $0 \leq t_1 \leq t_2 \leq b_n$. Then
\begin{align*}
\norm{\varphi_n(t_1)-\varphi_n(t_2)} \leq \int_{t_1}^{t_2} \norm{\varphi_n^\prime(t)} dt \leq M \int_{t_1}^{t_2} \frac{dt}{(1-t)^{1-\beta}}.
\end{align*}
If $t_2-t_1 \leq 1-t_2$ then 
\begin{align*}
\norm{\varphi_n(t_1)-\varphi_n(t_2)} \leq M\frac{ t_2-t_1}{(1-t_2)^{1-\beta}} \leq M \abs{t_2-t_1}^{\beta}.
\end{align*}
If $t_2 - t_1 \geq 1-t_2$ then 
\begin{align*}
\norm{\varphi_n(t_1)-\varphi_n(t_2)} \leq \left. M\frac{(1-t)^{\beta}}{\beta} \right|_{t_1}^{t_2} \leq \frac{M}{\beta} \abs{t_2-t_1}^{\beta}.
\end{align*}
As similar argument establishes the necessary bounds when $a_n < t_2 \leq t_1 \leq 0$. 

Finally suppose that $a_n < t_1 < 0 < t_2 < b_n$ and $\abs{t_1-t_2} < \epsilon$. Then 
\begin{align*}
\norm{\varphi_n(t_1)-\varphi_n(t_2)} 
&\leq M \int_{t_1}^{t_2} \frac{dt}{(1-\abs{t})^{1-\beta}} \leq M  \int_{t_1}^{t_2} \frac{dt}{(1-\epsilon)^{1-\beta}}\\
& = \frac{M}{(1-\epsilon)^{1-\beta}}\abs{t_1-t_2} \leq \frac{M}{(1-\epsilon)^{1-\beta}}\abs{t_1-t_2}^{\beta}.
\end{align*}
\end{proof}

\begin{proof}[Proof of Theorem~\ref{thm:m_convex}]
By Proposition~\ref{prop:normal_family}, we can pass to a subsequence such that $\varphi_n$ converges locally uniformly to a holomorphic map $\varphi: \Delta \rightarrow \overline{\Omega}$ and either $\varphi(\Delta) \subset \Omega$ or $\varphi(\Delta) \subset \partial \Omega$. Since $\Omega$ is locally $m$-convex, if $\varphi$ is non-constant then by Observation~\ref{obs:no_bd_disks} we must have that $\varphi(\Delta) \subset \Omega$. In this case we immediately see that $\varphi:\Delta \rightarrow \Omega$ is a complex geodesic by Theorem~\ref{thm:dist_conv}. So it is enough to show that $\varphi$ is non-constant.

By the above lemma there exists $\beta >1$ and $C>0$ such that 
\begin{align*}
\norm{\varphi_n(t_1) - \varphi_n(t_2)} \leq C\abs{t_1-t_2}^{\beta}
\end{align*}
for all $n$ sufficiently large and $t_1,t_2 \in [a_n,b_n]$. 

By passing to a subsequence we can assume that $a_n \rightarrow a_{\infty}$ and $b_n \rightarrow b_{\infty}$. Since 
\begin{align*}
0 < \lim_{n \rightarrow \infty} \norm{\varphi_n(a_n) - \varphi_n(b_n))} \leq \lim_{n \rightarrow \infty} C\abs{a_n-b_n}^{\beta}=C\abs{a_\infty-b_\infty}^{\beta}
\end{align*}
we must have that $a_{\infty} \neq b_{\infty}$. 

Now fix $\epsilon >0$ sufficiently small. By passing to a subsequence we may suppose that $[a_{\infty}+\epsilon, b_{\infty}-\epsilon] \subset [a_n,b_n]$ for all $n$. Then 
\begin{align*}
\norm{\varphi(a_{\infty}+\epsilon)-\varphi(b_{\infty}-\epsilon)} 
&= \lim_{n \rightarrow \infty} \norm{\varphi_n(a_{\infty}+\epsilon)-\varphi_n(b_{\infty}-\epsilon)} \\
& \geq \lim_{n \rightarrow \infty} \norm{\varphi_n(a_{n})-\varphi_n(b_{n})} -\norm{\varphi_n(a_{n})-\varphi_n(a_{\infty}+\epsilon)}\\
& \quad \quad -\norm{\varphi_n(b_{n})-\varphi_n(b_{\infty}-\epsilon)}  \\
& \geq \lim_{n \rightarrow \infty} \norm{\varphi_n(a_{n})-\varphi_n(b_{n})} - 2C \abs{\epsilon}^{\beta}.
\end{align*}
So for $\epsilon$ small enough, we see that 
\begin{align*}
\norm{\varphi(a_{\infty}+\epsilon)-\varphi(b_{\infty}-\epsilon)} >0
\end{align*}
and thus $\varphi$ is not constant. Hence by the remarks at the start of the proof $\varphi: \Delta \rightarrow \Omega$ is a complex geodesic. 
\end{proof}

We now turn our attention to general geodesics. 

\begin{proposition}\label{prop:m_convex}
Suppose $\Omega_n$ is a locally m-convex sequence of $\Cb$-proper convex open sets converging to a $\Cb$-proper convex open set $\Omega$ in the local Hausdorff topology. Assume $\sigma_n : \Rb \rightarrow \Omega_n$ is a sequence of geodesics such that there exists $a_n \leq b_n$ and $R >0$ satisfying 
\begin{enumerate}
\item $\sigma_n([a_n,b_n]) \subset B_R(0)$,
\item $\lim_{n \rightarrow \infty} \norm{\sigma_n(a_n)-\sigma_n(b_n)} > 0$, 
\end{enumerate}
then there exists $T_n \in [a_n,b_n]$ such that a subsequence of $t\rightarrow \sigma_n(t+T_n)$ converges locally uniformly to a geodesic $\sigma:\Rb \rightarrow \Omega$.  
\end{proposition}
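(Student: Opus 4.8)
The plan is to reduce to Theorem~\ref{thm:m_convex} by recognizing the geodesic segment $\sigma_n|_{[a_n,b_n]}$ as the image of a real diameter of a complex geodesic, and then to run an Arzel\`a--Ascoli argument for the recentered geodesics.

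First I would pass to $n$ large enough that the relevant portion of $\Omega_n$ is strictly convex: local $m$-convexity near $B_R(0)$ forbids line segments in $\partial\Omega_n\cap B_R(0)$, since pushing the midpoint of such a segment slightly inward gives a point of arbitrarily small $\delta_{\Omega_n}$ whose complex line section in the segment's direction has width bounded below, contradicting the $m$-convexity bound. Hence the geodesic segment $\sigma_n|_{[a_n,b_n]}$ joining $\sigma_n(a_n)$ and $\sigma_n(b_n)$ coincides, up to reparametrization, with the image of a real diameter of the complex geodesic $\varphi_n:\Delta\to\Omega_n$ through these points furnished by Proposition~\ref{prop:connecting} (geodesics between two points of a strictly convex $\Cb$-proper convex domain are unique). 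Precomposing $\varphi_n$ with an automorphism of $\Delta$, I arrange that this segment corresponds to a real interval $[\alpha_n,\beta_n]\subset(-1,1)$ with $\varphi_n(\alpha_n)=\sigma_n(a_n)$, $\varphi_n(\beta_n)=\sigma_n(b_n)$, and with $\varphi_n(0)$ a point of maximal $\delta_{\Omega_n}$-depth on the segment, so that $0\in[\alpha_n,\beta_n]$.

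Then I would apply Theorem~\ref{thm:m_convex} to the sequence $\varphi_n$ with this $R$ and these $\alpha_n,\beta_n$. Hypotheses (1), (3), (4) of that theorem hold by construction (hypothesis (2) of the present proposition gives (3)), and hypothesis (2) there is exactly hypothesis (1) here, since $\varphi_n([\alpha_n,\beta_n])=\sigma_n([a_n,b_n])\subset B_R(0)$. So along a subsequence $n_k$ the maps $\varphi_{n_k}$ converge locally uniformly to a complex geodesic $\varphi:\Delta\to\Omega$; in particular $\varphi_{n_k}(0)\to\varphi(0)\in\Omega$. Choosing $T_{n_k}\in[a_{n_k},b_{n_k}]$ with $\sigma_{n_k}(T_{n_k})=\varphi_{n_k}(0)$ and setting $\wt{\sigma}_{n_k}(t):=\sigma_{n_k}(t+T_{n_k})$, I get geodesics $\wt{\sigma}_{n_k}:\Rb\to\Omega_{n_k}$ with $\wt{\sigma}_{n_k}(0)\to\varphi(0)\in\Omega$. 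It remains to extract a locally uniformly convergent subsequence on $\Rb$. Fix $S>0$ and a deep basepoint $o$ near $\varphi(0)$. For $|t|\le S$ one has $d_{\Omega_{n_k}}(\wt{\sigma}_{n_k}(t),\wt{\sigma}_{n_k}(0))=|t|$ and $d_{\Omega_{n_k}}(\wt{\sigma}_{n_k}(0),o)$ bounded (Lemma~\ref{lem:upper}); hence $\wt{\sigma}_{n_k}(t)$ stays in a fixed Euclidean ball by a uniform properness statement — a sequence of complex geodesics through a bounded set cannot run to infinity, by Proposition~\ref{prop:connecting} and Proposition~\ref{prop:normal_family}. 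Moreover $\wt{\sigma}_{n_k}(t)$ stays bounded away from $\partial\Omega$: otherwise $\delta_{\Omega_{n_k}}(\wt{\sigma}_{n_k}(t))\to0$, so by local $m$-convexity $\delta_{\Omega_{n_k}}(\wt{\sigma}_{n_k}(t);v)\to0$ in every direction, and Lemma~\ref{lem:convex_lower_bd_2} (using concavity of $\delta_{\Omega_{n_k}}$ along lines to dispatch the case where the ray from $o$ fails to exit $\Omega_{n_k}$) forces $d_{\Omega_{n_k}}(o,\wt{\sigma}_{n_k}(t))\to\infty$, contradicting $d_{\Omega_{n_k}}(o,\wt{\sigma}_{n_k}(t))\le d_{\Omega_{n_k}}(o,\wt{\sigma}_{n_k}(0))+|t|$. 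Thus $\wt{\sigma}_{n_k}([-S,S])$ lies in a fixed compact $K_S\subset\Omega$ for $k$ large, and the speed bound $\norm{\wt{\sigma}_{n_k}'(t)}\le2\delta_{\Omega_{n_k}}(\wt{\sigma}_{n_k}(t);\wt{\sigma}_{n_k}'(t))\le2C\,\delta_{\Omega_{n_k}}(\wt{\sigma}_{n_k}(t))^{1/m}$ (Lemma~\ref{lem:convex_inf}, unit speed, local $m$-convexity) is uniform on $K_S$, giving equicontinuity. Arzel\`a--Ascoli and a diagonal argument yield a further subsequence converging locally uniformly on $\Rb$ to $\sigma:\Rb\to\Omega$, and $d_{\Omega}(\sigma(t_1),\sigma(t_2))=\lim_k d_{\Omega_{n_k}}(\wt{\sigma}_{n_k}(t_1),\wt{\sigma}_{n_k}(t_2))=|t_1-t_2|$ by Theorem~\ref{thm:dist_conv}, so $\sigma$ is a geodesic.

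The main obstacle is the reduction in the second paragraph, specifically verifying hypothesis (2) of Theorem~\ref{thm:m_convex}: this needs the complex geodesic through the endpoints of $\sigma_n|_{[a_n,b_n]}$ to have its geodesic segment contained in $B_R(0)$, which I would obtain from uniqueness of geodesics under the strict convexity provided by local $m$-convexity — a point worth isolating as a lemma. The uniform properness input and the "no escape to $\partial\Omega$" estimate in the last paragraph are the remaining, more routine, technical points.
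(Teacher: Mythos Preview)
Your reduction to Theorem~\ref{thm:m_convex} rests on identifying $\sigma_n|_{[a_n,b_n]}$ with the real diameter of the complex geodesic $\varphi_n$ through its endpoints, and you justify this by uniqueness of real Kobayashi geodesics in strictly convex $\Cb$-proper domains. This uniqueness is not established in the paper and is not a routine consequence of strict convexity: Lempert's uniqueness of \emph{complex} geodesics in bounded strictly convex domains does not immediately give uniqueness of \emph{real} geodesics, and here the $\Omega_n$ need not even be bounded. You flag this as ``a point worth isolating as a lemma,'' but it is the entire content of the reduction---without it you have no control over where the real diameter of $\varphi_n$ goes between the endpoints, so you cannot verify hypothesis~(2) of Theorem~\ref{thm:m_convex}. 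Note also that the definition of a locally $m$-convex \emph{sequence} lets the constants depend on $R$ and on a threshold $N(R)$, so a fixed $\Omega_n$ is not guaranteed to be globally strictly convex.

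The paper sidesteps uniqueness entirely. It chooses a Euclidean near-midpoint $m_n\in[a_n,b_n]$ with $\norm{\sigma_n(a_n)-\sigma_n(m_n)}$ and $\norm{\sigma_n(m_n)-\sigma_n(b_n)}$ both bounded below, and passes to a subsequence with $\sigma_n(m_n)\to y\in\overline{\Omega}$. If $y\in\Omega$, Arzel\`a--Ascoli with $T_n=m_n$ finishes (your final paragraph handles this part correctly, if somewhat over-elaborately). If $y\in\partial\Omega$, the paper takes \emph{two} complex geodesics, $\varphi_n$ through $\sigma_n(a_n),\sigma_n(m_n)$ and $\phi_n$ through $\sigma_n(m_n),\sigma_n(b_n)$, truncates each to stay in $B_{R+1}(0)$ (the midpoint ensures the truncated endpoints remain separated), and applies Theorem~\ref{thm:m_convex} to each. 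Because $\sigma_n$ is a geodesic, the concatenation of $\varphi_n|_{[u_n,v_n]}$ and $\phi_n|_{[s_n,t_n]}$ is also a geodesic, so
\[
d_{\Omega_n}(\varphi_n(0),\phi_n(0))=d_{\Omega_n}(\varphi_n(0),\sigma_n(m_n))+d_{\Omega_n}(\sigma_n(m_n),\phi_n(0))\to\infty
\]
since $\sigma_n(m_n)\to\partial\Omega$, contradicting $\varphi(0),\phi(0)\in\Omega$. The argument uses only existence of complex geodesics and additivity of distance along $\sigma_n$; no uniqueness of any kind is needed.
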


\begin{proof}
Suppose 
\begin{align*}
\lim_{n \rightarrow \infty} \norm{\sigma_n(a_n)-\sigma_n(b_n)} = \epsilon
\end{align*}
and pick $m_n \in [a_n,b_n]$ such that 
\begin{align*}
\lim_{n \rightarrow \infty} \norm{\sigma_n(a_n)-\sigma_n(m_n)} =\lim_{n \rightarrow \infty} \norm{\sigma_n(m_n)-\sigma_n(b_n)}\geq \epsilon/2.
\end{align*}
By passing to a subsequence we may suppose that $\sigma_n(m_n)$ converges to some point $y \in \overline{\Omega} \cap B_R(0)$. If $y \in \Omega$ then the Arzel{\`a}-Ascoli theorem implies that a subsequence of $\sigma_n(t+m_n)$ converges locally uniformly to a geodesic $\sigma:\Rb \rightarrow \Omega$. So assume for a contradiction that $y \in \partial \Omega$. 

Now let $\varphi_n:\Delta \rightarrow \Omega_n$ be a complex geodesic with $\varphi_n(u_n) = \sigma_n(a_n)$ and $\varphi_n(v_n) = \sigma_n(m_n)$ for some $u_n < v_n \in (-1,1)$. We would like to apply Theorem~\ref{thm:m_convex} to the complex geodesics $\varphi_n$ but it is unclear if the sequence $\varphi_n|_{[u_n,v_n]}$ is uniformly bounded. With this in mind define
\begin{align*}
u_n^\prime = \inf \{ t\geq u_n : \varphi_n([t,v_n]) \subset B_{R+1}(0) \}.
\end{align*}
Notice that 
\begin{align*}
\lim_{n \rightarrow \infty} \norm{\varphi_n(u_n^\prime) - \varphi_n(v_n)} > 0.
\end{align*}
By reparametrizing $\varphi_n$, we may assume that  $0 \in [u_n^\prime, v_n]$ and 
\begin{align*}
\delta_{\Omega_n}(\varphi_n(0)) = \max \{ \delta_{\Omega_n}(\varphi_n(t)) : t \in [u_n^\prime, v_n] \}.
\end{align*}
In a similar fashion let $\phi_n:\Delta \rightarrow \Omega_n$ be a complex geodesic with $\phi_n(s_n) = \sigma_n(m_n)$ and $\phi_n(t_n) = \sigma_n(b_n)$ for some $s_n < t_n \in (-1,1)$. Now let 
\begin{align*}
t_n^\prime = \sup \{ t\leq t_n : \varphi_n([s_n,t]) \subset B_{R+1}(0) \}.
\end{align*}
We may assume that $0 \in [s_n, t_n^\prime]$ and 
\begin{align*}
\delta_{\Omega_n}(\phi_n(0)) = \max \{ \delta_{\Omega_n}(\phi_n(t)) : t \in [s_n, t_n^\prime] \}.
\end{align*}
Since 
\begin{align*}
d_{\Omega_n}(\sigma_n(a_n), \sigma_n(b_n)) = d_{\Omega_n}(\sigma_n(a_n), \sigma_n(m_n)) +d_{\Omega_n}(\sigma_n(m_n), \sigma_n(b_n)) 
\end{align*}
the concatenation of $\varphi_n|_{[u_n,v_n]}$ and $\phi_n|_{[s_n,t_n]}$ is a geodesic in $\Omega_n$. In particular, 
\begin{align*}
d_{\Omega_n}(\varphi_n(0), \phi_n(0)) = d_{\Omega_n}(\varphi_n(0), \sigma_n(m_n)) + d_{\Omega_n}(\sigma_n(m_n), \phi_n(0)).
\end{align*}

Now by Theorem~\ref{thm:m_convex} we may pass to a subsequence such that $\varphi_n$ converges locally uniformly to a complex geodesic $\varphi :\Delta \rightarrow \Omega$ and $\phi_n$ converges locally uniformly to a complex geodesic $\phi:\Delta \rightarrow \Omega$. But then
\begin{align*}
d_{\Omega}(\varphi(0), \phi(0)) 
&= \lim_{n \rightarrow \infty} d_{\Omega_n}(\varphi_n(0), \phi_n(0)) \\
& = \lim_{n \rightarrow \infty} d_{\Omega_n}(\varphi_n(0), \sigma_n(m_n)) + d_{\Omega_n}(\sigma_n(m_n), \phi_n(0)) \\
&= \infty
\end{align*}
because $y \in \partial \Omega$. This contradicts the fact that $\varphi(0), \phi(0) \in \Omega$.
\end{proof}

\begin{proposition}\label{prop:limits_exist}
Suppose $\Omega_n$ is a locally m-convex sequence of $\Cb$-proper convex open sets converging to a $\Cb$-proper convex open set $\Omega$ in the local Hausdorff topology. Assume $\sigma_n : \Rb \rightarrow \Omega_n$ is a sequence of geodesics converging locally uniformly to a geodesic $\sigma:\Rb \rightarrow \Omega$. If $t_n \rightarrow \infty$ is a sequence such that $\lim_{n \rightarrow \infty} \sigma_n(t_n) = x_{\infty} \in \overline{\Cb^d}$  then
\begin{align*}
\lim_{t \rightarrow \infty} \sigma(t) = x_{\infty}.
\end{align*}
\end{proposition}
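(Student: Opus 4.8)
The plan is to argue by contradiction. Suppose $\sigma(t)$ does not converge to $x_{\infty}$ in $\overline{\Cb^d}$; since $\overline{\Cb^d}$ is compact there are $s_k \to \infty$ and a point $y \neq x_{\infty}$ with $\sigma(s_k) \to y$. Because $\sigma|_{[0,\infty)}$ is a geodesic ray in $(\Omega, d_{\Omega})$, which is complete, it eventually leaves every compact subset of $\Omega$ (if it returned to a compact $K \subset \Omega$ along $t_j \to \infty$, then $\sigma(t_j)$ would subconverge to a point of $\Omega$ at infinite $d_{\Omega}$-distance from $\sigma(0)$), so both $y$ and $x_{\infty}$ lie in $\overline{\Omega} \setminus \Omega$. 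After a diagonal argument one may pass to a subsequence of the $\Omega_n$ and relabel so that, writing $\gamma_k = \sigma_{n_k}$, one has $\gamma_k \to \sigma$ uniformly on $[-s_k, s_k]$, $\gamma_k(0) \to \sigma(0) =: o \in \Omega$, $\gamma_k(s_k) \to y$, $t_{n_k} > 2 s_k$, and $\gamma_k(t_{n_k}) \to x_{\infty}$. Thus the geodesic segments $\gamma_k|_{[0, t_{n_k}]}$ run from points converging to $o \in \Omega$, pass at parameter $s_k \to \infty$ through points converging to $y \notin \Omega$, and end at points converging to $x_{\infty} \notin \Omega$, with still $t_{n_k} - s_k \to \infty$.

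The second step is to extract a limiting geodesic through $y$ by means of the m-convexity hypothesis and Proposition~\ref{prop:m_convex}. Recenter by setting $\tau_k(t) = \gamma_k(t + s_k)$, so that $\tau_k : \Rb \to \Omega_{n_k}$ are geodesics with $\tau_k(0) \to y$, $\tau_k(-s_k) \to o$ and $\tau_k(t_{n_k} - s_k) \to x_{\infty}$. I would then choose a bounded, non-collapsing sub-segment of $\tau_k$ straddling the parameter $0$: fixing a small $\rho > 0$, let $a_k < 0 < b_k$ be the times at which $\tau_k$ last, respectively first, meets $\partial B_{\rho}(y)$ before, respectively after, $0$, so that $\tau_k([a_k,b_k]) \subset \overline{B_{\rho}(y)}$. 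One verifies, using convexity of $\Omega_{n_k}$ and a supporting hyperplane of $\Omega$ at $y$ (so that a geodesic passing $o(1)$-close to $y$ must enter and leave $B_{\rho}(y)$ on essentially opposite sides of $y$), that $\liminf_k \|\tau_k(a_k) - \tau_k(b_k)\| > 0$. Applying Proposition~\ref{prop:m_convex} produces recentering times $p_k \in [a_k, b_k]$ and a subsequence along which $t \mapsto \tau_k(t + p_k)$ converges locally uniformly to a genuine geodesic $\widetilde{\sigma} : \Rb \to \Omega$. Since the whole segment $\tau_k([a_k,b_k])$ — and in particular $\tau_k(0) \to y$ — lies in the fixed ball $\overline{B_{\rho}(y)}$, the balance point $p_k$ stays within a bounded parameter window of $0$; passing to a further subsequence $p_k$ converges, and then $\widetilde{\sigma}$ attains the limit of $\tau_k(0)$, namely the point $y$, at a finite parameter.

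This is the contradiction: $\widetilde{\sigma}$ takes values in $\Omega$, yet it would have to pass through $y \in \partial \Omega$ at a finite time. I expect the main obstacle to be exactly the verification that the sub-segment $\tau_k|_{[a_k,b_k]}$ does not collapse — that a $d_{\Omega}$-geodesic cannot poke into a small Euclidean neighbourhood of a boundary point and withdraw along essentially the same direction — together with the need to pin the balance point $p_k$ near $0$; the adaptive choice of $[a_k,b_k]$ (rather than, say, the full initial segment $[-s_k,0]$, whose Euclidean ``mass'' might concentrate elsewhere) is precisely what makes both of these work. A secondary, more bookkeeping-level difficulty is the uniform treatment of the point at infinity in $\overline{\Cb^d}$: if $y$ (or $x_{\infty}$) is the point at infinity one first reduces, by choosing the sub-segment to straddle the first time $\tau_k$ enters a fixed large ball, or argues directly from the $\Cb$-properness of $\Omega$ (so that some complex affine direction escapes $\Omega$) together with the lower bound in Lemma~\ref{lem:convex_lower_bd_2}.
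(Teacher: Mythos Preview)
Your overall strategy --- argue by contradiction, produce a second accumulation point $y \neq x_\infty$ for $\sigma$, transfer it to the $\sigma_n$, and apply Proposition~\ref{prop:m_convex} to a bounded non-collapsing sub-segment --- is precisely the paper's. The gap is in the contradiction you draw.

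You claim that the balance point $p_k$ produced by Proposition~\ref{prop:m_convex} stays in a bounded parameter window of $0$, so that $\widetilde\sigma$ attains $y \in \partial\Omega$ at a finite time. This is false, and the reasoning offered conflates Euclidean boundedness with Kobayashi-parameter boundedness: the fact that $\tau_k([a_k,b_k]) \subset \overline{B_\rho(y)}$ says nothing about the Kobayashi length of that segment. In fact, by the very conclusion of Proposition~\ref{prop:m_convex} one has $\widetilde\sigma(0) = \lim_k \tau_k(p_k) \in \Omega$, while $\tau_k(0) \to y \in \partial\Omega$; since $\tau_k$ is unit-speed in $(\Omega_{n_k}, d_{\Omega_{n_k}})$ this gives $|p_k| = d_{\Omega_{n_k}}\!\big(\tau_k(0),\tau_k(p_k)\big)$, and the estimates behind Lemma~\ref{lem:containment} (or Lemma~\ref{lem:convex_lower_bd_2} together with $\delta_{\Omega_{n_k}}(\tau_k(0)) \to 0$) force this quantity to diverge. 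Hence $|p_k| \to \infty$, $\widetilde\sigma$ never reaches $y$, and the proposed contradiction does not occur.

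The paper's contradiction is different and sidesteps the issue entirely. One chooses the sub-segment $[u_n,v_n]$ to lie in $[\min\{s_n,t_n\},\infty)$: since $\sigma_n(s_n) \to y$ and $\sigma_n(t_n) \to x_\infty$ with $y \neq x_\infty$ and at least one of them finite, there is a bounded non-collapsing sub-segment in that range. Proposition~\ref{prop:m_convex} then yields $T_n \in [u_n,v_n]$ with $\sigma_n(\cdot + T_n) \to \widehat\sigma : \Rb \to \Omega$, and now $T_n \geq \min\{s_n,t_n\} \to \infty$ gives
\[
d_{\Omega}\big(\sigma(0),\widehat\sigma(0)\big) = \lim_n d_{\Omega_n}\big(\sigma_n(0),\sigma_n(T_n)\big) = \lim_n T_n = \infty,
\]
which is the contradiction. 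Your ``opposite sides'' justification for $\liminf_k \|\tau_k(a_k)-\tau_k(b_k)\| > 0$ is also unproven as stated, but that is a minor point and is easily fixed by taking the sub-segment to be $[a_k,0]$, for which $\|\tau_k(a_k)-\tau_k(0)\| \to \rho$ automatically.
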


\begin{proof}
Suppose for a contradiction that $\lim_{t \rightarrow \infty} \sigma(t) \neq x_{\infty}$. Then there exists $s_n^\prime \rightarrow \infty$ such that $\sigma(s_n^\prime) \rightarrow y_{\infty} \in \overline{\Cb^d}$ and $x_{\infty} \neq y_{\infty}$. Now since $\sigma_n$ converges locally uniformly to $\sigma$ there exists $s_n \rightarrow \infty$ such that $\sigma_n(s_n) \rightarrow y_{\infty}$.

Now since $x_{\infty}$ and $y_{\infty}$ are distinct at least one is finite and hence there exists $[u_n,v_n] \subset [\min\{s_n,t_n\}, \infty)$ and $R,\epsilon >0$ such that 
\begin{enumerate}
\item $\sigma_n([u_n,v_n]) \subset B_R(0)$, 
\item $\norm{\sigma_n(u_n) - \sigma_n(v_n)} > \epsilon$.
\end{enumerate}
Then by Proposition~\ref{prop:m_convex}, there exists $T_n \in [u_n,v_n]$ such that $\sigma_n(t+T_n)$ converges locally uniformly to a geodesic $\wh{\sigma}:\Rb \rightarrow \Omega$. But this is a contradiction since 
\begin{equation*}
d_{\Omega}(\sigma(0),\wh{\sigma}(0)) = \lim_{n \rightarrow \infty} d_{\Omega_n}(\sigma_n(0), \sigma_n(T_n)) \geq \lim_{n \rightarrow \infty} \min\{s_n, t_n\} = \infty. \qedhere
\end{equation*}
\end{proof} 

\begin{corollary}\label{cor:geodesic_limits}
Suppose $\Omega$ is a locally m-convex open set. If $\sigma: \Rb \rightarrow \Omega$ is a geodesic then 
\begin{align*}
\lim_{t \rightarrow -\infty} \sigma(t) \text{ and } \lim_{t \rightarrow +\infty} \sigma(t) 
\end{align*}
both exist in $\overline{\Cb^d}$.
\end{corollary}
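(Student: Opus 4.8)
The plan is to deduce Corollary~\ref{cor:geodesic_limits} from Proposition~\ref{prop:limits_exist} by a self-referential (constant sequence) argument. Take $\Omega_n = \Omega$ for all $n$; since $\Omega$ is locally $m$-convex, the constant sequence $(\Omega_n)_{n\in\Nb}$ is a locally $m$-convex sequence and trivially converges to $\Omega$ in the local Hausdorff topology. Likewise set $\sigma_n = \sigma$ for all $n$, so $\sigma_n \to \sigma$ locally uniformly. Now it suffices to show that $\lim_{t\to+\infty}\sigma(t)$ exists in $\overline{\Cb^d}$ (the statement at $-\infty$ follows by applying the result to $t \mapsto \sigma(-t)$, which is again a geodesic).

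First I would establish that $\sigma(t)$ does not escape to infinity and return, i.e. that it has a limit in the one-point compactification $\overline{\Cb^d} = \Cb^d \cup \{\infty\}$. Consider a sequence $t_n \to \infty$; passing to a subsequence, $\sigma(t_n)$ converges to some $x_\infty \in \overline{\Cb^d}$. Applying Proposition~\ref{prop:limits_exist} with the constant sequences above and this $t_n$, we conclude $\lim_{t\to\infty}\sigma(t) = x_\infty$. The point is that Proposition~\ref{prop:limits_exist} already delivers the full limit from any subsequential limit, so a single application gives the conclusion: if $t'_n \to \infty$ were another sequence with $\sigma(t'_n)$ converging (along a subsequence) to some $y_\infty$, the same proposition forces $y_\infty = x_\infty$, hence $\lim_{t\to+\infty}\sigma(t)$ exists.

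The only subtlety is making sure a subsequential limit of $\sigma(t_n)$ exists in $\overline{\Cb^d}$ at all, but this is automatic: $\overline{\Cb^d}$ is compact (it is the one-point compactification of $\Cb^d$, homeomorphic to $S^{2d}$), so any sequence $\sigma(t_n)$ in $\Cb^d \subset \overline{\Cb^d}$ has a convergent subsequence. Thus no step here requires new work beyond citing Proposition~\ref{prop:limits_exist}. I do not expect any genuine obstacle; the content of the corollary is entirely contained in the proposition, and the proof is simply the observation that the proposition's hypotheses are met by constant sequences together with the compactness of $\overline{\Cb^d}$ and the symmetry $t \mapsto -t$.

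\begin{proof}
Since $\Omega$ is locally $m$-convex, the constant sequence $\Omega_n := \Omega$ is a locally $m$-convex sequence converging to $\Omega$ in the local Hausdorff topology. Also $\sigma_n := \sigma$ converges locally uniformly to $\sigma$. We first show $\lim_{t \rightarrow +\infty}\sigma(t)$ exists in $\overline{\Cb^d}$. Pick any sequence $t_n \rightarrow \infty$. Since $\overline{\Cb^d}$ is compact, after passing to a subsequence we may assume $\sigma(t_n) \rightarrow x_{\infty} \in \overline{\Cb^d}$. By Proposition~\ref{prop:limits_exist} applied to the constant sequences above, $\lim_{t \rightarrow \infty}\sigma(t) = x_{\infty}$; in particular the limit exists. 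Applying this to the geodesic $t \mapsto \sigma(-t)$ shows $\lim_{t \rightarrow -\infty}\sigma(t)$ exists in $\overline{\Cb^d}$ as well.
\end{proof}
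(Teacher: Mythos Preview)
Your proof is correct and follows exactly the same approach as the paper: apply Proposition~\ref{prop:limits_exist} to the constant sequences $\Omega_n = \Omega$ and $\sigma_n = \sigma$. You have simply spelled out the details (compactness of $\overline{\Cb^d}$ to extract a subsequential limit, and the $t \mapsto -t$ symmetry for the backward limit) that the paper leaves implicit.
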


\begin{proof}
Notice that the constant sequence $\Omega_n : = \Omega$ converges to $\Omega$ in the local Hausdorff topology and $\sigma_n:=\sigma$ converges to $\sigma$ locally uniformly. So we may apply the previous proposition. 
\end{proof}

Combining Proposition~\ref{prop:limits_exist} and Corollary~\ref{cor:geodesic_limits} we have:

\begin{corollary}
Suppose $\Omega_n$ is a locally m-convex sequence of $\Cb$-proper convex open sets converging to a $\Cb$-proper convex open set $\Omega$ in the local Hausdorff topology. Assume $\sigma_n : \Rb \rightarrow \Omega_n$ is a sequence of geodesics converging locally uniformly to a geodesic $\sigma:\Rb \rightarrow \Omega$. If each $\Omega_n$ is locally $m$-convex  then
\begin{align*}
\lim_{n \rightarrow \infty}\lim_{ t\rightarrow \infty} \sigma_n(t) = \lim_{t \rightarrow \infty} \sigma(t).
\end{align*}
\end{corollary}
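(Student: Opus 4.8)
The statement combines the two previous results in a single limit interchange, so the plan is to chain them together. First I would set up the notation: $\sigma_n : \Rb \to \Omega_n$ are the given geodesics converging locally uniformly to the geodesic $\sigma : \Rb \to \Omega$, and by hypothesis every $\Omega_n$ is locally $m$-convex (in addition to forming a locally $m$-convex sequence), so Corollary~\ref{cor:geodesic_limits} applies to each $\sigma_n$ and guarantees that $x_\infty^{(n)} := \lim_{t\to\infty}\sigma_n(t)$ exists in $\overline{\Cb^d}$ for every $n$, and likewise Corollary~\ref{cor:geodesic_limits} applied to $\sigma$ (which lies in the locally $m$-convex limit $\Omega$, by Observation~\ref{obs:no_bd_disks}(2)) gives that $x_\infty := \lim_{t\to\infty}\sigma(t)$ exists in $\overline{\Cb^d}$. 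Thus both sides of the claimed identity are well-defined, and the task is to show $\lim_{n\to\infty} x_\infty^{(n)} = x_\infty$ in the topology of $\overline{\Cb^d}$.

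Next I would pass to a subsequence: since $\overline{\Cb^d}$ is compact, after passing to a subsequence we may assume $x_\infty^{(n)} \to w$ for some $w \in \overline{\Cb^d}$, and it suffices to prove $w = x_\infty$ (as the limit is then independent of the subsequence). The idea is to produce, for each $n$, a parameter value $t_n$ far out along $\sigma_n$ such that $\sigma_n(t_n)$ is simultaneously close to $x_\infty^{(n)}$ (because $\sigma_n(t) \to x_\infty^{(n)}$ as $t \to \infty$) and close to $w$ (because $x_\infty^{(n)} \to w$); a diagonal choice of $t_n \to \infty$ then gives $\sigma_n(t_n) \to w$. Now Proposition~\ref{prop:limits_exist} applies verbatim to the sequence $\sigma_n$, the limit geodesic $\sigma$, and the sequence $t_n \to \infty$ with $\sigma_n(t_n) \to w \in \overline{\Cb^d}$, and its conclusion is exactly $\lim_{t\to\infty}\sigma(t) = w$, i.e. $x_\infty = w$. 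This completes the argument, and the symmetric statement for $t \to -\infty$ (if wanted) follows identically.

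The one point requiring a little care — and the main obstacle — is the diagonal extraction of $t_n$ in the topology of $\overline{\Cb^d}$, since $w$ may be the point at infinity or a finite point, and one must phrase ``close'' uniformly. Concretely: fix a metric $\rho$ on $\overline{\Cb^d}$ inducing its topology. For each $n$, since $\sigma_n(t) \to x_\infty^{(n)}$ as $t \to \infty$, choose $t_n \geq n$ with $\rho(\sigma_n(t_n), x_\infty^{(n)}) < 1/n$. Then $\rho(\sigma_n(t_n), w) \leq \rho(\sigma_n(t_n), x_\infty^{(n)}) + \rho(x_\infty^{(n)}, w) \to 0$, and $t_n \to \infty$, so the hypotheses of Proposition~\ref{prop:limits_exist} are met. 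I expect no further difficulty — everything else is a direct invocation of the two corollaries already proved — and the write-up should be only a few lines.
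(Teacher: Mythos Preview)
Your proposal is correct and is precisely the argument the paper has in mind: the paper states this corollary immediately after Proposition~\ref{prop:limits_exist} and Corollary~\ref{cor:geodesic_limits} with only the words ``Combining Proposition~\ref{prop:limits_exist} and Corollary~\ref{cor:geodesic_limits} we have,'' and your diagonal extraction of $t_n$ together with the subsequence/compactness reduction is exactly how those two results are meant to be combined.
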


\section{A sufficient condition for Gromov hyperbolicity}\label{sec:suff}

Before stating our sufficient condition for Gromov hyperbolicity we need one more definition.

\begin{definition}
Suppose $\Omega$ is a $\Cb$-proper convex open set. A geodesic $\sigma : \Rb\rightarrow \Omega$ is \emph{well behaved} if both limits 
\begin{align*}
\lim_{t \rightarrow -\infty} \sigma(t) \text{ and } \lim_{t \rightarrow +\infty} \sigma(t)
\end{align*}
exist in $\overline{\Cb^d}$ and are distinct. 
\end{definition}

\begin{remark}
We have already seen in Corollary~\ref{cor:geodesic_limits} that these limits exists when $\Omega$ is locally m-convex. In Section~\ref{sec:gromov_prod} we will show that the limits (when they exist) are distinct if $\Omega$ is bounded and $\partial\Omega$ is $C^2$. In Section~\ref{sec:multi_infty} we will show that certain unbounded domains also have well behaved geodesics.
\end{remark}

\begin{theorem}\label{thm:suff}
Suppose $\Omega$ is a $\Cb$-proper convex open set. If for every sequence $u_n \in \Omega$ there exists a subsequence $n_k \rightarrow \infty$, affine maps $A_k \in \Aff(\Cb^d)$, and a $\Cb$-proper convex open set $\wh{\Omega}$ such that 
\begin{enumerate}
\item $A_k \Omega \rightarrow \wh{\Omega}$ in the local Hausdorff topology,
\item $A_k u_{n_k} \rightarrow u_{\infty} \in \wh{\Omega}$,
\item $(A_k \Omega)_{k \in \Nb}$ is a locally m-convex sequence,
\item all geodesics in $\wh{\Omega}$ are well behaved,
\end{enumerate}
then $(\Omega, d_{\Omega})$ is Gromov hyperbolic.
\end{theorem}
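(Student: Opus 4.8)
\textbf{Proof proposal for Theorem~\ref{thm:suff}.}

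The plan is to argue by contradiction via the definition of Gromov hyperbolicity through thin triangles, exactly as outlined in the remark following Theorem~\ref{thm:suff_i}. Suppose $(\Omega, d_{\Omega})$ is not Gromov hyperbolic. Then for each $n \in \Nb$ there is a geodesic triangle $\Tc_n$ with vertices $x_n, y_n, z_n$, geodesic edges $\sigma_{x_ny_n}, \sigma_{y_nz_n}, \sigma_{z_nx_n}$, and a point $u_n$ on one edge, say $u_n \in \sigma_{x_ny_n}$, with $d_{\Omega}(u_n, \sigma_{y_nz_n} \cup \sigma_{z_nx_n}) > n$. Apply the hypothesis to the sequence $(u_n)$: pass to a subsequence $n_k \to \infty$ and affine maps $A_k$ so that $A_k\Omega \to \wh{\Omega}$ locally Hausdorff, $A_k u_{n_k} \to u_\infty \in \wh{\Omega}$, $(A_k\Omega)_k$ is locally $m$-convex, and all geodesics in $\wh{\Omega}$ are well behaved. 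Set $\Omega_k := A_k\Omega$; since $A_k$ is an isometry from $(\Omega, d_\Omega)$ to $(\Omega_k, d_{\Omega_k})$, the image triangle $A_k\Tc_{n_k}$ is a geodesic triangle in $\Omega_k$ with the same defect: writing $u_k' = A_k u_{n_k}$ and $\sigma_k$ for the (reparametrized so that $\sigma_k(0) = u_k'$) edge through $u_k'$, we have $d_{\Omega_k}(u_k', (\text{other two edges})) > n_k$.

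The core of the argument is to extract, from the edge $\sigma_k : \Rb \to \Omega_k$ through $u_k'$, a locally uniformly convergent subsequence limiting to a geodesic $\sigma_\infty : \Rb \to \wh{\Omega}$. Since $u_k' \to u_\infty \in \wh{\Omega}$ and $d_{\Omega_k}(u_k', \text{endpoints of } \sigma_k)$ is bounded below by $n_k \to \infty$, the geodesics $\sigma_k$ stay (after restriction to any fixed parameter window) in a fixed ball and do not degenerate: for fixed $t$, $d_{\Omega_k}(\sigma_k(t), \sigma_k(0)) = |t|$ while $u_k'$ converges into $\wh{\Omega}$, so by Theorem~\ref{thm:dist_conv} the points $\sigma_k(t)$ stay in a compact subset of $\wh{\Omega}$ (away from $\partial\wh{\Omega}$, since $d_{\wh{\Omega}}$ is complete and distances converge). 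An Arzel{\`a}--Ascoli argument — using the Kobayashi metric bound $K_{\Omega_k} \le \|v\|/\delta_{\Omega_k}(p;v)$ to get equicontinuity with respect to the Euclidean metric on compacta of $\wh{\Omega}$, plus a diagonal extraction over an exhaustion of $\Rb$ — then yields a subsequence of $\sigma_k$ converging locally uniformly to a path $\sigma_\infty : \Rb \to \wh{\Omega}$, and Theorem~\ref{thm:dist_conv} shows $\sigma_\infty$ is a geodesic. Run the same extraction on the other two edges; a priori their vertices could escape to infinity, but that is allowed — we only need the three limiting edges to fit together into a genuine geodesic triangle (possibly with some vertices at infinity, i.e. bi-infinite or semi-infinite geodesic sides) in $\wh{\Omega}$, and the matching of endpoints at finite or infinite vertices is where the \emph{well behaved} hypothesis enters: it guarantees the endpoints $\lim_{t\to\pm\infty}\sigma_\infty(t)$ exist in $\overline{\Cb^d}$ and are distinct, so the limiting configuration is a nondegenerate ideal/ordinary geodesic triangle $\Tc_\infty$ with $u_\infty$ on one side.

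Once $\Tc_\infty$ is in hand, one derives the contradiction. The point $u_\infty$ lies on a side of $\Tc_\infty$ and has some finite distance $D := d_{\wh{\Omega}}(u_\infty, \text{union of other two sides}) < \infty$ — finite precisely because the other two sides are honest geodesics in $\wh{\Omega}$ landing at points of $\overline{\Cb^d}$ distinct from the landing points of the first side, so they pass through $\wh{\Omega}$ and are a positive but finite distance from $u_\infty$; here it is important that $u_\infty \in \wh{\Omega}$ and $d_{\wh{\Omega}}$ is a genuine (finite on $\wh{\Omega}\times\wh{\Omega}$, complete) metric. Pick a point $w_\infty$ on one of the other two sides with $d_{\wh{\Omega}}(u_\infty, w_\infty) \le D + 1$; it is the limit of points $w_k$ on the corresponding edge of $A_k\Tc_{n_k}$, and by Theorem~\ref{thm:dist_conv}, $d_{\Omega_k}(u_k', w_k) \to d_{\wh{\Omega}}(u_\infty, w_\infty) \le D+1$. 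But $w_k$ lies on one of the two edges not containing $u_k'$, so $d_{\Omega_k}(u_k', w_k) > n_k \to \infty$, a contradiction. Hence $(\Omega, d_\Omega)$ is Gromov hyperbolic.

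\textbf{Main obstacle.} The delicate point is producing the limit triangle $\Tc_\infty$ honestly: the vertices $x_{n_k}, y_{n_k}, z_{n_k}$ may run off to infinity under $A_k$, so one must work with geodesic \emph{lines} and \emph{rays} rather than segments, carefully tracking which endpoints converge in $\Cb^d$ and which escape to the sphere at infinity, and then argue that the three limiting edges share endpoints in $\overline{\Cb^d}$ in the pattern of a triangle. This is exactly what the results of Section~\ref{sec:m_convex} (Proposition~\ref{prop:m_convex}, Proposition~\ref{prop:limits_exist}, and Corollary~\ref{cor:geodesic_limits}) are designed to handle — they give existence of limits of geodesics under the locally $m$-convex hypothesis and control the endpoints — and the \emph{well behaved} assumption on $\wh{\Omega}$ ensures the resulting configuration is nondegenerate. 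Managing the bookkeeping of these limits (and checking that $u_\infty$ genuinely lands in the interior of a side, not at a vertex) is the technical heart of the proof.
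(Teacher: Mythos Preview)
Your outline follows the same contradiction strategy as the paper and correctly identifies the key inputs (Theorem~\ref{thm:dist_conv}, Arzel{\`a}--Ascoli for the edge through $u_n$, Propositions~\ref{prop:m_convex} and~\ref{prop:limits_exist}, and the well-behaved hypothesis). The one place where your write-up diverges from the paper is the plan to ``run the same extraction on the other two edges'' and assemble a full limit triangle $\Tc_\infty$. This is both more than is needed and not quite justified as stated.

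The Arzel{\`a}--Ascoli extraction you describe for the first edge works because $\sigma_k(0)=u_k'\to u_\infty\in\wh{\Omega}$ gives a basepoint that stays in a compact subset of $\wh{\Omega}$. For the other two edges you have no such anchor: their endpoints may both run to $\partial\wh{\Omega}\cup\{\infty\}$, and if two of the limiting vertices coincide (say $y_\infty=z_\infty$, possibly both $=\infty$) the edge $\sigma_{y_nz_n}$ need not pass through any fixed bounded region, so Proposition~\ref{prop:m_convex} does not apply and there may be no limiting geodesic for that side at all. Thus a full $\Tc_\infty$ need not exist. The paper sidesteps this cleanly: after extracting the limit $\sigma$ of the edge through $u_n$, Proposition~\ref{prop:limits_exist} gives $\lim_{t\to-\infty}\sigma(t)=x_\infty$ and $\lim_{t\to+\infty}\sigma(t)=y_\infty$, and the well-behaved hypothesis (applied to this single limit geodesic, not to any ``matching of endpoints'') forces $x_\infty\neq y_\infty$. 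Hence $z_\infty$ differs from at least one of them, say $x_\infty$; since at least one of $x_\infty,z_\infty$ is then finite, Proposition~\ref{prop:m_convex} applies to the \emph{single} edge $\sigma_{x_nz_n}$, producing one limiting geodesic $\wh{\sigma}$ in $\wh{\Omega}$, and the contradiction is immediate from $d_{\wh{\Omega}}(u_\infty,\wh{\sigma}(0))=\lim d_{\Omega}(u_n,\sigma_{x_nz_n})=\infty$. So your contradiction step is right, but you only ever need one of the other two edges, chosen so that its endpoints are known to be distinct; drop the triangle-assembly narrative and the argument becomes exactly the paper's.
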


\begin{proof}

Suppose $(\Omega, d_{\Omega})$ is not Gromov hyperbolic. Then there exists points $x_n,y_n, z_n \in \Omega$, geodesic segments $\sigma_{x_ny_n}, \sigma_{y_nz_n}, \sigma_{z_nx_n}$ joining them, and a point $u_n$ in the image of $\sigma_{x_ny_n}$ such that
\begin{align*}
d_{\Omega}(u_n, \sigma_{y_nz_n} \cup \sigma_{z_nx_n}) > n.
\end{align*}
By passing to a subsequence there exists affine maps $A_n \in \Aff(\Cb^d)$ and a $\Cb$-proper convex open set $\wh{\Omega}$ such that 

\begin{enumerate}
\item $A_n \Omega \rightarrow \wh{\Omega}$ in the local Hausdorff topology,
\item $A_n u_n \rightarrow u_{\infty} \in \wh{\Omega}$,
\item $(A_n \Omega)_{n \in \Nb}$ is a locally m-convex sequence,
\item geodesics in $\wh{\Omega}$ are well behaved.
\end{enumerate}
By passing to another subsequence we can suppose that $A_n x_n \rightarrow x_{\infty}$, $A_n y_n \rightarrow y_{\infty}$, and $A_n z_n \rightarrow z_{\infty}$ for some $x_{\infty}, y_{\infty}, z_{\infty} \in \overline{\Cb^d}$. 

Parametrize $\sigma_{x_ny_n}$ such that $\sigma_{x_ny_n}(0)=u_n$ then using the Arzel{\`a}-Ascoli theorem we can pass to a subsequence such that $A_n\sigma_{x_ny_n}$ converges locally uniformly to a geodesic $\sigma: \Rb \rightarrow \wh{\Omega}$. Moreover, by Proposition~\ref{prop:limits_exist}
\begin{align*}
\lim_{t \rightarrow -\infty} \sigma(t) = x_{\infty} \text{ and } \lim_{t \rightarrow +\infty} \sigma(t) = y_{\infty}.
\end{align*}
Since geodesics in $\wh{\Omega}$ are well behaved we must have that $x_{\infty} \neq y_{\infty}$. 

So $z_{\infty}$ does not equal at least one of $x_{\infty}$ or $y_{\infty}$. By relabeling we can suppose that $x_{\infty} \neq z_{\infty}$. Since $x_{\infty} \neq z_{\infty}$ at least one is finite and hence by Proposition~\ref{prop:m_convex} we may pass to a subsequence and parametrize $\sigma_{x_nz_n}$ so that it converges locally uniformly to a geodesic $\wh{\sigma}:\Rb \rightarrow \wh{\Omega}$. But then
\begin{align*}
d_{\wh{\Omega}}(u_{\infty},\wh{\sigma}(0)) 
&= \lim_{n \rightarrow \infty} d_{\Omega_n}(A_nu_n, A_n\sigma_{x_n z_n} (0))\\
&= \lim_{n \rightarrow \infty} d_{\Omega}(u_n, \sigma_{x_n z_n} (0))\\
& \geq  \lim_{n \rightarrow \infty} d_{\Omega}(u_n, \sigma_{x_nz_n})=\infty
\end{align*}
which is a contradiction. Thus $(\Omega, d_{\Omega})$ is Gromov hyperbolic. \end{proof}

\part{Convex domains of finite type}

In this part we apply Theorem~\ref{thm:suff} (Theorem~\ref{thm:suff_i} in the introduction) to convex domains of finite type and show that they have Gromov hyperbolic Kobayashi metric.

\section{Finite type and m-convexity}\label{sec:finite_type_m_conex} 

Finite line type is related to m-convexity by the following (well known) Proposition: 

\begin{proposition}\label{prop:finite_type}
Suppose $\Omega$ is a $\Cb$-proper open convex set and $\partial \Omega$ is $C^L$ and has finite line type $L$ near some $\xi \in \partial \Omega$.  Then there exists a neighborhood $U$ of $\xi$ and a $C>0$ such that 
\begin{align*}
\delta_{\Omega}(p;v) \leq C \delta_{\Omega}(p)^{1/L}
\end{align*}
for all $p \in U \cap \Omega$ and $v \in \Cb^d$ nonzero.
\end{proposition}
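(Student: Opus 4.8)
The plan is to reduce to a one-dimensional statement about the defining function and then exploit finite line type. Fix $\xi \in \partial\Omega$; after an affine change of coordinates assume $\xi = 0$, that the real tangent hyperplane at $0$ is $\{\Imaginary(z_1) = 0\}$, and that $\Omega \subset \{\Imaginary(z_1) > 0\}$ near $0$. Writing the boundary near $0$ as a graph $\Imaginary(z_1) = \rho(\Real(z_1), z_2, \dots, z_d)$, the key geometric fact is: for $p \in U \cap \Omega$ and a nonzero direction $v$, the quantity $\delta_\Omega(p;v)$ is governed by how fast $\partial\Omega$ pulls away from the affine complex line $p + \Cb v$. Since $\partial\Omega$ has line type at most $L$, every non-trivial affine complex line $\ell$ through a boundary point has $\nu(r\circ \ell) \leq L$, which after normalization means the defining function restricted to a complex line near the boundary cannot vanish to order higher than $L$; equivalently, there is a uniform lower bound on the ``$L$-th order'' bulging of $\partial\Omega$ in every complex direction.

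The concrete steps I would carry out are as follows. First, by compactness of a small sphere's worth of directions and of a neighborhood of $\xi$ in $\partial\Omega$, it suffices to produce, for each boundary point $q$ near $\xi$ and each complex direction $w$, a uniform constant $c>0$ with the estimate: the complex line $q + \Cb w$ leaves $\Omega$ within Euclidean distance $\big(\delta_\Omega(q+sw)/c\big)^{1/L}$-type control — more precisely, I would show that along the segment from an interior point $p$ toward the nearest boundary point in direction $v$, the function $t \mapsto \delta_\Omega$ of the point at parameter $t$ is comparable to $t^L$ from below near the boundary. This is exactly a one-variable Taylor expansion of $r$ restricted to the complex line $p + \Cb v$: finite line type $L$ forces the first non-vanishing derivative of $r\circ\ell$ at the boundary to be of order at most $L$, and by convexity (which makes $r\circ\ell$ essentially monotone/convex in the relevant real variable) plus a compactness argument over the (compact) set of unit directions and nearby boundary points, the corresponding Taylor coefficient is bounded below uniformly. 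Second, I would translate this into the stated inequality: if $p \in U \cap \Omega$, let $\xi_v$ be the nearest boundary point on $p + \Cb v$, so $\norm{p - \xi_v} = \delta_\Omega(p;v)$; the uniform Taylor estimate at $\xi_v$ gives $\delta_\Omega(p) \lesssim \operatorname{dist}(p, \partial\Omega) \gtrsim$ (bulge term) $\gtrsim \delta_\Omega(p;v)^L$, i.e. $\delta_\Omega(p;v) \leq C\,\delta_\Omega(p)^{1/L}$.

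The main obstacle is the uniformity: getting a single constant $C$ that works for all directions $v$ and all $p$ in a neighborhood, rather than a constant depending on the direction. The delicate point is that finite line type is an a priori pointwise, direction-by-direction condition (a supremum of orders of vanishing), and upgrading it to a uniform quantitative estimate requires the standard but somewhat technical argument (essentially McNeal's, or the $C^L$-regularity combined with a normal-families/compactness argument over directions and boundary points) showing that the $L$-jet of $r$ along complex lines is non-degenerate uniformly. I would handle this by fixing a smooth defining function $r$ with $\nabla r \neq 0$ near $\partial\Omega$, considering the function $(q, w) \mapsto \sum_{j\le L} \abs{\partial_t^j (r\circ \ell_{q,w})(0)}$ on the compact set of $(q,w)$ with $q$ near $\xi$ and $\norm{w}=1$, observing it is continuous and strictly positive (positivity is precisely finite line type $\leq L$ plus $\nabla r \neq 0$), hence bounded below; then a Taylor-with-remainder estimate on $r \circ \ell$, together with convexity to control the sign and the lower-order terms, yields the comparison $\delta_\Omega(p) \gtrsim \delta_\Omega(p;v)^L$ with uniform constants. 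Convexity is what lets us pass from ``$r$ small'' to ``genuinely inside $\Omega$'' without worrying about oscillation of $r$ along the line.
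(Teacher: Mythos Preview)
Your strategy --- a uniform lower bound on the $L$-jet of $r\circ\ell$ via compactness over $(q,w)$, followed by a Taylor-with-remainder estimate --- is the right one and matches the paper. The paper itself only cites Gaussier for Proposition~\ref{prop:finite_type}, but its quantitative refinement Proposition~\ref{prop:finite_type_2} carries out exactly this plan: your continuous positive function $(q,w)\mapsto\sum_{j\le L}|\partial_t^j(r\circ\ell_{q,w})(0)|$ is precisely the paper's parameter $\alpha(\Omega,R)$, and its positivity is exactly finite line type plus compactness.

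The one place your write-up diverges, and where there is a genuine gap, is the choice of base point in your ``second step''. You expand at $\xi_v$, the closest boundary point on $p+\Cb v$, and claim the Taylor estimate there yields $\delta_\Omega(p)\gtrsim\delta_\Omega(p;v)^L$. But the complex line $p+\Cb v$ is generally \emph{not} tangent to $\partial\Omega$ at $\xi_v$: the first-order coefficient of $r\circ\ell$ at $\xi_v$ is then nonzero yet degenerates as $p\to\partial\Omega$, and the polynomial lower bound you get from compactness controls only $\sup_{|z|\le\epsilon}|r\circ\ell(z)|$, not the value at the specific parameter corresponding to $p$. Your appeal to ``convexity to control the sign and the lower-order terms'' is exactly where the work lies, and as written it is not justified.

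The paper sidesteps this by expanding instead at $x$, the foot of the perpendicular from $p$ to $\partial\Omega$. For $v$ real-orthogonal to $x-p$ one has $\ell'(0)\in T_x\partial\Omega$, so the polynomial lemma produces some $|z|\le(C_0\,\delta_\Omega(p))^{1/L}$ with $r(x+zv)$ large and positive; a gradient bound shifting from $x$ to $p$ (distance $\delta_\Omega(p)$) then forces $r(p+zv)>0$, hence $\delta_\Omega(p;v)\le(C_0\,\delta_\Omega(p))^{1/L}$. General $v$ is handled by decomposing into a component along $p-x$ (where $\delta_\Omega(p;v)\le\delta_\Omega(p)/|\beta|$ trivially) and a tangential component. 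If you rewrite your second step with $x$ as the base point rather than $\xi_v$, your outline becomes a complete proof identical to the paper's.
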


There are several ways to establish this Proposition, for instance it follows from the proof of Lemma 1.3 in~\cite{G1997}. In order to apply Theorem~\ref{thm:suff} we will need to show that certain sequences of convex sets are locally m-convex sequences and the rest of this section is devoted to giving an estimate on the constant $C>0$ in Proposition~\ref{prop:finite_type}. 

We begin with an observation:

\begin{lemma}
For any integer $L>0$ there exists $A_L>0$ such that if $P: \Cb \rightarrow \Cb$ is a polynomial of the form 
\begin{align*}
P(z) = \sum_{1 \leq a+b \leq L} \alpha_{a, b} z^a \overline{z}^b
\end{align*}
then 
\begin{align*}
A_L\epsilon^L\sup_{1 \leq a+b \leq L}\Big\{ \abs{\alpha_{a,b}}  \Big\} \leq \sup_{\abs{z} \leq \epsilon} \abs{P(z)}
\end{align*}
for any $\epsilon \in (0,1)$. 
\end{lemma}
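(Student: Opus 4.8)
The plan is to read the claimed inequality as a comparison between two norms on the finite-dimensional vector space
\[
V_L := \left\{ \sum_{1 \leq a+b \leq L} \alpha_{a,b} z^a \overline{z}^b : \alpha_{a,b} \in \Cb \right\},
\]
together with a dilation argument to recover the dependence on $\epsilon$. On $V_L$ I would consider the two functionals $\norm{P}_{\mathrm{coef}} := \sup_{1 \leq a+b \leq L} \abs{\alpha_{a,b}}$ and $\norm{P}_\infty := \sup_{\abs{z} \leq 1} \abs{P(z)}$. The first is visibly a norm. The one nontrivial point is that $\norm{\cdot}_\infty$ is also a norm, i.e. that $P$ vanishing on the closed unit disk forces $P \equiv 0$: writing $z = x+iy$ turns $P$ into a polynomial in the real variables $x,y$ with complex coefficients, such a polynomial vanishing on an open subset of $\Rb^2$ is identically zero, and the monomials $z^a\overline{z}^b = (x+iy)^a(x-iy)^b$ with $1 \leq a+b \leq L$ are linearly independent as functions on $\Cb$, so all $\alpha_{a,b}$ must vanish. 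Since any two norms on a finite-dimensional vector space are equivalent, this yields a constant $A_L > 0$, depending only on $L$ (as $V_L$ and both norms depend only on $L$), with
\[
A_L \norm{P}_{\mathrm{coef}} \leq \norm{P}_\infty \qquad \text{for all } P \in V_L.
\]

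Next I would reduce the general $\epsilon \in (0,1)$ to the case $\epsilon = 1$ by dilation. Given $P \in V_L$, set $P_\epsilon(z) := P(\epsilon z)$, so $P_\epsilon \in V_L$ with coefficients $\alpha_{a,b}\epsilon^{a+b}$; since $0 < \epsilon < 1$ and $1 \leq a+b \leq L$ we have $\epsilon^{a+b} \geq \epsilon^L$, hence
\[
\norm{P_\epsilon}_{\mathrm{coef}} = \sup_{1 \leq a+b \leq L} \abs{\alpha_{a,b}}\,\epsilon^{a+b} \geq \epsilon^L \norm{P}_{\mathrm{coef}}.
\]
On the other hand $\norm{P_\epsilon}_\infty = \sup_{\abs{z}\leq 1}\abs{P(\epsilon z)} = \sup_{\abs{w}\leq\epsilon}\abs{P(w)}$. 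Applying the $\epsilon=1$ inequality to $P_\epsilon$ then gives
\[
\sup_{\abs{w}\leq\epsilon}\abs{P(w)} = \norm{P_\epsilon}_\infty \geq A_L \norm{P_\epsilon}_{\mathrm{coef}} \geq A_L \epsilon^L \norm{P}_{\mathrm{coef}},
\]
which is exactly the asserted estimate.

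I do not expect a genuine obstacle: the argument is entirely soft. The only step requiring (minor) care is the verification that the sup over the unit disk is a norm on $V_L$ — equivalently, the linear independence of the functions $z^a\overline{z}^b$ for $1 \leq a+b \leq L$ — after which finite-dimensionality supplies $A_L$ and the dilation bookkeeping supplies the factor $\epsilon^L$.
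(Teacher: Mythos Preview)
Your proof is correct and follows essentially the same approach as the paper: both use equivalence of norms on the finite-dimensional space $V_L$ to obtain $A_L$ for $\epsilon=1$, then rescale via $P_\epsilon(z)=P(\epsilon z)$ and use $\epsilon^{a+b}\geq\epsilon^L$ to recover the general case. Your argument is slightly more careful in explicitly justifying that $\norm{\cdot}_\infty$ is a norm (via linear independence of the monomials $z^a\overline{z}^b$), a point the paper leaves implicit.
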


\begin{proof} 
Consider the vector space $\Vc$ consisting of polynomials of the form
\begin{align*}
P(z) = \sum_{1 \leq a+b \leq L} \alpha_{a, b} z^a \overline{z}^b. 
\end{align*}
By the equivalence of finite dimensional norms we see that there exists $A_L>0$ such that 
\begin{align*}
A_L\sup_{1 \leq a+b \leq L} \Big\{ \abs{\alpha_{a,b}} \Big\} \leq  \sup_{ \abs{z} \leq 1} \abs{ P(z)}
\end{align*}
for any $P \in \Vc$. Then for $P \in \Vc$ 
\begin{align*}
\sup_{\abs{z} \leq \epsilon} \abs{P(z)} = \sup_{\abs{z} \leq 1} \abs{P(\epsilon z)} \geq  A_L \sup\Big\{ \abs{\alpha_{a,b}}\epsilon^{a+b} \Big\} \geq A_L \epsilon^L  \sup\Big\{ \abs{\alpha_{a,b}} \Big\} .
\end{align*}
\end{proof}

Now suppose $\Omega$ is a $\Cb$-proper open convex set and $\partial \Omega$ is a $C^L$ hypersurface. Assume that every point  $x \in \partial \Omega \cap \overline{B_{R+1}(0)}$ has finite line type at most $L$. Let $r: \Cb^d \rightarrow \Rb$ be a defining function of $\Omega$, that is $\Omega = \{ r(z) < 0\}$, $r$ is $C^L$, and $\nabla r \neq 0$ in a neighborhood of $\partial \Omega$. 

Let $\Lc(\Omega, R)$ be the set of affine lines $\ell: \Cb \rightarrow \Cb^d$ with $\ell(0) \in \partial \Omega \cap B_{R+1}(0)$, $\ell^\prime(0) \in T_{\ell(0)} \partial \Omega$, and $\abs{\ell^\prime(0)}=1$. For any $\ell \in \Lc(\Omega, R)$ let 
\begin{align*}
P_{\ell}(z) = \sum_{1 \leq a+b \leq L} \alpha_{a,b}(\ell) z^a \overline{z}^b
\end{align*}
be the $L^{th}$ order Taylor polynomial of $(r \circ \ell)(z)$ at $z=0$. Next define 
\begin{align*}
\alpha(\Omega, R):= \inf_{\ell \in \Lc(\Omega, R)} \sup\Big\{ \abs{\alpha_{a,b}(\ell)} : 1 \leq a+b \leq L \Big\}
\end{align*} 
for all $\ell \in \Lc(\Omega, R)$. By the finite type hypothesis $\alpha(\Omega, R)$ is positive. 

Next let $\epsilon(\Omega, R)$ be the largest number such
\begin{align*}
\abs{P_\ell(z) - (r\circ \ell)(z)} \leq \frac{A_L \alpha(\Omega, R)}{4} \abs{z}^L
\end{align*}
for every $\ell \in \Lc(\Omega, R)$ and $\abs{z} \leq \epsilon(\Omega, R)$. Since $\partial \Omega$ is a $C^L$ hypersurface $\epsilon(\Omega, R)$ is positive. 

Finally define
\begin{align*}
 \kappa(\Omega, R): = \sup\{ \norm{\nabla r(z)} : z \in B_{R+4}(0)\}
 \end{align*}
and
\begin{align*}
D(\Omega, R) := \sup\{ \delta_{\Omega}(p;v) : p \in \Omega \cap B_R(0), v \in \Cb^d \text{ nonzero} \}.
\end{align*}
Since $\Omega$ is $\Cb$-proper, $D(\Omega, R)$ is finite. 

Although the parameters $\kappa$, $\alpha$, and $\epsilon$ also depend on the defining function $r$ we suppress this dependency. With all this notation we have the following:

\begin{proposition}\label{prop:finite_type_2}
Suppose $\kappa_0, \alpha_0, \epsilon_0, D_0, R>0$. There exists $C>0$ such that if $\Omega=\{ r(z) < 0\}$ is a $\Cb$-proper open convex set, $\partial \Omega$ is $C^L$ and has finite line type $L$ in a neighborhood of $B_R(0)$, $\kappa(\Omega, R) \leq \kappa_0$, $\alpha(\Omega, R) \geq \alpha_0$, $\epsilon(\Omega, R) \geq \epsilon_0$, and $D(\Omega, R) \leq D_0$ then 
\begin{align*}
\delta_{\Omega}(p;v) \leq C \delta_{\Omega}(p)^{1/L}
\end{align*}
for all $p \in B_R(0) \cap \Omega$ and $v \in \Cb^d$ nonzero.
\end{proposition}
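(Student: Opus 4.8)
The plan is to prove the estimate by exhibiting, for each $p\in B_R(0)\cap\Omega$ and each unit vector $v$, a point of the complex line $p+\Cb v$ lying outside $\Omega$ within Euclidean distance $C\delta_\Omega(p)^{1/L}$ of $p$; since $\delta_\Omega(p;v)$ is precisely the infimum of such distances, this suffices. Write $\delta:=\delta_\Omega(p)$. If $\delta\geq\delta_1$ for a threshold $\delta_1\in(0,1)$ to be fixed, then $\delta_\Omega(p;v)\leq D(\Omega,R)\leq D_0\leq (D_0\delta_1^{-1/L})\,\delta^{1/L}$ and there is nothing to do, so assume $\delta<\delta_1$. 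Choose $q\in\partial\Omega$ with $\norm{q-p}=\delta$; then $q\in B_{R+1}(0)$, and since $\nabla r(q)\neq0$ while $r(p)<0$, the outer unit normal $\nu:=\nabla r(q)/\norm{\nabla r(q)}$ satisfies $q-p=\delta\nu$, the tangent hyperplane $H:=q+T_q\partial\Omega=\{x:\Real\ip{x-q,\nu}=0\}$ supports $\Omega$ (so $\Omega\cap H=\emptyset$ and $r\geq0$ on $H$), and $r$ is $\kappa_0$-Lipschitz on $B_{R+4}(0)$.

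Write $v=a\nu+v'$ with $a:=\ip{v,\nu}$ and $v'$ in the complex tangent space $T^{\Cb}_q\partial\Omega=(\Cb\nu)^{\perp}$, so $\abs{a}^2+\norm{v'}^2=1$, and set $\rho:=C\delta^{1/L}$. The argument splits on whether $v$ is normal-like or complex-tangent-like. \textbf{Case 1: $\abs{a}\geq\delta^{1-1/L}/C$.} The ``height'' $x\mapsto-\Real\ip{x-q,\nu}$ is positive on $\Omega$, vanishes on $H$, and equals $\delta-\Real(wa)$ at $p+wv$; choosing $w$ with $wa=\delta$ gives $\abs{w}=\delta/\abs{a}\leq\rho$ and places $p+wv$ on $H$, hence outside $\Omega$, so $\delta_\Omega(p;v)\leq\abs{w}\leq C\delta^{1/L}$.

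\textbf{Case 2: $\abs{a}<\delta^{1-1/L}/C$.} For $\delta$ small this gives $\norm{v'}\geq1/\sqrt2$. The crucial point is that the affine line $\ell(z):=q+z\,v'/\norm{v'}$ lies entirely inside the supporting hyperplane $H$ — because $\Cb\cdot v'\subseteq T^{\Cb}_q\partial\Omega\subseteq T_q\partial\Omega$ — so $r\circ\ell\geq0$ everywhere and $\ell\in\Lc(\Omega,R)$. Let $P_\ell$ be the $L$-th order Taylor polynomial of $r\circ\ell$ at $0$. Applying the $A_L$-lemma at radius $\epsilon':=\rho\norm{v'}$ (legitimate since $\epsilon'<\min\{1,\epsilon_0\}\leq\epsilon(\Omega,R)$ for $\delta$ small), together with the defining inequalities $\abs{P_\ell-r\circ\ell}\leq\tfrac14 A_L\,\alpha(\Omega,R)\abs{z}^L$ on $\abs{z}\leq\epsilon'$ and $\sup_{1\leq a+b\leq L}\abs{\alpha_{a,b}(\ell)}\geq\alpha(\Omega,R)\geq\alpha_0$, yields a point $z_0$, $\abs{z_0}\leq\epsilon'$, with $r(\ell(z_0))\geq\tfrac34 A_L\alpha_0(\epsilon')^L$ (here $r\circ\ell\geq0$ lets us drop the absolute value). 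Setting $w_0:=z_0/\norm{v'}$, one has $\abs{w_0}\leq\rho$ and $p+w_0v=\ell(z_0)+(w_0a-\delta)\nu$, whence $\norm{(p+w_0v)-\ell(z_0)}\leq\delta+\abs{w_0}\abs{a}<2\delta$ by the case hypothesis (and all points involved stay in $B_{R+4}(0)$). Lipschitzness then gives $r(p+w_0v)\geq\tfrac34 A_L\alpha_0(\epsilon')^L-2\kappa_0\delta\geq\delta\big(\tfrac34 A_L\alpha_0\,C^L2^{-L/2}-2\kappa_0\big)$, which is positive as soon as $C^L>8\cdot2^{L/2}\kappa_0/(3A_L\alpha_0)$; for such $C$ we conclude $p+w_0v\notin\Omega$ and $\delta_\Omega(p;v)\leq\abs{w_0}\leq C\delta^{1/L}$.

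To finish one fixes $C\geq\sqrt2$ exceeding the bound from Case 2, then $\delta_1\in(0,1)$ small enough to justify $q\in B_{R+1}(0)$, $\epsilon'<\min\{1,\epsilon_0\}$ and $\norm{v'}\geq1/\sqrt2$ whenever $\delta<\delta_1$, and finally enlarges $C$ to $\max\{C,\,D_0\delta_1^{-1/L}\}$; all constants depend only on $\kappa_0,\alpha_0,\epsilon_0,D_0$ (and $L$, $A_L$). I expect the main obstacle to be Case 2, specifically the two calibrations it requires: recognizing that the complex tangent line at $q$ sits inside the supporting hyperplane (so that a large value of the Taylor polynomial of $r\circ\ell$ genuinely is a point outside $\Omega$ and not one far inside), and choosing the cutoff $\delta^{1-1/L}/C$ exactly so that the error $\delta+\abs{w_0}\abs{a}$ made in sliding from the tangent line through $q$ to the line through $p$ is $O(\delta)$ and hence swamped by the value, of order $C^L\delta$, of the Taylor polynomial.
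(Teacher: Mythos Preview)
Your proof is correct and follows essentially the same route as the paper's: reduce to small $\delta_\Omega(p)$ via the bound $D_0$, pick the nearest boundary point, split $v$ into a normal component (handled by the supporting hyperplane) and a tangential component (handled by the $A_L$--lemma plus the Taylor remainder estimate, then a Lipschitz slide from $q$ back to $p$), with the same $\delta^{1-1/L}$ threshold separating the cases. The only cosmetic difference is that you decompose $v$ with respect to the complex tangent space from the outset---which makes the nonnegativity of $r\circ\ell$ immediate---whereas the paper first treats the tangential direction as a special case and then does the decomposition.
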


\begin{proof}[Proof of Proposition~\ref{prop:finite_type_2}]
We can assume that $\epsilon_0 \leq 1$. Let 
\begin{align*}
C_0: = \frac{4\kappa(\Omega,R)}{A_L\alpha(\Omega,R)} \leq \frac{4 \kappa_0}{A_L \alpha_0}.
\end{align*}
We will show that 
\begin{align*}
\delta_{\Omega}(p;v) \leq 2 C_0^{1/L} \delta_{\Omega}(p)^{1/L}
\end{align*}
for $v \in \Cb^d$ nonzero and $p \in \Omega \cap B_R(0)$ satisfying 
\begin{align*}
\delta_{\Omega}(p)  \leq \delta_0:=\min\left\{1, \frac{\epsilon_0^L}{C_0}, \left(\frac{C_0}{3^{L/2}}\right)^{1/(L-1)}\right\}.
\end{align*} 
Since $D(\Omega, R) \leq D_0$, this will imply that 
\begin{align*}
\delta_{\Omega}(p;v) \leq \max\left\{ 2D_0/\delta_0, 2C_0^{1/L} \right\}\delta_{\Omega}(p)^{1/L} 
\end{align*}
for all $p \in B_R(0) \cap \Omega$ and $v \in \Cb^d$ nonzero.

So suppose $p \in \Omega \cap B_R(0)$ satisfies $\delta_{\Omega}(p) \leq \delta_0$ and $v \in \Cb^d$ is a unit vector. Let $x$ be a closest point to $p$ in $\partial \Omega$ and
\begin{align*}
U = \{ u \in \Cb^d : \Real \ip{u,x-p} = 0\}
\end{align*}
then $T_{x} \partial \Omega = x + U$. Notice that $x \in B_{R+1}(0) \cap \partial \Omega$ since $\delta_{\Omega}(p) \leq \delta_0 \leq 1$.

First suppose that $\Real \ip{x-p,v}=0$.  Let $\ell : \Cb \rightarrow \Cb^d$ be the affine line with $\ell(0)=x$ and $\ell^\prime(0)=v$. If $\ell^\prime(0) \in T_{\ell(0)} \partial \Omega$ then
\begin{align*}
\sup_{\abs{z} \leq \left(C_0  \delta_{\Omega}(p) \right)^{1/L} }\abs{P_\ell(z)} \geq 4 \kappa(\Omega,R)  \delta_{\Omega}(p)
\end{align*}
since $\ell \in \Lc(\Omega, R)$.  Then since $\delta_{\Omega}(p) \leq \delta_0$ we have
\begin{align*}
\left(C_0 \delta_{\Omega}(p) \right)^{1/L} \leq \epsilon_0 \leq \epsilon(\Omega, R)
\end{align*}
and so 
\begin{align*}
\sup_{\abs{z} \leq \left(C_0 \delta_{\Omega}(p) \right)^{1/L} } \abs{P_\ell(z)-(r\circ\ell)(z)} \leq \kappa(\Omega,R) \delta_{\Omega}(p). 
\end{align*}
Then, exploiting the fact that $(r\circ \ell)(z) \geq 0$, we see that 
\begin{align*}
\sup_{\abs{z} \leq  \left(C_0 \delta_{\Omega}(p) \right)^{1/L} }  (r\circ \ell)(z) \geq 3\kappa(\Omega, R)\delta_{\Omega}(p).
\end{align*}
Thus, using the fact that $\norm{\nabla r} \leq \kappa(\Omega, R)$ and $vz + p = \ell(z)-x+p$, we have
\begin{align*}
\sup_{\abs{z} \leq (C_0\delta_{\Omega}(p))^{1/L}} r(vz+p) \geq 3\kappa(\Omega,R)\delta_{\Omega}(p)-\kappa(\Omega,R) \norm{p-x} = 2\kappa(\Omega,R) \delta_\Omega(p) >0.
\end{align*}
Notice that we need the assumption that 
\begin{align*}
\delta_{\Omega}(p) \leq \delta_0 \leq \min\{ 1, \epsilon_0^{L}/C_0\} \leq \min\{ 1, 1/C_0\}
\end{align*}
in order to apply the gradient estimate. Since $\Omega = \{ r(z) < 0\}$ we then have
\begin{equation*}
\delta_{\Omega}(p;v) \leq    C_0^{1/L} \delta_{\Omega}(p)^{1/L}.
\end{equation*}

Now we prove the general case. Decompose 
\begin{align*}
v = \beta \frac{p-x}{\delta_{\Omega}(p)} + \sqrt{1-\beta^2} u 
\end{align*}
where $\beta = \ip{p-x, v}/\delta_{\Omega}(p)$ and $u \in U$ is a unit vector. Since 
\begin{align*}
(-\delta_{\Omega}(p)/\beta)v+p \in T_{x} \partial \Omega \subset \Cb^d \setminus \Omega
\end{align*}
 we see that 
\begin{align}
\label{eq:ft_mc_1}
\delta_{\Omega}(p;v) \leq \delta_{\Omega}(p)/\abs{\beta}.
\end{align}
Now if 
\begin{align*}
\abs{\beta} \geq \frac{3\delta_{\Omega}(p)^{1-1/L}}{2C_0^{1/L}} 
\end{align*}
then Equation~\ref{eq:ft_mc_1} implies that 
\begin{align*}
\delta_{\Omega}(p;v) \leq \frac{2}{3}C_0^{1/L} \delta_{\Omega}(p)^{1/L}.
\end{align*}
Otherwise, since $\delta_{\Omega}(p) \leq \delta_0 \leq (C_0/3^{L/2})^{1/(L-1)}$ we see that $\abs{\beta} \leq \sqrt{3}/2$. Then since $u \in T_{x} \partial \Omega$, the special case above implies that
\begin{align*}
\sup_{ \abs{z} \leq \frac{1}{\sqrt{1-\beta^2}}  (C_0  \delta_{\Omega}(p))^{1/L}} & r\Big( u(\sqrt{1-\beta^2}z) + p\Big) \\
& = \sup_{ \abs{z} \leq  (C_0 \delta_{\Omega}(p) )^{1/L}} r\Big( uz + p\Big)
\geq 3\kappa(\Omega, R)  \delta_{\Omega}(p).
\end{align*}
And so, using the fact that $\norm{\nabla r} \leq \kappa(\Omega, R)$, we have
\begin{align*}
\sup_{ \abs{z} \leq \frac{1}{\sqrt{1-\beta^2}}  (C_0  \delta_{\Omega}(p))^{1/L}} & r(vz+p) \geq 3\kappa(\Omega, R)  \delta_{\Omega}(p) -  \frac{\kappa(\Omega,R)\abs{\beta}}{\sqrt{1-\beta^2}}  \left(C_0  \delta_{\Omega}(p) \right)^{1/L} \\
&  \geq 3\kappa(\Omega,R)  \delta_{\Omega}(p)-  2\kappa(\Omega,R) \abs{\beta}\left(C_0  \delta_{\Omega}(p)\right)^{1/L}.
\end{align*}
Notice that we need the assumption that 
\begin{align*}
\delta_{\Omega}(p) \leq \delta_0 \leq \min\{ 1, \epsilon_0^{L}/C_0\} \leq \min\{ 1, 1/C_0\}
\end{align*}
and 
\begin{align*}
\frac{1}{\sqrt{1-\beta^2}} \leq 2
\end{align*}
in order to apply the gradient estimate. Thus
\begin{align*}
\sup_{ \abs{z} \leq \frac{1}{\sqrt{1-\beta^2}}  (C_0 \delta_{\Omega}(p) )^{1/L}} r(vz+p) \geq 0
\end{align*}
and so
\begin{equation*}
\delta_{\Omega}(p;v) \leq \frac{1}{\sqrt{1-\beta^2}}C_0^{1/L}\delta_{\Omega}(p)^{1/L} \leq 2C_0^{1/L}\delta_{\Omega}(p)^{1/L}. \qedhere
\end{equation*}
\end{proof}

We end this section with an an example:

\begin{example}\label{ex:L_convex_seq}
Suppose that $\Omega = \{ r(z) < 0\}$ is $\Cb$-proper convex open set, $\partial \Omega$ is $C^L$ and every $x \in \partial \Omega$ has finite line type at most $L$. Then by Proposition~\ref{prop:finite_type}, $\Omega$ is locally $L$-convex. If $\Omega_n = \{ r_n(z) < 0\}$ is a sequence of $\Cb$-proper convex open sets such that $r_n$ converges to $r$ locally uniformly in the $C^0$ topology then 
\begin{align*}
\lim_{n \rightarrow \infty} D(\Omega_n,R) = D(\Omega, R).
\end{align*}
If $r_n$ converges to $r$ locally uniformly in the $C^1$ topology then 
\begin{align*}
\lim_{n \rightarrow \infty} \kappa(\Omega_n,R) = \kappa(\Omega, R).
\end{align*}
If $r_n$ converges to $r$ locally uniformly in the $C^L$ topology then 
\begin{align*}
\lim_{n \rightarrow \infty} \alpha(\Omega_n,R) = \alpha(\Omega, R).
\end{align*}
Finally, if $r_n$ converges to $r$ locally uniformly in the $C^{L+1}$ topology then 
\begin{align*}
\lim_{n \rightarrow \infty} \epsilon(\Omega_n,R) = \epsilon(\Omega, R).
\end{align*}
In particular, if  $r_n$ converges to $r$ locally uniformly in the $C^{L+1}$ topology then Proposition~\ref{prop:finite_type_2} implies that $(\Omega_n)_{n \in \Nb}$ is a locally $L$-convex sequence. 
\end{example}

\section{Rescaling convex domains of finite type}

In this section we recall an argument of Gaussier~\cite{G1997} which implies the following:

\begin{theorem}\label{thm:gaussier}
Suppose $\Omega \subset \Cb^{d+1}$ is a convex open set such that $\partial \Omega$ is $C^L$ and has finite line type $L$ near some $\xi \in \partial \Omega$. If $q_n \in \Omega$ is a sequence converging to $\xi$ then there exists $n_k \rightarrow \infty$ and affine maps $A_k \in \Aff(\Cb^d)$ such that
\begin{enumerate}
\item $A_k \Omega$ converges in the local Hausdorff topology to a $\Cb$-proper convex open set $\wh{\Omega}$ of the form:
\begin{align*}
\wh{\Omega} = \{ (z_0,z_1 \dots, z_d) \in \Cb^{d} : \Real(z_0)  > P(z_1,z_2, \dots, z_d) \}
\end{align*}
where $P$ is a non-negative non-degenerate convex polynomial with $P(0)=0$,
\item $A_k q_{n_k} \rightarrow q_{\infty} \in \wh{\Omega}$, and
\item $(A_k \Omega)_{k \in \Nb}$ is a locally L-convex sequence.
\end{enumerate}
\end{theorem}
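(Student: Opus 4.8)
The plan is to carry out the anisotropic (Frankel–Pinchuk) rescaling adapted to a boundary point of finite type, following \cite{G1997}, and then to use the estimates of Section~\ref{sec:finite_type_m_conex} to upgrade the resulting sequence of rescaled domains to a locally $L$-convex sequence. First I would reduce to a local model. After a complex affine change of coordinates we may assume $\xi = 0$, that $\{\Real(z_0) = 0\}$ is the supporting hyperplane of $\Omega$ at $0$, and that on a neighborhood $\Oc$ of $0$
\begin{align*}
\Omega \cap \Oc = \{ (z_0, z') \in \Oc : \Real(z_0) > F(\Imaginary(z_0), z') \}
\end{align*}
where $F$ is convex (since $\Omega$ is), $C^L$, non-negative, with $F(0) = 0$ and $dF(0) = 0$; finite line type $L$ says precisely that the restriction of a defining function to any complex affine line through a boundary point near $0$ vanishes to order at most $L$. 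Set $\epsilon_n := \delta_\Omega(q_n) \to 0$, let $p_n \in \partial\Omega$ be a closest point to $q_n$, and let $\tau_n(z) = z - p_n$. Composing with $\tau_n$ and a $\Cb$-linear unitary rotation $R_n \to \mathrm{id}$ (which exists since $p_n \to 0$ and $\partial\Omega$ is $C^1$) we may assume that $p_n$ is sent to $0$, that $\{\Real(z_0)=0\}$ is the supporting hyperplane there, and hence that $q_n$ is sent to $(\epsilon_n, 0, \dots, 0)$; the boundary is then the graph of a convex $C^L$ function $F_n \geq 0$ with $F_n(0)=0$, $dF_n(0)=0$, and with $C^L$-norms bounded uniformly in $n$ on a fixed neighborhood of $0$.

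Next, for each $n$ I would apply Gaussier's construction: there is a $\Cb$-linear map $L_n$ — a composition of a unitary map and a unipotent map of the $z'$-variables with an anisotropic dilation $\operatorname{diag}(\epsilon_n^{-1}, \epsilon_n^{-1/m_1}, \dots, \epsilon_n^{-1/m_d})$, $2 \le m_j \le L$ — such that the maps $A_n := L_n \circ R_n \circ \tau_n$ have the following property: on a ball of radius $\asymp \epsilon_n^{-1/L}$ about the origin the set $A_n\Omega$ is the region above the graph of a function $\widetilde{F}_n$ whose Taylor coefficients up to order $L$ are bounded above uniformly in $n$ and, thanks to finite type (this is where positivity of the quantity $\alpha(\Omega,R)$ from Section~\ref{sec:finite_type_m_conex} enters), bounded away from $0$ in every complex tangential direction, while the remainder of order $> L$ tends to $0$ locally uniformly. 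Passing to a subsequence $n_k$ so that these Taylor coefficients converge, $\widetilde{F}_{n_k}$ converges locally uniformly to a convex polynomial $P$ of degree at most $L$ with $P \ge 0$ and $P(0) = 0$; the uniform lower bounds force $P$ to be non-degenerate, i.e. unbounded on every complex line, so that
\begin{align*}
\wh{\Omega} := \{ (z_0, z') \in \Cb^{d+1} : \Real(z_0) > P(z') \}
\end{align*}
is $\Cb$-proper.

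Fix $R > 0$. Since the part of $\Omega$ outside $\Oc$ lies at a definite distance from $\xi$ while the maps $A_n$ are expanding, for $k$ large one has $A_{n_k}\Omega \cap B_R(0) = A_{n_k}(\Omega \cap \Oc) \cap B_R(0)$, which by the previous step is $d_H^{(R)}$-close to $\wh{\Omega} \cap B_R(0)$; since $R$ was arbitrary this gives $A_{n_k}\Omega \to \wh{\Omega}$ in the local Hausdorff topology, which is (1). Because $L_{n_k}$ scales the $z_0$-coordinate by $\epsilon_{n_k}^{-1}$ and fixes the origin of the $z'$-space, $A_{n_k} q_{n_k} = (1, 0, \dots, 0) =: q_\infty$, and $\Real(1) = 1 > 0 = P(0)$ shows $q_\infty \in \wh{\Omega}$, which is (2).

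Finally, for (3) write $A_{n_k}\Omega = \{ \widetilde{r}_k < 0 \}$ with $\widetilde{r}_k$ a rescaled ($C^L$) defining function. Using the uniform $C^L$ control on $r$ near $\xi$ and the explicit form of the $A_{n_k}$, I would verify that for each fixed $R$ the parameters $\kappa(A_{n_k}\Omega, R)$ and $D(A_{n_k}\Omega, R)$ are bounded above and $\alpha(A_{n_k}\Omega, R)$ and $\epsilon(A_{n_k}\Omega, R)$ are bounded below, uniformly in $k$ (compare Example~\ref{ex:L_convex_seq}); for $\epsilon(\cdot, R)$ this reduces to the $L$-th order Taylor remainders of the $\widetilde{r}_k$ sharing a common modulus of continuity, inherited from that of the $L$-th derivatives of $r$. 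Proposition~\ref{prop:finite_type_2} then supplies, for each $R$, a single $C > 0$ with $\delta_{A_{n_k}\Omega}(p;v) \le C\,\delta_{A_{n_k}\Omega}(p)^{1/L}$ for all $p \in B_R(0)\cap A_{n_k}\Omega$, all nonzero $v$, and all large $k$ — that is, $(A_{n_k}\Omega)_{k\in\Nb}$ is a locally $L$-convex sequence. The genuinely substantial step is the construction of the linear maps $L_n$ with the stated uniform control on the rescaled graph functions, in particular the non-degeneracy of the limit polynomial $P$, which is exactly where finite type is essential and is the content of Gaussier's argument; the remaining delicate point is transferring the modulus of continuity of the top-order derivatives of $r$ through the anisotropic dilations to get $\epsilon(A_{n_k}\Omega,R)$ bounded below.
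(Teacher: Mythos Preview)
Your outline is the paper's approach: localize, apply Gaussier's anisotropic rescaling, extract a subsequential limit, and verify part~(3) via Proposition~\ref{prop:finite_type_2}. Two points of your sketch diverge from what the paper actually does and are worth flagging. First, the dilation is not $\operatorname{diag}(\epsilon_n^{-1},\epsilon_n^{-1/m_1},\dots,\epsilon_n^{-1/m_d})$ with \emph{fixed} exponents $m_j$: the paper (following Hefer's correction of Gaussier's original argument) builds for each $q_n$ an adapted orthogonal frame and scales by $\tau_j(q_n)^{-1}$, where $\tau_j(q_n)$ is the distance from $q_n$ to $\partial\Omega$ in the $j$-th frame direction; one only has $\tau_j(q_n)\lesssim\epsilon_n^{1/L}$, not an equality, and the paper's remark explicitly warns that the earlier ``maximal basis'' scaling has a gap. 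Relatedly, the non-degeneracy of $P$ is not obtained from lower bounds on Taylor coefficients as you suggest, but geometrically: by construction the standard basis vectors $e_0,\dots,e_d$ all lie in $\partial\wh{\Omega}$, and the tangent hyperplanes there are shown (via the derivative estimates of Lemma~\ref{lem:G3}) to be linearly independent, forcing $\wh{\Omega}$ to be $\Cb$-proper. Second, your appeal to Example~\ref{ex:L_convex_seq} for the $\epsilon(A_{n_k}\Omega,R)$ bound does not work as stated, since that example needs $C^{L+1}$ convergence of defining functions while here $\partial\Omega$ is only $C^L$; the paper instead pulls a line $\ell$ in $A_n\Omega$ back to a line $\wh{\ell}$ in $\Omega$, uses the uniform modulus of continuity of the $L$-th derivatives of $r$ on a fixed compact neighborhood of $\xi$, and tracks how the remainder transforms under the anisotropic dilation --- which is the mechanism you correctly identify in your final sentence, but it requires the direct argument rather than the example.
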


\begin{remark} \

\begin{enumerate}
\item A polynomial is called \emph{non-degenerate} if the set $\{ P=0\}$ contains no complex affine lines. In the context of the above theorem this is equivalent to the set $\wh{\Omega}$ being $\Cb$-proper.
\item Following the notation in~\cite{G1997}, in this section we consider convex sets in $\Cb^{d+1}$. 
\item There is a small issue in Gaussier's proof that needs to be corrected: Proposition 1.3 part (iii) in~\cite{M1994} was used which was later pointed out to be incorrect in~\cite{NPT2013}. This problem is easily fixed by using the ``minimal'' bases introduced by Hefer~\cite{H2002} instead of the ``maximal'' bases used in the original proof. 
\item Gaussier also considers the case in which $\partial \Omega$ is $C^\infty$ near $\xi$. Weakening the regularity requires some additional estimates and is our main motivation for presenting the complete argument. 
\end{enumerate} 
\end{remark}

Now suppose $\Omega$ satisfies the hypothesis of Theorem~\ref{thm:gaussier}. Then there exists a bounded neighborhood $V$ of $\xi$ in $\Cb^{d+1}$ such that $\Omega \cap V$ is defined by a convex function of the form 
\begin{align*}
r(z_0,z_1, \dots, z_d) = \Real(z_0) + \varphi(\Imaginary(z_0), z_1, \dots, z_d)
\end{align*}
where $\varphi$ is $C^L$, $\norm{\nabla r}$ is bounded from above and below on $V$, and $\Omega \cap V$ has finite line type $L$. 

Given a point $q \in \Omega$ we will associate points $x_0(p), \dots, x_d(p) \in \partial \Omega$. First let $x_0(p)$ be a point in $\partial \Omega$ closest to $p$. Then assuming $x_0(q), \dots, x_k(q)$ have already been selected, let $P$ be the maximal complex plane through $q$ orthogonal to the lines $\{ \overline{q x_i(q)}$ : $0 \leq i \leq k\}$. Then let $x_{k+1}(q)$ be the point in $P \cap \partial \Omega$ closest to $q$. 

\begin{observation} There exists a neighborhood $U$ of $\xi$ so that for all $q \in U \cap \Omega$ the points $x_0(q), \dots, x_d(q)$ are in  $V$. 
\end{observation}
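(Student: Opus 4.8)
The plan is to bound $\max_{0\le i\le d}\norm{q-x_i(q)}$ for $q$ near $\xi$ and show that it tends to $0$ as $q\to\xi$. Since $V$ is a neighborhood of $\xi$, fix $\rho>0$ with $B_\rho(\xi)\subset V$. Once we know $\norm{q-x_i(q)}\to 0$, the estimate
\[
\norm{x_i(q)-\xi}\le\norm{x_i(q)-q}+\norm{q-\xi}
\]
shows that $x_i(q)\in B_\rho(\xi)\subset V$ for all $0\le i\le d$ whenever $q$ lies in a sufficiently small neighborhood $U$ of $\xi$, which is the assertion.

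Before estimating I would record that the construction is legitimate near $\xi$ and that Proposition~\ref{prop:finite_type} is applicable, both of which follow from the fact that $\Omega$ contains no complex affine line. Indeed, if $\ell_0=a+\Cb v\subset\Omega$ were such a line, then a standard convexity argument shows $p+\Cb v\subset\Omega$ for every $p\in\Omega$: the convex hull of $\ell_0\cup\{p\}$ contains the complex line $(1-s)a+sp+\Cb v$ for each $s\in[0,1)$, hence $p+\Cb v\subset\overline\Omega$, and then $p+\mu v=\frac{1}{2}p+\frac{1}{2}(p+2\mu v)$ is the midpoint of the interior point $p$ and the point $p+2\mu v\in\overline\Omega$, so $p+\mu v\in\Omega$. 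Letting $p\to\xi$ gives $\xi+\Cb v\subset\overline\Omega$, and no point of this line can lie in $\Omega$ (otherwise $\xi$ would too), so $\xi+\Cb v\subset\partial\Omega$ and a defining function for $\Omega$ vanishes to infinite order along it, contradicting finite line type at $\xi$. Thus $\Omega$ is $\Cb$-proper, so Proposition~\ref{prop:finite_type} yields a neighborhood $U_0$ of $\xi$ and a constant $C>0$ with $\delta_\Omega(p;w)\le C\,\delta_\Omega(p)^{1/L}$ for all $p\in U_0\cap\Omega$ and all nonzero $w$; moreover every positive-dimensional complex affine subspace through a point of $\Omega$ meets $\partial\Omega$, so each $x_{k+1}(q)$ is well defined.

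Now the estimate itself. First $\norm{q-x_0(q)}=\delta_\Omega(q)\le\norm{q-\xi}\to 0$. Next fix $0\le k\le d-1$ and let $P$ be the complex plane through $q$ used in the construction of $x_{k+1}(q)$; picking any nonzero $w$ with $q+\Cb w\subset P$, the nearest point of $\partial\Omega$ to $q$ along the line $q+\Cb w$ lies in $P\cap\partial\Omega$, so $\norm{q-x_{k+1}(q)}\le\delta_\Omega(q;w)\le C\,\delta_\Omega(q)^{1/L}\le C\norm{q-\xi}^{1/L}$ as soon as $q\in U_0$. Hence $\max_{0\le i\le d}\norm{q-x_i(q)}\le\max\{\norm{q-\xi},\,C\norm{q-\xi}^{1/L}\}$, which tends to $0$ as $q\to\xi$, completing the argument. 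The only real subtlety --- and the point where finite type is used --- is the control of the auxiliary points $x_{k+1}(q)$ with $k\ge 0$: their distance to $q$ is governed not by $\delta_\Omega(q)$ but by the directional quantity $\delta_\Omega(q;w)$, which can be much larger, and Proposition~\ref{prop:finite_type} is exactly what bridges the gap.
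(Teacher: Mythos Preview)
Your proof is correct; the paper states this as an observation without proof, and the estimate you use --- $\norm{q-x_{k+1}(q)}\le\delta_\Omega(q;w)\le C\,\delta_\Omega(q)^{1/L}$ via Proposition~\ref{prop:finite_type} --- is exactly what the paper records a few lines later as Lemma~\ref{lem:G2}, so your approach is the intended one. Your preliminary verification that $\Omega$ is $\Cb$-proper (needed both for the construction of the $x_i(q)$ and for the hypothesis of Proposition~\ref{prop:finite_type}) is a nice point that the paper glosses over.
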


Once $x_0(q), \dots, x_d(q)$ have been selected let $\tau_i(q) = \norm{q-x_i(q)}$ for $0 \leq i \leq d$. Next let $T_q \in \Aff(\Cb^{d+1})$ be the translation $T_q(z) = z-q$, let $\Lambda_q \in \GL_{d+1}(\Cb)$ be the linear map 
\begin{align*}
\begin{pmatrix}
\tau_0(q)^{-1} & & \\
& \ddots & \\
& & \tau_{d+1}(q)^{-1}
\end{pmatrix},
\end{align*}
and let $U_q$ be the unitary map so that 
\begin{align*}
\Lambda_q U_q T_q(x_i(q)) = e_i.
\end{align*}
Next define the polydisk  
\begin{align*}
P(q): = \{ z \in \Cb^{d+1} : \abs{z_0} < \tau_0(q), \dots, \abs{z_d} < \tau_d(q) \}
\end{align*}
and a new function
\begin{align*}
r_q: = r \circ (U_q \circ T_q)^{-1}.
\end{align*}
Notice that $r_q$ is a defining function for the convex set $(U_q \circ T_q)(\Omega \cap V)$. Also for $0 \leq i \leq d$ let 
\begin{align*}
y_i(q) = (U_q \circ T_q)x_i(q).
\end{align*}

Then

\begin{lemma}\cite[Lemma 1.1, Lemma 1.2]{G1997}\label{lem:G1}
There exists $C>0$ such that for all $q \in \Omega \cap U$ 
\begin{align*}
CP(q) \subset \{ z \in \Cb^{d+1} : 2r(q) < r_q(z) < 0\}.
\end{align*}
\end{lemma}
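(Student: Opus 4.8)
\textbf{Proof proposal for Lemma~\ref{lem:G1}.}

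The plan is to reduce everything to a statement about the single real-valued convex function $r_q$ and its behaviour along the coordinate axes, and then use convexity together with the defining relations $r_q(0) = r(q)$, $r_q(y_i(q)) = 0$, and $\norm{\nabla r_q} \asymp 1$ on $(U_q\circ T_q)(V)$. Write $\tau_i = \tau_i(q)$ and recall that by construction the points $y_i(q) = \tau_i e_i$ lie on the boundary of the rescaled domain while $0$ corresponds to the interior point $q$, with $r(q) = -\delta_\Omega(q) \cdot (\text{gradient factor})$, i.e. $|r(q)| \asymp \tau_0$ up to the uniform bounds on $\norm{\nabla r}$. The first step is to record these normalizations precisely: $x_0(q)$ is a nearest boundary point so $\tau_0 = \delta_\Omega(q)$ and $x_0(q) - q$ is normal to $\partial\Omega$; each later $x_i(q)$ is the nearest boundary point in the orthogonal complex plane $P$, so $\tau_0 \leq \tau_1 \leq \dots \leq \tau_d$, and $r_q$ vanishes at each $\tau_i e_i$.

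Next I would prove the inclusion $CP(q) \subset \{r_q < 0\}$, i.e. that the scaled polydisk is still inside the domain. The key point is that $\{r_q < 0\}$ is convex and contains the origin together with the $2(d+1)$ points $\pm \tau_i e_i$ — here one needs that the ``opposite'' boundary point in each axis direction is not much closer than $\tau_i$, which is where the finite-type / convexity estimate enters (a convex domain cannot be pinched on one side without being pinched comparably on the other, up to a dimensional constant; this is essentially the content of Lemma 1.1 in~\cite{G1997}). Granting a uniform lower bound on the distance from $0$ to $\partial\{r_q<0\}$ in the $-e_i$ directions, the convex hull of $\{0, \pm c\tau_i e_i : 0 \leq i \leq d\}$ contains $c' P(q)$ for dimensional constants $c, c' > 0$, giving one half of the claim.

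For the other two inclusions — that on $CP(q)$ one has $r_q(z) < 0$ and $r_q(z) > 2r(q)$ — I would use the gradient bound and the mean value inequality. Since $\norm{\nabla r_q} \leq \kappa_0$ uniformly and $\diam(CP(q)) \lesssim \tau_d$... wait, that is too crude; the correct move is to exploit that along the $i$-th axis $r_q$ goes from $r(q) < 0$ at $0$ to $0$ at $\tau_i e_i$, is convex, hence $r_q(t e_i) \leq 0$ for $t \in [0,\tau_i]$ and $r_q(t e_i) \geq r(q)(1 - |t|/\tau_i) \geq r(q)$ there; combined with convexity in all variables simultaneously (a convex function on a box is controlled by its values on the vertices) one gets $2r(q) < r_q < 0$ on a fixed fraction of the polydisk. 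The lower bound $r_q > 2r(q)$ is the place where the constant must be chosen carefully: shrinking the polydisk by a factor $C$ keeps $r_q$ within a controlled multiple of $r(q)$ of its value at the center. The main obstacle, as usual in this rescaling scheme, is the comparison of $\tau_i$ with the width of the domain in the $-e_i$ direction — i.e. ruling out that the scaled domain degenerates on one side; this rests on a quantitative convexity lemma, and once it is in hand the rest is bookkeeping with the gradient estimate. Since this is precisely Lemma 1.1 and Lemma 1.2 of~\cite{G1997}, I would cite those for the sharp constants and only reproduce the convexity argument in enough detail to see that the constant $C$ depends on nothing beyond $d$ and the uniform bounds on $\norm{\nabla r}$ on $V$.
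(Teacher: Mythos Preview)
Your approach to the upper bound (the inclusion $CP(q)\subset\{r_q<0\}$) is more or less what the paper does, but you are introducing a difficulty that isn't there. Recall that $\tau_i(q)$ is defined as the \emph{minimum} distance from $q$ to $\partial\Omega$ inside the complex affine subspace $P$ orthogonal to the earlier directions. Since $P$ contains the whole complex line $\Cb e_i$, the nearest boundary point in \emph{every} direction $e^{i\theta}e_i$ is at distance at least $\tau_i$; in particular the ``opposite'' direction $-e_i$ is automatically handled and no separate quantitative comparison is needed. Each coordinate disk $\{|z_i|<\tau_i\}\cap \Cb e_i$ therefore lies in the convex domain, and the convex hull of these $d+1$ disks contains $\frac{1}{d+1}P(q)$; after possibly shrinking $C$ to stay inside $(U_q\circ T_q)(V)$, this gives the inclusion. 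The paper simply asserts this step (``We can pick $C>0$ sufficiently small\dots''), so your attempt to spell it out is fine, but drop the worry about one-sided degeneration.

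The real problem is your argument for the lower bound $r_q>2r(q)$. You write ``$r_q(te_i)\geq r(q)(1-|t|/\tau_i)$'', but convexity gives the opposite inequality: along the segment $[0,\tau_i e_i]$ the convex function $r_q$ lies \emph{below} the chord joining $r_q(0)=r(q)$ and $r_q(\tau_i e_i)=0$, so $r_q(t\tau_i e_i)\leq(1-t)r(q)$, not $\geq$. Likewise, your remark that ``a convex function on a box is controlled by its values at the vertices'' gives an upper bound (maximum at extreme points), which is useless here since you need a lower bound. So this part of the proposal does not work as written.

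The paper's argument for the lower bound is short and avoids all of this. It uses only the central symmetry of $P(q)$: if $z\in CP(q)$ then also $-z\in CP(q)$, so $r_q(z)<0$ and $r_q(-z)<0$. Convexity of $t\mapsto r_q(tz)$ on $[-1,1]$ gives
\[
r(q)=r_q(0)\leq \tfrac{1}{2}\big(r_q(z)+r_q(-z)\big),
\]
hence $r_q(z)\geq 2r(q)-r_q(-z)>2r(q)$. (The paper phrases this via the mean value theorem, but the content is the same.) This is the missing idea: once $CP(q)$ is known to lie in $\{r_q<0\}$, symmetry plus convexity immediately force $r_q>2r(q)$ there, with no gradient estimates or vertex bounds needed.
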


\begin{proof} 
We can pick $C >0$ sufficiently small so that $CP(q) \subset (U_q \circ T_q)(\Omega \cap V)$. Then $r_q(z) < 0$ for all $z \in CP(q)$. 

Now if $z \in CP(q)$ we see that $-z \in CP(q)$. So suppose that $z \in CP(q)$ and $r_q(z) < 2r(q)$. Then the function $f(t) = r_q(-tz)$ is convex and by the mean value theorem there exists $t_0 \in [-1,0]$ so that 
\begin{align*}
f^\prime(t_0)(0+1) = r(q) - r(z) > -r(q).
\end{align*}
But then 
\begin{align*}
f(1) = f(0) + \int_{0}^{1} f^\prime(t) dt > r(q) - r(q) = 0
\end{align*}
by convexity. Which contradicts the fact that $-z \in CP(q) \subset \Omega \cap V$.
\end{proof}

\begin{lemma}\cite[Lemma 1.3]{G1997}\label{lem:G2}
There exists $c>0$ such that for all $q \in \Omega \cap U$ 
\begin{enumerate}
\item $\tau_0(q) \leq c(-r(q))$, 
\item for every $j \geq 1$, $\tau_j(q) \leq c(-r(q))^{1/L}$.
\end{enumerate}
\end{lemma}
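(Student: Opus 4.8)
The plan is to prove Lemma~\ref{lem:G2} by exploiting the two-sided inclusion from Lemma~\ref{lem:G1} together with the finite line type hypothesis and the convexity of $r$. Throughout, fix $q \in \Omega \cap U$ and write $x_i = x_i(q)$, $\tau_i = \tau_i(q)$, $y_i = y_i(q)$, and $r_q = r \circ (U_q \circ T_q)^{-1}$; recall $r_q(y_i) = 0$ for $0 \le i \le d$ and that $-r(q) = \delta$ is comparable to $\tau_0 = \delta_\Omega(q)$ up to the (bounded above and below) size of $\nabla r$ on $V$.

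First I would prove part (1). The point $x_0$ is the nearest boundary point to $q$, so the inward normal direction to $\partial\Omega$ at $x_0$ is parallel to $q - x_0$; under $U_q \circ T_q$ this becomes the coordinate direction $e_0$, and $y_0 = \tau_0 e_0$ lies on $\{r_q = 0\}$. Since $r_q$ is a convex defining function with gradient bounded above and below on the relevant region, and $r_q(0) = r(q) = -\delta$, moving a Euclidean distance $\tau_0$ along $e_0$ raises $r_q$ from $-\delta$ up to $0$; the lower gradient bound forces $\tau_0 \lesssim \delta = -r(q)$, which is exactly (1). (Conversely the upper gradient bound gives $\tau_0 \gtrsim -r(q)$, so in fact $\tau_0 \asymp -r(q)$, but only the stated inequality is needed.)

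Next I would prove part (2), which is the substantive point. Fix $j \ge 1$. By construction $x_j$ is the nearest point of $\partial\Omega$ to $q$ inside the complex plane $P$ through $q$ orthogonal to $\overline{qx_0},\dots,\overline{qx_{j-1}}$; in particular the complex line $\ell$ through $q$ and $x_j$ is tangent to $\partial\Omega$ at $x_j$ (the nearest-point property within a complex line makes $q - x_j$ orthogonal, hence the complex tangent line there — and this line passes through $q$). Transporting to the $y$-coordinates, $\ell$ becomes a complex line through $0$ meeting $\{r_q = 0\}$ at $y_j$ with $\norm{y_j} = \tau_j$, and it is tangent to $\{r_q = 0\}$ there, so the restriction $r_q\big|_{\ell}$, viewed as a function of one complex variable $z$ with $z=0 \leftrightarrow q$, has $r_q|_\ell(0) = -\delta < 0$ and a zero at the parameter value corresponding to $y_j$. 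The finite line type $L$ hypothesis, made quantitative exactly as in the constant $\alpha(\Omega,R)$ of Proposition~\ref{prop:finite_type_2} (some Taylor coefficient of $r_q|_\ell$ up to order $L$ at $y_j$ is bounded below), combined with the $C^L$ control, shows that $r_q|_\ell$ cannot decrease from $0$ at $y_j$ down to $-\delta$ at $z=0$ over a $z$-interval shorter than $c'\,\delta^{1/L}$ for a uniform $c' > 0$. Hence $\tau_j = \norm{y_j - 0} \gtrsim \delta^{1/L}$ is the \emph{wrong} direction — so instead I must run the estimate the other way: convexity of $r_q$ along $\ell$ together with $r_q|_\ell(0) = -\delta$ and the tangency/finite-type data at $y_j$ bound $\tau_j$ from \emph{above} by $c(-r(q))^{1/L}$. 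Concretely, since $r_q \le 0$ on $CP(q)$ by Lemma~\ref{lem:G1} and $\ge 2r(q)$ there, while the line-type lower bound on the order-$\le L$ jet forces $r_q|_\ell$ to exceed, say, $\delta$ in absolute value once $|z| \ge (C_0\delta)^{1/L}$ for a controlled $C_0$, the two facts are compatible only if $\tau_j \le c\,\delta^{1/L}$; this is the argument already carried out inside the proof of Proposition~\ref{prop:finite_type_2}, applied here with the roles of $p$, $x$ played by $q$, $x_j$.

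The main obstacle is making the finite line type hypothesis \emph{uniform} over all $q \in \Omega \cap U$ and over the unitary changes of coordinates $U_q$: one needs that the lower bound $\alpha$ on the order-$\le L$ Taylor coefficients of $r$ restricted to tangent complex lines, and the $C^L$-remainder scale $\epsilon$, do not degenerate as $q \to \xi$ and as the frame rotates. This is handled by working throughout on the fixed bounded neighborhood $V$, on which $\partial\Omega \cap V$ has uniformly finite line type $L$ and $\nabla r$ is bounded above and below, so that $\alpha(\Omega \cap V, R)$, $\epsilon(\Omega \cap V, R)$, and $\kappa(\Omega \cap V, R)$ are all positive and finite by compactness — exactly the inputs to Proposition~\ref{prop:finite_type_2}; the unitary maps $U_q$ preserve all Euclidean quantities, so these constants are frame-independent. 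Everything else is the convexity bookkeeping above. Once both parts are in hand the constant $c$ can be taken to be the maximum of the finitely many constants produced.
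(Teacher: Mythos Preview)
Your argument for part~(1) is fine and matches the paper: $\tau_0(q)=\delta_\Omega(q)$, and since $r(x_0(q))=0$ while $r(q)=-\delta$, the lower bound on $\norm{\nabla r}$ gives $\tau_0\leq c(-r(q))$.

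For part~(2), however, there is a real gap. The claim that the complex line $\ell$ through $q$ and $x_j$ is \emph{tangent} to $\partial\Omega$ at $x_j$ is false. Take $\Omega$ to be the unit ball and $q=0$: then $x_1=e_1$, $\ell=\Cb e_1$, and $\ell$ meets $\partial\Omega$ transversally at $e_1$. The nearest-point property in $P$ only tells you that $x_j-q$ is parallel to the projection of $\nabla r(x_j)$ onto $P$; it does not make $x_j-q$ (or $i(x_j-q)$) orthogonal to $\nabla r(x_j)$. Since your subsequent use of the finite-line-type lower bound on the order-$\leq L$ jet of $r_q|_\ell$ at $y_j$ is predicated on tangency (otherwise the first-order term dominates and there is nothing to say about the higher jet), the chain leading to ``the two facts are compatible only if $\tau_j\leq c\,\delta^{1/L}$'' does not go through. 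Invoking Lemma~\ref{lem:G1} does not rescue this, and applying the \emph{proof} of Proposition~\ref{prop:finite_type_2} ``with $p,x$ played by $q,x_j$'' is also off: that proof takes $x$ to be the globally nearest boundary point to $p$, which here is $x_0$, not $x_j$.

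The paper's route is much shorter and avoids all of this: simply observe that $\tau_j(q)=\delta_\Omega(q;\,x_j-q)$ (since $x_j$ is the nearest boundary point to $q$ inside the complex plane $P$, hence a fortiori inside the complex line $\ell\subset P$), and then apply the \emph{conclusion} of Proposition~\ref{prop:finite_type_2} (equivalently Proposition~\ref{prop:finite_type}) to get
\[
\tau_j(q)=\delta_\Omega(q;\,x_j-q)\leq C\,\delta_\Omega(q)^{1/L}=C\,\tau_0(q)^{1/L}\leq C c^{1/L}\,(-r(q))^{1/L}.
\]
No tangency, no Lemma~\ref{lem:G1}, no re-running of the $m$-convexity proof is needed. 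Your discussion of uniformity of the constants over $q\in U$ is correct and is exactly what makes the invocation of Proposition~\ref{prop:finite_type_2} legitimate, but it should be attached to this direct application rather than to the tangency argument.
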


\begin{proof} Part (1) follows from the fact that $\nabla r$ is bounded from below on $\Omega \cap V$. By Proposition~\ref{prop:finite_type_2} there exists $C>0$ so that 
\begin{align*}
\delta_{\Omega}(q;v) \leq C \delta_{\Omega}(q)^{1/L}
\end{align*}
for $q \in U$ and $v \in \Cb^{d+1}$ nonzero. Part (2) follows from this and the fact that $\nabla r$ is bounded from below.
\end{proof}

\begin{lemma}\cite[Lemma 1.4]{G1997}\label{lem:G3} For all $q \in \Omega \cap U$
\ \begin{enumerate}
\item For every $0 \leq i \leq d$, $\frac{\partial r_q}{\partial z_i}(y_i(q))$ is real,
\item there exists $c^\prime >0$ such that for all $0 \leq j \leq d$ 
\begin{align*}
\abs{\frac{\partial r_q}{\partial z_j}(y_j(q))} \geq c^\prime \frac{\tau_0(q)}{\tau_j(q)},
\end{align*}
\item if $0 \leq i < j \leq d$ then 
\begin{align*}
\frac{\partial r_q}{\partial z_j}(y_i(q)) = 0.
\end{align*}
\end{enumerate}
\end{lemma}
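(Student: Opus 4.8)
The plan is to push everything forward by the Euclidean isometry $U_q\circ T_q$ and then read off the three assertions from the way the points $x_i(q)$ were selected. Set $\Omega'=(U_q\circ T_q)(\Omega\cap V)$, so $r_q$ is a convex defining function for $\Omega'$; note that $(U_q\circ T_q)(q)=0\in\Omega'$ and that, by the definitions of $\Lambda_q$ and $U_q$, one has $y_i(q)=(U_q\circ T_q)(x_i(q))=\tau_i(q)e_i$. Since $U_q\circ T_q$ is a Euclidean isometry and $\norm{\nabla r}$ is bounded above and below on $V$, the real gradient $\nabla r_q$ is bounded above and below on $\Omega'$. The structural observation I would use is that in the construction of the $x_k(q)$ each line $\overline{q\,x_k(q)}$ is chosen Hermitian-orthogonal to the earlier ones, so $U_q$ carries $\mathrm{span}_{\Cb}\{x_0(q)-q,\dots,x_{i-1}(q)-q\}$ onto $\mathrm{span}_{\Cb}\{e_0,\dots,e_{i-1}\}$ and hence carries the complex plane $P_i$ through $q$ orthogonal to $\overline{q\,x_0(q)},\dots,\overline{q\,x_{i-1}(q)}$ onto the coordinate subspace $V_i:=\mathrm{span}_{\Cb}\{e_i,\dots,e_d\}$. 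Because $x_i(q)$ is by construction the point of $P_i\cap\partial\Omega$ nearest to $q$, it follows that $y_i(q)$ is the point of $\partial\Omega'\cap V_i$ nearest to $0$.

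For parts (1) and (3) I would argue in a $2$-real-dimensional slice. Fix $i$ and an index $k\in\{i,i+1,\dots,d\}$, and let $W\subset V_i$ be the real plane $\mathrm{span}_{\Rb}\{e_i,v\}$, where $v$ is a unit vector in $\mathrm{span}_{\Rb}\{e_k,\sqrt{-1}\,e_k\}$ chosen with $v\perp e_i$ (when $k=i$, take $v=\sqrt{-1}\,e_i$). Since $\Omega'$ is convex and $0\in\Omega'\cap W$, the set $\Omega'\cap W$ is a convex neighbourhood of $0$ in $W$ whose relative boundary is $\partial\Omega'\cap W\subset\partial\Omega'\cap V_i$, and it contains $y_i(q)$; hence $y_i(q)$ is still the point of $\partial_W(\Omega'\cap W)$ nearest to $0$. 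Now $r_q|_W$ is convex and satisfies $r_q|_W(0)=r(q)<0=r_q|_W(y_i(q))$, so $y_i(q)$ is not a minimum of $r_q|_W$; since a convex function has no critical points other than global minima, $\nabla(r_q|_W)(y_i(q))\ne0$, and therefore $\partial_W(\Omega'\cap W)$ is a $C^1$ curve near $y_i(q)$ with tangent line $\ker\ip{\nabla(r_q|_W)(y_i(q)),\cdot}$. The first-order condition that $y_i(q)=\tau_i(q)e_i$ minimises the distance to $0$ along this curve then forces $\nabla(r_q|_W)(y_i(q))$ to be a (positive) multiple of $e_i$, i.e. $\ip{\nabla r_q(y_i(q)),v}=0$. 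Taking $k=i$ and $v=\sqrt{-1}\,e_i$ yields $\Imaginary\frac{\partial r_q}{\partial z_i}(y_i(q))=0$, which is (1); taking $i<k\le d$ and $v\in\{e_k,\sqrt{-1}\,e_k\}$ yields $\frac{\partial r_q}{\partial z_k}(y_i(q))=0$, which is (3).

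For part (2) I would combine a one-dimensional convexity estimate with Lemma~\ref{lem:G2}. Fix $j$ and set $g(t):=r_q\!\big(y_j(q)-te_j\big)$ for $t\in[0,\tau_j(q)]$; since the pull-back of this segment by $(U_q\circ T_q)^{-1}$ lies in $V$ (after shrinking $U$ if necessary), $g$ is a well-defined convex function with $g(0)=r_q(y_j(q))=0$ and $g(\tau_j(q))=r_q(0)=r(q)<0$, using $y_j(q)=\tau_j(q)e_j$. The slope inequality for convex functions gives $g'(0)\le\big(g(\tau_j(q))-g(0)\big)/\tau_j(q)=r(q)/\tau_j(q)$, whence $\abs{g'(0)}\ge -r(q)/\tau_j(q)$. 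On the other hand $g'(0)$ is minus the derivative of the real-valued function $r_q$ in the real direction $e_j$ at $y_j(q)$, so $\abs{g'(0)}=2\,\big|\Real\tfrac{\partial r_q}{\partial z_j}(y_j(q))\big|\le 2\,\big|\tfrac{\partial r_q}{\partial z_j}(y_j(q))\big|$. Combining these with Lemma~\ref{lem:G2}(1), which supplies $-r(q)\ge\tau_0(q)/c$, gives $\big|\tfrac{\partial r_q}{\partial z_j}(y_j(q))\big|\ge\frac{-r(q)}{2\tau_j(q)}\ge\frac{1}{2c}\cdot\frac{\tau_0(q)}{\tau_j(q)}$, so (2) holds with $c'=1/(2c)$.

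The step I expect to require the most care is the geometric translation in (1) and (3): turning ``$y_i(q)$ is the nearest boundary point of the slice to $0$'' into the vanishing of the prescribed components of $\nabla r_q(y_i(q))$, and in particular checking that the restricted gradient $\nabla(r_q|_W)(y_i(q))$ is nonzero, so that the nearest-point/tangent-line argument is legitimate — this is where convexity of $r_q$ together with the comparison $r_q(0)=r(q)<0=r_q(y_i(q))$ is essential. Part (2) is then a short computation once Lemma~\ref{lem:G2} is available, and the bookkeeping with $U_q\circ T_q$ is routine.
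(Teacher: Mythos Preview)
Your argument is correct and follows the same approach as the paper: parts (1) and (3) come from the first-order optimality condition for the nearest boundary point in the appropriate complex subspace (you spell out the tangent-line argument in two-real-dimensional slices, while the paper just asserts it in one line), and part (2) comes from the slope inequality for the convex function $t\mapsto r_q$ along the segment between $0$ and $y_j(q)$ combined with Lemma~\ref{lem:G2}(1). Your parametrisation starting at $y_j(q)$ instead of at $0$ is cosmetic.
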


\begin{proof} Part (1) and part (3) follow from the fact that $x_i(q)$ is the closest point to $q$ in the complex plane spanned by $x_i(p), x_{i+1}(p), \dots, x_d(p)$. By construction $r_q(0)=r(q)$ and $r_q(y_i(q)) = 0$. Thus, by the mean value theorem, there exists some $t_0 \in [0,1]$ so that 
\begin{align*}
\left. \frac{d}{dt}\right|_{t=t_0} r_q(t y_i(q)) = -r(q)
\end{align*}
Then by convexity 
\begin{align*}
\left. \frac{d}{dt}\right|_{t=1} r_q(t y_i(q)) \geq -r(q)
\end{align*}
which implies
\begin{align*}
\frac{\partial r_q}{\partial z_i}(y_i(q)) \geq -\frac{r(q)}{\tau_i(q)}.
\end{align*}
Then part (2) follows from Lemma~\ref{lem:G2}.
\end{proof}

Now suppose that $q_n \rightarrow \xi$. By passing to a tail of the sequence we can suppose that $q_n \in U$ for all $n \in \Nb$. Let $\epsilon_n := -r(q_n)$, $U_n := U_{q_n}$, $T_n:=T_{q_n}$, $\Lambda_n:=\Lambda_{q_n}$, and $r_n := r_{q_n}$. Then 
\begin{align*}
r_n(z) = -\epsilon_n + \Real\left(\sum_{j=0}^d a_j^n z_j\right) + \sum_{2 \leq \abs{\alpha}+\abs{\beta} \leq L} C_{\alpha,\beta}^n z^{\alpha}\overline{z}^{\beta}+E_n(z)
\end{align*}
where $E_n$ is the error term in Taylor's formula. Since $q_n\in \Omega \cap V$, a bounded set, for any multi-indices $\alpha, \beta$ with $\abs{\alpha}+\abs{\beta} =k \leq L$ 
\begin{align}
\label{eq:5}
\lim_{z \rightarrow 0} \left( \frac{ 1}{\norm{z}^{L-k}}\frac{\partial^kE_n}{\partial z^{\alpha}\partial \overline{z}^\beta} (z) \right) =0.
\end{align}
Since each $U_n$ is unitary we may pass to a subsequence so that $U_n \rightarrow U$. Moreover, $T_n \rightarrow T$ where $T(z) = z-\xi$. Then $r_n$ converges in the $C^L$ topology to $r \circ (U \circ T)^{-1}$ and in particular
\begin{align*}
 \lim_{n \rightarrow \infty} a_j^n \text{ and }  \lim_{n \rightarrow \infty} C_{\alpha,\beta}^n
\end{align*}
exist for $0 \leq j \leq d$ and $2 \leq \abs{\alpha}+\abs{\beta} \leq L$.

Now
\begin{align*}
\wt{r}_n := \frac{1}{\epsilon_n} r_n \circ \Lambda_n^{-1} = \frac{1}{\epsilon_n} r_n \circ(\Lambda_n \circ U_n \circ T_n)^{-1}
\end{align*}
is a defining function for the domain $(\Lambda_n \circ U_n \circ T_n)(\Omega \cap V)$ and
\begin{align*}
\wt{r}_n(z) = 1+\frac{1}{\epsilon_n} \Real\left(\sum_{j=0}^d a_j^n \tau_{j}(q_n) z_j\right) + \frac{1}{\epsilon_n} \sum_{2 \leq \abs{\alpha}+\abs{\beta} \leq L} C_{\alpha,\beta}^n \tau(q_n)^{\alpha+\beta} z^{\alpha}\overline{z}^{\beta}+\frac{1}{\epsilon_n}E_n(\Lambda_n^{-1} z)
\end{align*}
where $\tau(q_n)^{\alpha+\beta} = \prod \tau_{i}(q_n)^{\alpha_i+\beta_i}$. 

\begin{proposition}\cite[Proposition 2.1, Lemma 3.1]{G1997}
The functions $\wt{r}_n$ are $C^L$ and convex. Moreover there exists a subsequence of $(\wt{r}_n)_{n \in \Nb}$ that converges locally uniformly in the $C^L$ topology to a smooth convex function $\wt{r}$ of the form 
\begin{align*}
\wt{r}(z) = -1 + \Real\left(\sum_{j=0}^d b_j z_j\right) + P(z_1, \dots, z_d)
\end{align*}
where $b_0 \neq 0$ and $P$ is a non-degenerate convex polynomial with $P(0)=0$.
\end{proposition}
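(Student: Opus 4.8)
The plan is to decompose each $\wt{r}_n$ into its order‑$\le L$ Taylor polynomial at the origin plus a remainder, to show the remainder tends to $0$ in $C^L$ because the anisotropic dilation $\Lambda_n$ is scale invariant of exactly the right order, to deduce that the polynomial part subconverges, and then to read off the shape of the limit from Lemmas~\ref{lem:G1},~\ref{lem:G2}, and~\ref{lem:G3}. That each $\wt{r}_n$ is $C^L$ and convex is immediate, since $\wt{r}_n=\frac1{\epsilon_n}\,r\circ(\Lambda_nU_nT_n)^{-1}$ is a positive multiple of a precomposition of the $C^L$ convex function $r|_V$ with an affine map; note also that the domains $(\Lambda_nU_nT_n)(V)$ exhaust $\Cb^{d+1}$ because $\tau_j(q_n)\to0$ for every $j$ by Lemma~\ref{lem:G2}.

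Next I would establish a uniform $C^0$ bound. By Lemma~\ref{lem:G1}, $r_n$ takes values in $(2r(q_n),0)$ on $CP(q_n)$, and since $\Lambda_n^{-1}$ carries the fixed polydisk $\{\abs{w_j}<C:0\le j\le d\}$ onto $CP(q_n)$, this gives $\abs{\wt{r}_n}<2$ there. Convexity then yields uniform Lipschitz bounds on compact subsets, so after passing to a subsequence $\wt{r}_n$ converges locally uniformly to a convex function $\wt{r}$. To upgrade this to $C^L$ convergence with a polynomial limit, write $\wt{r}_n=Q_n+R_n$, where $Q_n$ is the order‑$\le L$ Taylor polynomial of $\wt{r}_n$ at $0$,
\begin{align*}
Q_n(w)=-1+\Real\Big(\sum_{j=0}^d\tfrac{\tau_j(q_n)}{\epsilon_n}a_j^n\,w_j\Big)+\sum_{2\le\abs{\alpha}+\abs{\beta}\le L}\tfrac{\tau(q_n)^{\alpha+\beta}}{\epsilon_n}C_{\alpha,\beta}^n\,w^\alpha\overline{w}^\beta ,
\end{align*}
and $R_n(w)=\frac1{\epsilon_n}E_n(\Lambda_n^{-1}w)$. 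Differentiating $R_n$ and substituting Equation~\ref{eq:5}, made uniform in $n$ via the $C^L$ convergence $r_n\to r\circ(UT)^{-1}$, a derivative of order $\abs{k}$ of $R_n$ on a fixed ball is bounded by $\mathrm{const}\cdot\rho_n\cdot\epsilon_n^{\abs{k}/L+(L-\abs{k})/L-1}$ with $\rho_n\to0$; here the powers of $\tau_j(q_n)$ supplied by the chain rule combine, via the estimates $\tau_0(q_n)\lesssim\epsilon_n$ and $\tau_j(q_n)\lesssim\epsilon_n^{1/L}$ of Lemma~\ref{lem:G2}, to make the exponent of $\epsilon_n$ equal to $0$. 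Hence $R_n\to0$ in $C^L$ on every compact set. Consequently $Q_n=\wt{r}_n-R_n$ is bounded on a fixed polydisk, and being a polynomial of degree $\le L$ its coefficients are bounded (equivalence of norms on the finite-dimensional space of such polynomials, as in the lemma preceding Proposition~\ref{prop:finite_type_2}); so along a further subsequence $Q_n$ converges coefficient-wise to a polynomial $Q$, whence $\wt{r}_n\to Q$ in $C^L$ on all compact sets and, comparing with the $C^0$ limit, $\wt{r}=Q$ is a smooth convex polynomial of degree $\le L$.

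It remains to identify the form of $\wt{r}$. The constant term is $-1$ by construction. Set $b_j:=\lim_n\tfrac{\tau_j(q_n)}{\epsilon_n}a_j^n$; since $\nabla r$ is bounded above and below on $V$ we have $\epsilon_n=r(x_0(q_n))-r(q_n)\asymp\tau_0(q_n)$ and $\abs{a_0^n}\asymp1$, so $b_0\ne0$. For a term $w^\alpha\overline{w}^\beta$ of order $k=\abs{\alpha}+\abs{\beta}\ge2$ with $\alpha_0+\beta_0\ge1$, Lemma~\ref{lem:G2} bounds its coefficient $\tfrac{\tau(q_n)^{\alpha+\beta}}{\epsilon_n}C_{\alpha,\beta}^n$ by $\mathrm{const}\cdot\epsilon_n^{(\alpha_0+\beta_0)+(k-\alpha_0-\beta_0)/L-1}$, and the exponent is strictly positive once $k\ge2$, so this coefficient tends to $0$. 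Thus the order‑$\ge2$ part of $\wt{r}$ involves only $w_1,\dots,w_d$, i.e.\ it is a convex polynomial $P(w_1,\dots,w_d)$ with $P(0)=0$, and $\wt{r}(w)=-1+\Real(\sum_{j=0}^d b_jw_j)+P(w_1,\dots,w_d)$. Finally, $P$ is non-degenerate: this is the point at which the finite-type hypothesis is used essentially, and it follows as in Gaussier from the fact that, by Lemma~\ref{lem:G3}(2), $\nabla\wt{r}$ does not vanish at the boundary points $e_1,\dots,e_d\in\partial\wh{\Omega}$, which together with convexity prevents $\{P=0\}$ from containing a complex affine line; equivalently, $\wh{\Omega}=\{\wt{r}<0\}$ is $\Cb$-proper.

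I expect the main obstacle to be the $C^L$ convergence in the second paragraph: one must carefully track how each Taylor coefficient of $r_n$ and each derivative of the Taylor remainder $E_n$ is reweighted by powers of the $\tau_j(q_n)$ under $\Lambda_n$, and check that these weights conspire — precisely because of the sharp relations in Lemma~\ref{lem:G2} — to be scale invariant of the correct order. This is exactly the place where weakening the hypothesis from $C^\infty$ to $C^L$ requires extra estimates, since one no longer has derivatives of $r$ of all orders available to absorb the error terms; the non-degeneracy of $P$ is the other point requiring care.
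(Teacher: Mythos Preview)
Your approach is essentially the paper's: the same decomposition $\wt{r}_n=Q_n+R_n$, the same use of Lemma~\ref{lem:G2} and Equation~\ref{eq:5} to kill $R_n$ in $C^L$, and the same equivalence-of-norms argument (via the $C^0$ bound from Lemma~\ref{lem:G1}) to bound the coefficients of $Q_n$. The preliminary $C^0$ convergence step via convexity and Lipschitz bounds is harmless but unnecessary: once $R_n\to0$ and $\abs{\wt{r}_n}<2$ on the fixed polydisk, $Q_n$ is bounded there and you can pass directly to a coefficientwise limit. Your argument for $b_0\ne0$ via $\epsilon_n\asymp\tau_0(q_n)$ is a valid alternative to the paper's observation that $\wt{r}(e_0)=\lim\wt{r}_n(e_0)=0$.

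There is, however, a genuine gap in your non-degeneracy sketch. Knowing only that $\nabla\wt{r}(e_j)\ne0$ for each $j$ (which is what Lemma~\ref{lem:G3}(2) gives) does \emph{not} by itself force $\wh{\Omega}$ to be $\Cb$-proper: the supporting hyperplanes at $e_0,\dots,e_d$ could a priori all be parallel, and then $\wh{\Omega}$ could still contain a complex affine line. The paper uses Lemma~\ref{lem:G3}(3) as well: since $\partial\wt{r}/\partial z_k(e_j)=0$ for $k>j$ and $\partial\wt{r}/\partial z_j(e_j)\ne0$, the vectors $v_j=\big(\partial\wt{r}/\partial z_0(e_j),\dots,\partial\wt{r}/\partial z_d(e_j)\big)$ form a triangular system with nonzero diagonal, hence a basis of $\Cb^{d+1}$. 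Then $\wh{\Omega}\subset\bigcap_{k=0}^d\{z:\delta_k\Real(\overline z^{\,t}v_k)>0\}$, and an intersection of $d+1$ half-spaces whose normals span $\Cb^{d+1}$ contains no complex affine line. You should invoke part~(3) explicitly.
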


\begin{proof}
Each $\wt{r}_n$ is clearly $C^L$ and convex. 

For multi-indices $\alpha, \beta$ with $\abs{\alpha}+\abs{\beta}=k \leq L$ we have
\begin{align*}
\abs{ \frac{1}{\epsilon_n}\frac{\partial^k }{\partial z^\alpha\partial\overline{z}^\beta}  \Big(E_n(\Lambda_n^{-1} z)\Big) } = \abs{\frac{\tau(q_n)^{\alpha+\beta}}{\epsilon_n}\frac{\partial^k E_n}{\partial z^\alpha\partial\overline{z}^\beta}(\Lambda_n^{-1} z)}
  \leq\frac{c^k}{\epsilon_n^{1-k/L}}\abs{\frac{\partial^k E_n}{\partial z^\alpha\partial\overline{z}^\beta}(\Lambda_n^{-1} z)}
\end{align*} 
by Lemma~\ref{lem:G2}. Using Lemma~\ref{lem:G2} again we have that 
\begin{align*}
\norm{\Lambda_n^{-1} z}^{L-k} \leq \left( c\epsilon_n^{1/L} \norm{z} \right)^{L-k} = c^{L-k}\epsilon_n^{1-k/L} \norm{z}^{L-k}
\end{align*}
and hence
\begin{align*}
\abs{ \frac{1}{\epsilon_n}\frac{\partial^k }{\partial z^\alpha\partial\overline{z}^\beta}\Big(E_n(\Lambda_n^{-1} z)\Big)} \leq c^{L} \norm{z}^{L-k} \abs{\frac{1}{\norm{\Lambda_n^{-1} z}^{L-k}}\frac{\partial^k E_n}{\partial z^\alpha\partial\overline{z}^\beta}(\Lambda_n^{-1} z)}.
\end{align*}
Then for any $R>0$ 
\begin{align*}
\lim_{n \rightarrow \infty} \sup_{\abs{z} \leq R} \abs{ \frac{1}{\epsilon_n}\frac{\partial^k }{\partial z^\alpha\partial\overline{z}^\beta}\Big(E_n(\Lambda_n z)\Big)} = 0
\end{align*}
by Equation~\ref{eq:5}. This implies that $\wt{r}_n$ converges locally uniformly in the $C^L$ topology if and only if the polynomial 
\begin{align*}
 1+\frac{1}{\epsilon_n} \Real\left(\sum_{j=0}^d a_j^n \tau_j(q_n) z_j\right) + \frac{1}{\epsilon_n} \sum_{2 \leq \abs{\alpha}+\abs{\beta} \leq L} C_{\alpha,\beta}^n \tau(q_n)^{\alpha+\beta} z^{\alpha}\overline{z}^{\beta}
 \end{align*}
 converges locally uniformly in the $C^L$ topology (which is equivalent to the coefficients converging).
 
 Now since every norm on a finite dimensional vector space is equivalent we see that there exists $d_1 >0$ such that 
 \begin{align*}
 \sup_{j, \alpha, \beta}  & \left\{ \abs{a_j^n}\tau_j(q_n),  \abs{C_{\alpha,\beta}^n}\tau(q_n)^{\alpha+\beta} \right\} \\
&  \leq d_1 \sup_{\abs{w} \leq C} \abs{  \Real\left(\sum_{j=0}^d a_j^n \tau_j(q_n) w_j\right) + \sum_{2 \leq \abs{\alpha}+\abs{\beta} \leq L} C_{\alpha,\beta}^n \tau(q_n)^{\alpha+\beta} w^{\alpha}\overline{w}^{\beta}} \\
& \leq  d_1\sup_{z \in CP(q_n)} \abs{  \Real\left(\sum_{j=0}^d a_j^n z_j\right) + \sum_{2 \leq \abs{\alpha}+\abs{\beta} \leq L} C_{\alpha,\beta}^n z^{\alpha}\overline{z}^{\beta}}
 \end{align*}
 Now by Lemma~\ref{lem:G1}
 \begin{align*}
 \sup_{z \in CP(q_n)} \abs{r_n(z)} \leq 2 \epsilon_n.
 \end{align*}
 Using Equation~\ref{eq:5} and Lemma~\ref{lem:G2} we can pick an $N>0$ such that 
  \begin{align*}
\abs{E_n(z)} \leq 2 \norm{z}^L 
 \end{align*}
 for all $n>N$ and $z \in CP(q_n)$. Which implies by Lemma~\ref{lem:G2} that
  \begin{align*}
 \sup_{z \in CP(q_n)} \abs{E_n(z)} \leq d_2 \epsilon_n
 \end{align*} 
 for some $d_2>0$. Then 
 \begin{align*}
 \sup_{z \in CP(q_n)} & \abs{  \Real\left(\sum_{j=0}^d a_j^n z_j\right) + \sum_{2 \leq \abs{\alpha}+\abs{\beta} \leq L} C_{\alpha,\beta}^n z^{\alpha}\overline{z}^{\beta}}\\
 & = \sup_{z \in CP(q_n)} \abs{ r_n(z) - E_n(z)} \\
 &  \leq  \sup_{z \in CP(q_n)} \abs{r_n(z)} +  \sup_{z \in CP(q_n)} \abs{E_n(z)}  \leq (2+d_2)\epsilon_n.
 \end{align*}
So 
\begin{align*}
 \sup_{j, \alpha, \beta}  \left\{ \abs{a_j^n}\tau_{j}(q_n), \abs{C_{\alpha,\beta}^n}\tau(q_n)^{\alpha+\beta} \right\} \leq d_1(2+d_2)\epsilon_n.
 \end{align*}
 Thus we can pass to a subsequence such that $(\wt{r}_n)$ converges locally uniformly in the $C^L$ topology to a function of the form 
 \begin{align*}
\wt{r}(z) = -1 + \Real\left(\sum_{j=0}^d b_j z_j\right) + P(z_0, z_1, \dots, z_d).
\end{align*}
Since $\wt{r}$ is the limit of convex functions, it is convex. 

We next claim that $P$ does not depend on $z_0$. By Lemma~\ref{lem:G2}
\begin{align*}
\tau(q_n)^{\alpha+\beta} \leq c^{\abs{\alpha}+\abs{\beta}} \epsilon_n^{\alpha_0+\beta_0} \epsilon_n^{\frac{1}{L}(\abs{\alpha}+\abs{\beta}-\alpha_0-\beta_0)}.
\end{align*}
So if $\abs{\alpha}+\abs{\beta} \geq 2$ and $(\alpha_0,\beta_0) \neq (0,0)$ then
\begin{align*}
\lim_{n \rightarrow \infty} \frac{1}{\epsilon_n} C_{\alpha,\beta}^n \tau(q_n)^{\alpha+\beta} = 0.
\end{align*}
Thus $P$ does not depend on $z_0$. 

It remains to show that $b_0 \neq 0$ and $P$ is non-degenerate. This is equivalent to showing the convex  set
\begin{align*}
\Omega_{\infty}:=\{ z \in \Cb^{d+1} : \wt{r}(z) < 0\}
\end{align*}
is $\Cb$-proper. Let $e_0, \dots, e_{d+1}$ be the standard basis in $\Cb^{d+1}$. Since $\wt{r}_n(e_0)=0$ for all $n$ we see that $\wt{r}(e_0)=0$. Thus $b_0 \neq 0$. 

Now $\partial \Omega_{\infty}$ is a $C^\infty$ hypersurface since $b_0 \neq 0$. The tangent plane at a point $x \in \partial\Omega_{\infty}$ is given by 
\begin{align*}
T_x \partial \Omega_{\infty} = \left\{ z \in \Cb^{d+1} : \Real\left( \sum_{i=0}^d \frac{\partial \wt{r}}{\partial z_i}(x)\overline{z_i} \right) = 0\right\}.
\end{align*}
Since $\Omega_\infty$ is convex, $T_x \partial \Omega_\infty \cap \Omega_\infty = \emptyset$ for all $x \in \partial \Omega$. 

Notice that $(\Lambda_n U_n T_n)(x_j(q_n)) = e_j$ and so $e_j \in \partial \Omega$ for $0 \leq j \leq d$.   Then by Lemma~\ref{lem:G3}
\begin{align*}
\frac{\partial \wt{r}}{\partial z_k}(e_j) = \lim_{n \rightarrow \infty} \frac{\partial \wt{r}_n}{\partial z_k}(e_j)=0
\end{align*}
for $k >j$ and
\begin{align*}
\frac{\partial \wt{r}}{\partial z_k}(e_k) = \lim_{n \rightarrow \infty} \frac{\partial \wt{r}_n}{\partial z_k}(e_k)
\end{align*}
is nonzero. In particular, if 
\begin{align*}
v_k := \left( \frac{\partial \wt{r}}{\partial z_0}(e_k), \dots, \frac{\partial \wt{r}}{\partial z_d}(e_k) \right)^t
\end{align*}
then $v_0, \dots, v_d$ is a basis of $\Cb^{d+1}$ and 
\begin{align*}
T_{e_k} \partial \Omega_{\infty} = \{ z \in \Cb^{d+1} : \Real( \overline{z}^tv_k ) =0\}.
\end{align*}
Since $\Omega_{\infty}$ is convex, there exists $\delta_k \in \{-1,1\}$ such that 
 \begin{align*}
 \Omega_{\infty} \subset \{ z \in \Cb^{d+1} : \delta_k  \Real( \overline{z}^t v_k ) >0\}.
 \end{align*}
 Since $v_0, \dots, v_d$ is a basis of $\Cb^{d+1}$ this implies that $\Omega_{\infty}$ is $\Cb$-proper.
 \end{proof}
 
We can now complete the proof of Theorem~\ref{thm:gaussier}. Let $A_n:= (\Lambda_nU_nT_n)$ and $\Omega_n := A_n\Omega$. By passing to a subsequence we can assume that  $\wt{r}_n$ converges to $\wt{r}$ locally uniformly in the $C^L$ topology.

We first claim that $\Omega_n$ converges to $\Omega_{\infty}:=\{ \wt{r}(z) < 0\}$ in the local Hausdorff topology. This follows from the fact that for any $R>0$ there exists $N$ such that 
\begin{align*}
B_R(0) \cap S_n\Omega =B_R(0) \cap A_n(\Omega \cap V)
\end{align*}
for all $n >N$. 

We next claim that $(\Omega_n)_{n \in \Nb}$ is a locally $L$-convex sequence. Using the notation in Section~\ref{sec:finite_type_m_conex}, since $\wt{r}_n$ converges to $\wt{r}$ locally uniformly in the $C^L$ topology we see that 
\begin{align*}
\lim_{n \rightarrow \infty} \kappa(\Omega_n,R) = \kappa(\Omega_\infty, R)< \infty,
\end{align*}
\begin{align*}
\lim_{n \rightarrow \infty} D(\Omega_n,R) = D(\Omega_\infty, R)< \infty,
\end{align*}
and
\begin{align*}
\lim_{n \rightarrow \infty} \alpha(\Omega_n,R) = \alpha(\Omega_\infty, R)>0. 
\end{align*}
So by passing to a subsequence we can suppose that $\kappa(\Omega_n,R) \leq \kappa_0$, $D(\Omega_n,R) \leq D_0$, and $\alpha(\Omega_n, R) \geq \alpha_0$ for some $\kappa_0, D_0, \alpha_0>0$. It remains to show that $\epsilon(\Omega_n, R)$ is uniformly bounded from below for large $n$. 

Let $\Lc$ be set of affine lines $\ell: \Cb \rightarrow \Cb^d$ such that $\ell(0) \in V \cap \partial \Omega$, $\ell^\prime(0) \in T_{\ell(0)} \partial \Omega$, and $\norm{\ell^\prime(0)} =1$. For $\ell \in \Lc$ let $P_{\ell}$ be the $L^{th}$ order Taylor polynomial of $(r \circ \ell)(z)$. Let $c >0$ be as in Lemma~\ref{lem:G2}. Since $\partial \Omega$ is $C^L$, there exists $\epsilon_0 > 0$ such that 
\begin{align*}
\abs{ P_{\ell}(z) - (r \circ \ell)(z) } \leq \frac{ A_L \alpha_0}{4 c^L} \abs{z}^{L}
\end{align*}
for all $\ell \in \Lc$ and $\abs{z} \leq \epsilon_0$. Fix $R>0$ and let $\Lc(\Omega_n, R)$ be the the set of affine lines $\ell: \Cb \rightarrow \Cb^d$ such that $\ell(0) \in B_{R+1}(0) \cap \partial \Omega_n$, $\ell^\prime(0) \in T_{\ell(0)} \partial \Omega_n$, and $\norm{\ell^\prime(0)} =1$. For $\ell \in \Lc(\Omega_n,R)$ let $P_{n,\ell}$ be the $L^{th}$ order Taylor polynomial of $(\wt{r}_n \circ \ell)(z)$. 

Now given $\ell \in \Lc(\Omega_n, R)$ let $\wh{\ell}$ be the affine line 
\begin{align*}
\wh{\ell}(z) = A_n^{-1} \ell\left( \frac{1}{\norm{d(A_n)^{-1} \ell^\prime(0)}}z\right).
\end{align*}
Then 
\begin{align*}
\abs{ P_{n,\ell}(z) - (\wt{r}_n \circ \ell)(z) } = \frac{1}{\epsilon_n} \abs{ P_{\wh{\ell}}\left(\norm{d(A_n)^{-1}\ell^\prime(0)}z\right) - (r \circ \wh{\ell})\left(\norm{d(A_n)^{-1}\ell^\prime(0)}z\right)  }.
\end{align*}
Notice that $\norm{d(A_n)^{-1}} \leq c \epsilon_n^{1/L}$ by Lemma~\ref{lem:G2}. For $n$ large, $\wh{\ell} \in \Lc$ and $c \epsilon_n^{1/L} \leq \epsilon_0$ so for $\abs{z} < 1$ we have 
\begin{align*}
\abs{ P_{n,\ell}(z) - (\wt{r}_n \circ \ell)(z) } \leq \frac{ A_L \alpha_0}{4 c^L  \epsilon_n} \abs{\norm{d(A_n)^{-1} \ell^\prime(0)}z }^{L} 
\leq \frac{ A_L \alpha(\Omega_n, R)}{4} \abs{z }^{L}
\end{align*}
Thus for $n$ large $\epsilon_n(\Omega, R) \geq 1$. This implies, by Proposition~\ref{prop:finite_type_2}, that $(\Omega_n)_{n \in \Nb}$ is a locally $L$-convex sequence. 

Finally since $b_0 \neq 0$ we can make a linear change of coordinates such that 
\begin{align*}
\Omega_{\infty} = \{ (z_0, \dots, z_d) \in \Cb^{d+1} : \Real(z_0) > P(z_1, \dots, z_d)\}
\end{align*}
and $P$ is a non-negative non-degenerate convex polynomial with $P(0)=0$.

\section{Geodesics and the Gromov product in convex domains}\label{sec:gromov_prod}

The primary goal of the next two sections is to show that the polynomial domains produced by Gaussier's theorem have well behaved geodesics. In this section we investigate the asymptotic properties of geodesics and the Gromov product on general convex sets. In the next section we will specialize to polynomial domains. 

In some of the arguments that follow we will need to know that certain lines are not just quasi-geodesics (as guaranteed by Lemma~\ref{lem:quasi_geodesic}), but have Lipschitz factor one. Suppose $\Omega$ is an open set with $C^1$ boundary. If $x \in \partial \Omega$ let $n_x$ be the inward pointing normal vector at $x$. 

\begin{proposition}\label{prop:rough_geodesics}
Suppose $\Omega$ is a $\Cb$-proper convex set with $C^2$ boundary in a neighborhood $\Oc$ of some $y \in \partial \Omega$. If $x \in \Oc \cap \partial \Omega$ then there exists $\epsilon=\epsilon(x) >0$ such that the curve $\sigma: \Rb_{\geq 0} \rightarrow \Omega$ given by
\begin{align*}
\sigma(t)=x+e^{-2t}\epsilon n_x
\end{align*}
is an $(1,\log\sqrt{2})$-quasi-geodesic in $(\Omega, d_{\Omega})$. Moreover, we can choose $\epsilon$ to depend continuously on $x \in \Oc \cap \partial \Omega$.
\end{proposition}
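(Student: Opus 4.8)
The statement asks for a parametrization of a short normal segment at $x \in \partial\Omega$ that is a quasi-geodesic with Lipschitz constant exactly $1$ (not just some $A > 1$ as in Lemma~\ref{lem:quasi_geodesic}), up to an additive error $\log\sqrt 2$. The lower bound $d_\Omega(\sigma(t_1),\sigma(t_2)) \geq |t_1-t_2| - \log\sqrt2$ is the easy half: since $\sigma$ traces a segment along the line $L = x + \Cb n_x$ heading to the boundary point $x$, Lemma~\ref{lem:convex_lower_bd_2} gives $d_\Omega(\sigma(t_1),\sigma(t_2)) \geq \tfrac12\log\frac{\norm{\sigma(t_1)-x}}{\norm{\sigma(t_2)-x}} = |t_1-t_2|$, which is even better than needed. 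So the whole content is the \emph{upper} bound $d_\Omega(\sigma(t_1),\sigma(t_2)) \leq |t_1-t_2| + \log\sqrt2$, and this is where the $C^2$ hypothesis and the sharp constant must be used.

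**Main idea for the upper bound.** Because $\partial\Omega$ is $C^2$ near $y$, there is an interior tangent ball: for each $x \in \Oc\cap\partial\Omega$ there is $\epsilon = \epsilon(x) > 0$ (depending continuously on $x$, by $C^2$-ness and compactness of a slightly smaller neighborhood) such that $B_{\epsilon}(x + \epsilon n_x) \subset \Omega$. Restrict attention to the complex line $L = x + \Cb\cdot n_x$ through $x$ in the normal direction. The key geometric fact is that the disk $D := B_\epsilon(x+\epsilon n_x)\cap L$ is an \emph{honest Euclidean disk} inside $\Omega\cap L$ tangent to $\partial\Omega$ at $x$, and along the radial segment from $x$ through the center, the Kobayashi metric of $\Omega\cap L$ is bounded above by the Kobayashi metric of this disk $D$ (by the distance-decreasing property, since $D\subset\Omega\cap L\subset\Omega$, using Proposition~\ref{prop:basic_kob} twice: $K_\Omega \leq K_{\Omega\cap L} \leq K_D$). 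Now $D$ is a round disk of radius $\epsilon$, so it is biholomorphic to $\Delta$; parametrizing the diameter of $D$ from $x$ as $\sigma(t) = x + e^{-2t}\epsilon n_x$ (up to rotating $n_x$ so this lands on the diameter), one computes directly that this is a geodesic \emph{for $d_D$} after reparametrization — in fact the geodesic in $\Delta$ through $0$ along a diameter, transported to $D$, gives exactly the map $t\mapsto$ boundary-approaching point with $d_D(\sigma(t_1),\sigma(t_2)) = |t_1 - t_2|$ under the right parametrization. The only subtlety producing the $\log\sqrt2$ is that the natural parametrization making $\sigma$ a $d_D$-geodesic is not exactly $x + e^{-2t}\epsilon n_x$: one has to match $e^{-2t}\epsilon$ against the hyperbolic parametrization $\epsilon\cdot\frac{1 - \tanh t}{1}$ type expression, and the discrepancy between $e^{-2t}$ and $\frac{2}{e^{2t}+1}$-type factors is absorbed into an additive constant, which works out to $\log\sqrt2$.

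**Carrying it out.** Concretely I would: (1) normalize by a complex affine map so that $x = 0$, $n_x = (1,0,\dots,0)$, $H = \{\Imaginary(z_1) = 0\}$ is the tangent hyperplane (wait — the normal direction should be set up so $\Omega \subset \{\Real(z_1) > 0\}$ near $0$; I'd set $n_x = e_1$ and the interior ball $B_\epsilon(\epsilon e_1)$); (2) restrict to $L = \Cb e_1$, observe $\Delta_\epsilon := \{z_1 : |z_1 - \epsilon| < \epsilon\} \subset \Omega\cap L$, and the Cayley-type map $w = \frac{\epsilon - z_1}{\epsilon + z_1}$... actually $z_1 \mapsto \frac{z_1}{2\epsilon - z_1}$ or similar sends $\Delta_\epsilon$ to a half-plane or disk; (3) compute $K_{\Delta_\epsilon}$ on the real segment $(0,2\epsilon)$ explicitly — for the disk of radius $\epsilon$ centered at $\epsilon$, at a point $s\in(0,2\epsilon)$ in the diameter, $K_{\Delta_\epsilon}(s; v) = \frac{|v|\,\epsilon}{s(2\epsilon - s)}$ (this is $K_\Delta$ pulled back); (4) integrate: with $\sigma(t) = e^{-2t}\epsilon e_1$ so $s(t) = e^{-2t}\epsilon$, $\dot s = -2e^{-2t}\epsilon$, get $K_{\Delta_\epsilon}(\sigma(t);\dot\sigma(t)) = \frac{2e^{-2t}\epsilon\cdot\epsilon}{e^{-2t}\epsilon(2\epsilon - e^{-2t}\epsilon)} = \frac{2}{2 - e^{-2t}}$, which is $\leq 1$ only for... no, $\frac{2}{2-e^{-2t}} \geq 1$ always and $\to 1$ as $t\to\infty$, with $\int_{t_1}^{t_2}\frac{2\,dt}{2-e^{-2t}} = (t_2 - t_1) + \text{[bounded correction]}$; evaluating the correction $\int_0^\infty\left(\frac{2}{2-e^{-2t}} - 1\right)dt = \int_0^\infty \frac{e^{-2t}}{2-e^{-2t}}dt = \tfrac12\log\frac{2}{2-1} = \tfrac12\log 2 = \log\sqrt2$ gives exactly the claimed constant, and since the integrand correction is positive and decreasing the bound $\int_{t_1}^{t_2} \leq |t_1-t_2| + \log\sqrt2$ holds for all $0 \leq t_1 \leq t_2$; (5) chain $d_\Omega \leq d_{\Omega\cap L} \leq d_{\Delta_\epsilon}$ along the segment; (6) continuity of $\epsilon(x)$: take $\epsilon(x)$ to be, say, half the radius of the largest interior osculating ball, which varies continuously with $x$ since the second fundamental form does, on any relatively compact subset of $\Oc\cap\partial\Omega$.

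**Main obstacle.** The real work is step (3)–(4): getting the \emph{sharp} constant $\log\sqrt2$ rather than some unspecified $C$, which forces one to use the exact round-ball comparison $K_\Omega \leq K_{\Delta_\epsilon}$ (a genuine disk, where $K$ is known exactly) rather than the crude estimate $K_\Omega \leq \norm{v}/\delta_\Omega(p;v)$ used in Lemma~\ref{lem:quasi_geodesic} — the latter gives the $2/\epsilon$ factor but not Lipschitz constant $1$. The $C^2$ hypothesis is exactly what is needed to produce the interior tangent ball uniformly, and the continuity claim follows by tracking this ball's radius as a continuous function of the boundary point. Everything else is the routine explicit integration sketched above.
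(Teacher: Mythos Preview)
Your proposal is correct and follows essentially the same approach as the paper: both use Lemma~\ref{lem:convex_lower_bd_2} for the lower bound, invoke the $C^2$ hypothesis to produce an interior tangent ball $B_\epsilon(x+\epsilon n_x)\subset\Omega$, and compare $d_\Omega$ with the Kobayashi metric on that ball (a round disk in the complex line $L$). The only cosmetic difference is that the paper computes the explicit distance formula $d_\Delta(0,1-e^{-2t}) = t + \tfrac12\log(2-e^{-2t})$ and subtracts along the geodesic diameter, whereas you integrate the infinitesimal metric $K_{\Delta_\epsilon}(\sigma(t);\dot\sigma(t)) = \frac{2}{2-e^{-2t}}$ directly; both computations produce exactly the additive error $\int_0^\infty \frac{e^{-2t}}{2-e^{-2t}}\,dt = \log\sqrt2$.
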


\begin{remark} \
 If $A \in \Aff(\Cb^d)$ then $A$ induces an isometry between $(\Omega, d_{\Omega})$ and $(A\Omega, d_{A\Omega})$. In particular, Proposition~\ref{prop:rough_geodesics} actually implies that many additional real lines can be parametrized as $(1,K)$-quasi-geodesics. 
\end{remark}

\begin{proof}
By translating and rotating, we may assume that $x = 0$, 
\begin{align*}
T_0 \partial \Omega = \{ (z_1, \dots, z_d) \in \Cb^d : \Imaginary(z_1)=0\},
\end{align*}
and $\Omega \subset \{ (z_1, \dots, z_d) \in \Cb^d : \Imaginary(z_1)> 0\}$. With this normalization $n_x = (i,0,\dots, 0)$. 

From Lemma~\ref{lem:convex_lower_bd_2} we obtain:
\begin{align*}
d_{\Omega}(\sigma(t_1),\sigma(t_2)) \geq \abs{t_2-t_1}.
\end{align*}

Using the formulas in Section~\ref{sec:prelim}
\begin{align*}
d_{\Delta}(0,1-e^{-2t}) = \frac{1}{2} \log \frac{ 2-e^{-2t}}{e^{-2t}} = t + \frac{1}{2} \log (2-e^{-2t})
\end{align*}
for $t \geq 0$. So
\begin{align*}
t \leq d_{\Delta}(0,1-e^{-2t}) \leq t + \log\sqrt{2}
\end{align*}
for $t \geq 0$. Since $(-1,1)$ can be parametrized as a geodesic in $\Delta$, for $t_2 \geq t_1 \geq 0$ we have 
\begin{align*}
d_{\Delta}(1-e^{-2t_2},1-e^{-2t_1}) 
&= d_{\Delta}(1-e^{-2t_2},0)- d_{\Delta}(1-e^{-2t_1},0) \\
& \leq t_2-t_1 + \log\sqrt{2} \\
& = \abs{t_2-t_1}+\log\sqrt{2}.
\end{align*}
For any $\epsilon >0$ the affine map $z \in \Cb \rightarrow \epsilon i (1-z) \in \Cb$ induces an isometry between $(\Delta, d_{\Delta})$ and $(B_{\epsilon}(\epsilon i), d_{B_{\epsilon}(\epsilon i)})$ and so 
\begin{align*}
d_{B_{\epsilon}(\epsilon i)}(i\epsilon e^{-2t_2},i\epsilon e^{-2t_1}) =d_{\Delta}(1-e^{-2t_2},1-e^{-2t_1}) \leq \abs{t_2-t_1} + \log\sqrt{2}.
\end{align*}
Finally since $\partial \Omega$ is $C^2$ near $y$ there exists $\epsilon >0$ such that $B_{\epsilon}(\epsilon i) \subset \Omega$. Then
\begin{align*}
d_{\Omega}(\sigma(t_1),\sigma(t_2)) \leq d_{B_{\epsilon}(\epsilon i)}(i\epsilon e^{-2t_1},i\epsilon e^{-2t_2}) \leq \abs{t_1-t_2} + \log\sqrt{2}.
\end{align*}
Notice that the maximal such $\epsilon >0$ depends continuously on $x$.
\end{proof}

Recall that the Gromov product is defined to be
\begin{align*}
(p|q)_o = \frac{1}{2} \left(d_{\Omega}(p,o)+d_{\Omega}(o,q)-d_{\Omega}(p,q) \right).
\end{align*}

\begin{proposition}\label{prop:gromov_prod_finite}
Suppose $\Omega$ is a $\Cb$-proper convex open set. Assume $p_n, q_n \in \Omega$ are sequences with $\lim_{n \rightarrow \infty} p_n = \xi^+ \in \partial \Omega$, $\lim_{n \rightarrow \infty} q_n = \xi^- \in \partial \Omega \cup \{ \infty\}$, and 
\begin{align*}
\liminf_{n,m \rightarrow \infty} (p_n | q_m)_o < \infty.
\end{align*}
If $\partial \Omega$ is $C^2$ near $\xi^+$ then $\xi^+ \neq \xi^-$.
\end{proposition}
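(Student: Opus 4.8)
The plan is to argue by contradiction: suppose $\xi^+ = \xi^- =: \xi$, and derive that $\liminf_{n,m} (p_n|q_m)_o = \infty$. Since the Gromov product is, up to a bounded additive constant, the distance from $o$ to a geodesic joining $p_n$ and $q_m$ (or since we can work directly from the definition), it suffices to show that $d_\Omega(p_n, q_m) + \text{const}$ is much smaller than $d_\Omega(o,p_n) + d_\Omega(o,q_m)$; more precisely, I want to show $(p_n|q_m)_o \to \infty$. The key geometric input is that near a $C^2$ boundary point $\xi$, the normal segment at $\xi$ can be parametrized as a $(1,\log\sqrt2)$-quasi-geodesic by Proposition~\ref{prop:rough_geodesics}, and more importantly, both $p_n$ and $q_m$ are close to $\xi$, so a geodesic triangle $o, p_n, q_m$ should have its $p_nq_m$-side staying near $\xi$, far from $o$.

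First I would reduce to a normalized picture: translate and rotate so that $\xi = 0$, the tangent hyperplane $T_0\partial\Omega = \{\Imaginary(z_1)=0\}$, and $\Omega \subset \{\Imaginary(z_1) > 0\}$ locally, with inward normal $n_\xi = (i,0,\dots,0)$. Since $\partial\Omega$ is $C^2$ near $\xi$, there is a ball $B_\epsilon(\epsilon i) \subset \Omega$ tangent to $\partial\Omega$ at $0$; by convexity $\Omega$ lies in the half-space $\{\Imaginary(z_1)>0\}$ near $0$, and in fact $\delta_\Omega(p) \le \Imaginary(p_1)$ for $p$ near $0$. Now estimate $d_\Omega(o, p_n)$ from below: using Lemma~\ref{lem:convex_lower_bd_2} applied to the complex line through $o$ and $p_n$ and a boundary point $\zeta_n$ of that line with $\zeta_n \to 0$ as $p_n \to 0$ (which exists since the line through $o$ and $p_n$ converges to a line through $\xi$ that must exit $\Omega$ near $\xi$), one gets $d_\Omega(o,p_n) \ge \tfrac12\log(\norm{o - \zeta_n}/\norm{p_n - \zeta_n}) \to \infty$; similarly for $q_m$ (in the case $\xi^- = \infty$ this is even easier since $q_m$ itself escapes to infinity). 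So $d_\Omega(o,p_n) + d_\Omega(o,q_m) \to \infty$. The point is to show $d_\Omega(p_n, q_m)$ is comparably large — specifically, that $\tfrac12(d_\Omega(o,p_n)+d_\Omega(o,q_m)) - \tfrac12 d_\Omega(p_n,q_m)\to\infty$.

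To control $d_\Omega(p_n,q_m)$ from above, I would project: let $P:\Cb^d\to\Cb$ be the projection onto the first coordinate, so $P(\Omega)$ is a convex subset of $\Hc$ near $0$, hence $d_\Omega(p_n,q_m) \le d_{P(\Omega)}(\cdot,\cdot)$ does NOT go the right way — instead I use the upper bound via the ball: since $\Omega$ contains the convex hull of $B_\epsilon(\epsilon i)$ and any fixed interior point, and $p_n, q_m$ are near $0$, a path from $p_n$ to $q_m$ going "up" the normal direction to a uniformly-interior point and back has length at most $d_\Omega(o,p_n) + d_\Omega(o,q_m) - 2c_n$ where $c_n\to\infty$, the savings coming from the fact that near the $C^2$ point, $d_\Omega(p_n, \epsilon i n_\xi) \le -\tfrac12\log\delta_\Omega(p_n) + O(1)$ by Proposition~\ref{prop:rough_geodesics} (reparametrizing the normal ray), while $d_\Omega(o,p_n) \ge -\tfrac12\log\delta_\Omega(p_n) - O(1)$ from the lower bound, so the "excess" $d_\Omega(o, \epsilon i n_\xi)$ is bounded and does not cancel. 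Concretely: $d_\Omega(p_n,q_m) \le d_\Omega(p_n, \epsilon i n_\xi) + d_\Omega(\epsilon i n_\xi, q_m)$, and one shows $d_\Omega(p_n, \epsilon i n_\xi) \le d_\Omega(o, p_n) - c_n$ with $c_n \to\infty$ (and symmetrically for $q_m$), which forces $(p_n|q_m)_o \ge c_n + c_m - O(1) \to \infty$, contradicting the hypothesis.

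The main obstacle will be the last step: establishing the inequality $d_\Omega(p_n, \epsilon i n_\xi) \le d_\Omega(o,p_n) - c_n$ with an honest divergent $c_n$. This requires pairing a sharp \emph{upper} bound for $d_\Omega(p_n, \epsilon i n_\xi)$ — which should be $-\tfrac12\log\Imaginary((p_n)_1) + O(1)$ using the quasi-geodesic from Proposition~\ref{prop:rough_geodesics} together with $\delta_\Omega(p_n)\le \Imaginary((p_n)_1)$ — against a matching \emph{lower} bound $d_\Omega(o,p_n) \ge -\tfrac12\log\norm{p_n - \zeta_n} + O(1)$ where $\norm{p_n-\zeta_n}$ is comparable to (or at worst not much larger than) $\delta_\Omega(p_n)$, the comparison being uniform because the complex line $o\to p_n$ limits to a transverse (non-tangent) line at $\xi$ thanks to $o$ being a fixed interior point. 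One must check the transversality carefully and handle the projection/line-section bookkeeping; this is where $C^2$ regularity (giving the interior tangent ball, hence $\delta_\Omega(p)$ comparable to the tangential-Euclidean distance in the normal direction) is genuinely used, as opposed to mere $C^1$.
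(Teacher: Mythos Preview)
Your overall contradiction strategy is right, but the execution has a genuine gap at the step you yourself flag as the ``main obstacle.'' You want
\[
d_\Omega(p_n,\, \epsilon i\, n_\xi) \le d_\Omega(o,p_n) - c_n \quad \text{with } c_n \to \infty,
\]
but $\epsilon i\, n_\xi$ is a \emph{fixed} interior point, just like $o$. The triangle inequality then forces
\[
\bigl| d_\Omega(p_n,\, \epsilon i\, n_\xi) - d_\Omega(o,p_n) \bigr| \le d_\Omega(o,\, \epsilon i\, n_\xi),
\]
a constant independent of $n$. So $c_n$ is bounded and the argument collapses: routing through any fixed waypoint yields only $(p_n|q_m)_o \ge -O(1)$, which is vacuous. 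Your attempted justification via matching $-\tfrac12\log\Imaginary((p_n)_1)$ against $-\tfrac12\log\norm{p_n-\zeta_n}$ cannot produce divergence either, since $\norm{p_n-\zeta_n}\ge\delta_\Omega(p_n)$ and $\delta_\Omega(p_n)$ is comparable to $\Imaginary((p_n)_1)$ whenever $p_n$ approaches $\xi$ roughly normally (which you cannot exclude). There is a second, related issue: $p_n$ need not lie on the normal ray from $\xi$, so the upper bound for $d_\Omega(p_n,\epsilon i\, n_\xi)$ via Proposition~\ref{prop:rough_geodesics} is not available as stated.

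The paper's proof fixes both problems by moving the waypoint. Let $\xi_n^{\pm}$ be the nearest boundary points to $p_n,q_n$; then $p_n$ really does lie on the normal ray $\sigma_{\xi_n^+}$, and similarly for $q_n$. For a parameter $T>0$, one uses the two waypoints $\sigma_{\xi_n^+}(T)$ and $\sigma_{\xi_n^-}(T)$. The $(1,\log\sqrt{2})$ additive quasi-geodesic bound from Proposition~\ref{prop:rough_geodesics} gives
\[
d_\Omega(o,p_n) \ge d_\Omega(\sigma_{\xi_n^+}(T),p_n) + T - O(1),
\]
and the triangle inequality through both waypoints gives
\[
d_\Omega(p_n,q_n) \le d_\Omega(p_n,\sigma_{\xi_n^+}(T)) + d_\Omega(\sigma_{\xi_n^+}(T),\sigma_{\xi_n^-}(T)) + d_\Omega(\sigma_{\xi_n^-}(T),q_n).
\]
The crucial point is that $\xi_n^{\pm}\to\xi$, so for fixed $T$ the two depth-$T$ waypoints converge to the \emph{same} interior point $\sigma_\xi(T)$, hence $d_\Omega(\sigma_{\xi_n^+}(T),\sigma_{\xi_n^-}(T))\to 0$. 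This yields $(p_n|q_n)_o \ge T - O(1)$, and letting $T\to\infty$ gives the contradiction. The savings you were looking for come not from a single waypoint but from the fact that the two moving waypoints at depth $T$ coalesce.
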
 

\begin{proof}
By passing to subsequences we may assume that $\lim_{n \rightarrow \infty} (p_n | q_n)_o$ is finite. Assume for a contradiction that $\xi^+=\xi^- \in \partial \Omega$. 

Let $\xi_n^+ \in \partial \Omega$ be a point in the boundary closest to $p_n$ and $\xi_n^- \in \partial \Omega$ be a point in the boundary closest to $q_n$. Then $\lim_{n \rightarrow \infty} \xi_n^{\pm} = \xi^+$. By Proposition~\ref{prop:rough_geodesics} there exists $\epsilon>0$ such that for any $y$ near $\xi^+$ the curve $\sigma_y: \Rb_{\geq 0} \rightarrow \Omega$ given by
\begin{align*}
\sigma_y(t)=y+e^{-2t}\epsilon n_y
\end{align*}
is an $(1,\log\sqrt{2})$-quasi-geodesic in $(\Omega, d_{\Omega})$. Moreover, for $n$ large $\norm{p_n- \xi_n^+} < \epsilon$ and $\norm{q_n- \xi_n^-} < \epsilon$. So for large $n$, $p_n \in \sigma_{\xi_n^+}$ and $q_n \in \sigma_{\xi_n^-}$. There also exists $R>0$ such that 
\begin{align*}
d_{\Omega}(\sigma_y(0), o) \leq R
\end{align*}
for all $y \in \partial \Omega$ near $\xi^+$.

Now fix $T > 0$ then
\begin{align*}
d_{\Omega}(o,p_n) 
&\geq d_{\Omega}(\sigma_{\xi_n^+}(0),  p_n) -d_{\Omega}(o,\sigma_{\xi_n^+}(0)) \\
&  \geq d_{\Omega}(\sigma_{\xi_n^+}(0),  p_n) -R\\
& \geq d_{\Omega}(\sigma_{\xi_n^+}(0),  \sigma_{\xi_n^+}(T))+d_{\Omega}(\sigma_{\xi_n^+}(T),  p_n) - R-\log\sqrt{2}\\
& \geq d_{\Omega}(\sigma_{\xi_n^+}(T),  p_n)+T - R-2\log\sqrt{2}.
\end{align*}
The same argument shows that 
\begin{align*}
d_{\Omega}(o,q_n) \geq d_{\Omega}(\sigma_{\xi_n^-}(T),  q_n)+T - R-2\log\sqrt{2}.
\end{align*}
So 
\begin{align*}
2(p_n|q_n)_o
& = d_{\Omega}(o,p_n)+d_{\Omega}(o, q_n) - d_{\Omega}(p_n,q_n) \\
& \geq 2T - 2R-2\log(2) \\
& \quad +d_{\Omega}(\sigma_{\xi_n^+}(T),  p_n)+ d_{\Omega}(\sigma_{\xi_n^-}(T),q_n)- d_{\Omega}(p_n,q_n) \\
& \geq 2T-2R-2\log(2) - d_{\Omega}(\sigma_{\xi_n^-}(T),\sigma_{\xi_n^+}(T)).
\end{align*}
Since  $\sigma_y(T)$ depends continuously on $y$ and $\xi_n^{\pm} \rightarrow \xi^+$ we have that
\begin{align*}
d_{\Omega}(\sigma_{\xi_n^-}(T),\sigma_{\xi_n^+}(T))\rightarrow 0.
\end{align*}
Which implies that
\begin{align*}
\lim_{n \rightarrow \infty} (p_n|q_n)_o \geq T-R-\log(2).
\end{align*}
Since $T>0$ was arbitrary we have an contradiction. 
\end{proof}

\begin{corollary}\label{cor:geod_limits_distinct}
Suppose $\Omega$ is a $\Cb$-proper convex open set with $C^2$ boundary and $\sigma: \Rb \rightarrow \Omega$ is a geodesic. If $t_n, s_n \rightarrow \infty$ are two sequences such that $\lim_{n \rightarrow \infty} \sigma(t_n)=\xi^+ \in \partial \Omega$ and $\lim_{n \rightarrow \infty} \sigma(-s_n)=\xi^- \in \partial \Omega \cup \{\infty\}$ then $\xi^+ \neq \xi^-$.
\end{corollary}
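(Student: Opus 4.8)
The plan is to deduce this directly from Proposition~\ref{prop:gromov_prod_finite} by checking that the Gromov product $(\sigma(t_n)\,|\,\sigma(-s_n))_o$ stays bounded along the given sequences. First I would fix the basepoint $o := \sigma(0)$. Since $\sigma$ is a geodesic, for $t_n, s_n \geq 0$ we have $d_\Omega(o,\sigma(t_n)) = t_n$, $d_\Omega(o,\sigma(-s_n)) = s_n$, and $d_\Omega(\sigma(t_n),\sigma(-s_n)) = t_n + s_n$ (the concatenation of $\sigma|_{[-s_n,0]}$ and $\sigma|_{[0,t_n]}$ is a geodesic since $\sigma$ itself is). Hence
\begin{align*}
(\sigma(t_n)\,|\,\sigma(-s_n))_o = \frac12\big( t_n + s_n - (t_n+s_n)\big) = 0
\end{align*}
for every $n$, so in particular $\liminf_{n,m\to\infty} (\sigma(t_n)\,|\,\sigma(-s_m))_o = 0 < \infty$ (one can even take the double-indexed liminf, since for any $n,m$ the same computation gives value $0$).

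Next I would invoke Proposition~\ref{prop:gromov_prod_finite} with $p_n = \sigma(t_n)$, $q_n = \sigma(-s_n)$: we are told $\lim p_n = \xi^+ \in \partial\Omega$ and $\lim q_n = \xi^- \in \partial\Omega \cup \{\infty\}$, and since $\partial\Omega$ is $C^2$ (in particular $C^2$ near $\xi^+$), the hypothesis that the Gromov product has finite liminf forces $\xi^+ \neq \xi^-$. That is exactly the desired conclusion.

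There is essentially no obstacle here — the only point requiring a word of care is that the Gromov product along a geodesic (with basepoint on the geodesic) is identically zero, which is immediate from the definition of geodesic and additivity of distance along $\sigma$. One should also note for cleanliness that the basepoint in the statement of Proposition~\ref{prop:gromov_prod_finite} is an arbitrary fixed point of $\Omega$, so choosing $o = \sigma(0)$ is legitimate; if one prefers an externally specified basepoint $o$, the Gromov product changes by at most $2d_\Omega(o,\sigma(0))$, which is still finite, so the argument is unaffected. Thus the corollary follows at once.
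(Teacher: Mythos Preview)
Your proof is correct and matches the paper's approach exactly: the paper's entire proof is the single observation that $(\sigma(t_n)\mid\sigma(-s_n))_{\sigma(0)} = 0$ for $s_n, t_n > 0$, followed by an implicit appeal to Proposition~\ref{prop:gromov_prod_finite}. Your write-up is just a more detailed version of the same argument.
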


\begin{proof}Note that $(\sigma(t_n)|\sigma(-s_n))_{\sigma(0)} = 0$ when $s_n$ and $t_n$ are positive. 
\end{proof}

\begin{proposition}\label{prop:gromov_prod_infinite}
Suppose $\Omega$ is a locally $m$-convex open set and $p_n, q_n \subset \Omega$ are sequences of points such that $\lim_{n \rightarrow \infty} p_n = \xi_1 \in \partial \Omega \cup \{\infty\}$ and $\lim_{ n \rightarrow \infty} q_n = \xi_2 \in \partial \Omega \cup \{\infty\}$. If
\begin{align*}
\lim_{n \rightarrow \infty} (p_n|q_n)_o = \infty
\end{align*}
for some $o \in \Omega$, then $\xi_1 = \xi_2$.
\end{proposition}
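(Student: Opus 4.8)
I would argue by contradiction: suppose $\lim_{n\to\infty}(p_n|q_n)_o=\infty$ but $\xi_1\neq\xi_2$. Since $\{\infty\}$ is a single point of $\overline{\Cb^d}$, at least one of $\xi_1,\xi_2$ is finite, say $\xi_1\in\partial\Omega$. For each $n$ use Proposition~\ref{prop:connecting} to fix a complex geodesic through $p_n$ and $q_n$; its real axis, extended and parametrized by arclength, is a geodesic line $\sigma_n:\Rb\to\Omega$ with $\sigma_n(a_n)=p_n$ and $\sigma_n(b_n)=q_n$ for some $a_n<b_n$. Let $w_n$ be a point of the segment $\sigma_n([a_n,b_n])$ nearest to $o$. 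The elementary bound $(p|q)_o\le d_\Omega(o,w)$, valid for any point $w$ on a geodesic joining $p$ and $q$ in any geodesic metric space (it follows at once from the triangle inequality and $d_\Omega(p,q)=d_\Omega(p,w)+d_\Omega(w,q)$), gives $d_\Omega(o,w_n)\ge (p_n|q_n)_o\to\infty$.

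\textbf{The technical heart.} I would then show that, after passing to a subsequence and reparametrizing, there are $R>0$ and an interval $[a_n',b_n']\subseteq[a_n,b_n]$ containing the parameter of $w_n$ with $\sigma_n([a_n',b_n'])\subseteq B_R(0)$ and $\liminf_n\norm{\sigma_n(a_n')-\sigma_n(b_n')}>0$. When $\xi_1,\xi_2$ are both finite and the geodesic segments $[p_n,q_n]$ happen to stay in a fixed ball one may simply take $[a_n',b_n']=[a_n,b_n]$, since then $\norm{p_n-q_n}\to\norm{\xi_1-\xi_2}>0$; in general one must excise a maximal sub-arc of $\sigma_n$ that remains inside a large fixed ball while still exiting it with definite Euclidean displacement, and for this one needs to control the location of $w_n$ (in particular that $w_n$ stays in a fixed ball). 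I expect this to be the main obstacle, most acute when $\xi_2=\infty$; it is where local $m$-convexity is essential — it forbids holomorphic disks (Observation~\ref{obs:no_bd_disks}), and in fact line segments, in $\partial\Omega$, which is what prevents the geodesics $\sigma_n$ from degenerating into the boundary and lets one keep the displacement bounded below.

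\textbf{Conclusion.} Granting this, apply Proposition~\ref{prop:m_convex} to the constant sequence $\Omega_n:=\Omega$ (a locally $m$-convex sequence, since $\Omega$ is locally $m$-convex) and the geodesics $\sigma_n$ on $[a_n',b_n']$: there is $T_n\in[a_n',b_n']$ and a subsequence along which $t\mapsto\sigma_n(t+T_n)$ converges locally uniformly to a geodesic line $\sigma:\Rb\to\Omega$. Evaluating at $t=0$ gives $\sigma_n(T_n)\to\sigma(0)\in\Omega$, so $d_\Omega(o,\sigma_n(T_n))\to d_\Omega(o,\sigma(0))<\infty$ by continuity of the Kobayashi distance. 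But $T_n\in[a_n',b_n']\subseteq[a_n,b_n]$ and $w_n$ is the point of $\sigma_n([a_n,b_n])$ closest to $o$, so $d_\Omega(o,\sigma_n(T_n))\ge d_\Omega(o,w_n)\to\infty$ — a contradiction. Hence $\xi_1=\xi_2$, as claimed.
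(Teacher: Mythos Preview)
Your overall strategy is exactly the paper's: assume $\xi_1\neq\xi_2$, take geodesics $\sigma_n$ from $p_n$ to $q_n$, apply Proposition~\ref{prop:m_convex} (with the constant sequence $\Omega_n=\Omega$) to extract a limit geodesic $\sigma:\Rb\to\Omega$, and then contradict $(p_n|q_n)_o\to\infty$ using the inequality $(p_n|q_n)_o\le d_\Omega(o,z)$ valid for \emph{any} point $z$ on the geodesic from $p_n$ to $q_n$. The complex-geodesic detour through Proposition~\ref{prop:connecting} is harmless but unnecessary; any geodesic segment works.

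The ``main obstacle'' you identify is not real, and you have in fact created it yourself. In your technical heart you demand that the sub-interval $[a_n',b_n']$ contain the parameter of the nearest point $w_n$ --- but your own conclusion never uses this. You only use $T_n\in[a_n',b_n']\subseteq[a_n,b_n]$ together with the fact that $w_n$ minimizes distance to $o$ over all of $\sigma_n([a_n,b_n])$, hence $d_\Omega(o,\sigma_n(T_n))\ge d_\Omega(o,w_n)$. So there is no need to control where $w_n$ sits; equivalently, you may dispense with $w_n$ entirely (as the paper does) and use directly that $(p_n|q_n)_o\le d_\Omega(o,\sigma_n(T_n))$ for the $T_n$ produced by Proposition~\ref{prop:m_convex}.

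Once the spurious requirement is dropped, verifying the hypotheses of Proposition~\ref{prop:m_convex} is routine. Say $\xi_1\in\partial\Omega$ and fix $R$ with $p_n\in B_R(0)$ for large $n$. Set
\[
b_n'=\sup\{\,t\in[a_n,b_n]:\sigma_n([a_n,t])\subset B_{R+1}(0)\,\}.
\]
If $b_n'<b_n$ then $\sigma_n(b_n')\in\partial B_{R+1}(0)$, so $\norm{\sigma_n(a_n)-\sigma_n(b_n')}\ge 1$; if $b_n'=b_n$ then $\sigma_n(b_n')=q_n$ and $\norm{\sigma_n(a_n)-\sigma_n(b_n')}=\norm{p_n-q_n}\to\norm{\xi_1-\xi_2}>0$ (and this case cannot persist when $\xi_2=\infty$). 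Either way the hypotheses hold on $[a_n,b_n']$, and the argument concludes exactly as you wrote.
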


\begin{proof}
Suppose for a contradiction that $\xi_1 \neq \xi_2$. Then at least one of $\xi_1$ and $\xi_2$ is finite. Now let $\sigma_n : [0,T_n] \rightarrow \Omega$ be a geodesic such that $\sigma_n(0) = p_n$ and $\sigma_n(T_n) = q_n$.

Using Proposition~\ref{prop:m_convex} there exists $\alpha_n \in [0,T_n]$ such that $\sigma_n(t+\alpha_n)$  converges locally uniformly to a geodesic $\sigma: \Rb \rightarrow \Omega$. Now since $\sigma_n$ is a geodesic 
\begin{align*}
(p_n|q_n)_o 
&= \frac{1}{2} \left( d_{\Omega}(p_n,o)+ d_{\Omega}(o,q_n) - d_{\Omega}(p_n,q_n) \right) \\
&=  \frac{1}{2} \left( d_{\Omega}(p_n,o)+ d_{\Omega}(o,q_n) - d_{\Omega}(p_n,\sigma_n(\alpha_n)) - d_{\Omega}(\sigma_n(\alpha_n), q_n) \right)\\
& \leq d_{\Omega}(o, \sigma_n(\alpha_n)).
\end{align*}
But then
\begin{align*}
\infty =\lim_{n \rightarrow \infty} (p_n|q_n)_o  \leq  \lim_{n\rightarrow \infty}  d_{\Omega}(o, \sigma_n(\alpha_n))=  d_{\Omega}(o, \sigma(0))
\end{align*}
which is a contradiction. 
\end{proof}

\section{Multi-type at infinity}\label{sec:multi_infty}

In this section we will show that the polynomial domains appearing in Theorem~\ref{thm:gaussier} have well behaved geodesics. From Corollary~\ref{cor:geodesic_limits} we know that the forward and backward limits of a geodesic exist. And from Corollary~\ref{cor:geod_limits_distinct} we know that the backward and forward limits of a geodesic are distinct if at least one is not $\infty$. So it remains to show that a geodesic cannot have $\infty$ as a backwards and forwards limit. 

To understand the geometry at infinity we will associated to infinity a multi-type in the spirit of~\cite{C1984, Y1992}.

\begin{proposition}\label{prop:multi_type}
Suppose $P: \Cb^d \rightarrow \Rb$ is a non-negative non-degenerate convex polynomial with $P(0)=0$. Then there exists a linear change of coordinates and integers $0 < m_1 \leq \dots \leq  m_d$ such that as $t \rightarrow 0$
\begin{align*}
t P(t^{-1/m_1}z_1, \dots, t^{-1/m_d}z_d)
\end{align*}
converges in the $C^\infty$ topology to a non-negative convex non-degenerate polynomial $P_1$. Moreover, 
\begin{align*}
 t P_1(t^{-1/m_1}z_1, \dots, t^{-1/m_d}z_d) = P_1(z_1, \dots, z_d)
 \end{align*}
 for all $t \in \Rb$. 
\end{proposition}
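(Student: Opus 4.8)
The plan is to separate the structural content from the combinatorial core, the latter being a construction of a ``multi-type at infinity'' in the style of Catlin~\cite{C1984}. First I would reduce as follows. Since $P \geq 0$, $P(0) = 0$, and $P$ is convex, the origin is a global minimum, so $P$ has no monomials of degree $\leq 1$; write $P = \sum c_{\alpha\beta} z^\alpha \overline{z}^\beta$ with $|\alpha| + |\beta| \geq 2$ on the support. Recall that non-degeneracy means the convex set $Z := \{P = 0\}$ (the minimum set of $P$) contains no complex affine line. For positive integers $m_1, \dots, m_d$ and $\lambda := (1/m_1, \dots, 1/m_d)$, expanding the rescaling gives
\[
t\, P\big(t^{-1/m_1}z_1, \dots, t^{-1/m_d}z_d\big) = \sum_{\alpha,\beta} c_{\alpha\beta}\, t^{\,1-\ip{\lambda,\alpha+\beta}}\, z^\alpha\overline{z}^\beta ,
\]
whose coefficients converge as $t \to 0^+$ exactly when $\ip{\lambda,\alpha+\beta} \leq 1$ on the support of $P$; in that case the limit $P_\lambda := \sum_{\ip{\lambda,\alpha+\beta}=1} c_{\alpha\beta} z^\alpha\overline{z}^\beta$ is attained in $C^\infty$ on compact sets, it is non-negative and convex (each rescaling is $t$ times $P$ precomposed with a linear map), and it is weighted-homogeneous of weight $1$ for the weights $1/m_j$ — which is precisely the asserted identity $t\, P_\lambda(t^{-1/m_1}z_1, \dots, t^{-1/m_d}z_d) = P_\lambda$. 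So it suffices to produce, after a linear change of coordinates, a $\lambda \in \Rb^d$ with positive rational entries such that $\ip{\lambda,\alpha+\beta} \leq 1$ on the support of $P$ and $P_\lambda$ is non-degenerate.

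Integrality is then automatic: if $P_\lambda$ is non-degenerate, then for each $j$ the $z_j$-axis is not contained in $\{P_\lambda = 0\}$, so $P_\lambda$ restricted to the $z_j$-axis is a nonzero polynomial in $z_j, \overline{z_j}$, and every monomial $z_j^a\overline{z_j}^{\,b}$ appearing there satisfies $(a+b)\lambda_j = 1$; hence $1/\lambda_j = a+b \in \Zb_{>0}$. Reordering the coordinates so that $\lambda_1 \geq \dots \geq \lambda_d$ then produces integers $0 < m_1 \leq \dots \leq m_d$, and one is done.

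The heart of the matter is the construction of such a $\lambda$. I would proceed by induction on $d$, following Catlin's multitype construction~\cite{C1984} and the line-type stratification. When $d=1$, take $m_1 = \deg P$; then $P_\lambda$ is the top-degree homogeneous part of $P$, which is non-negative and not identically zero, hence non-degenerate in $\Cb$. For $d > 1$, I would first choose the ``first'' coordinate together with the largest admissible weight $\lambda_1 = 1/m_1$ — realized after an appropriate linear change of coordinates, the supremum being attained by a finiteness argument since the relevant orders of vanishing are bounded by $\deg P$ — and non-negativity forces $m_1$ to be a positive integer with $m_1 \leq \deg P$. Then I would change coordinates to remove, up to the relevant weight, the cross terms between $z_1$ and $z_2, \dots, z_d$, and pass to the leading slice, a non-negative convex polynomial in $z_2, \dots, z_d$, to which the inductive hypothesis applies and yields the remaining weights. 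Non-degeneracy of the final $P_\lambda$ would be obtained by pulling a hypothetical complex line in $\{P_\lambda = 0\}$ back, via weighted-homogeneity and the chosen normalization, to a complex line in $Z = \{P = 0\}$, contradicting non-degeneracy of $P$.

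The rescaling identity, the non-negativity and convexity of $P_\lambda$, the homogeneity, and the integrality bookkeeping are all routine; the main obstacle is the inductive construction itself. The delicate points will be: performing the iterated linear changes of coordinates so that the weights are genuinely maximal (hence correctly ordered after relabeling), ensuring that non-negativity and convexity survive the passage to the lower-dimensional slice at each stage, and propagating non-degeneracy from $P$ to $P_\lambda$. Convexity is essential precisely here: it forces the relevant orders of vanishing to be realized along complex lines (the content of McNeal's theorem~\cite{M1992} for convex domains), which is what makes the greedy, one-complex-direction-at-a-time construction work and keeps all the $m_j$ integral.
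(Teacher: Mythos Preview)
Your reduction is correct and matches the paper: the convergence condition $\ip{\lambda,\alpha+\beta}\le 1$ on the support, the identification of $P_\lambda$ as the weight-$1$ part, its non-negativity and convexity, the homogeneity identity, and the integrality bookkeeping are all exactly right. Where you diverge from the paper is in the construction of the weights and the non-degeneracy argument.

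The paper avoids induction entirely. It defines $\deg(\vec v)$ to be the degree of $z\mapsto P(z\vec v)$ and observes, using only the convexity inequality $P\big(\tfrac12(2z\vec v)+\tfrac12(2z\vec w)\big)\le\tfrac12 P(2z\vec v)+\tfrac12 P(2z\vec w)$, that $\deg(\vec v+\vec w)\le\max\{\deg(\vec v),\deg(\vec w)\}$. This ultrametric-type bound immediately gives a flag $\{0\}=V_0\subset V_1\subset\cdots\subset V_k=\Cb^d$ with $\deg(\vec v)=D_\ell$ for $\vec v\in V_\ell\setminus V_{\ell-1}$; one simply chooses coordinates adapted to the flag and sets $m_i=\deg(e_i)$. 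The condition $\omega(\alpha,\beta)\le 1$ on the support is then proved by contradiction: if the maximum value were $\delta>1$, the $\delta$-homogeneous part $P_\delta$ would be a non-negative convex polynomial vanishing identically on each coordinate axis (since the pure $z_i^{\alpha_i}\overline{z_i}^{\beta_i}$ terms have weight $\le 1$), hence $P_\delta\equiv 0$ by convexity. Finally, non-degeneracy of $P_1$ is shown directly: for $\vec v\in V_\ell\setminus V_{\ell-1}$, the degree-$D_\ell$ part of $P_1(z\vec v)$ coincides with that of $P(z\vec v)$ (any monomial contributing to degree $D_\ell$ in $P(z\vec v)$ with $v_i=0$ for $i>d_\ell$ is forced to have $\omega=1$), so it is nonzero.

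Your inductive Catlin-style route is in principle viable, but two steps are genuinely incomplete as written. First, the ``leading slice'' in $z_2,\dots,z_d$ is not defined; $P(0,z_2,\dots,z_d)$ need not be non-degenerate without further normalization, and you have not specified what ``removing cross terms up to the relevant weight'' means or why the result is still non-negative and convex. Second, and more seriously, your non-degeneracy argument does not work: a complex line $L\subset\{P_\lambda=0\}$ cannot in general be ``pulled back'' to a complex line in $\{P=0\}$, because $P=P_\lambda+(\text{lower-weight terms})$ and those lower-weight terms have no reason to vanish on $L$; weighted homogeneity of $P_\lambda$ gives you nothing about $P$. The paper's argument sidesteps this entirely by comparing top-degree terms of $P(z\vec v)$ and $P_1(z\vec v)$ along each flag stratum, which is both simpler and what actually makes the proof go through.
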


Delaying the proof of  Proposition~\ref{prop:multi_type} we show that geodesics are well behaved in polynomial domains. 

\begin{proposition}\label{prop:geodesics_well_behaved}
Suppose $\Omega$ is a domain of the form 
\begin{align*}
\Omega = \{ (z_0, \dots, z_d) \in \Cb^{d+1} : \Real(z_0)  > P(z_1, \dots, z_d) \}
\end{align*}
where $P$ is a non-negative non-degenerate convex polynomial with $P(0)=0$. If $\sigma: \Rb \rightarrow \Omega$ is a geodesic then $\lim_{t \rightarrow -\infty} \sigma(t)$ and $\lim_{t \rightarrow \infty} \sigma(t)$ both exist in $\overline{\Cb^d}$ and are distinct.
\end{proposition}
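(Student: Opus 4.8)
\emph{Strategy of proof.} The plan is to first reduce, using the results already established in this section, to the single case in which the geodesic has $\infty$ as both its forward and backward limit, and then to rule this out using the multi-type model $\Omega_1$ produced by Proposition~\ref{prop:multi_type}. For the reduction: since $\Omega=\{\Real(z_0)>P(z_1,\dots,z_d)\}$ is the graph of a polynomial, $\partial\Omega$ is $C^\infty$; and for every complex affine line $\ell\colon\Cb\to\Cb^{d+1}$ the restriction to $\ell$ of the defining function $r(z)=\Real(z_0)-P(z_1,\dots,z_d)$ is a polynomial in $(\zeta,\overline\zeta)$ of degree at most $\deg P$, which cannot vanish identically because $\Omega$ is $\Cb$-proper; hence $\partial\Omega$ has finite line type $L\le\deg P$. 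By Example~\ref{ex:L_convex} the set $\Omega$ is locally $L$-convex, so Corollary~\ref{cor:geodesic_limits} shows that both limits $\xi^{\pm}:=\lim_{t\to\pm\infty}\sigma(t)$ exist in $\overline{\Cb^{d+1}}$. If either of $\xi^{\pm}$ lies in $\partial\Omega$, then Corollary~\ref{cor:geod_limits_distinct} (applicable since $\partial\Omega$ is $C^2$) gives $\xi^+\neq\xi^-$ and we are done. So it remains to show that no geodesic $\sigma$ has $\xi^+=\xi^-=\infty$.

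Assume for contradiction that such a $\sigma$ exists. By Proposition~\ref{prop:multi_type}, after a linear change of coordinates (an affine automorphism, hence harmless) there are weights $0<m_1\le\dots\le m_d$ for which the linear dilations $\delta_\lambda(z_0,z_1,\dots,z_d)=(\lambda z_0,\lambda^{1/m_1}z_1,\dots,\lambda^{1/m_d}z_d)$, $\lambda>0$, satisfy: for every $\lambda_n\to 0^+$ the domains $\delta_{\lambda_n}\Omega=\{\Real(z_0)>\lambda_n P(\lambda_n^{-1/m_1}z_1,\dots,\lambda_n^{-1/m_d}z_d)\}$ converge in the local Hausdorff topology to $\Omega_1:=\{\Real(z_0)>P_1(z_1,\dots,z_d)\}$, and each $\delta_\lambda$ is a biholomorphic automorphism of $\Omega_1$ (hence a Kobayashi isometry), since $P_1(\lambda^{1/m_1}z_1,\dots,\lambda^{1/m_d}z_d)=\lambda P_1(z_1,\dots,z_d)$. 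The defining polynomials of $\delta_{\lambda_n}\Omega$ have degree at most $1+\deg P$ and converge in $C^0$ on compact sets, hence in $C^\infty$, so by Example~\ref{ex:L_convex_seq} the sequence $(\delta_{\lambda_n}\Omega)_n$ is locally $L$-convex.

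Next I would rescale $\sigma$ into the model. Fix a large $R>0$ and $\lambda_n\to 0^+$. For $n$ large $\delta_{\lambda_n}\sigma(0)\to 0\in\partial\Omega_1$, while $\|\delta_{\lambda_n}\sigma(t)\|\to\infty$ as $t\to+\infty$ for each fixed $n$; so there is a first time $b_n>0$ with $\|\delta_{\lambda_n}\sigma(b_n)\|=R$, and then $\delta_{\lambda_n}\sigma([0,b_n])\subset B_{R+1}(0)$ while $\|\delta_{\lambda_n}\sigma(0)-\delta_{\lambda_n}\sigma(b_n)\|\to R$. Proposition~\ref{prop:m_convex} (valid because $(\delta_{\lambda_n}\Omega)_n$ is locally $L$-convex and converges to the locally $L$-convex set $\Omega_1$) then produces $T_n\in[0,b_n]$ such that a subsequence of $t\mapsto\delta_{\lambda_n}\sigma(t+T_n)$ converges locally uniformly to a geodesic $\widetilde\sigma\colon\Rb\to\Omega_1$; and since $\|\delta_{\lambda_n}\sigma(t+T_n)\|\to\infty$ as $t\to+\infty$, Proposition~\ref{prop:limits_exist} gives $\lim_{t\to+\infty}\widetilde\sigma(t)=\infty$.

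The hard part will be controlling the backward end and closing the contradiction. A naive rescaling only yields a geodesic in $\Omega_1$ running from $0\in\partial\Omega_1$ to $\infty$ — which exists (for instance $s\mapsto(e^{2s},0,\dots,0)$) and is not by itself contradictory; one must exploit the \emph{extra} one-parameter symmetry group $\{\delta_\lambda\}$ of $\Omega_1$, which a general affine blow-up of $\Omega$ does not have. The cleanest route I see is to first establish that in $\Omega_1$ the Gromov product satisfies $(p_n\mid q_n)_o\to\infty$ whenever $p_n,q_n\to\infty$: using $\delta_\lambda$-equivariance one rescales $p_n$ to unit size, reducing to bounded sequences accumulating at the $C^2$ boundary point $0$, where Proposition~\ref{prop:gromov_prod_finite} applies. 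Granting this, one chooses the rescaling of $\sigma$ so that $\widetilde\sigma$ records both ends of $\sigma$ at $\infty$, and applies the statement to $p_n=\widetilde\sigma(n)$, $q_n=\widetilde\sigma(-n)$, contradicting $(\widetilde\sigma(n)\mid\widetilde\sigma(-n))_{\widetilde\sigma(0)}=0$. An alternative is an induction on $d$: slice $\Omega_1$ and use Lemma~\ref{lem:slices} to descend to a lower-dimensional model, the base case $d=0$ being $\{\Real(z_0)>0\}\cong\Delta$. In either approach the main technical obstacle is this last step; all the other ingredients — existence and uniform control of limiting geodesics, continuity of the Kobayashi metric, and the $C^\infty$ convergence of the rescaled defining functions — are routine given the machinery of Sections~\ref{sec:m_convex}–\ref{sec:gromov_prod}.
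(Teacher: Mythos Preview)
Your reduction to the case $\xi^+=\xi^-=\infty$ and the rescaling setup via Proposition~\ref{prop:multi_type} are both correct and match the paper exactly. The gap is in the closing argument: both of your proposed routes (the Gromov-product claim for $\Omega_1$ and the induction on $d$) are left vague, and the first is dangerously close to circular---proving that $(p_n\mid q_n)_o\to\infty$ whenever $p_n,q_n\to\infty$ in $\Omega_1$ is essentially equivalent to showing $\Omega_1$ has no geodesic with both ends at $\infty$, which is precisely the statement you are trying to establish. Your sketch of rescaling $p_n$ to unit size does not obviously work either, since the basepoint $o$ is then pushed to $0\in\partial\Omega_1$ and Proposition~\ref{prop:gromov_prod_finite} no longer applies.

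The paper's finish is much simpler and you nearly had it. Instead of extracting a single limiting geodesic from the forward piece $[0,b_n]$, do the same on the \emph{backward} piece as well: since $\lim_{t\to -\infty}\sigma(t)=\infty$, there are $a_n<0$ with $\|\delta_{\lambda_n}\sigma(a_n)\|=R$ and $\delta_{\lambda_n}\sigma([a_n,0])\subset B_{R+1}(0)$. Apply Proposition~\ref{prop:m_convex} twice to obtain $\alpha_n\in[a_n,0]$ and $\beta_n\in[0,b_n]$ such that (along a subsequence) $t\mapsto\delta_{\lambda_n}\sigma(t+\alpha_n)$ and $t\mapsto\delta_{\lambda_n}\sigma(t+\beta_n)$ converge to geodesics $\widehat\sigma_1,\widehat\sigma_2:\Rb\to\Omega_1$. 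Since $\delta_{\lambda_n}\sigma(0)\to 0\in\partial\Omega_1$ while $\widehat\sigma_i(0)\in\Omega_1$, one is forced to have $\alpha_n\to-\infty$ and $\beta_n\to+\infty$. But $\delta_{\lambda_n}$ is an isometry $(\Omega,d_\Omega)\to(\delta_{\lambda_n}\Omega,d_{\delta_{\lambda_n}\Omega})$, so by Theorem~\ref{thm:dist_conv}
\[
d_{\Omega_1}\big(\widehat\sigma_1(0),\widehat\sigma_2(0)\big)=\lim_n d_{\delta_{\lambda_n}\Omega}\big(\delta_{\lambda_n}\sigma(\alpha_n),\delta_{\lambda_n}\sigma(\beta_n)\big)=\lim_n d_\Omega\big(\sigma(\alpha_n),\sigma(\beta_n)\big)=\lim_n|\beta_n-\alpha_n|=\infty,
\]
a contradiction. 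Note that no special property of $\Omega_1$ beyond $\Cb$-properness is used at this last step; in particular the $\delta_\lambda$-invariance of $\Omega_1$ plays no role.
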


\begin{proof}
Since $\Omega$ has $C^2$ boundary and is locally $m$-convex using Corollary~\ref{cor:geodesic_limits} and Corollary~\ref{cor:geod_limits_distinct} we know that both limits exist and they are distinct if at least one is finite. So suppose for a contradiction that 
\begin{align*}
\lim_{t \rightarrow -\infty} \sigma(t)=\lim_{t \rightarrow+ \infty} \sigma(t)=\infty.
\end{align*}

Now by Proposition~\ref{prop:multi_type} we can make an linear change of coordinates such that $ t P(t^{-1/m_1}z_1, \dots, t^{-1/m_d}z_d)$ converges to $P_1(z_1,\dots, z_d)$ locally uniformly in the $C^\infty$ topology as $t \rightarrow 0$. If we let 
\begin{align*}
\Lambda_n = \begin{pmatrix}
 n^{-1} & & & \\
 & n^{-1/m_1} && \\
 && \ddots & \\
 &&& n^{-1/m_d} 
 \end{pmatrix}
 \end{align*}
 then $\Lambda_n \Omega$ converges in the local Hausdorff topology to the domain 
 \begin{align*}
\wh{\Omega} = \{ (z_0, \dots, z_d) \in \Cb^{d+1} : \Real(z_0)  > P_1(z_1, \dots, z_d) \}
\end{align*}
 as $n \rightarrow \infty$. Since $n^{-1} P(n^{1/m_1}z_1, \dots, n^{1/m_d}z_d)$ converges to $P_1(z_1,\dots, z_d)$ locally uniformly in the $C^\infty$ topology we see that the family $\Lambda_n \Omega$ is a locally $L$-convex sequence (see Example~\ref{ex:L_convex_seq}).
 
Next consider the geodesic $\sigma_n= \Lambda_n \circ \sigma : \Rb \rightarrow \Lambda_n \Omega$. Since 
\begin{align*}
\lim_{t \rightarrow -\infty} \sigma_n(t)=\lim_{t \rightarrow+ \infty} \sigma_n(t)=\infty
\end{align*}
and $\norm{\sigma_n(0)} < \norm{\sigma(0)}$ we can find $\alpha_n \in (-\infty,0]$ and $\beta_n \in [0,\infty)$ such that the geodesics $t \rightarrow \sigma_n(t +\alpha_t)$ and $t \rightarrow \sigma_n(t +\beta_t)$ restricted to some subinterval satisfy the hypothesis of Proposition~\ref{prop:m_convex}. So there exists $n_k \rightarrow \infty$ such that $\sigma_{n_k}(t+\alpha_{n_k})$ and $\sigma_{n_k}(t +\beta_{n_k})$ converge locally uniformly to geodesics $\wh{\sigma}_1:\Rb \rightarrow \wh{\Omega}$ and $\wh{\sigma}_2:\Rb \rightarrow \wh{\Omega}$. Since $ \sigma_n(0) \rightarrow 0$ and $0 \in \partial \wh{\Omega}$ we see that $\alpha_{n_k} \rightarrow -\infty$ and $\beta_{n_k} \rightarrow \infty$. Then
\begin{align*}
d_{\wh{\Omega}}(\wh{\sigma}_1(0),\wh{\sigma}_2(0)) 
&= \lim_{k \rightarrow \infty} d_{\Lambda_{n_k} \Omega}(\Lambda_{n_k} \sigma(\alpha_{n_k}), \Lambda_{n_k} \sigma(\beta_{n_k}))\\
& = \lim_{k \rightarrow \infty} d_{ \Omega}( \sigma(\alpha_{n_k}),  \sigma(\beta_{n_k}))=  \lim_{n \rightarrow \infty} \abs{\alpha_{n_k}-\beta_{n_k}}=\infty.
\end{align*}
Which is a contradiction. 
\end{proof}

\begin{proof}[Proof of Proposition~\ref{prop:multi_type}]
Given a vector $\vec{v} \in \Cb^d$ let $\degree(\vec{v})$ denote the degree of the polynomial $z \in \Cb \rightarrow P(z\vec{v})$. Since $P$ is non-degenerate and $P(0)=0$,  $\deg(\vec{v}) > 0$ for all nonzero vectors $\vec{v}\in \Cb^d$. Moreover, $\deg(\vec{v}) \leq M$ if and only if there exists $C \geq 0$ so that $\abs{P(z\vec{v})} \leq C+C \abs{z}^M$ for all $z \in \Cb$. 

If $\vec{v}, \vec{w} \in \Cb^d$ then 
\begin{align*}
\abs{ P(z\vec{v}+z\vec{w}) } 
& =P(z\vec{v}+z\vec{w}) = P\left(\frac{1}{2}(2z\vec{w}) + \frac{1}{2}(2z\vec{v}) \right)
\leq \frac{1}{2} P(2z\vec{w}) + \frac{1}{2} P(2z\vec{v})\\
& \leq C_0+C_1 \abs{z}^{\deg(\vec{w})} + C_2\abs{z}^{\deg(\vec{v})}
 \end{align*}
by the convexity of $P$. Hence $\deg(\vec{v}+\vec{w}) \leq \max\{ \deg(\vec{v}), \deg(\vec{w})\}$. This implies that there exists subspaces
\begin{align*}
\{0\} = V_0 \subset V_1 \subset \dots \subset V_k = \Cb^d
\end{align*}
and integers $0=D_0 < D_1 < \dots < D_k$ such that for all $\vec{v}\in V_\ell \setminus V_{\ell-1}$ we have $\deg(\vec{v}) = D_\ell$.

Now fix linear coordinates such that for all $1 \leq \ell \leq k$
\begin{align*}
V_\ell = \{ (z_1, \dots, z_{d_{\ell}}, 0, \dots, 0): z_1, \dots, z_{d_\ell}\in \Cb\}
\end{align*}
where 
\begin{align*}
d_{\ell} =  \dimension(V_\ell).
\end{align*}
Let $e_1, \dots, e_d$ be the standard basis in these coordinates and for $1 \leq i \leq d$ let $m_i= \deg(e_i)$. Now in these linear coordinates $P$ can be written as
\begin{align*}
P(z) = \sum_{\alpha, \beta} a_{\alpha,\beta} z^{\alpha}\overline{z}^{\beta}
\end{align*}
using the usual multi-indices notation.  For $\alpha=(\alpha_1, \dots, \alpha_d)$ and $\beta=(\beta_1,\dots, \beta_d)$ let 
\begin{align*}
\omega(\alpha, \beta) = \sum_{i=1}^d \frac{ \alpha_i+\beta_i}{m_i}.
\end{align*}
We first claim that $\omega(\alpha,\beta) \leq 1$ whenever $a_{\alpha,\beta} \neq 0$. Suppose not, then let 
\begin{align*}
\delta = \max \{ \omega(\alpha, \beta) : a_{\alpha, \beta} \neq 0\}
\end{align*}
and 
\begin{align*}
P_{\delta}(z) = \sum_{\omega(\alpha,\beta)=\delta} a_{\alpha,\beta} z^{\alpha}\overline{z}^{\beta}.
\end{align*}
Since $\delta$ was picked maximally we see that 
\begin{align*}
 \lim_{t \rightarrow 0} t^{\delta} P(t^{-1/m_1}z_1, \dots, t^{-1/m_d}z_d) = P_{\delta}(z).
 \end{align*}
Since $P_\delta$ is the limit of convex non-negative polynomials, it is a convex and non-negative polynomial. Since $z \rightarrow P(ze_i)$ is a polynomial of degree $m_i$ we see that $P$ has no terms of the form $z_i^{\alpha_i} \overline{z_i}^{\beta_i}$ with 
\begin{align*}
\frac{ \alpha_i+\beta_i}{m_i} > 1.
\end{align*}
Thus for any $i$, the polynomial $z \in \Cb \rightarrow P_{\delta}(z e_i)$ is identically zero. So by convexity $P_{\delta}\equiv 0$. Which is a contradiction, thus $\omega(\alpha,\beta) \leq 1$ whenever $a_{\alpha,\beta} \neq 0$.

Now let 
\begin{align*}
P_{1}(z) = \sum_{\omega(\alpha,\beta)=1} a_{\alpha,\beta} z^{\alpha}\overline{z}^{\beta}.
\end{align*}
Since $\omega(\alpha,\beta) \leq 1$ whenever $a_{\alpha,\beta} \neq 0$ we see that 
\begin{align*}
\lim_{t \rightarrow 0} t P(t^{-1/m_1}z_1, \dots, t^{-1/m_d}z_d) = P_1(z_1, \dots, z_d).
\end{align*}
Moreover 
\begin{align*}
 t P_1(t^{-1/m_1}z_1, \dots, t^{-1/m_d}z_d) = P_1(z_1, \dots, z_d)
 \end{align*}
 for all $t \in \Rb$. Since $P_1$ is the limit of non-negative convex functions, it is non-negative and convex as well. Thus it only remains to verify that $P_1$ is non-degenerate. Since $P_1$ is convex it is enough to verify that for any nonzero $\vec{v} \in \Cb^d$ the polynomial $z \in \Cb \rightarrow P_{1}(z\vec{v})$ is not identically zero. So fix $\vec{v} \in \Cb^d$ nonzero. Then there exists $\ell$ such that $\vec{v} \in V_{\ell} \setminus V_{\ell-1}$. We claim that 
  \begin{align*}
   \text{terms of degree $D_\ell$ in } P(z\vec{v})  =    \text{terms of degree $D_\ell$ in } P_{1}(z\vec{v}).
   \end{align*}
   Now 
   \begin{align*}
     \text{terms of degree $D_\ell$ in } P(z\vec{v}) = \sum \left\{ a_{\alpha,\beta} (zv)^{\alpha}\overline{(zv)}^\beta : \sum_{i=1}^d \alpha_i+\beta_i = D_\ell\right\}
     \end{align*}
    and 
      \begin{align*}
     \text{terms of degree $D_\ell$ in } P_1(z\vec{v}) = \sum \left\{ a_{\alpha,\beta} (zv)^{\alpha}\overline{(zv)}^\beta : \omega(\alpha,\beta) =1 \text{ and } \sum_{i=1}^d \alpha_i+\beta_i = D_\ell\right\}.
     \end{align*}

 So suppose that $a_{\alpha,\beta} z^{\alpha}\overline{z}^{\beta}$ is a term in $P$ such that 
 \begin{align*}
 \sum_{i=1}^d \alpha_i+\beta_i = D_\ell.
 \end{align*}
 We will show that either $(zv)^{\alpha}\overline{(zv)}^\beta \equiv 0$ or $\omega(\alpha,\beta)=1$ which will imply the claim. Since $v_i = 0$ for $i>d_{\ell}$ either $(zv)^{\alpha}\overline{(zv)}^\beta \equiv 0$ or $\alpha_i=\beta_i = 0$ for $i > d_{\ell}$. In the latter case
 \begin{align*}
 1\geq \omega(\alpha,\beta) = \sum_{i=1}^d \frac{\alpha_i+\beta_i}{m_i} =  \sum_{i=1}^{d_{\ell}} \frac{\alpha_i+\beta_i}{m_i} \geq  \frac{1}{m_{d_{\ell}}} \sum_{i=1}^{d_{\ell}} \alpha_i+\beta_i = 1
 \end{align*}
 since $m_{d_\ell} = D_\ell$. So $\omega(\alpha,\beta) =1$.
 \end{proof}
 
\section{The proof of Theorem~\ref{thm:main} and Proposition~\ref{prop:main}}

\begin{proof}[Proof of Theorem~\ref{thm:main}] 
Suppose that $\Omega$ is a bounded convex open set with $C^\infty$ boundary.

If $\Omega$ has finite type then Theorem~\ref{thm:suff}, Theorem~\ref{thm:gaussier}, and Proposition~\ref{prop:geodesics_well_behaved} imply that $(\Omega, d_{\Omega})$ is Gromov hyperbolic. 

Conversely if $(\Omega, d_{\Omega})$ is Gromov hyperbolic, then Proposition~\ref{prop:infinite_type_2} implies that $\Omega$ has finite type. 
\end{proof}

\begin{proof}[Proof of Proposition~\ref{prop:main}] Given a geodesic $\sigma:[0,\infty) \rightarrow \Omega$ the limit 
\begin{align*}
\lim_{t \rightarrow \infty} \sigma(t) 
\end{align*}
exists by Corollary~\ref{cor:geodesic_limits} and is in $\partial \Omega$. We claim that this limit only depends on the choice of asymptotic class of geodesic. To see this suppose that $\sigma_1$ and $\sigma_2$ are two geodesic rays with 
\begin{align*}
\sup_{t \geq 0} d_{\Omega}(\sigma_1(t),\sigma_2(t)) < \infty.
\end{align*}
Then 
\begin{align*}
\lim_{t \rightarrow \infty} (\sigma_1(t) | \sigma_2(t))_{o} = \infty
\end{align*}
which implies by Proposition~\ref{prop:gromov_prod_infinite} that
\begin{align*}
\lim_{t \rightarrow \infty} \sigma_1(t) = \lim_{t \rightarrow \infty} \sigma_2(t).
\end{align*}
Thus the map  $\Phi: \Omega \cup \Omega(\infty) \rightarrow \Omega \cup \partial \Omega$ given by 
\begin{align*}
\Phi(\xi) = \left\{ \begin{array}{ll}
\xi & \text{ if } \xi \in \Omega \\
 \lim_{t \rightarrow \infty} \sigma(t) & \text{ if } \xi=[\sigma] \in \Omega(\infty) 
\end{array}
\right.
\end{align*}
is well defined.

We claim that $\Phi$ is continuous, injective, and surjective. Since $\Omega \cup \Omega(\infty)$ is compact this will imply that $\Phi$ is a homeomorphism. 

\textbf{Surjective:} It is enough to show that for all $x \in \partial \Omega$ there exists a geodesic ray $\sigma:[0,\infty) \rightarrow \Omega$ such that 
\begin{align*}
\lim_{t \rightarrow \infty} \sigma(t) = x.
\end{align*}
Fix a point $o \in \Omega$ and a sequence $x_n \in \Omega$ such that $x_n \rightarrow x$. Then let $\sigma_n:[0,T_n] \rightarrow \Omega$ be a geodesic such that $\sigma_n(0)=o$ and $\sigma_n(T_n) = x_n$. We can pass to a subsequence so that $\sigma_n$ converges locally uniformly to a geodesic ray $\sigma : [0,\infty) \rightarrow \Omega$. But then by Proposition~\ref{prop:limits_exist} 
\begin{align*}
\lim_{t \rightarrow \infty} \sigma(t) = x.
\end{align*}
Hence $\Phi$ is onto. 

\textbf{Continuous:} Suppose $\xi_n \rightarrow \xi$ in $\Omega \cup \Omega(\infty)$. If $\xi \in \Omega$ then clearly $\Phi(\xi_n) \rightarrow \Phi(\xi)$. So we can assume that $\xi_n \in \Omega(\infty)$. Since $\Omega \cup \partial\Omega$ is compact, it is enough to show that every convergent subsequence of $\Phi(\xi_n)$ converges to $\Phi(\xi)$. So we may also assume that $\Phi(\xi_n) \rightarrow x$ for some $x \in \partial \Omega$. 

Now fix $o \in \Omega$ and  let $\sigma_n:[0,T_n) \rightarrow \Omega$ be a geodesic with $\sigma_n(0)=o$ and 
\begin{align*}
\lim_{t \rightarrow T_n} \sigma_n(t)= \Phi(\xi_n).
\end{align*}
Notice that $T_n$ could be $\infty$, so pick $T_n^\prime \in (0, T_n)$ such that 
\begin{align*}
\lim_{n \rightarrow \infty} \sigma_n(T_n^\prime) = \lim_{n \rightarrow \infty} \Phi(x_n) = x.
\end{align*}
Now by the definition of topology on $\Omega \cup \Omega(\infty)$, if $\sigma$ is the limit of a convergent subsequence $\sigma_{n_k}$ of $\sigma_n$ then $\xi = [\sigma]$. Moreover, by Proposition~\ref{prop:limits_exist}
\begin{align*}
 \lim_{t \rightarrow \infty} \sigma(t) =\lim_{k \rightarrow \infty} \sigma_{n_k}(T_{n_k}^\prime) = x.
 \end{align*}
Thus $\Phi(\xi) = x$. So $\Phi$ is continuous. 
 
 \textbf{Injective:} Suppose for a contradiction that $\Phi(\xi_1)=\Phi(\xi_2)$ for some $\xi_1 \neq \xi_2$ in $\Omega \cup \Omega(\infty)$. Since $\Phi|_{\Omega} = id$ we must have that $\xi_1,\xi_2 \in \Omega(\infty)$. Now let $\sigma_1, \sigma_2$ be geodesic representatives of $\xi_1, \xi_2$. Then 
 \begin{align*}
 \lim_{t \rightarrow \infty} \sigma_1(t) = \Phi(\xi_1) = \Phi(\xi_2)= \lim_{t \rightarrow \infty} \sigma_2(t).
 \end{align*}
This implies, by Proposition~\ref{prop:gromov_prod_finite}, that 
\begin{align*}
\lim_{t \rightarrow \infty} (\sigma_1(t) | \sigma_2(t))_o = \infty.
\end{align*}
But by~\cite[Chapter III.H Lemma 3.13]{BH1999} this happens only if $\xi_1 = [\sigma_1]=[\sigma_2]=\xi_2$ which is a contradiction.

\end{proof}

\part{Locally convexifiable sets}

\section{Locally  convexifiable sets}\label{sec:local_convex}

If $\Omega$ is a bounded open set and $\xi \in \partial\Omega$ we call a pair $(V_{\xi}, \Phi_{\xi})$ a \emph{local convex chart of $\Omega$ at $\xi$} if $V_{\xi}$ is a neighborhood of $\xi$ in $\Cb^d$ and $\Phi_{\xi} : V_{\xi} \rightarrow \Cb^d$ is a bi-holomorphism onto its image such that $\Omega_{\xi}:=\Phi_{\xi}( V_{\xi} \cap \Omega)$ is convex. If for each $\xi \in \partial \Omega$ there exists a local convex chart at $\xi$ then we call $\Omega$ \emph{locally convexifiable}. Given a locally convex set $\Omega$, a \emph{locally convex atlas} is a choice $(V_{\xi}, \Phi_{\xi})$ for each $\xi \in \partial \Omega$. 

Suppose $\Omega$ is locally convexifiable and has finite type in the sense of D'Angelo, then there exists $L>0$ such that for any local convex chart $(V_{\xi}, \Phi_{\xi})$ of $\Omega$ the hypersurface $\Phi_{\xi}(\partial \Omega \cap V_{\xi})$ has  line type at most $L$ near $\Phi_{\xi}(\xi)$ (see~\cite{BS1992}). This motivates the next definition:

\begin{definition} 
Suppose $\Omega$ is a bounded open set and $\partial \Omega$ is a $C^L$ hypersurface. If $\Omega$ is locally convexifiable and there exists a locally convex atlas $\{ (V_{\xi}, \Phi_{\xi}) : \xi \in \partial \Omega \}$  such that for every $\xi$ the hypersurface $\Phi_{\xi}(\partial \Omega \cap V_{\xi})$ has  line type at most $L$ near $\Phi_{\xi}(\xi)$ then we say that $\Omega$ has \emph{finite local line type at most $L$.} If, in addition, there exists a $\xi$ where $\Phi_{\xi}(\partial \Omega \cap V_{\xi})$ has  line type  $L$ near $\Phi_{\xi}(\xi)$ then we say $\Omega$ has \emph{finite local line type $L$.}
\end{definition}

\begin{example} It is well known that a strongly pseudo-convex domain with $C^2$ boundary is locally convexifiable and has finite local line type 2. \end{example}

The rest of the paper is devoted to proving:

\begin{theorem}\label{thm:loc_main}
Suppose $\Omega$ is locally convexifiable and has finite local line type $L$. Then $(\Omega, d_{\Omega})$ is Gromov hyperbolic. Moreover the identity map $\Omega \rightarrow \Omega$ extends to a homeomorphism $\Omega \cup \partial \Omega \rightarrow \Omega \cup \Omega(\infty)$.
\end{theorem}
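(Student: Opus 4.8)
The plan is to reduce Theorem~\ref{thm:loc_main} to the convex case already handled in Part III (the combination of Theorem~\ref{thm:suff}, Theorem~\ref{thm:gaussier}, and Proposition~\ref{prop:geodesics_well_behaved}). The point is that all of the necessary geometric inputs to the sufficient condition in Theorem~\ref{thm:suff} are local near $\partial\Omega$: the construction of the rescaled limit domain $\wh\Omega$, the local m-convexity estimates (Proposition~\ref{prop:finite_type_2}, via the parameters $\kappa, \alpha, \epsilon, D$), and the production of a polynomial model with well behaved geodesics. Since $\Omega$ is locally convexifiable, for each $\xi\in\partial\Omega$ we may choose a local convex chart $(V_\xi,\Phi_\xi)$ with $\Omega_\xi=\Phi_\xi(V_\xi\cap\Omega)$ convex and $\partial\Omega_\xi$ of line type $\le L$ near $\Phi_\xi(\xi)$.

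First I would establish an analogue of Theorem~\ref{thm:suff} for locally convexifiable sets: if for every sequence $u_n\in\Omega$ there exist $n_k\to\infty$, \emph{holomorphic} maps $F_k$ defined on a fixed neighborhood of $u_{n_k}$ (namely the affine blow-ups $A_k\circ\Phi_{\xi_k}$ composed with the convexifying charts), and a $\Cb$-proper convex open set $\wh\Omega$ such that $F_k(\Omega\cap\Oc_k)\to\wh\Omega$ locally Hausdorff, $F_k(u_{n_k})\to u_\infty\in\wh\Omega$, the relevant sequence is locally $m$-convex, and geodesics in $\wh\Omega$ are well behaved, then $(\Omega,d_\Omega)$ is Gromov hyperbolic. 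The proof mirrors that of Theorem~\ref{thm:suff}: assume non-hyperbolicity, extract fat geodesic triangles $\Tc_n$ with a point $u_n$ far from the opposite sides, push them forward by $F_k$, and use that $F_k$ is a local isometry of the Kobayashi metric (Proposition~\ref{prop:basic_kob}(2) applied to $F_k$ and $F_k^{-1}$ on the relevant compact regions) together with the continuity of the Kobayashi metric in the local Hausdorff topology (Theorem~\ref{thm:dist_conv}) to obtain a limit triangle in $\wh\Omega$ violating the fat-triangle condition. The only subtlety here is that $F_k$ is only defined near $\xi_k$, so one must check that the relevant portions of the geodesic triangle stay inside the chart domains; this is exactly where one invokes that $u_n\to\partial\Omega$ (otherwise $u_n$ stays in a compact set and $d_\Omega(u_n,\,\cdot\,)$ controls things directly), after which $x_n,y_n$ on the geodesic through $u_n$ also approach the boundary, and the blow-up localizes the triangle.

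Next I would verify the hypotheses of this localized sufficient condition. Given $u_n\in\Omega$: if $u_n$ has a subsequence in a compact subset of $\Omega$ there is nothing to prove for that case (the triangle argument trivializes); otherwise pass to a subsequence with $u_n\to\xi\in\partial\Omega$. Choose the convex chart $(V_\xi,\Phi_\xi)$; then $\Phi_\xi(u_n)$ converges to $\Phi_\xi(\xi)\in\partial\Omega_\xi$, which is a convex domain of finite line type $L$ near that point. Apply Theorem~\ref{thm:gaussier} to $\Omega_\xi$ and the sequence $\Phi_\xi(u_n)$: this yields affine maps $A_k$ so that $A_k\Omega_\xi$ converges locally Hausdorff to a polynomial domain $\wh\Omega=\{\Real(z_0)>P(z_1,\dots,z_d)\}$ with $P$ non-negative non-degenerate convex, $A_k\Phi_\xi(u_{n_k})\to u_\infty\in\wh\Omega$, and $(A_k\Omega_\xi)_k$ a locally $L$-convex sequence. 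Set $F_k=A_k\circ\Phi_\xi$. By Proposition~\ref{prop:geodesics_well_behaved}, geodesics in $\wh\Omega$ are well behaved. So all hypotheses hold and $(\Omega,d_\Omega)$ is Gromov hyperbolic.

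Finally, for the boundary homeomorphism, I would follow the proof of Proposition~\ref{prop:main} essentially verbatim: given a geodesic ray $\sigma$ in $\Omega$, Corollary~\ref{cor:geodesic_limits} gives existence of $\lim_{t\to\infty}\sigma(t)$ (the domain need not be locally $m$-convex globally, but one only needs the limit to exist, and here one argues locally: if $\sigma(t_n)\to\xi\in\partial\Omega$ along a subsequence, push forward near $\xi$ and apply the rescaling and Proposition~\ref{prop:limits_exist} / Proposition~\ref{prop:m_convex} in the chart to rule out a second distinct limit). Define $\Phi:\Omega\cup\Omega(\infty)\to\Omega\cup\partial\Omega$ by sending $[\sigma]$ to this limit. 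Well-definedness on asymptoty classes uses Proposition~\ref{prop:gromov_prod_infinite}; surjectivity uses Proposition~\ref{prop:limits_exist} exactly as before; continuity is the same compactness argument; injectivity uses Proposition~\ref{prop:gromov_prod_finite} (applicable since $\partial\Omega$ is $C^L\subset C^2$) together with \cite[Chapter III.H Lemma 3.13]{BH1999}. I expect the main obstacle to be the bookkeeping in the localized sufficient-condition argument: ensuring that the relevant arcs of the geodesic triangles lie in the chart neighborhood $V_\xi$ so that the non-affine biholomorphism $\Phi_\xi$ can legitimately be applied, and tracking that $\Phi_\xi$ (and its inverse on the image) is bi-Lipschitz for $d_\Omega$ versus $d_{\Omega_\xi}$ only on compact subsets — this is handled by the taut/normal-family machinery (Proposition~\ref{prop:taut}, Proposition~\ref{prop:normal_family}) but requires care, since $\Phi_\xi$ is not a global isometry and the blow-up must be arranged to see only the convexifiable piece.
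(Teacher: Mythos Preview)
Your overall strategy is correct in spirit, but there is a genuine gap at the heart of the reduction. The map $\Phi_\xi : V_\xi\cap\Omega \to \Omega_\xi$ is an isometry for $d_{V_\xi\cap\Omega}$ and $d_{\Omega_\xi}$, not for $d_\Omega$ and $d_{\Omega_\xi}$. A geodesic of $(\Omega,d_\Omega)$ that happens to lie in $V_\xi$ is \emph{not} a geodesic of $(V_\xi\cap\Omega,d_{V_\xi\cap\Omega})$, so its image under $\Phi_\xi$ is not a geodesic in $\Omega_\xi$, and Theorem~\ref{thm:suff} cannot be applied as written. You flag this at the end, but your proposed fix (``bi-Lipschitz on compact subsets'' via normal families) points in the wrong direction: the comparison is needed precisely near $\partial\Omega$, where both metrics blow up. The paper handles this with two ingredients you are missing. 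First, a localization theorem in the style of Forstneri\v{c}--Rosay (Theorem~\ref{thm:inf_loc} infinitesimally, then Theorem~\ref{thm:global_loc} globally): for $p,q$ close to $\xi$, one has $d_\Omega(p,q)\le d_{\Omega\cap V_\xi}(p,q)\le d_\Omega(p,q)+K$ for a uniform $K$. This shows that a $d_\Omega$-geodesic staying near $\xi$ is a $K$-\emph{almost}-geodesic for $d_{\Omega\cap V_\xi}$, hence its image under $\Phi_\xi$ is a $K$-almost-geodesic in $\Omega_\xi$. Second, the paper redevelops the m-convex limit machinery (Propositions~\ref{prop:m_convex}, \ref{prop:limits_exist}, \ref{prop:geodesics_well_behaved}) for $K$-almost-geodesics rather than geodesics (Propositions~\ref{prop:rough_geodesic_limits_exist}, \ref{prop:rough_geodesic_limits_exist_2}, \ref{prop:rough_geodesics_well_behaved}), so that the limit-triangle argument still goes through.

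There is also a real case analysis you are glossing over: the vertices $x_n,y_n,z_n$ need not converge to $\xi$, so the triangle does not sit inside $V_\xi$. The paper truncates each side where it exits $B_\epsilon(\xi)$, proves (Lemma~\ref{lem:global_loc_2}) that a $d_\Omega$-geodesic with both endpoints near $\xi$ stays near $\xi$, and observes that truncation points at Euclidean distance bounded below from $\xi$ are sent to $\infty$ by the blow-up (Observation~\ref{obs:stupid}); a careful trichotomy then closes the argument. For the boundary homeomorphism the same issue recurs: Propositions~\ref{prop:gromov_prod_finite} and~\ref{prop:gromov_prod_infinite} are stated for convex $\Omega$ and do not apply directly; the paper reproves the needed statements (Proposition~\ref{prop:gromov_prod_infinite_loc}, Corollaries~\ref{cor:geod_loc_1}--\ref{cor:geod_loc_2}) using the localization theorem again.
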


By the remarks above we have the following corollaries:

\begin{corollary}
Suppose $\Omega$ is locally convexifiable and has finite type in the sense of D'Angelo. Then $(\Omega, d_{\Omega})$ is Gromov hyperbolic. Moreover the identity map $\Omega \rightarrow \Omega$ extends to a homeomorphism $\Omega \cup \partial \Omega \rightarrow \Omega \cup \Omega(\infty)$.
\end{corollary}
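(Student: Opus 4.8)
The plan is to deduce Theorem~\ref{thm:loc_main} from the convex case already established (Theorem~\ref{thm:suff} together with Theorem~\ref{thm:gaussier} and Proposition~\ref{prop:geodesics_well_behaved}) and from the proof of Proposition~\ref{prop:main}, the bridge being a localization principle for the Kobayashi metric. Fix a locally convex atlas $\{(V_\xi,\Phi_\xi)\}$, with $V_\xi$ bounded and $\Omega_\xi:=\Phi_\xi(V_\xi\cap\Omega)$ a bounded convex set whose boundary is $C^L$ and of line type at most $L$ near $\Phi_\xi(\xi)$. The first step is a localization lemma: for each $\xi\in\partial\Omega$ there is a neighbourhood $V_\xi'\subset\subset V_\xi$ and a constant $C_\xi>0$ with
\begin{align*}
d_\Omega(p,q)\le d_{V_\xi\cap\Omega}(p,q)\le d_\Omega(p,q)+C_\xi \qquad (p,q\in V_\xi'\cap\Omega).
\end{align*}
The left inequality is monotonicity; the right one follows from the existence of a local holomorphic peak (or support) function at $\xi$, which is available because $\partial\Omega$ is locally convex of finite type. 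Since $\Phi_\xi$ is a biholomorphism onto $\Omega_\xi$, this says that, up to the additive error $C_\xi$, $d_\Omega$ on $V_\xi'\cap\Omega$ agrees with $d_{\Omega_\xi}$ transported by $\Phi_\xi$; in particular $\Phi_\xi$ sends a $d_\Omega$-geodesic contained in $V_\xi'$ to a $(1,C_\xi)$-quasi-geodesic of $(\Omega_\xi,d_{\Omega_\xi})$.

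For the Gromov hyperbolicity assertion I would follow the proof of Theorem~\ref{thm:suff}. Assume $(\Omega,d_\Omega)$ is not Gromov hyperbolic, so there are geodesic triangles with vertices $x_n,y_n,z_n$ and sides $\sigma_{x_ny_n},\sigma_{y_nz_n},\sigma_{z_nx_n}$, together with a point $u_n$ on $\sigma_{x_ny_n}$ satisfying $d_\Omega(u_n,\sigma_{y_nz_n}\cup\sigma_{z_nx_n})>n$; passing to a subsequence, $u_n\to\xi\in\overline{\Omega}$. A preliminary normal-families argument, using local $L$-convexity near $\partial\Omega$ and the analogue of Corollary~\ref{cor:geod_limits_distinct}, reduces matters to the case $\xi\in\partial\Omega$ (when $\xi\in\Omega$, the limiting bi-infinite geodesic through $\xi$ has distinct endpoints in $\partial\Omega$ and the defect is relocated to a local convex model at one of them). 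Assume then $\xi\in\partial\Omega$. Because $\partial\Omega$ is locally $L$-convex near $\xi$, hyperbolic $R$-balls about points $p$ near $\xi$ have Euclidean diameter bounded by a quantity that tends to $0$ with $\delta_\Omega(p)$ for each fixed $R$; a diagonal argument then shows that, for any fixed distance bound, the relevant portions of the three sides eventually lie in $V_\xi'$. Applying $\Phi_\xi$ and then Gaussier's rescaling (Theorem~\ref{thm:gaussier}) to $\Omega_\xi$ along $\Phi_\xi(u_n)\to\Phi_\xi(\xi)$, we obtain affine maps $A_k$, a locally $L$-convex sequence $(A_k\Omega_\xi)_k$, and a polynomial limit domain
\begin{align*}
\wh{\Omega}=\{(z_0,\dots,z_d):\Real(z_0)>P(z_1,\dots,z_d)\},
\end{align*}
with $P$ non-negative, non-degenerate and $P(0)=0$, such that $A_k\Omega_\xi\to\wh{\Omega}$ in the local Hausdorff topology. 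Transporting the relevant subtriangle by $A_k\circ\Phi_\xi$ gives $(1,C_\xi)$-quasi-geodesic triangles in $A_k\Omega_\xi$; using a routine quasi-geodesic extension of Proposition~\ref{prop:m_convex}, Theorem~\ref{thm:dist_conv}, and the fact (Proposition~\ref{prop:geodesics_well_behaved}) that geodesics in $\wh{\Omega}$ are well behaved, one extracts a limiting quasi-geodesic triangle in $\wh{\Omega}$ exactly as in the proof of Theorem~\ref{thm:suff}. Continuity of the Kobayashi distance together with the localization lemma then yields $\liminf_n d_\Omega(u_n,\sigma_{y_nz_n}\cup\sigma_{z_nx_n})<\infty$, a contradiction.

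For the homeomorphism statement I would reproduce the proof of Proposition~\ref{prop:main} almost verbatim. Every ingredient there is local in character: local $L$-convexity near $\partial\Omega$ (from finite local line type and Proposition~\ref{prop:finite_type}) forces each geodesic ray to converge to a point of $\partial\Omega$, as in Corollary~\ref{cor:geodesic_limits}; Proposition~\ref{prop:gromov_prod_infinite} shows this limit depends only on the asymptotic class; and Proposition~\ref{prop:gromov_prod_finite}, which only uses that $\partial\Omega$ is $C^2$ near the relevant boundary point, gives injectivity of the induced map $\Phi\colon\Omega\cup\Omega(\infty)\to\Omega\cup\partial\Omega$. Surjectivity and continuity of $\Phi$ follow, as before, from Proposition~\ref{prop:limits_exist}, and since $\Omega\cup\Omega(\infty)$ is compact, $\Phi$ is a homeomorphism. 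The main obstacle is the localization lemma and the bookkeeping needed to run the rescaling argument with quasi-geodesics in place of geodesics: one must control where the $d_\Omega$-geodesics sit so that the chart $\Phi_\xi$ may legitimately be applied to them, carry the additive constant $C_\xi$ through the estimates of Sections~\ref{sec:m_convex}--\ref{sec:multi_infty}, and dispose of interior accumulation of the fat point $u_n$. Everything else is a direct adaptation of the convex arguments already in the paper.
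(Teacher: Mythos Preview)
Your outline is essentially the paper's own approach: localize the Kobayashi metric near each boundary point, transfer geodesics to the convex chart as almost-geodesics (what you call $(1,C_\xi)$-quasi-geodesics), rescale via Theorem~\ref{thm:gaussier}, and then rerun the arguments of Sections~\ref{sec:m_convex}--\ref{sec:multi_infty} and the proof of Proposition~\ref{prop:main} in that setting. Two small differences worth noting: the paper proves the localization (Theorems~\ref{thm:inf_loc} and~\ref{thm:global_loc}) directly via a Forstneri\v{c}--Rosay three-circles argument rather than invoking peak functions, and the paper is careful to state and prove locally-convexifiable analogues (Proposition~\ref{prop:gromov_prod_infinite_loc}, Corollaries~\ref{cor:geod_loc_1}--\ref{cor:geod_loc_2}) rather than citing Propositions~\ref{prop:gromov_prod_finite}, \ref{prop:gromov_prod_infinite}, \ref{prop:limits_exist} and Corollary~\ref{cor:geodesic_limits} directly---those results as stated require global convexity or $m$-convexity, so they must be applied inside the chart via the localization, not to $\Omega$ itself.
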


\begin{corollary}\cite[Theorem 1.4]{BB2000}
Suppose $\Omega$ is a bounded strongly pseudo convex domain with $C^2$ boundary. Then $(\Omega, d_{\Omega})$ is Gromov hyperbolic.  Moreover, the identity map $\Omega \rightarrow \Omega$ extends to a homeomorphism $\Omega \cup \partial \Omega \rightarrow \Omega \cup \Omega(\infty)$.
\end{corollary}

\subsection{Finding a good locally convex atlas} 

We begin by finding a locally convex atlas with nice properties.

\begin{definition}
Suppose $\Omega$ is locally convexifiable and has finite local line type $L$. Then we say a locally convex atlas $\{ (V_{\xi}, \Phi_{\xi}) : \xi \in \partial \Omega\}$ is \emph{good} if there exists numbers $C, \tau>0$ such that for every $\xi \in \partial \Omega$ we have
\begin{enumerate}
\item $B_{\tau}(\xi) \subset V_{\xi}$,
\item for every $p \in  \Omega_{\xi} := \Phi_{\xi}(\Omega \cap V_{\xi})$ and $v \in \Cb^d$ nonzero
\begin{align*}
\delta_{\Omega_{\xi}}(p; v) \leq C \delta_{\Omega_{\xi}}(p)^{1/L}
\end{align*}
\item for every $p,q \in V_{\xi}$ 
\begin{align*}
\frac{1}{C}\norm{p-q} \leq \norm{\Phi_{\xi}(p) - \Phi_{\xi}(q)} \leq C\norm{p-q}.
\end{align*}
\end{enumerate}
\end{definition}

\begin{proposition}\label{prop:good_cover}
Suppose $\Omega$ is locally convexifiable and has finite local line type $L$. Then there exist a good locally convex atlas.\end{proposition}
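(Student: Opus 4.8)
The plan is to build the good atlas by a compactness argument. Since $\Omega$ is locally convexifiable with finite local line type $L$, we start with \emph{some} locally convex atlas $\{(W_\xi, \Psi_\xi): \xi \in \partial\Omega\}$ in which each $\Psi_\xi(\partial\Omega \cap W_\xi)$ has line type at most $L$ near $\Psi_\xi(\xi)$. For each fixed $\xi$, I would first shrink $W_\xi$ to a smaller neighborhood $W_\xi'$ on which all three desired estimates hold \emph{for that particular $\xi$}: condition (3) holds on a small enough ball because $\Psi_\xi$ is a biholomorphism onto its image, hence bi-Lipschitz on any relatively compact subneighborhood, with some constant $C_\xi$; condition (2) holds on $\Psi_\xi(\Omega \cap W_\xi')$ for a small enough $W_\xi'$ and some $C_\xi$ by Proposition~\ref{prop:finite_type} applied to the convex set $\Psi_\xi(\Omega \cap W_\xi)$ at the boundary point $\Psi_\xi(\xi)$ (which has finite line type $L$), after possibly enlarging $C_\xi$; and then I pick $\tau_\xi > 0$ with $B_{\tau_\xi}(\xi) \subset W_\xi'$. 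The point is that at this stage the constants $C_\xi$ and $\tau_\xi$ depend on $\xi$.

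Next I would extract uniformity. The balls $\{B_{\tau_\xi/2}(\xi) : \xi \in \partial\Omega\}$ cover the compact set $\partial\Omega$, so finitely many of them, say centered at $\xi_1, \dots, \xi_N$ with radii $\tau_{\xi_1}, \dots, \tau_{\xi_N}$, already cover $\partial\Omega$. Set $\tau := \tfrac{1}{2}\min_i \tau_{\xi_i} > 0$. Now for an arbitrary $\xi \in \partial\Omega$, choose $i$ with $\xi \in B_{\tau_{\xi_i}/2}(\xi_i)$; then $B_\tau(\xi) \subset B_{\tau_{\xi_i}}(\xi_i) \subset W_{\xi_i}'$, so I can simply \emph{define} $V_\xi := W_{\xi_i}'$ and $\Phi_\xi := \Psi_{\xi_i}$ (using the chart of the nearby center rather than a chart centered at $\xi$ itself). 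With this choice, condition (1) holds with the uniform $\tau$, and conditions (2) and (3) hold with the uniform constant $C := \max_i C_{\xi_i}$, since $\Omega_\xi = \Psi_{\xi_i}(\Omega \cap W_{\xi_i}')$ and the estimates were already established for $W_{\xi_i}'$. The only subtlety is that condition (2) as stated is required for all $p \in \Omega_\xi$ and all nonzero $v$, including points $p$ far from $\Phi_\xi(\xi)$; but since $\Omega_\xi$ is a bounded convex set and $D(\Omega_\xi, R)$ is finite (as $\Omega_\xi$ is $\Cb$-proper, being a bounded convex set), the inequality $\delta_{\Omega_\xi}(p;v) \le C\,\delta_{\Omega_\xi}(p)^{1/L}$ automatically extends from a neighborhood of the finite-type boundary point to all of $\Omega_\xi$ at the cost of enlarging $C$, exactly as in the reduction at the end of the proof of Proposition~\ref{prop:finite_type_2}.

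I expect the main obstacle to be bookkeeping around condition (2): one must check that Proposition~\ref{prop:finite_type} (or Proposition~\ref{prop:finite_type_2}) genuinely applies to the \emph{local} convex piece $\Psi_\xi(\Omega \cap W_\xi)$ — this piece need not be $\Cb$-proper or even bounded, so one should first intersect with a small ball to get a bounded convex set whose boundary near $\Psi_\xi(\xi)$ is $C^L$ of finite line type $L$, apply the $\delta_\Omega(p;v) \le C\delta_\Omega(p)^{1/L}$ estimate on a neighborhood, and then note that shrinking $W_\xi$ does not disturb the estimate because $\delta$ of the smaller set is controlled by $\delta$ of the larger set near the common boundary. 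Everything else — the bi-Lipschitz bound (3) and the covering extraction — is routine compactness. The one genuinely conceptual move is the standard trick of using a finite subcover and assigning to each $\xi$ the chart of a nearby center, which is what converts the $\xi$-dependent constants into uniform ones.
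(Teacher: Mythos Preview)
Your approach is essentially the paper's: shrink each chart to get local constants, extract a finite subcover of $\partial\Omega$, and reassign each $\xi$ to the chart of a nearby center to make the constants uniform. That is exactly what the paper does.

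There is one step where your justification is not quite the right one. You want condition~(2) to hold for \emph{every} $p\in\Omega_\xi$, and you appeal to the ``large-$\delta_\Omega(p)$'' reduction at the end of the proof of Proposition~\ref{prop:finite_type_2}. That reduction only upgrades the estimate from points with $\delta_{\Omega_\xi}(p)\le\delta_0$ to all points; it does nothing for points $p$ that are close to the \emph{artificial} part of $\partial\Omega_\xi$ (the part coming from $\partial W_\xi'$ rather than from $\partial\Omega$), where $\delta_{\Omega_\xi}(p)$ is small but Proposition~\ref{prop:finite_type} gives you no information. The paper's fix---which you in fact gesture at in your last paragraph---is to take $W_\xi'$ so that $\Omega_\xi=\Psi_\xi(\Omega\cap W_\xi)\cap B_r(\Psi_\xi(\xi))$ for a small round ball $B_r$. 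Then the artificial boundary is a piece of a sphere, which is strongly convex, so $\delta_{B_r}(p;v)\le C_r\,\delta_{B_r}(p)^{1/2}$; since $\delta_{\Omega_\xi}(p;v)=\min\{\delta_{\mathcal C_\xi}(p;v),\delta_{B_r}(p;v)\}$ and likewise for $\delta_{\Omega_\xi}(p)$, taking the minimum of the two estimates gives condition~(2) on all of $\Omega_\xi$. Once you make this precise, your argument is the paper's argument.
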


\begin{proof}
Let $\{ (W_{\xi}, \Psi_{\xi}) : \xi \in \partial \Omega\}$ be a locally convex atlas that satisfies the definition of finite local line type. Let $\Cc_{\xi} = \Psi_{\xi}(\Omega \cap W_{\xi})$. Since the hypersurface $\Psi_{\xi}(\partial \Omega \cap W_{\xi})$ has line type at most $L$ near $\xi$ there exists $\tau(\xi) >0$ and $C(\xi)>1$ such that 
\begin{align*}
\delta_{\Cc_{\xi}}(p; v) \leq C(\xi) \delta_{\Cc_{\xi}}(p)^{1/L}
\end{align*}
for all $p \in B_{\tau(\xi)}(\Phi_{\xi}(\xi))$ and $v \in \Cb^d$ (see Proposition~\ref{prop:finite_type}).

For $r=r(\xi) < \tau(\xi)$ sufficiently small let 
\begin{align*}
W^\prime_{\xi} : = W_{\xi} \cap \Psi_{\xi}^{-1}(B_{r}(\Phi_{\xi}(\xi)))
\end{align*}
and 
\begin{align*}
\Cc_{\xi}^\prime : =  \Psi_{\xi}(\Omega \cap W_{\xi}^\prime)= \Cc_{\xi} \cap B_r(\Phi_\xi(\xi)).
\end{align*}
Now since $B_r(\xi)$ is strongly convex there exists $C_r>0$ such that 
\begin{align*}
\delta_{B_r(\xi)}(p; v) \leq C_r \delta_{B_r(\xi)}(p)^{1/2}
\end{align*}
for all $p \in B_r(\xi)$ and $v \in \Cb^d$ nonzero. By increasing $C(\xi)$ we may assume $C_r \leq C(\xi)$. Then 
\begin{align*}
\delta_{\Cc^\prime_{\xi}}(p; v) 
&= \min\{ \delta_{\Cc_{\xi}}(p; v), \delta_{B_r(\xi)}(p; v) \} \leq \min\{ C(\xi) \delta_{\Cc_{\xi}}(p)^{1/L}, C_r \delta_{B_r(\xi)}(p)^{1/2} \} \\
& \leq C(\xi) \delta_{\Cc^\prime_{\xi}}(p)^{1/L}
\end{align*}
for all $p \in \Cc_{\xi}^\prime$ and $v \in \Cb^d$ nonzero. By possibly decreasing $r$ and increasing $C(\xi)$ we can also assume that 
\begin{align*}
\frac{1}{C(\xi)}\norm{p-q} \leq \norm{\Psi_{\xi}(p) - \Psi_{\xi}(q)} \leq C(\xi)\norm{p-q}
\end{align*}
for all $p,q \in W_{\xi}^\prime$. 

Now $\cup W_{\xi}^\prime$ is an open cover of $\partial \Omega$ and $\partial \Omega$ is compact, so there exists $\xi_1, \dots, \xi_N \in \partial \Omega$ such that 
\begin{align*}
\partial \Omega \subset \cup_{i=1}^N W_{\xi_i}^\prime
\end{align*}
Let $V_i = W_{\xi_i}^\prime$ and $\Phi_i = \Psi_{\xi_i}^\prime$. Then there exists $\tau >0$ and a map $I:\partial\Omega \rightarrow \{1,\dots, N\}$ such that for all $\xi \in \partial \Omega$ 
\begin{align*}
B_{\tau}(\xi) \subset V_{I(\xi)}.
\end{align*}
So let $V_{\xi}=V_{I(\xi)}$, $\Phi_{\xi} = \Phi_{I(\xi)}$, and 
\begin{equation*}
C = \max\{ C(\xi_i) : 1 \leq i \leq N\}. \qedhere
\end{equation*}
\end{proof} 

\section{Almost geodesics}

In the proof of Theorem~\ref{thm:loc_main} we will need to understand the limits of ``almost'' geodesics $\sigma_n: \Rb \rightarrow \Omega_n$ when a sequence of $\Cb$-proper convex sets $\Omega_n$ converges in the local Hausdorff topology to a $\Cb$-proper convex set $\Omega$. 

\begin{definition}
Suppose $(X,d)$ is a metric space and $K \geq 1$. A map $\sigma: (a,b) \rightarrow \Omega$ is a called a \emph{$K$-almost-geodesic segment} if for all $s,t \in (a,b)$ 
\begin{align*}
\abs{s-t} - K \leq d(\sigma(s),\sigma(t)) \leq \abs{s-t} + K
\end{align*}
and 
\begin{align*}
\frac{1}{K}\abs{s-t}  \leq d(\sigma(s),\sigma(t)) \leq K\abs{s-t}.
\end{align*}
\end{definition}

\begin{remark}
The first condition on a $K$-almost-geodesic allows one to estimates the Gromov product: if $\sigma :\Rb \rightarrow X$ is a $K$-almost geodesic then 
\begin{align*}
\abs{ (\sigma(t)|\sigma(s))_{\sigma(0)} -\Big( \abs{t}+\abs{s}-\abs{t-s}\Big) } \leq 3K. 
\end{align*}
The second condition on a $K$-almost-geodesic allows one to take limits: if $\sigma_n : \Rb \rightarrow X$ is a sequence of $K$-almost-geodesics and $\{\sigma_n(0)\}_{n \in \Nb}$ is relatively compact then a subsequence of $\sigma_n$ converges locally uniformly to a $K$-almost-geodesic $\sigma: \Rb \rightarrow X$.
\end{remark}

The proofs of Proposition~\ref{prop:m_convex}, Proposition~\ref{prop:limits_exist}, and Corollary~\ref{cor:geodesic_limits} adapt essentially verbatim to the almost geodesic case:

\begin{proposition}\label{prop:rough_geodesic_limits_exist}
Suppose $\Omega_n$ is a locally m-convex sequence of $\Cb$-proper convex open sets converging to a $\Cb$-proper convex open set $\Omega$ in the local Hausdorff topology. Assume $\sigma_n : \Rb \rightarrow \Omega_n$ is a sequence of $K$-almost-geodesics such that there exists $a_n \leq b_n$ and $R >0$ satisfying 
\begin{enumerate}
\item $\sigma_n([a_n,b_n]) \subset B_R(0)$,
\item $\lim_{n \rightarrow \infty} \norm{\sigma_n(a_n)-\sigma_n(b_n)} > 0$, 
\end{enumerate}
then there exists $T_n \in [a_n,b_n]$ such that a subsequence of $\sigma_n(t+T_n)$ converges locally uniformly to a $K$-almost-geodesic $\sigma:\Rb \rightarrow \Omega$.  
\end{proposition}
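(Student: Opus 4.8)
The plan is to follow, almost verbatim, the proofs of Proposition~\ref{prop:m_convex}, Proposition~\ref{prop:limits_exist}, and Corollary~\ref{cor:geodesic_limits}, checking at each point that the only properties of geodesics used there are precisely the two defining inequalities of a $K$-almost-geodesic. The whole argument boils down to the following: the statement here is the almost-geodesic analogue of Proposition~\ref{prop:m_convex}, and its proof decomposed the geodesic $\sigma_n|_{[a_n,b_n]}$ at a midpoint $m_n$, replaced the two halves by \emph{complex geodesics} $\varphi_n$, $\phi_n$ agreeing with $\sigma_n$ at the endpoints, and then used Theorem~\ref{thm:m_convex} to extract limiting complex geodesics. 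The contradiction came from the additivity $d_{\Omega_n}(\sigma_n(a_n),\sigma_n(b_n)) = d_{\Omega_n}(\sigma_n(a_n),\sigma_n(m_n)) + d_{\Omega_n}(\sigma_n(m_n),\sigma_n(b_n))$, which forced the concatenation of the two complex geodesics to be a geodesic and hence $d_{\Omega}(\varphi(0),\phi(0)) = \infty$ when the midpoint escaped to $\partial\Omega$.

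First I would set up exactly as before: let $\epsilon = \lim_{n\to\infty}\norm{\sigma_n(a_n)-\sigma_n(b_n)} > 0$, pick $m_n \in [a_n,b_n]$ with both $\norm{\sigma_n(a_n)-\sigma_n(m_n)}$ and $\norm{\sigma_n(m_n)-\sigma_n(b_n)}$ bounded below by $\epsilon/2$ in the limit (possible by continuity of $t \mapsto \sigma_n(t)$, which holds since $\sigma_n$ is $K$-Lipschitz by the second almost-geodesic inequality), pass to a subsequence so $\sigma_n(m_n) \to y \in \overline{\Omega}\cap B_R(0)$, and note that if $y \in \Omega$ the Arzel\`a--Ascoli theorem (using the uniform Lipschitz bound) gives a locally uniformly convergent subsequence of $\sigma_n(t+m_n)$, whose limit is a $K$-almost-geodesic $\sigma : \Rb \to \Omega$; this is the desired conclusion with $T_n = m_n$. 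So assume for contradiction $y \in \partial\Omega$. Then, exactly as in the proof of Proposition~\ref{prop:m_convex}, I introduce complex geodesics $\varphi_n : \Delta \to \Omega_n$ with $\varphi_n(u_n) = \sigma_n(a_n)$, $\varphi_n(v_n) = \sigma_n(m_n)$ and $\phi_n : \Delta \to \Omega_n$ with $\phi_n(s_n) = \sigma_n(m_n)$, $\phi_n(t_n) = \sigma_n(b_n)$ (these exist by Proposition~\ref{prop:connecting}), truncate at the first/last time the trace leaves $B_{R+1}(0)$, reparametrize so $0$ lies in the relevant subinterval and realizes the max of $\delta_{\Omega_n}\circ(\cdot)$ there, and apply Theorem~\ref{thm:m_convex} to get limiting complex geodesics $\varphi,\phi : \Delta \to \Omega$.

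The one place where a genuine check is needed — and where I expect the main (minor) obstacle — is the additivity step. For an honest geodesic one has $d_{\Omega_n}(\sigma_n(a_n),\sigma_n(b_n)) = d_{\Omega_n}(\sigma_n(a_n),\sigma_n(m_n)) + d_{\Omega_n}(\sigma_n(m_n),\sigma_n(b_n))$ exactly, so the concatenation $\varphi_n|\phi_n$ is a genuine geodesic of $\Omega_n$ and $d_{\Omega_n}(\varphi_n(0),\phi_n(0))$ equals the sum $d_{\Omega_n}(\varphi_n(0),\sigma_n(m_n)) + d_{\Omega_n}(\sigma_n(m_n),\phi_n(0))$. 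For a $K$-almost-geodesic this additivity fails, but only up to an additive error of $K$: since $\sigma_n|_{[a_n,m_n]}$ and $\sigma_n|_{[m_n,b_n]}$ are themselves $K$-almost-geodesics, and $d_{\Omega_n}$ is a metric, one still has
\begin{align*}
d_{\Omega_n}(\varphi_n(0),\phi_n(0)) &\geq d_{\Omega_n}(\varphi_n(0),\sigma_n(m_n)) + d_{\Omega_n}(\sigma_n(m_n),\phi_n(0)) - 2d_{\Omega_n}(\sigma_n(m_n),\sigma_n(m_n)) \\
&= d_{\Omega_n}(\varphi_n(0),\sigma_n(m_n)) + d_{\Omega_n}(\sigma_n(m_n),\phi_n(0)),
\end{align*}
wait — more carefully: the correct and sufficient observation is just the triangle inequality $d_{\Omega_n}(\varphi_n(0),\phi_n(0)) \geq \bigl| d_{\Omega_n}(\varphi_n(0),\sigma_n(m_n)) - d_{\Omega_n}(\sigma_n(m_n),\phi_n(0)) \bigr|$ is \emph{not} what we want; instead I use that $\varphi_n(0)$ and $\phi_n(0)$ lie on almost-geodesics through $\sigma_n(m_n)$ on ``opposite sides'' in the sense that the parameter of $\sigma_n(m_n)$ separates the parameters of $\sigma_n(a_n)$ and $\sigma_n(b_n)$. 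Concretely, $d_{\Omega_n}(\varphi_n(0),\phi_n(0)) \geq d_{\Omega_n}(\sigma_n(a_n),\sigma_n(b_n)) - d_{\Omega_n}(\sigma_n(a_n),\varphi_n(0)) - d_{\Omega_n}(\phi_n(0),\sigma_n(b_n))$, and then $d_{\Omega_n}(\sigma_n(a_n),\sigma_n(b_n)) \geq d_{\Omega_n}(\sigma_n(a_n),\sigma_n(m_n)) + d_{\Omega_n}(\sigma_n(m_n),\sigma_n(b_n)) - 2K$ by the first almost-geodesic inequality applied to the two halves, while $d_{\Omega_n}(\sigma_n(a_n),\varphi_n(0)) = d_{\Omega_n}(\sigma_n(a_n),\sigma_n(m_n)) - d_{\Omega_n}(\varphi_n(0),\sigma_n(m_n))$ since $\varphi_n$ is a complex geodesic and $\varphi_n(0)$ lies between $\varphi_n(u_n)$ and $\varphi_n(v_n)$ (after the truncation/reparametrization, up to the harmless truncation which only moves the endpoint outward), and similarly for the $\phi_n$ term. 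Adding these up yields
\begin{align*}
d_{\Omega_n}(\varphi_n(0),\phi_n(0)) \geq d_{\Omega_n}(\varphi_n(0),\sigma_n(m_n)) + d_{\Omega_n}(\sigma_n(m_n),\phi_n(0)) - 2K,
\end{align*}
which is enough: both $d_{\Omega_n}(\varphi_n(0),\sigma_n(m_n))$ and $d_{\Omega_n}(\sigma_n(m_n),\phi_n(0))$ tend to $\infty$ because $\sigma_n(m_n) \to y \in \partial\Omega$ while $\varphi_n(0) \to \varphi(0) \in \Omega$, $\phi_n(0)\to\phi(0)\in\Omega$ (using Theorem~\ref{thm:dist_conv}). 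Hence $d_{\Omega}(\varphi(0),\phi(0)) = \lim_n d_{\Omega_n}(\varphi_n(0),\phi_n(0)) = \infty$, contradicting $\varphi(0),\phi(0) \in \Omega$. This completes the proof; the only new ingredient beyond the proof of Proposition~\ref{prop:m_convex} is the bookkeeping of the $-2K$ error terms, which do not affect the divergence argument.
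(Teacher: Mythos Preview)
Your proposal is correct and is exactly what the paper has in mind: the paper does not write out a separate proof but simply asserts that the proof of Proposition~\ref{prop:m_convex} ``adapts essentially verbatim,'' and you have carried out that adaptation, correctly identifying the additivity step as the only place requiring care. One small arithmetic slip: the almost-additivity estimate should read $d_{\Omega_n}(\sigma_n(a_n),\sigma_n(b_n)) \geq d_{\Omega_n}(\sigma_n(a_n),\sigma_n(m_n)) + d_{\Omega_n}(\sigma_n(m_n),\sigma_n(b_n)) - 3K$ (you lose $K$ once for the lower bound on $d(\sigma_n(a_n),\sigma_n(b_n))$ and once each for the upper bounds on the two halves), but of course any finite constant suffices for the divergence argument.
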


\begin{proposition}\label{prop:rough_geodesic_limits_exist_2}
Suppose $\Omega_n$ is a locally m-convex sequence of $\Cb$-proper convex open sets converging to a $\Cb$-proper convex open set $\Omega$ in the local Hausdorff topology. Assume $\sigma_n : \Rb \rightarrow \Omega_n$ is a sequence of $K$-almost-geodesics converging locally uniformly to a $K$-almost-geodesic $\sigma:\Rb \rightarrow \Omega$. If $t_n \rightarrow \infty$ is a sequence such that $\lim_{n \rightarrow \infty} \sigma_n(t_n) = x_{\infty} \in \overline{\Cb^d}$  then
\begin{align*}
\lim_{t \rightarrow \infty} \sigma(t) = x_{\infty}.
\end{align*}
\end{proposition}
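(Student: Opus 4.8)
The plan is to follow the proof of Proposition~\ref{prop:limits_exist} essentially verbatim, replacing geodesics by $K$-almost-geodesics throughout and invoking Proposition~\ref{prop:rough_geodesic_limits_exist} in place of Proposition~\ref{prop:m_convex}. First I would argue by contradiction: suppose $\lim_{t \rightarrow \infty} \sigma(t) \neq x_{\infty}$. Then there is a sequence $s_n^\prime \rightarrow \infty$ with $\sigma(s_n^\prime) \rightarrow y_{\infty} \in \overline{\Cb^d}$ and $y_{\infty} \neq x_{\infty}$. Since $\sigma_n \rightarrow \sigma$ locally uniformly, one extracts a diagonal sequence $s_n \rightarrow \infty$ with $\sigma_n(s_n) \rightarrow y_{\infty}$ as well. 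Because $x_{\infty} \neq y_{\infty}$, at least one of the two is finite, so one can find intervals $[u_n,v_n] \subset [\min\{s_n,t_n\}, \infty)$ and constants $R,\epsilon > 0$ with $\sigma_n([u_n,v_n]) \subset B_R(0)$ and $\norm{\sigma_n(u_n)-\sigma_n(v_n)} > \epsilon$; this uses that the image of a connected almost-geodesic passing through a bounded region must genuinely travel a positive Euclidean distance before it can escape to the other limit point, exactly as in the geodesic case.

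Next I would apply Proposition~\ref{prop:rough_geodesic_limits_exist} to this truncated family: there exist $T_n \in [u_n,v_n]$ such that a subsequence of $t \mapsto \sigma_n(t+T_n)$ converges locally uniformly to a $K$-almost-geodesic $\wh{\sigma}: \Rb \rightarrow \Omega$. The contradiction is then obtained by comparing basepoints: using the local Hausdorff convergence of distances (Theorem~\ref{thm:dist_conv}) and the fact that $\sigma_n$ is a $K$-almost-geodesic, one has
\begin{align*}
d_{\Omega}(\sigma(0),\wh{\sigma}(0)) = \lim_{n \rightarrow \infty} d_{\Omega_n}(\sigma_n(0), \sigma_n(T_n)) \geq \lim_{n \rightarrow \infty} \left( \frac{1}{K}T_n - K \right) \geq \lim_{n \rightarrow \infty} \left( \frac{1}{K}\min\{s_n, t_n\} - K \right) = \infty,
\end{align*}
where the lower bound uses $T_n \geq \min\{s_n,t_n\}$ together with the almost-geodesic inequality $d_{\Omega_n}(\sigma_n(0),\sigma_n(T_n)) \geq \abs{T_n}/K - K$ — this is the one place where the constant $K$ enters and where the estimate differs cosmetically from the geodesic proof. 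Since the left side is finite (both $\sigma(0)$ and $\wh{\sigma}(0)$ lie in $\Omega$), this is absurd.

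The only mild subtlety — and the step I would treat most carefully — is verifying that Proposition~\ref{prop:rough_geodesic_limits_exist} is genuinely applicable, i.e.\ that the truncation $[u_n,v_n]$ with the two required properties really can be chosen. This is where one must use that $x_\infty \neq y_\infty$ and that at least one is finite: if, say, $x_\infty$ is finite, then for large $n$ the point $\sigma_n(t_n)$ lies near $x_\infty$ while $\sigma_n(s_n)$ lies near $y_\infty$, so the connectedness of the curve segment between parameters $\min\{s_n,t_n\}$ and $\max\{s_n,t_n\}$ forces it to cross a fixed compact annulus around $x_\infty$, yielding the interval $[u_n,v_n] \subset [\min\{s_n,t_n\},\infty)$ on which the curve stays in a fixed ball $B_R(0)$ yet has endpoints separated by a fixed $\epsilon > 0$. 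No new ideas beyond those in the proof of Proposition~\ref{prop:limits_exist} are needed; the second (bi-Lipschitz) clause in the definition of $K$-almost-geodesic is precisely what makes the Arzel\`a--Ascoli extraction in Proposition~\ref{prop:rough_geodesic_limits_exist} go through, so the whole argument transfers without obstruction.
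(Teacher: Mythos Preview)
Your proposal is correct and is exactly what the paper intends: it explicitly states that the proofs of Proposition~\ref{prop:m_convex}, Proposition~\ref{prop:limits_exist}, and Corollary~\ref{cor:geodesic_limits} ``adapt essentially verbatim to the almost geodesic case,'' and you have carried out precisely that adaptation, with the only change being the use of Proposition~\ref{prop:rough_geodesic_limits_exist} in place of Proposition~\ref{prop:m_convex} and the insertion of the factor $1/K$ in the final distance estimate. (A tiny remark: the bound you quote, $d_{\Omega_n}(\sigma_n(0),\sigma_n(T_n)) \geq T_n/K - K$, is actually weaker than what the definition gives you directly, namely $d \geq T_n/K$ or $d \geq T_n - K$; either of these alone suffices.)
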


\begin{corollary}\label{cor:rough_geodesic_limits}
Suppose $\Omega$ is a locally m-convex open set. If $\sigma: \Rb \rightarrow \Omega$ is a $K$-almost-geodesic then 
\begin{align*}
\lim_{t \rightarrow -\infty} \sigma(t) \text{ and } \lim_{t \rightarrow +\infty} \sigma(t) 
\end{align*}
both exist in $\overline{\Cb^d}$.
\end{corollary}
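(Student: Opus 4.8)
The plan is to imitate the proof of Corollary~\ref{cor:geodesic_limits} essentially verbatim, substituting the almost-geodesic analogue Proposition~\ref{prop:rough_geodesic_limits_exist_2} for Proposition~\ref{prop:limits_exist}. First I would take the constant sequence $\Omega_n := \Omega$. This converges to $\Omega$ in the local Hausdorff topology, and since $\Omega$ is locally $m$-convex, the constant sequence $(\Omega_n)_{n \in \Nb}$ is trivially a locally $m$-convex sequence of $\Cb$-proper convex open sets. Likewise I would set $\sigma_n := \sigma$, which is a sequence of $K$-almost-geodesics converging locally uniformly to the $K$-almost-geodesic $\sigma$.

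Next, since $\overline{\Cb^d}$ is compact, I would extract a sequence $t_n \to +\infty$ along which $\sigma(t_n) = \sigma_n(t_n)$ converges to some $x_\infty \in \overline{\Cb^d}$. Applying Proposition~\ref{prop:rough_geodesic_limits_exist_2} to the data $\Omega_n, \Omega, \sigma_n, \sigma, t_n$ then yields $\lim_{t \to +\infty} \sigma(t) = x_\infty$, so the forward limit exists. For the backward limit, observe that $\wh{\sigma}(t) := \sigma(-t)$ is again a $K$-almost-geodesic $\Rb \to \Omega$, and running the identical argument for $\wh{\sigma}$ shows that $\lim_{t \to +\infty} \wh{\sigma}(t) = \lim_{t \to -\infty} \sigma(t)$ exists in $\overline{\Cb^d}$.

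I do not expect any real obstacle here: the statement is a formal consequence of Proposition~\ref{prop:rough_geodesic_limits_exist_2} applied to constant sequences, exactly as Corollary~\ref{cor:geodesic_limits} was deduced from Proposition~\ref{prop:limits_exist}. The only substantive work has already been relegated to the (asserted) fact that the proofs of Proposition~\ref{prop:m_convex}, Proposition~\ref{prop:limits_exist}, and Corollary~\ref{cor:geodesic_limits} adapt verbatim to the almost-geodesic setting; granting Proposition~\ref{prop:rough_geodesic_limits_exist_2}, the corollary is immediate.
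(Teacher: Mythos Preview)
Your proposal is correct and matches the paper's approach exactly: the paper simply states that the proofs of Proposition~\ref{prop:m_convex}, Proposition~\ref{prop:limits_exist}, and Corollary~\ref{cor:geodesic_limits} adapt verbatim to the almost-geodesic case, and your argument (constant sequences $\Omega_n=\Omega$, $\sigma_n=\sigma$, extract a convergent subsequence by compactness of $\overline{\Cb^d}$, then apply Proposition~\ref{prop:rough_geodesic_limits_exist_2}) is precisely that adaptation spelled out.
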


We also need to know that almost geodesics are well behaved in polynomial domains:

\begin{proposition}\label{prop:rough_geodesics_well_behaved}
Suppose $\Omega$ is a domain of the form 
\begin{align*}
\Omega = \{ (z_0, \dots, z_d) \in \Cb^{d+1} : \Real(z_0)  > P(z_1, \dots, z_d) \}
\end{align*}
where $P$ is a non-negative non-degenerate convex polynomial with $P(0)=0$. If $\sigma: \Rb \rightarrow \Omega$ is a $K$-almost-geodesic then $\lim_{t \rightarrow -\infty} \sigma(t)$ and $\lim_{t \rightarrow \infty} \sigma(t)$ both exist in $\overline{\Cb^d}$ and are distinct.
\end{proposition}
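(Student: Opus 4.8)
The proof is obtained by transcribing the proof of Proposition~\ref{prop:geodesics_well_behaved}, with the geodesic tools replaced by their $K$-almost-geodesic analogues: Proposition~\ref{prop:rough_geodesic_limits_exist}, Proposition~\ref{prop:rough_geodesic_limits_exist_2}, and Corollary~\ref{cor:rough_geodesic_limits}.

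First I would record that $r(z)=\Real(z_0)-P(z_1,\dots,z_d)$ is a $C^\infty$ defining function for $\Omega$ with $\nabla r$ nowhere zero, so $\partial\Omega$ is $C^\infty$ (in particular $C^2$); since $P$ is non-degenerate, $\Omega$ is $\Cb$-proper, and since $P$ is a non-constant polynomial whose zero set contains no complex line, $\Omega$ has finite line type $L$ for some $L\le\degree P$, hence is locally $L$-convex by Example~\ref{ex:L_convex}. By Corollary~\ref{cor:rough_geodesic_limits} the limits $\xi^{-}:=\lim_{t\to-\infty}\sigma(t)$ and $\xi^{+}:=\lim_{t\to+\infty}\sigma(t)$ exist in $\overline{\Cb^{d+1}}$. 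Next I would prove the almost-geodesic version of Corollary~\ref{cor:geod_limits_distinct}: if at least one of $\xi^{\pm}$ lies in $\partial\Omega$, then $\xi^{+}\neq\xi^{-}$. Assume, say, $\xi^{+}\in\partial\Omega$. With $p_n=\sigma(n)$ and $q_m=\sigma(-m)$, the first defining inequality of a $K$-almost-geodesic gives $d_\Omega(\sigma(0),\sigma(n))\le n+K$, $d_\Omega(\sigma(0),\sigma(-m))\le m+K$, and $d_\Omega(\sigma(n),\sigma(-m))\ge n+m-K$, hence $(p_n\mid q_m)_{\sigma(0)}\le\tfrac32K$ for all $n,m$. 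Thus $\liminf_{n,m\to\infty}(p_n\mid q_m)_{\sigma(0)}<\infty$, and since $\partial\Omega$ is $C^2$ near $\xi^{+}$, Proposition~\ref{prop:gromov_prod_finite} forces $\xi^{+}\neq\xi^{-}$; reversing the parametrization of $\sigma$ handles the case $\xi^{-}\in\partial\Omega$.

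It remains to exclude $\xi^{+}=\xi^{-}=\infty$, and here I would argue by contradiction exactly as in Proposition~\ref{prop:geodesics_well_behaved}. By Proposition~\ref{prop:multi_type}, after a linear change of coordinates $tP(t^{-1/m_1}z_1,\dots,t^{-1/m_d}z_d)$ converges in $C^\infty$ as $t\to0$ to a non-negative non-degenerate convex polynomial $P_1$ with $P_1(0)=0$. Setting $\Lambda_n=\operatorname{diag}(n^{-1},n^{-1/m_1},\dots,n^{-1/m_d})$, the sets $\Lambda_n\Omega$ converge in the local Hausdorff topology to $\wh\Omega=\{\Real(z_0)>P_1(z_1,\dots,z_d)\}$, and since $n^{-1}P(n^{1/m_1}z_1,\dots,n^{1/m_d}z_d)\to P_1$ in $C^\infty$, the sequence $(\Lambda_n\Omega)_{n\in\Nb}$ is locally $L$-convex by Example~\ref{ex:L_convex_seq}. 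Each $\sigma_n:=\Lambda_n\circ\sigma$ is a $K$-almost-geodesic in $\Lambda_n\Omega$, because $\Lambda_n$ is a biholomorphism and hence a Kobayashi isometry. Since both ends of every $\sigma_n$ tend to $\infty$ while $\norm{\sigma_n(0)}\le\norm{\sigma(0)}$, I can choose $\alpha_n\le0\le\beta_n$ and bounded subintervals on which $t\mapsto\sigma_n(t+\alpha_n)$ and $t\mapsto\sigma_n(t+\beta_n)$ meet the hypotheses of Proposition~\ref{prop:rough_geodesic_limits_exist} (take $R$ small, let $\alpha_n$ be the first time $\le0$ at which $\norm{\sigma_n}$ reaches $R$, and dually for $\beta_n$; the endpoints of the resulting pieces are then a definite distance apart). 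Passing to a subsequence $n_k$, these translates converge locally uniformly to $K$-almost-geodesics $\wh\sigma_1,\wh\sigma_2:\Rb\to\wh\Omega$ with $\wh\sigma_1(0)=\lim_k\sigma_{n_k}(\alpha_{n_k})$ and $\wh\sigma_2(0)=\lim_k\sigma_{n_k}(\beta_{n_k})$. Because $\sigma_n(0)\to0\in\partial\wh\Omega$ whereas $\wh\sigma_i(0)\in\wh\Omega$, the standard estimate (if $\alpha_{n_k}$ were bounded, $d_{\Lambda_{n_k}\Omega}(\sigma_{n_k}(0),\sigma_{n_k}(\alpha_{n_k}))\le\abs{\alpha_{n_k}}+K$ would be bounded, contradicting Lemma~\ref{lem:convex_lower_bd_2} combined with $\Lambda_{n_k}\Omega\to\wh\Omega$ and $\sigma_{n_k}(0)\to\partial\wh\Omega$) yields $\alpha_{n_k}\to-\infty$ and $\beta_{n_k}\to\infty$. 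Finally, by Theorem~\ref{thm:dist_conv} and the $\Lambda_{n_k}$-invariance of $d$,
\[
d_{\wh\Omega}(\wh\sigma_1(0),\wh\sigma_2(0))=\lim_{k\to\infty}d_{\Lambda_{n_k}\Omega}(\sigma_{n_k}(\alpha_{n_k}),\sigma_{n_k}(\beta_{n_k}))=\lim_{k\to\infty}d_\Omega(\sigma(\alpha_{n_k}),\sigma(\beta_{n_k}))\ge\lim_{k\to\infty}\bigl(\abs{\alpha_{n_k}-\beta_{n_k}}-K\bigr)=\infty,
\]
which is absurd because $\wh\sigma_1(0),\wh\sigma_2(0)\in\wh\Omega$. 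This contradiction completes the proof.

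The step I expect to require the most care is the extraction of $\alpha_n,\beta_n$ together with the subintervals on which Proposition~\ref{prop:rough_geodesic_limits_exist} applies: one must verify that the rescaled almost-geodesic stays in a fixed ball on the chosen parameter range while its endpoints remain a definite Euclidean distance apart, and that the additive constant $K$ is not compounded at any stage. Every other step is a verbatim copy of the corresponding step in the proof of Proposition~\ref{prop:geodesics_well_behaved}.
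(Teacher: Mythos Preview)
Your proof is correct and follows essentially the same approach as the paper: the paper establishes local $L$-convexity via Proposition~\ref{prop:finite_type}, bounds the Gromov product $(\sigma(-s)\mid\sigma(t))_{\sigma(0)}\le 3K$ to invoke Proposition~\ref{prop:gromov_prod_finite} when at least one limit is finite, and then declares that the case $\xi^+=\xi^-=\infty$ is handled ``verbatim'' by the proof of Proposition~\ref{prop:geodesics_well_behaved}. You have simply written out that verbatim transcription with the almost-geodesic substitutes (Proposition~\ref{prop:rough_geodesic_limits_exist} and Corollary~\ref{cor:rough_geodesic_limits}) and supplied a few extra details, such as the explicit justification that $\alpha_{n_k}\to-\infty$ and $\beta_{n_k}\to\infty$.
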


\begin{proof}
By Proposition~\ref{prop:finite_type} $\Omega$ is a $L$-locally convex set so by Corollary~\ref{cor:rough_geodesic_limits} both limits exist. Since 
\begin{align*}
(\sigma(-s)|\sigma(t))_{\sigma(0)} \leq 3K
\end{align*}
when $s$ and $t$ are positive, Proposition~\ref{prop:gromov_prod_finite} implies that the limits are distinct if at least one is finite. To show that it is impossible for both limits to equal $\infty$ one can use the proof of Proposition~\ref{prop:geodesics_well_behaved} verbatim. 
\end{proof}

\section{Localization of the Kobayashi metric}

\subsection{Infinitesimal localization} 

The proof of the next theorem is based on an argument of Forstneri{\v{c}} and Rosay~\cite[Section 2]{FR1987} for strongly pseudo-convex domains. 

\begin{theorem}\label{thm:inf_loc}
Suppose $\Omega$ is locally convexifiable, has finite local line type $L$, and $\{ (V_{\xi}, \Phi_{\xi}) : \xi \in \partial \Omega\}$ is a good convex atlas. Then there numbers $c, \epsilon >0$ so that 
\begin{align*}
K_{\Omega}(p; v) \leq K_{\Omega \cap V_{\xi}}(p;v) \leq e^{c\delta_{\Omega}(p)^{1/L}}K_{\Omega}(p;v)
\end{align*}
for every $\xi \in \partial \Omega$, every $p \in B_{\epsilon}(\xi) \cap \Omega$, and every $v \in \Cb^d$.
\end{theorem}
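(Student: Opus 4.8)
The lower bound $K_{\Omega}(p;v) \le K_{\Omega\cap V_{\xi}}(p;v)$ is immediate from the inclusion $\Omega\cap V_{\xi}\subset\Omega$ and the monotonicity of the Kobayashi metric (Proposition~\ref{prop:basic_kob}(2) applied to the inclusion map). So the entire content is the upper bound: given an analytic disk $f\in\Hol(\Delta,\Omega)$ with $f(0)=p$ and $df_0(\xi)=v$, one must manufacture from it an analytic disk into the \emph{small} domain $\Omega\cap V_{\xi}$ with the same $0$-jet up to a factor $e^{c\delta_{\Omega}(p)^{1/L}}$. The Forstneri\v{c}--Rosay idea is to shrink the disk: replace $f$ by $g(z)=f(rz)$ for a suitable radius $r=r(p)<1$ close to $1$. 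Rescaling by $r$ costs a factor $1/r$ in the derivative, hence $K_{\Omega\cap V_{\xi}}(p;v)\le \frac{1}{r}K_{\Omega}(p;v)$, and one needs $\frac1r \le e^{c\delta_{\Omega}(p)^{1/L}}$, i.e. $1-r \lesssim \delta_{\Omega}(p)^{1/L}$. The crux is to show that for $p$ close enough to $\xi$ and $r$ chosen with $1-r$ comparable to $\delta_{\Omega}(p)^{1/L}$, the shrunken disk $g(\Delta)=f(r\Delta)$ actually lands inside $V_{\xi}$ (it automatically lands in $\Omega$).

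\textbf{Key steps.} First, fix the uniform constants $C,\tau>0$ coming from the good atlas (Proposition~\ref{prop:good_cover}), and choose $\epsilon$ small relative to $\tau$. Second, a Schwarz-lemma / Hardy--Littlewood type estimate on the size of $f(r\Delta)$: I would show that there is a constant $M$, depending only on the diameter of $\Omega$ and on $C$, such that for any $f\in\Hol(\Delta,\Omega)$ with $f(0)=p$ and $\abs z\le r$ one has $\norm{f(z)-p}\le M\,d_{\Omega}(p,f(z))^{1/2}$ or, more usefully, a bound in terms of $1-\abs z$. Concretely: since $d_{\Omega}(f(0),f(z))\le d_{\Delta}(0,z)=\frac12\log\frac{1+\abs z}{1-\abs z}$, it suffices to bound $\norm{f(z)-p}$ by a power of $d_{\Omega}(p,f(z))$, which is where local $m$-convexity (via $\delta_{\Omega}(p;v)\le C\delta_{\Omega}(p)^{1/L}$ near $\xi$) enters: near the boundary the domain is ``pinched,'' so small Kobayashi distance from $p$ forces small Euclidean distance, with an exponent involving $L$ — this is the same mechanism as in Lemma~\ref{lem:lipschitz}. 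Running this uniformly in a Euclidean ball around $\xi$ (using that all points of $\partial\Omega$ near $\xi$ have line type $\le L$) gives: $\norm{f(z)-p}\le M'(1-\abs z)^{1/(2\alpha L)}$ for $\abs z<1$, hence $f(r\Delta)\subset B_{M'(1-r)^{1/(2\alpha L)}}(p)$. Third, choose $r$ so that $1-r = \bigl(\tfrac{\tau}{4M'}\bigr)^{2\alpha L}$ when $\delta_\Omega(p)$ is bounded below, and otherwise — for $p$ very close to $\xi$ — we can afford $1-r$ of order $\delta_{\Omega}(p)^{1/L}$ because then $f(r\Delta)\subset B_{\tau/2}(p)\subset B_{\tau}(\xi)\subset V_{\xi}$ once $\norm{p-\xi}<\tau/2$. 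Wait — this requires being slightly careful: I actually want $1-r$ only as large as $\delta_\Omega(p)^{1/L}$ to get the exponential factor small, so the right statement is: $\norm{f(z)-p}\le M'(1-\abs z)^{1/(2\alpha L)}$ combined with the stronger local estimate $\norm{f(z)-p}\lesssim \delta_\Omega(p)^{?}$ near $p$; the cleanest route is to first prove $d_\Omega(p, f(z))$ small $\Rightarrow$ $\norm{f(z)-p}\lesssim \delta_\Omega(p)^{1/L}$ is false in general, so instead one balances: take $1-r=\delta_\Omega(p)^{1/L}$, get $\norm{f(z)-p}\le M'\delta_\Omega(p)^{1/(2\alpha L^2)}\to 0$, so eventually $f(r\Delta)\subset B_\tau(\xi)$. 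Finally, assemble: $K_{\Omega\cap V_\xi}(p;v)\le \frac1r K_\Omega(p;v)=\frac1{1-(1-r)}K_\Omega(p;v)\le e^{2(1-r)}K_\Omega(p;v)=e^{2\delta_\Omega(p)^{1/L}}K_\Omega(p;v)$ for $1-r\le 1/2$, giving the claim with $c=2$ (after absorbing the finitely many $p$ with $\delta_\Omega(p)$ bounded below into the constant $c$).

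\textbf{Main obstacle.} The delicate point is making the size estimate on $f(r\Delta)$ \emph{uniform over all $\xi\in\partial\Omega$} and correctly tracking the dependence of the containment radius on $\delta_{\Omega}(p)$, so that the loss is genuinely $O(\delta_\Omega(p)^{1/L})$ and not merely $o(1)$. This is where the good-atlas constants $C,\tau$ and Proposition~\ref{prop:finite_type_2} (uniform $m$-convexity) must be invoked carefully: one needs a single $M'$ and a single threshold $\epsilon$ working simultaneously for every boundary chart. I expect the Hardy--Littlewood-style argument (bounding $\norm{f'(z)}$ via $K_\Omega(f(z);f'(z))\ge \norm{f'(z)}/(2\delta_\Omega(f(z);f'(z)))$ and $K_\Omega(f(z);f'(z))=K_\Delta(z;1)\le 1/(1-\abs z)$, then $\delta_\Omega(f(z);f'(z))\le C\delta_\Omega(f(z))^{1/L}$, then integrating) to be the technical heart; this is essentially a repeat of Lemma~\ref{lem:lipschitz} but for a single domain and a single disk, so it should go through without new ideas. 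A secondary bookkeeping point: when $p$ is not near $\partial\Omega$ (i.e.\ $\delta_\Omega(p)\ge\epsilon^2$, say, yet still $p\in B_\epsilon(\xi)$), the inequality is trivial with a large $c$, and one must simply choose $c$ large enough to cover that bounded range, which is harmless since everything is uniform and $\partial\Omega$ is compact.
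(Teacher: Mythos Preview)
Your overall strategy---shrink the disk by a factor $r$ with $1-r\asymp\delta_\Omega(p)^{1/L}$ and show the shrunken disk lands in $V_\xi$---is exactly right, but the Hardy--Littlewood step as you have written it does not go through. Both ingredients you invoke,
\[
K_\Omega(f(z);f'(z))\ \ge\ \frac{\norm{f'(z)}}{2\,\delta_\Omega(f(z);f'(z))}
\qquad\text{and}\qquad
\delta_\Omega(f(z);f'(z))\ \le\ C\,\delta_\Omega(f(z))^{1/L},
\]
are valid only for \emph{convex} domains (they are Lemma~\ref{lem:convex_inf} and the $m$-convexity of a good chart, respectively). In this theorem $\Omega$ is merely locally convexifiable; the good atlas supplies these estimates for the chart image $\Omega_\xi=\Phi_\xi(V_\xi\cap\Omega)$, hence only for disks that already land in $V_\xi$. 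So you face a bootstrap problem: to bound $\norm{f(z)-p}$ you want the convex-domain estimates, but those are only available once you know $f(z)\in V_\xi$, which is precisely what you are trying to prove.

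The paper resolves this circularity by a two-stage argument. First, the ordinary Schwarz lemma (using only that $\Omega$ is bounded) gives a crude initial radius $\rho\ge\eta$ for which $\varphi(\rho\Delta)\subset B_\eta(p)\subset V_\xi$. Second, Lemma~\ref{lem:inf_loc}---which is essentially your Hardy--Littlewood estimate, but stated and proved only for disks $\varphi:\Delta\to B_\tau(\xi)\cap\Omega$, where one may pass to the convex chart $\Omega_\xi$---is applied to the rescaled disk $\zeta\mapsto\varphi(\rho\zeta)$. Finally Hadamard's three-circles theorem, applied to $\log\sup_{\abs\zeta=r}\norm{\varphi(\zeta)-\varphi(0)}$, leverages the improvement from Lemma~\ref{lem:inf_loc} to push $\rho$ up to $e^{-c\epsilon^{1/L}}$. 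Your plan is salvageable, but you need this indirection: run the Hardy--Littlewood argument only in the chart (for disks already known to land there), and supply a separate mechanism---Schwarz plus three-circles---to feed arbitrary disks $f:\Delta\to\Omega$ into that estimate.
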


The next lemma shows that locally convexifiable sets with finite type satisfy an analogue of Property ($*$) in~\cite[pp.  244]{FR1987}. 

\begin{lemma}\label{lem:inf_loc}  Suppose $\Omega$ is locally convexifiable, has finite local line type $L$, and $\{ (V_{\xi}, \Phi_{\xi}) : \xi \in \partial \Omega\}$ is a good convex atlas with parameters $C,\tau$.  For any $\eta >0$ there exists $\alpha>0$ such that if $\xi \in \partial \Omega$, $\varphi: \Delta \rightarrow B_{\tau}(\xi) \cap \Omega$, and 
\begin{align*}
\abs{\zeta} \leq 1 -\alpha\delta_{\Omega}(\varphi(0))^{1/L}
\end{align*}
then 
\begin{align*}
\norm{\varphi(\zeta)-\varphi(0)} \leq \eta.
\end{align*}
\end{lemma}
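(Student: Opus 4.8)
The plan is to transfer the problem from $\Omega$ to the local convex model $\Omega_\xi := \Phi_\xi(\Omega \cap V_\xi)$ via $\psi := \Phi_\xi\circ\varphi : \Delta \to \Omega_\xi$, and then run a Hardy--Littlewood type argument driven by three inputs: the Schwarz lemma applied to the coordinate transverse to a supporting hyperplane, the m-convexity estimate built into the good atlas, and the distance-decreasing property of the Kobayashi metric.

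First I would set up the reduction. By property (3) of a good atlas, $\norm{\varphi(\zeta)-\varphi(0)} \le C\norm{\psi(\zeta)-\psi(0)}$, so it suffices to bound $\norm{\psi(\zeta)-\psi(0)}$. Let $q^\ast \in \partial\Omega$ be a nearest point to $\varphi(0)$; this point lies in $V_\xi$ once the good atlas is arranged so that $B_{2\tau}(\xi)\subset V_\xi$ for all $\xi$ (a harmless strengthening of Proposition~\ref{prop:good_cover}, since only finitely many charts occur) and $\alpha$ is large enough that $\alpha^{-L}<\tau$ (for smaller $\delta_\Omega(\varphi(0))$ the hypothesis is vacuous). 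Then $\Phi_\xi(q^\ast)$ lies in $\partial\Omega_\xi$, so property (3) gives
\begin{align*}
d_0 := \delta_{\Omega_\xi}(\psi(0)) \le \norm{\psi(0)-\Phi_\xi(q^\ast)} \le C\,\delta_\Omega(\varphi(0)).
\end{align*}
Consequently it is enough to prove the estimate with $\delta_\Omega(\varphi(0))$ replaced by $d_0$ and $\alpha$ replaced by some $\alpha_1$; the original statement then follows with $\alpha = C^{1/L}\alpha_1$. So from now I work inside $\Omega_\xi$ assuming $\abs{\zeta} \le 1 - \alpha_1 d_0^{1/L}$.

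After a translation and a unitary change of coordinates I may assume the nearest point of $\partial\Omega_\xi$ to $\psi(0)$ is the origin, that $H := \{\Imaginary(z_1)=0\}$ is a supporting hyperplane of $\Omega_\xi$ with $\Omega_\xi \subset \{\Imaginary(z_1)>0\}$, and that $\psi(0) = (id_0,0,\dots,0)$. Writing $\psi_1$ for the first component of $\psi$, we have $\psi_1 : \Delta \to \Hc$ with $\psi_1(0)=id_0$; applying the Schwarz lemma to $w \mapsto (\psi_1(w)-id_0)/(\psi_1(w)+id_0)$ gives $\Imaginary(\psi_1(w)) \le 3d_0/(1-\abs{w})$. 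Since $H\cap\Omega_\xi=\emptyset$ we get $\delta_{\Omega_\xi}(\psi(w)) \le \Imaginary(\psi_1(w))$, so property (2) of the good atlas yields, for every $w\in\Delta$,
\begin{align*}
\delta_{\Omega_\xi}(\psi(w);\psi'(w)) \le C\,\delta_{\Omega_\xi}(\psi(w))^{1/L} \le C\Big(\frac{3d_0}{1-\abs{w}}\Big)^{1/L}.
\end{align*}
Combining Lemma~\ref{lem:convex_inf} with the distance-decreasing inequality $K_{\Omega_\xi}(\psi(w);\psi'(w)) \le K_\Delta(w;1) = (1-\abs{w}^2)^{-1}$ (Proposition~\ref{prop:basic_kob}) then gives the pointwise bound
\begin{align*}
\norm{\psi'(w)} \le 2\,\delta_{\Omega_\xi}(\psi(w);\psi'(w))\,K_{\Omega_\xi}(\psi(w);\psi'(w)) \le \frac{2C(3d_0)^{1/L}}{(1-\abs{w})^{1+1/L}}.
\end{align*}

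Finally I would integrate this along the radius from $0$ to $\zeta$: using $\abs{\zeta} \le 1 - \alpha_1 d_0^{1/L}$,
\begin{align*}
\norm{\psi(\zeta)-\psi(0)} \le 2C(3d_0)^{1/L}\int_0^{1-\alpha_1 d_0^{1/L}}\frac{ds}{(1-s)^{1+1/L}} \le 2CL\,3^{1/L}\,\alpha_1^{-1/L}\,d_0^{(L-1)/L^2}.
\end{align*}
Because only finitely many charts occur, $d_0 \le \diam(\Omega_\xi)$ is bounded by a constant independent of $\xi$, and $(L-1)/L^2 \ge 0$, so the right-hand side is at most $(\text{const})\,\alpha_1^{-1/L}$; choosing $\alpha_1$ large makes this $\le \eta/C$, hence $\norm{\varphi(\zeta)-\varphi(0)} \le \eta$, as claimed. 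The only genuine work beyond routine estimation is the bookkeeping in the reduction step — keeping the nearest boundary point inside the chart and checking uniformity of all constants over $\xi\in\partial\Omega$ — and this is handled purely by the finiteness of the good atlas; the analytic core, the derivative bound on $\psi$, is immediate from results already established.
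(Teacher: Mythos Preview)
Your argument is correct but follows a genuinely different route from the paper. The paper proceeds in one step: with $\wh{\varphi}=\Phi_\xi\circ\varphi$ and $v$ the direction joining $\wh{\varphi}(0)$ to $\wh{\varphi}(\zeta)$, Lemma~\ref{lem:convex_lower_bd_2} combined with $d_{\Omega_\xi}\le d_\Delta$ gives directly
\[
\norm{\wh{\varphi}(\zeta)-\wh{\varphi}(0)}\le\Big(\tfrac{2}{1-\abs{\zeta}}+1\Big)\,\delta_{\Omega_\xi}(\wh{\varphi}(0);v),
\]
and then a single application of m-convexity plus bi-Lipschitz yields $\norm{\varphi(\zeta)-\varphi(0)}\le C^{2+1/L}\big(\tfrac{2}{1-\abs{\zeta}}+1\big)\delta_\Omega(\varphi(0))^{1/L}$, from which the explicit choice $\alpha=4C^{2+1/L}/\eta$ drops out. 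Your Hardy--Littlewood approach instead bounds $\norm{\psi'}$ pointwise (Schwarz in the transverse half-plane, m-convexity, Lemma~\ref{lem:convex_inf}) and integrates; this mirrors the technique of Lemma~\ref{lem:lipschitz} and is a natural alternative. The resulting bound $\sim\alpha_1^{-1/L}d_0^{(L-1)/L^2}$ is slightly less sharp in that the $d_0$-dependence does not cancel exactly, so you must invoke a uniform bound on $d_0$ coming from the finiteness of the atlas; the paper's direct estimate avoids this. Both arguments are short; the paper's is a touch slicker, while yours makes the connection to classical function-theoretic methods more explicit.
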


\begin{proof}
Let
\begin{align*} 
\alpha: = \frac{4C^{2+1/L}}{\eta}
\end{align*}
and consider some $\xi \in \partial \Omega$, $\varphi: \Delta \rightarrow B_{\tau}(\xi) \cap \Omega$, and $\zeta \in \Delta$ with 
\begin{align*}
\abs{\zeta} \leq 1 -\alpha\delta_{\Omega}(\varphi(0))^{1/L}.
\end{align*}
Notice that
\begin{align*}
\delta_{\Omega}(\varphi(0)) \leq \left(\frac{\eta}{2 C^{2+1/L}}\right)^{L}
\end{align*}
for otherwise 
\begin{align*}
1-\alpha\delta_{\Omega}(\varphi(0))^{1/L} < 0.
\end{align*}

Let $\wh{\varphi} = \Phi_{\xi} \circ \varphi:\Delta \rightarrow \Omega_{\xi}$. If we select $v \in \Cb^d$ such that $\wh{\varphi}(\zeta) \in \wh{\varphi}(0) + \Cb v$ we have 
\begin{align*}
\frac{1}{2} \log \abs{\frac{ \norm{\wh{\varphi}(\zeta)-\wh{\varphi}(0)} - \delta_{\Omega_\xi}(\wh{\varphi}(0); v)}{\delta_{\Omega_\xi}(\wh{\varphi}(0); v)}} \leq d_{\Omega_\xi}(\wh{\varphi}(0), \wh{\varphi}(\zeta))
\end{align*}
by Lemma~\ref{lem:convex_lower_bd_2}. Further, by the distance decreasing property of the Kobayashi metric, we have 
\begin{align*}
d_{\Omega_\xi}(\wh{\varphi}(0), \wh{\varphi}(\zeta)) \leq d_{\Delta}(0, \zeta) = \frac{1}{2} \log \frac{1 +\abs{\zeta}}{1-\abs{\zeta}} \leq \frac{1}{2} \log \frac{2}{1-\abs{\zeta}}.
\end{align*}
Combining the two inequalities we have 
\begin{align*}
\norm{\wh{\varphi}(\zeta)-\wh{\varphi}(0)} \leq \left(\frac{2}{1-\abs{\zeta}}+1\right) \delta_{\Omega_\xi}(\wh{\varphi}(0); v)
\end{align*}
Using  the properties of a good convex atlas we see that 
\begin{align*}
\norm{\varphi(\zeta)-\varphi(0)} 
&\leq C \norm{\wh{\varphi}(\zeta)-\wh{\varphi}(0)} \leq C\left(\frac{2}{1-\abs{\zeta}}+1\right) \delta_{\Omega_\xi}(\wh{\varphi}(0); v) \\
& \leq C^2\left(\frac{2}{1-\abs{\zeta}}+1\right) \delta_{\Omega_\xi}(\wh{\varphi}(0))^{1/L}  \\
& \leq  C^{2+1/L}\left(\frac{2}{1-\abs{\zeta}}+1\right) \delta_{\Omega}(\varphi(0))^{1/L}.
\end{align*}
Hence, by our choice of $\alpha$, we have that 
\begin{equation*}
\norm{\varphi(\zeta)-\varphi(0)} \leq \eta. \qedhere
\end{equation*}
\end{proof}

With the lemma established the rest of proof of Theorem~\ref{thm:inf_loc} follows~\cite[Theorem 2.1]{FR1987} essentially verbatim, but we will include the argument for the reader's convenience.

\begin{proof}[Proof of Theorem~\ref{thm:inf_loc}]
By shrinking $\tau$, we can assume that there exists $\eta >0$ such that whenever $p \in B_{\tau}(\xi)$ then $B_{\eta}(p) \subset V_{\xi}$. 

Now given $\epsilon>0$ sufficiently small let $\rho=\rho(\epsilon)$ be the largest number such that: if $\varphi: \Delta \rightarrow \Omega$ is holomorphic with $\varphi(0) \in B_{\epsilon}(\xi)$ and $\abs{\zeta} \leq \rho$ then $\abs{\varphi(\zeta)-\varphi(0)} \leq \eta$. To prove the theorem it is enough to show that $\rho \geq e^{-c \epsilon^{1/L}}$ for some $c>0$ (which does not depend on $\epsilon$).  

By scaling, we may assume that $\diam_{\operatorname{Euc}}(\Omega) \leq 1$. Then by the Schwarz lemma we have $\rho \geq \eta$. Now by Lemma~\ref{lem:inf_loc} there exists $\alpha>0$ such that if 
\begin{align*}
\delta_{\Omega}(\varphi(0)) \leq \epsilon \text{ and } \abs{\zeta} \leq \rho - \alpha\epsilon^{1/L}
\end{align*}
then $\abs{\varphi(\zeta)-\varphi(0)} \leq \eta/2$. 

If $\rho< 1$ then there exists a holomorphic map $\varphi: \Delta \rightarrow \Omega$ such that $\varphi(0) \in B_{\epsilon}(\xi)$ and
\begin{align*}
\eta = \sup_{\abs{\zeta}=\rho} \abs{\varphi(\zeta)-\varphi(0)}.
\end{align*}
Then by Hadamard's three circle lemma 
\begin{align*}
M(r):=\log \sup_{\abs{\zeta}=r} \abs{\varphi(\zeta)-\varphi(0)}
\end{align*}
is a convex function of $\log(r)$. Since $\rho-\alpha\epsilon^{1/L} < 1$ we have
\begin{align*}
\frac{\log(\rho-\alpha\epsilon^{1/L})}{\log(\eta/2)} \geq \frac{\log(\rho-\alpha\epsilon^{1/L})}{M(\rho-\alpha\epsilon^{1/L})} \geq \frac{ \log(\rho)}{M(\rho)} = \frac{\log(\rho)}{\log(\eta)}.
\end{align*}
Now we may assume that $\epsilon^{1/L} < \eta/(2\alpha)$ so $\rho-\alpha\epsilon^{1/L} \geq \eta/2$ and thus
\begin{align*}
\log(\rho-\alpha\epsilon^{1/L}) \geq \log(\rho) - \frac{2}{\eta}(\alpha\epsilon^{1/L})
\end{align*}
Since $\eta/2 < 1$, one can show that
\begin{align*}
\rho \geq e^{-c\epsilon^{1/L}}
\end{align*}
where 
\begin{equation*}
c = -\frac{2\alpha \log(\eta)}{\eta\log(2)}. \qedhere
\end{equation*}
\end{proof}

\subsection{Global localization}

\begin{theorem}\label{thm:global_loc}
Suppose $\Omega$ is locally convexifiable and has finite local line type $L$. Let $\{(V_{\xi}, \Phi_{\xi}) : \xi \in \partial \Omega\}$ be a good convex atlas. Then there exists $K,\delta >0$ such that for every $p,q \in B_{\delta}(\xi)$ we have 
\begin{align*}
d_{\Omega}(p,q) \leq d_{\Omega \cap V_{\xi}}(p,q) \leq d_{\Omega}(p,q)+K.
\end{align*}
\end{theorem}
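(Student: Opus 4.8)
The left inequality is immediate from the distance-decreasing property of the Kobayashi metric applied to the inclusion $\Omega \cap V_\xi \hookrightarrow \Omega$. The content is the upper bound $d_{\Omega\cap V_\xi}(p,q) \le d_\Omega(p,q) + K$, and the plan is to obtain it by integrating the infinitesimal localization of Theorem~\ref{thm:inf_loc} along a near-geodesic for $d_\Omega$ joining $p$ to $q$, but only after showing that such a path can be chosen to stay inside a smaller neighborhood $B_\delta(\xi) \subset\subset B_\epsilon(\xi) \subset\subset V_\xi$ away from a controlled ``interior'' piece.

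First I would fix the constants $c,\epsilon$ from Theorem~\ref{thm:inf_loc} and choose $\delta>0$ small (to be shrunk finitely many times) with $B_{2\delta}(\xi) \subset B_\epsilon(\xi)$ for all $\xi\in\partial\Omega$, using compactness of $\partial\Omega$ and the good atlas parameter $\tau$. The key geometric input is that for $p,q \in B_\delta(\xi)\cap\Omega$ a $d_\Omega$-geodesic $\gamma$ from $p$ to $q$ decomposes into at most two sub-arcs that start and end near $\partial\Omega$ inside $B_{2\delta}(\xi)$, together with a middle portion that leaves $B_{2\delta}(\xi)$; on the middle portion one has a uniform lower bound $\delta_\Omega(\gamma(t)) \ge c_0 > 0$ (it is a compact set bounded away from $\partial\Omega$), so that the $d_{\Omega\cap V_\xi}$-length of a short segment connecting the two endpoints of the excursion — e.g. running along a broken path through a fixed interior basepoint $o\in\Omega\cap V_\xi$ — is bounded by a constant $K_0$ independent of $p,q,\xi$. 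This is exactly the Forstneri\v c--Rosay bookkeeping; the cleanest way to phrase it is: either $\gamma$ stays inside $B_{2\delta}(\xi)$, or it contains a point $w$ with $\delta_\Omega(w)\ge c_1>0$, and in the latter case $d_{\Omega\cap V_\xi}(p,q) \le d_{\Omega\cap V_\xi}(p,w) + d_{\Omega\cap V_\xi}(w,q)$ with each term handled as in the first case after translating $w$ to a fixed interior point.

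On a sub-arc $\gamma|_{[a,b]}$ contained in $B_{2\delta}(\xi)\cap\Omega$, Theorem~\ref{thm:inf_loc} gives
\begin{align*}
d_{\Omega\cap V_\xi}(\gamma(a),\gamma(b)) \le \int_a^b K_{\Omega\cap V_\xi}(\gamma(t);\gamma'(t))\,dt \le \int_a^b e^{c\,\delta_\Omega(\gamma(t))^{1/L}} K_\Omega(\gamma(t);\gamma'(t))\,dt,
\end{align*}
so the overshoot over $d_\Omega(\gamma(a),\gamma(b)) = \int_a^b K_\Omega(\gamma(t);\gamma'(t))\,dt$ is controlled by $\int_a^b (e^{c\,\delta_\Omega(\gamma(t))^{1/L}}-1) K_\Omega(\gamma(t);\gamma'(t))\,dt$. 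To bound this I would split $[a,b]$ at the time $t^*$ where $\gamma$ is closest to $\partial\Omega$; on each half, $\delta_\Omega(\gamma(t))$ is comparable to a geometrically decaying quantity in the parameter distance to $t^*$ (this is the same estimate underlying Proposition~\ref{prop:rough_geodesics} / Lemma~\ref{lem:convex_lower_bd_2}, namely $\delta_\Omega(\gamma(t)) \lesssim e^{-2|t-t^*|}$ along a geodesic after using the lower bound on $d_\Omega$ via the half-plane), so that $e^{c\,\delta_\Omega(\gamma(t))^{1/L}}-1 \lesssim \delta_\Omega(\gamma(t))^{1/L} \lesssim e^{-(2/L)|t-t^*|}$, an integrable bound against $K_\Omega(\gamma(t);\gamma'(t))\,dt = dt$. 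Hence the integral is bounded by a constant independent of $p,q,\xi$, yielding the additive constant $K$.

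The main obstacle I anticipate is the second step: making precise and uniform (over all $\xi\in\partial\Omega$) the claim that a $d_\Omega$-geodesic between two points of $B_\delta(\xi)$ either stays in $B_{2\delta}(\xi)$ or can be cut at an interior point, and that near-boundary sub-arcs have the geometric decay of $\delta_\Omega(\gamma(t))$. The decay estimate should follow by applying Lemma~\ref{lem:convex_lower_bd_2} to the complex line through $\gamma(t)$ in the direction of the closest boundary point, giving $d_\Omega(\gamma(t^*),\gamma(t)) \ge \tfrac12\log(\delta_\Omega(\gamma(t))/\delta_\Omega(\gamma(t^*))) $ type bounds; combined with $d_\Omega(\gamma(t^*),\gamma(t)) = |t-t^*|$ this gives the exponential control, though one must be slightly careful that ``closest boundary point'' stays in the good chart. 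Once these geometric facts are pinned down the rest is the routine integration above, and an entirely analogous argument handles the almost-geodesic version if needed later.
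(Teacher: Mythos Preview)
Your integration argument for a geodesic that stays in $B_\epsilon(\xi)$ is essentially the paper's Lemma~\ref{lem:global_loc_1}, and the mechanism (exponential decay of $\delta_\Omega(\gamma(t))^{1/L}$ makes $e^{c\delta_\Omega^{1/L}}-1$ integrable against arc-length) is correct. One small correction: the decay comes from taking $t^*$ where $\delta_\Omega(\gamma(\cdot))$ is \emph{maximal}, not minimal, and from an \emph{upper} bound $d_\Omega(p,o)\le c_1-\tfrac12\log\delta_\Omega(p)$ valid on any bounded domain (see~\cite[Theorem 2.3.51]{A1989}), not from Lemma~\ref{lem:convex_lower_bd_2}; the latter is a lower bound on $d_\Omega$ and gives the inequality in the wrong direction.

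The genuine gap is in your handling of possible excursions. Your dichotomy ``either $\gamma\subset B_{2\delta}(\xi)$ or $\gamma$ meets $\{\delta_\Omega\ge c_1\}$'' is not justified: a priori the geodesic could leave $B_{2\delta}(\xi)$ while sliding along $\partial\Omega$, never acquiring a lower bound on $\delta_\Omega$. And even granting the dichotomy, your plan for the second case fails outright: the interior point $w$ need not lie in $V_\xi$, so $d_{\Omega\cap V_\xi}(p,w)$ is undefined, and routing ``through a fixed basepoint $o\in\Omega\cap V_\xi$'' doesn't help because the detour to $o$ has $d_{\Omega\cap V_\xi}$-length that blows up as $p,q\to\partial\Omega$.

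The paper resolves this differently: it proves (Lemma~\ref{lem:global_loc_2}) that for $\delta$ small enough the excursion \emph{never happens}, i.e.\ any $d_\Omega$-geodesic between points of $B_\delta(\xi)$ stays in $B_\epsilon(\xi)$. The argument is by contradiction. If a sequence of geodesics $\sigma_n$ with endpoints $\to\xi$ escapes $B_{\eta/2}(\xi)$, then the portions of $\sigma_n$ inside $B_{\eta/2}(\xi)$ before first exit and after last entry are, by the already-proved Lemma~\ref{lem:global_loc_1}, $K$-almost-geodesics in $(\Omega\cap V_\xi,d_{\Omega\cap V_\xi})$. Passing these to the convex model and applying Proposition~\ref{prop:rough_geodesic_limits_exist}, one extracts limits and shows the Gromov product $(\sigma_n(a_n)\,|\,\sigma_n(b_n))^{V_\xi}_{\sigma(0)}$ stays bounded. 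But both endpoints converge to $\xi\in\partial(\Omega\cap V_\xi)$, and $\partial(\Omega\cap V_\xi)$ is $C^2$ there, so Proposition~\ref{prop:gromov_prod_finite} forces the product to $\infty$ --- a contradiction. In short, the missing idea is to \emph{bootstrap}: first prove the additive localization on geodesics that do stay in the chart, then use that very statement (and the Gromov-product machinery in the convex chart) to show all relevant geodesics stay in the chart.
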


For the remainder of this subsection fix a set $\Omega$ satisfying the hypothesis of Theorem~\ref{thm:global_loc}, a good convex atlas $\{(V_{\xi}, \Phi_{\xi}) : \xi \in \partial \Omega\}$, let $C,\tau >0$ be the parameters of the good convex atlas, and let $\epsilon, c >0$ be as in Theorem~\ref{thm:inf_loc}.

\begin{lemma}\label{lem:global_loc_1} There exists $K>0$ such that if $\xi \in \partial \Omega$ and $\sigma:[a,b] \rightarrow \Omega$ is a geodesic with $\sigma([a,b]) \subset B_{\epsilon}(\xi)$ then
\begin{align*}
\abs{t-s} \leq d_{\Omega \cap V_{\xi}}(\sigma(s),\sigma(t)) \leq \abs{t-s}+K
\end{align*}
for all $s,t \in [a,b]$.
\end{lemma}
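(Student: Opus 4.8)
The plan is to use the infinitesimal localization theorem (Theorem~\ref{thm:inf_loc}) to compare the length of a geodesic $\sigma$ of $(\Omega, d_\Omega)$, when measured with $K_{\Omega\cap V_\xi}$, against its $d_\Omega$-length, which equals $\abs{t-s}$. The lower bound is immediate: since $\Omega\cap V_\xi \subset \Omega$, the distance-decreasing property gives $d_\Omega(\sigma(s),\sigma(t)) \le d_{\Omega\cap V_\xi}(\sigma(s),\sigma(t))$, and the left-hand side is $\abs{t-s}$ because $\sigma$ is a $d_\Omega$-geodesic. So the content is the upper bound.

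For the upper bound, fix $s<t$ in $[a,b]$. Parametrizing $\sigma$ by $d_\Omega$-arclength, we have $\int_s^t K_\Omega(\sigma(r);\sigma'(r))\,dr = \abs{t-s}$ (the geodesic realizes its length infinitesimally). Now estimate
\begin{align*}
d_{\Omega\cap V_\xi}(\sigma(s),\sigma(t)) \le \int_s^t K_{\Omega\cap V_\xi}(\sigma(r);\sigma'(r))\,dr \le \int_s^t e^{c\,\delta_\Omega(\sigma(r))^{1/L}} K_\Omega(\sigma(r);\sigma'(r))\,dr,
\end{align*}
where the second inequality is Theorem~\ref{thm:inf_loc}, valid because $\sigma(r) \in B_\epsilon(\xi)\cap\Omega$ for all $r$ (after possibly shrinking $\epsilon$ at the start to the one furnished by that theorem). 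Writing $e^{c\,\delta_\Omega(\sigma(r))^{1/L}} \le 1 + (e^{c\,\delta_\Omega(\sigma(r))^{1/L}} - 1)$ and bounding $e^x - 1 \le x e^{x_0}$ for $x \in [0,x_0]$ with $x_0 = c\,(\operatorname{diam}\Omega)^{1/L}$, we get
\begin{align*}
d_{\Omega\cap V_\xi}(\sigma(s),\sigma(t)) \le \abs{t-s} + c'\int_s^t \delta_\Omega(\sigma(r))^{1/L} K_\Omega(\sigma(r);\sigma'(r))\,dr
\end{align*}
for a constant $c'$ depending only on $c$ and $\operatorname{diam}\Omega$. It therefore suffices to bound the remaining integral by a constant $K/c'$ independent of $\xi$, $\sigma$, $s$, $t$.

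The main obstacle — and the heart of the argument — is this last integral bound: one must show $\int_s^t \delta_\Omega(\sigma(r))^{1/L} K_\Omega(\sigma(r);\sigma'(r))\,dr$ is uniformly bounded along any $d_\Omega$-geodesic staying in $B_\epsilon(\xi)$. The idea is that a geodesic spends most of its length very close to $\partial\Omega$, where $\delta_\Omega(\sigma(r))$ is exponentially small as a function of the distance travelled toward the boundary; so the weight $\delta_\Omega(\sigma(r))^{1/L}$ decays exponentially in a way that makes the integral converge. Concretely, one compares $d_\Omega$-arclength to $-\tfrac12\log\delta_\Omega$: using the lower estimate $d_\Omega(p,q) \ge \tfrac12\log(\delta_\Omega(p)/\delta_\Omega(q))$-type bounds from Lemma~\ref{lem:convex_lower_bd_2} (applied in the local convex chart, via the good-atlas comparison of Euclidean distances) together with the standard upper bound $K_\Omega \le \|v\|/\delta_\Omega(p;v)$ and the $m$-convexity estimate $\delta_\Omega(p;v) \le C\delta_\Omega(p)^{1/L}$ (part (2) of a good atlas, pushed forward), one shows that along the geodesic the quantity $\delta_\Omega(\sigma(r))$ decays at least like $e^{-2|r - r_0|}$ away from the point $r_0$ where $\delta_\Omega\circ\sigma$ is maximal on $[s,t]$. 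Then
\begin{align*}
\int_s^t \delta_\Omega(\sigma(r))^{1/L} K_\Omega(\sigma(r);\sigma'(r))\,dr \le \int_{-\infty}^{\infty} \big(M e^{-2|r-r_0|}\big)^{1/L}\cdot \frac{C'}{\big(M e^{-2|r-r_0|}\big)^{1/L}}\cdot(\text{decay})\,dr,
\end{align*}
which one arranges to be a convergent integral with value independent of the data; here $M \le \operatorname{diam}\Omega$ bounds the maximum of $\delta_\Omega\circ\sigma$. Carrying out this estimate carefully (the bookkeeping of constants via the good-atlas parameters $C,\tau$ and Theorem~\ref{thm:inf_loc}'s $c,\epsilon$) yields a uniform $K$, completing the proof. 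Setting $\epsilon$ to the minimum of the value here and any value used downstream finishes the statement.
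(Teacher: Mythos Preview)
Your overall strategy---use Theorem~\ref{thm:inf_loc} to reduce the upper bound to controlling $\int \delta_\Omega(\sigma(r))^{1/L}\,dr$ along the geodesic, then show $\delta_\Omega(\sigma(r))$ decays exponentially away from the point $r_0$ where it is maximal---is exactly the paper's approach. However, your justification of the decay estimate has the direction reversed. To obtain $\delta_\Omega(\sigma(r)) \lesssim e^{-|r-r_0|}$ one needs an \emph{upper} bound on $d_\Omega$ in terms of $-\log\delta_\Omega$: namely $|r-r_0| = d_\Omega(\sigma(r),\sigma(r_0)) \le d_\Omega(\sigma(r),o) + d_\Omega(o,\sigma(r_0)) \le 2c_1 - \tfrac12\log\big(\delta_\Omega(\sigma(r))\delta_\Omega(\sigma(r_0))\big)$ for a fixed $o\in\Omega$, which combined with maximality at $r_0$ gives $\delta_\Omega(\sigma(r)) \le e^{2c_1}e^{-|r-r_0|}$. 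The paper invokes \cite[Theorem~2.3.51]{A1989} for the bound $d_\Omega(p,o)\le c_1-\tfrac12\log\delta_\Omega(p)$ (valid on any bounded domain, ultimately from $K_\Omega(p;v)\le\norm{v}/\delta_\Omega(p)$). Your citation of Lemma~\ref{lem:convex_lower_bd_2}, a \emph{lower} bound on distance, goes the wrong way: feeding a lower bound into $|r-r_0|=d_\Omega(\sigma(r),\sigma(r_0))$ yields at best a lower bound on $\delta_\Omega(\sigma(r))$, not the upper bound you need. The $m$-convexity estimate is not used at this step either.

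Two smaller issues. First, Kobayashi geodesics need not be $C^1$, so writing $\sigma'(r)$ and integrating $K_\Omega(\sigma(r);\sigma'(r))$ is not immediately legitimate; the paper sidesteps this by passing to a nearby $C^1$ arclength-parametrized curve $\wh{\sigma}$ with $d_{\Omega\cap V_\xi}(\sigma,\wh{\sigma})<1$. Second, your displayed integral has the factors $(Me^{-2|r-r_0|})^{1/L}$ cancelling in numerator and denominator, leaving an undefined ``(decay)'' term; in fact along a unit-speed curve $K_\Omega(\wh{\sigma};\wh{\sigma}')\equiv 1$, so the integrand is simply $\delta_\Omega(\wh{\sigma}(r))^{1/L}\lesssim e^{-|r-r_0|/L}$, whose integral over $\Rb$ furnishes the constant $K$.
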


\begin{proof} Since $\Omega \cap V_{\xi} \subset \Omega$ we immediately have that 
\begin{align*}
d_{\Omega \cap V_{\xi}}(\sigma(s),\sigma(t)) \geq d_{\Omega}(\sigma(t),\sigma(s)) = \abs{t-s}.
\end{align*}
To avoid rectifiable issues, we approximate $\sigma:[a,b] \rightarrow \Omega$ by a $C^1$ curve $\wh{\sigma}: [a,\wh{b}] \rightarrow \Omega$ which is parametrized by arc-length (that is $K_{\Omega}(\wh{\sigma}(t); \wh{\sigma}(t)) \equiv 1$) and which satisfies
\begin{align*}
d_{\Omega \cap V_{\xi}}(\sigma(t), \wh{\sigma}(t)) < 1
\end{align*}
for all $t \in [a,b]$. Notice that this implies that 
\begin{align*}
d_{\Omega}(\sigma(t), \wh{\sigma}(t)) \leq d_{\Omega \cap V_{\xi}}(\sigma(t), \wh{\sigma}(t)) < 1
\end{align*}
for all $t \in [a,b]$.

Now let $T \in [a,\wh{b}]$ be such that 
\begin{align*}
\delta_{\Omega}(\wh{\sigma}(T)) = \max \{ \delta_{\Omega}(\wh{\sigma}(t)) : t \in [a,\wh{b}]\}.
\end{align*}
Fix $o \in \Omega$ then 
\begin{align*}
\abs{T-t}
& = d_{\Omega}(\sigma(T), \sigma(t)) \leq d_{\Omega}(\wh{\sigma}(T), \wh{\sigma}(t))+2\\
&  \leq d_{\Omega}(\wh{\sigma}(T), o)+d_{\Omega}(o, \wh{\sigma}(t))+2
\end{align*}
and by~\cite[Theorem 2.3.51]{A1989} there exists $c_1>0$ such that
\begin{align*}
d_{\Omega}(\wh{\sigma}(T), o)+d_{\Omega}(o, \wh{\sigma}(t)) \leq 2c_1 -\frac{1}{2}\log \left( \frac{1}{\delta_{\Omega}(\wh{\sigma}(t))\delta_{\Omega}(\wh{\sigma}(T))} \right).
\end{align*}
So 
\begin{align*}
\delta_{\Omega}(\wh{\sigma}(t)) \leq \left( \delta_{\Omega}(\wh{\sigma}(t))\delta_{\Omega}(\wh{\sigma}(T)) \right)^{1/2} \leq e^{2c_1+2}e^{-\abs{T-t}}.
\end{align*}
Then using Theorem~\ref{thm:inf_loc} we have for  $s < t$
\begin{align*}
d_{\Omega \cap V_{\xi}}(\wh{\sigma}(s),\wh{\sigma}(t)) 
& \leq \int_{s}^t K_{\Omega \cap V_{\xi}}(\wh{\sigma}(r); \wh{\sigma}^\prime(r))dr \leq \int_{s}^t e^{Ce^{-\abs{T-r}/L}} K_{\Omega}(\wh{\sigma}(r); \wh{\sigma}^\prime(r))dr \\
& = \int_{s}^t e^{Ce^{-\abs{T-r}/L}}dr
\end{align*}
for some $C>0$. 

Now for $\lambda \in [0,1]$ 
\begin{align*}
e^{C\lambda} = 1 + \int_0^\lambda C e^{C s} ds \leq 1+ \int_0^{\lambda} C e^C ds \leq 1 + Ce^C\lambda.
\end{align*} 
Hence
\begin{align*}
e^{Ce^{-\abs{T-r}/L}} \leq 1 + Ce^C e^{-\abs{T-r}/L}
\end{align*}
and so
\begin{align*}
d_{\Omega \cap V_{\xi}}(\wh{\sigma}(s),\wh{\sigma}(t))
& \leq \int_s^{t} 1 + Ce^C e^{-\abs{T-r}/L} dr \leq \abs{t-s} +Ce^C  \int_{-\infty}^{\infty} e^{-\abs{T-r}/L} dr \\
& = \abs{t-s} + K^\prime
\end{align*}
where $K^\prime: = Ce^C  \int_{-\infty}^{\infty} e^{-\abs{r}/L} dr$. Finally, 
\begin{equation*}
d_{\Omega \cap V_{\xi}}(\sigma(s),\sigma(t)) \leq d_{\Omega \cap V_{\xi}}(\wh{\sigma}(s),\wh{\sigma}(t))+2 \leq \abs{t-s}+K^\prime+2. \qedhere
\end{equation*}
\end{proof}

\begin{lemma}\label{lem:global_loc_2}
For any $\eta>0$ there exists $\delta >0$ such that if $\xi \in \partial \Omega$, $p,q \in B_{\delta}(\xi)$, and $\sigma:[a,b] \rightarrow \Omega$ is a geodesic with $\sigma(a)=p$ and $\sigma(b)=q$ then $\sigma([a,b]) \subset B_{\eta}(\xi)$.
\end{lemma}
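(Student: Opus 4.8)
For any $\eta>0$ there exists $\delta>0$ such that if $\xi\in\partial\Omega$, $p,q\in B_\delta(\xi)$, and $\sigma:[a,b]\to\Omega$ is a geodesic with $\sigma(a)=p$ and $\sigma(b)=q$, then $\sigma([a,b])\subset B_\eta(\xi)$.

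My plan is to argue by contradiction, extracting a limiting configuration. Suppose the conclusion fails for some fixed $\eta>0$: then there are $\xi_n\in\partial\Omega$, points $p_n,q_n\in B_{1/n}(\xi_n)$, and geodesics $\sigma_n:[a_n,b_n]\to\Omega$ from $p_n$ to $q_n$ whose images leave $B_\eta(\xi_n)$. By compactness of $\partial\Omega$ we may pass to a subsequence with $\xi_n\to\xi\in\partial\Omega$; then $p_n,q_n\to\xi$ as well. The key point is that a geodesic from $p_n$ to $q_n$ whose image wanders a definite distance $\eta$ away from $\xi$ must be "long" — and I will derive a contradiction with the fact that $d_\Omega(p_n,q_n)$ stays controlled relative to how close $p_n,q_n$ are to the boundary.

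More precisely, first I would fix $\xi$ and use Proposition~\ref{prop:rough_geodesics} (applicable since $\partial\Omega$ is $C^2$, indeed $C^L$, being locally convexifiable of finite line type) to produce, for $n$ large, a $(1,\log\sqrt2)$-quasi-geodesic ray $\rho_n$ in $\Omega$ starting near $p_n$ and running into the interior along the inner normal at the nearest boundary point to $p_n$; choose a parameter $T>0$ and note $d_\Omega(\rho_n(0),\rho_n(T))\ge T-\log\sqrt2$. Since $p_n\to\xi$, we have $d_\Omega(p_n,\rho_n(T))\ge T-O(1)$ and also $d_\Omega(q_n,\rho_n(T))\ge T-O(1)$ by the same construction applied at $q_n$ (using that the nearest boundary point to $q_n$ also converges to $\xi$, so the two normal rays end up close — this is exactly the continuous-dependence-on-the-base-point clause of Proposition~\ref{prop:rough_geodesics}, which forces $d_\Omega(\rho^{p_n}(T),\rho^{q_n}(T))\to 0$). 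On the other hand, any point $w_n=\sigma_n(t_n)$ on the geodesic with $\|w_n-\xi_n\|\ge\eta$ satisfies $d_\Omega(w_n,\xi)\ge\frac12\log\tfrac{\eta}{C}-O(1)$ is the wrong direction; instead I want an \emph{upper} bound on $d_\Omega(p_n,q_n)$ and a \emph{lower} bound coming from the excursion. The cleaner route: the excursion forces $d_\Omega(p_n,\sigma_n(t_n))+d_\Omega(\sigma_n(t_n),q_n)=d_\Omega(p_n,q_n)$ (since $\sigma_n$ is a geodesic), while each of the two summands is bounded below by a quantity tending to $\infty$ — namely $d_\Omega(p_n,\sigma_n(t_n))\ge d_\Omega(p_n,o)-d_\Omega(o,\sigma_n(t_n))$ is not it either. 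Let me instead bound $d_\Omega(p_n,q_n)$ from above directly: fix $o\in\Omega$; since $p_n,q_n\to\xi\in\partial\Omega$, by Lemma~\ref{lem:convex_lower_bd_2}-type reasoning applied in a local convex chart (using the good atlas and Theorem~\ref{thm:global_loc}'s predecessor estimates), $d_\Omega(p_n,q_n)$ is comparable to $\tfrac12\log\tfrac{1}{\delta_\Omega(p_n)}+\tfrac12\log\tfrac1{\delta_\Omega(q_n)}$ up to an additive constant plus the Gromov product $(p_n|q_n)_o$, and the geometry forces $(p_n|q_n)_o\to\infty$ because both converge to the same boundary point. Meanwhile a geodesic through a point $w_n$ with $\|w_n-\xi\|\ge\eta$ has $\delta_\Omega(w_n)$ bounded away from — no, $\delta_\Omega(w_n)$ could still be small.

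Let me restate the clean argument. By contradiction, get $\xi_n\to\xi$, $p_n,q_n\to\xi$, geodesics $\sigma_n$ from $p_n$ to $q_n$, and $t_n$ with $w_n:=\sigma_n(t_n)$, $\|w_n-\xi\|\ge\eta/2$ for $n$ large. Passing to a further subsequence, $w_n\to w_\infty$ with $\|w_\infty-\xi\|\ge\eta/2$, so $w_\infty\ne\xi$. Now $d_\Omega(p_n,q_n)=d_\Omega(p_n,w_n)+d_\Omega(w_n,q_n)$. I claim each term $\to\infty$: indeed $w_n$ stays in a compact region $\{w:\operatorname{dist}(w,\xi)\ge\eta/3\}\cap\overline\Omega$, so either $w_n\to w_\infty\in\Omega$, giving $d_\Omega(p_n,w_n)\to\infty$ because $p_n\to\xi\in\partial\Omega$ and $d_\Omega$ is complete (distance from interior point to a boundary-approaching sequence blows up — this follows from Lemma~\ref{lem:convex_lower_bd_2}); or $w_\infty\in\partial\Omega$ with $w_\infty\ne\xi$, and then using the $(1,\log\sqrt2)$-quasigeodesic normal rays at $p_n$ and at $w_n$ (Proposition~\ref{prop:rough_geodesics}) together with the fact that their endpoints stay a definite distance apart, one gets $d_\Omega(p_n,w_n)\to\infty$ as well — this is precisely the content of the proof of Proposition~\ref{prop:gromov_prod_finite} showing the Gromov product $(p_n|w_n)_o$ stays bounded, whence $d_\Omega(p_n,w_n)\ge\tfrac12\log\tfrac1{\delta_\Omega(p_n)}-O(1)\to\infty$. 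Symmetrically $d_\Omega(w_n,q_n)\to\infty$, so $d_\Omega(p_n,q_n)\to\infty$. But on the other hand $p_n,q_n\to\xi$ together, so choosing a small ball $B_{\epsilon_1}(a)\subset\Omega$ tangent-ish near $\xi$ and using convexity, $p_n$ and $q_n$ both lie on the boundary-directed quasigeodesic from $a$ through $\xi$ for large $n$; hence $d_\Omega(p_n,q_n)\le d_\Omega(p_n,a)+d_\Omega(a,q_n)$, and more to the point $(p_n\mid q_n)_a\to\infty$ so $d_\Omega(p_n,q_n)=d_\Omega(p_n,a)+d_\Omega(a,q_n)-2(p_n\mid q_n)_a$ — I need this bounded, which requires comparing the blow-up rates. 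The cleanest version: since $\sigma_n$ is a geodesic and $\sigma_n(a_n)=p_n$, $\sigma_n(b_n)=q_n$, we have $\operatorname{length}(\sigma_n)=d_\Omega(p_n,q_n)$; take the $\sigma_n$ reparametrized so $\sigma_n(0)=w_n$, apply Proposition~\ref{prop:rough_geodesic_limits_exist}/\ref{prop:m_convex} (the sequence is trivially locally $m$-convex, being constant $\Omega_n=\Omega$, which is locally $L$-convex by Example~\ref{ex:L_convex_seq}) to extract a locally uniform limit geodesic $\sigma_\infty:\mathbb R\to\Omega$ through $w_\infty\in\Omega$; then $d_\Omega(p_n,w_n)=|a_n|-0\le$ finite forces... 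Actually the slickest finish: the \emph{nonexpanding} quasigeodesics from Proposition~\ref{prop:rough_geodesics} at $p_n$ and $q_n$ have nearby starting points and the same limiting direction, so $\sup_t d_\Omega(\rho^{p_n}(t),\rho^{q_n}(t))$ is bounded independent of $n$; applying $(1,\log\sqrt2)$-estimates gives $d_\Omega(p_n,q_n)\le d_\Omega(\rho^{p_n}(s_n),\rho^{q_n}(s'_n))+O(1)$ bounded by a constant $K_0$ independent of $n$ (choosing $s_n,s'_n$ so that $\rho^{p_n}(s_n)$ is at distance $<\tau$ from $p_n$ etc.). This contradicts $d_\Omega(p_n,q_n)\to\infty$.

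The main obstacle is making the last paragraph's "$d_\Omega(p_n,q_n)$ stays bounded" claim precise: one must show that when two interior points both converge to the \emph{same} $C^2$ boundary point, their Kobayashi distance does not blow up. This is the $C^2$-boundary analogue of the easy upper bound, and it follows by realizing both points as lying (for large $n$) on a single $(1,\log\sqrt2)$-quasi-geodesic built from an interior ball osculating the boundary at $\xi$ — or, more robustly, from the two normal-ray quasi-geodesics of Proposition~\ref{prop:rough_geodesics} whose parameters can be matched so that corresponding points are within a bounded distance. I would carry this out carefully as the one genuinely technical step; everything else (the contradiction setup, the compactness extraction of $w_\infty$, the blow-up of $d_\Omega(p_n,w_n)$ via Lemma~\ref{lem:convex_lower_bd_2} and the Gromov-product argument of Proposition~\ref{prop:gromov_prod_finite}) is routine given the tools already developed.
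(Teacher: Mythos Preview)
Your approach has a genuine gap. The core of your argument is to derive a contradiction between two claims: (i) $d_\Omega(p_n,q_n)\to\infty$ (forced by the excursion through $w_n$), and (ii) $d_\Omega(p_n,q_n)$ stays bounded (because $p_n,q_n\to\xi$). Claim (ii) is simply false: two interior points can converge to the \emph{same} $C^2$ boundary point while their Kobayashi distance blows up. Already in the upper half plane $\Hc$, take $p_n=i/n$ and $q_n=i/n^2$; both lie in $B_{1/n}(0)$ and converge to $0$, yet $d_{\Hc}(p_n,q_n)=\tfrac12\log n\to\infty$. Your suggested justification (``realize both points on a single $(1,\log\sqrt2)$-quasi-geodesic'') does not help: two points on the same quasi-geodesic can still be arbitrarily far apart. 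A secondary issue is that you freely invoke Proposition~\ref{prop:rough_geodesics} and Lemma~\ref{lem:convex_lower_bd_2}, but both are stated for convex $\Omega$, whereas here $\Omega$ is only locally convexifiable; one must pass to a local convex chart first.

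The paper's proof avoids this by never claiming $d_\Omega(p_n,q_n)$ is bounded. Instead it works in the local chart $\Omega\cap V_\xi$, uses Lemma~\ref{lem:global_loc_1} (just proven) to see that the initial and terminal pieces of $\sigma_n$ inside $B_{\eta/2}(\xi)$ are $K$-almost-geodesics in $(\Omega\cap V_\xi,d_{\Omega\cap V_\xi})$, and then extracts subsequential limits $\sigma,\gamma$ of these pieces (via Proposition~\ref{prop:rough_geodesic_limits_exist}, applicable because $\Omega\cap V_\xi$ is bi-holomorphic to a locally $m$-convex set). The existence of these interior limit points gives a uniform bound on the \emph{Gromov product} $(\sigma_n(a_n)\,|\,\sigma_n(b_n))^{V_\xi}_{\sigma(0)}$ in the local chart. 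Since both endpoints converge to $\xi$, this contradicts Proposition~\ref{prop:gromov_prod_finite}. The right invariant to track here is the Gromov product, not the distance.
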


\begin{proof}
Suppose for a contradiction that the lemma does not hold for some $\eta>0$. We may assume that $\eta < \epsilon$. Then for each $n >0$ there exists a point $\xi_n \in \partial \Omega$ and a geodesic 
\begin{align*}
\sigma_n:[a_n,b_n] \rightarrow \Omega
\end{align*} 
with $\sigma_n(a_n), \sigma_n(b_n) \in B_{1/n}(\xi_n)$ and $\sigma_n(0) \in \Omega \setminus B_{\eta}(\xi)$. By passing to a subsequence we can assume that there exists a single $\xi \in \partial \Omega$ such that  $\sigma_n(a_n), \sigma_n(b_n) \in B_{1/n}(\xi)$ and $\sigma_n(0) \in \Omega \setminus B_{\eta/2}(\xi)$.

Now let 
\begin{align*}
a_n^\prime = \inf\{ t \in [a_n,b_n] : \sigma_n(t) \in \Omega \setminus B_{\eta/2}(\xi)\}.
\end{align*}
and 
\begin{align*}
b_n^\prime = \sup\{ t \in [a_n,b_n] : \sigma_n(t) \in \Omega \setminus B_{\eta/2}(\xi)\}.
\end{align*}
By Theorem~\ref{thm:inf_loc} and Lemma~\ref{lem:global_loc_1}, $\sigma_n|_{[a_n,a_n^\prime]}$ and $\sigma_n|_{[b_n^\prime, b_n]}$ are $K$-almost-geodesics in $\Omega \cap V_{\xi}$. Since $\Omega \cap V_{\xi}$ is bi-holomorphic to a locally m-convex set, by Proposition~\ref{prop:rough_geodesic_limits_exist} we can pass to a subsequence and find $\alpha_n \in [a_n,a_n^\prime]$ and $\beta_n \in [b_n^\prime,b_n]$ such that $t\rightarrow \sigma_n(t+\alpha_n)$ and $t\rightarrow \sigma_n(t+\beta_n)$ converge to $K$-almost-geodesics $\sigma$ and $\gamma$ in $\Omega \cap V_{\xi}$. 

Since 
\begin{align*}
d_{\Omega \cap V_{\xi}}(\sigma(0), \gamma(0)) = \lim_{n \rightarrow \infty} d_{\Omega \cap V_{\xi}}(\sigma(0), \sigma_n(\beta_n)) 
\end{align*}
there exists $R>0$ such that 
\begin{align*}
d_{\Omega \cap V_{\xi}}(\sigma(0), \sigma_n(\beta_n))  \leq R
\end{align*}
for all $n$. So
\begin{align*}
d_{\Omega \cap V_{\xi}}(\sigma(0),\sigma_n(b_n)) \leq R + \abs{b_n - \beta_n}+K.
\end{align*}
Also
\begin{align*}
d_{\Omega \cap V_{\xi}}(\sigma_n(a_n),\sigma_n(b_n)) \geq d_{\Omega}(\sigma_n(a_n),\sigma_n(b_n)) = \abs{b_n-a_n}.
\end{align*}
 
Now let $(\cdot |\cdot)^{V_{\xi}}$ be the Gromov product on $(\Omega \cap V_{\xi}, d_{\Omega \cap V_{\xi}})$. Using the estimates above 
\begin{align*}
2 \limsup_{n \rightarrow \infty} & (\sigma_n(a_n) | \sigma_n(b_n))_{\sigma(0)}^{V_{\xi}}  \\
& = \limsup_{n \rightarrow \infty} \left(d_{\Omega \cap V_\xi}(\sigma_n(a_n), \sigma(0)) + d_{\Omega \cap V_\xi}(\sigma(0), \sigma_n(b_n)) - d_{\Omega\cap V_{\xi}}(\sigma_n(a_n),\sigma_n(b_n)) \right)\\
& = \limsup_{n \rightarrow \infty} \left(d_{\Omega \cap V_\xi}(\sigma_n(a_n), \sigma_n(\alpha_n)) + d_{\Omega \cap V_\xi}(\sigma(0), \sigma_n(b_n)) - d_{\Omega \cap V_{\xi}}(\sigma_n(a_n),\sigma_n(b_n))\right) \\
& \leq \limsup_{n \rightarrow \infty} \left(\abs{a_n - \alpha_n} +K+R+ \abs{b_n - \beta_n} +K - \abs{b_n - a_n} \right) \\
& \leq R+2K. 
\end{align*}
On the other hand, $\Omega \cap V_{\xi}$ is convex and $\partial (\Omega \cap V_{\xi})$ is $C^2$ in a neighborhood of $\xi$. Moreover $\sigma_n(a_n) \rightarrow \xi$ and $\sigma_n(b_n)\rightarrow \xi$ hence by Proposition~\ref{prop:gromov_prod_finite} we have a contradiction. 
\end{proof}

\begin{proof}[Proof of Theorem~\ref{thm:global_loc}] Let $\delta >0$ be such that if $\xi \in \partial \Omega$, $p,q \in B_{\delta}(\xi)$, and $\sigma:[a,b] \rightarrow \Omega$ is a geodesic with $\sigma(a)=p$ and $\sigma(b)=q$ then $\sigma([a,b]) \subset B_{\epsilon}(\xi)$.

Now suppose $p,q \in B_{\delta}(\xi)$ and $\sigma:[a,b] \rightarrow \Omega$ is a geodesic joining them, then by the choice of $\delta$ we have $\sigma([a,b]) \subset B_{\epsilon}(\xi)$. But then by Lemma~\ref{lem:global_loc_1} we have 
\begin{equation*}
d_{\Omega \cap V_{\xi}}(p,q)=d_{\Omega \cap V_\xi}(\sigma(a),\sigma(b)) \leq d_\Omega(\sigma(a),\sigma(b))+K = d_\Omega(p,q)+K. \qedhere
\end{equation*}
\end{proof}

\section{Gromov hyperbolicity}

In this section we prove the first part of Theorem~\ref{thm:loc_main}.

\begin{theorem}
Suppose $\Omega$ is locally convexifiable and has finite local line type $L$. Then $(\Omega, d_{\Omega})$ is Gromov hyperbolic. 
\end{theorem}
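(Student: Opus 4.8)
The plan is to run the proof of Theorem~\ref{thm:suff} again, with two changes forced by the fact that $\Omega$ is no longer globally convex. In place of a global affine renormalization we use a good convex atlas $\{(V_\xi,\Phi_\xi):\xi\in\partial\Omega\}$ (Proposition~\ref{prop:good_cover}): near a boundary point $\xi$ the set $\Omega$ is biholomorphic, via $\Phi_\xi$, to the bounded locally $L$-convex convex domain $\Omega_\xi=\Phi_\xi(\Omega\cap V_\xi)$, to which Gaussier's rescaling theorem (Theorem~\ref{thm:gaussier}) applies. Since $\Phi_\xi$ only relates $d_\Omega$ to $d_{\Omega_\xi}$ up to a bounded additive error on small balls, geodesics of $\Omega$ transfer to $K$-almost-geodesics of $\Omega_\xi$: this is exactly the content of Theorems~\ref{thm:inf_loc} and~\ref{thm:global_loc} (via Lemma~\ref{lem:global_loc_1}), and it is why the almost-geodesic limit results -- Propositions~\ref{prop:rough_geodesic_limits_exist} and~\ref{prop:rough_geodesic_limits_exist_2}, Corollary~\ref{cor:rough_geodesic_limits}, and Proposition~\ref{prop:rough_geodesics_well_behaved} -- were prepared.

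Concretely, suppose $(\Omega,d_\Omega)$ is not Gromov hyperbolic. Then for each $n$ there is a geodesic triangle with vertices $x_n,y_n,z_n$, sides $\sigma_{x_ny_n},\sigma_{y_nz_n},\sigma_{z_nx_n}$, and a point $u_n$ on $\sigma_{x_ny_n}$ with $d_\Omega(u_n,\sigma_{y_nz_n}\cup\sigma_{z_nx_n})>n$; in particular $d_\Omega(u_n,x_n),d_\Omega(u_n,y_n),d_\Omega(u_n,z_n)>n$. Parametrize $\sigma_{x_ny_n}$ by arclength with $\sigma_{x_ny_n}(0)=u_n$ and pass to a subsequence so that $u_n\to\xi$ and $x_n,y_n,z_n$ converge in $\overline\Omega$. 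Since $\Omega$ is bounded and $d_\Omega$ complete, $d_\Omega(u_n,x_n)\to\infty$ forces $\lim x_n\in\partial\Omega$, and similarly for $y_n,z_n$; and if $\xi\in\Omega$ then, arguing inside $\Omega$ with Arzel\`a--Ascoli, Theorem~\ref{thm:global_loc}, Lemma~\ref{lem:global_loc_2} (as in the proof of Proposition~\ref{prop:main}) and Proposition~\ref{prop:gromov_prod_finite} (valid because $\partial\Omega$ is $C^L\subset C^2$), one builds a complete geodesic through $\xi$ whose two ends are distinct points of $\partial\Omega$ and finishes exactly as in Theorem~\ref{thm:suff}. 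So assume $\xi\in\partial\Omega$, fix the chart $(V_\xi,\Phi_\xi)$ and the constants $\epsilon$ of Theorem~\ref{thm:inf_loc} and $K,\delta$ of Theorem~\ref{thm:global_loc}. For large $n$ let $[a_n,b_n]\ni0$ be the maximal subinterval of the domain of $\sigma_{x_ny_n}$ with $\sigma_{x_ny_n}([a_n,b_n])\subset B_\epsilon(\xi)$. By Theorem~\ref{thm:inf_loc} and Lemma~\ref{lem:global_loc_1}, $\sigma_{x_ny_n}|_{[a_n,b_n]}$ is a $K$-almost-geodesic in $\Omega\cap V_\xi$, so $\widehat\sigma_n:=\Phi_\xi\circ\sigma_{x_ny_n}|_{[a_n,b_n]}$ is a $K$-almost-geodesic in $\Omega_\xi$. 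Apply Theorem~\ref{thm:gaussier} to $\Omega_\xi$ at $\Phi_\xi(\xi)$ with the sequence $\Phi_\xi(u_n)\to\Phi_\xi(\xi)$: there are $n_k\to\infty$ and affine $A_k$ with $A_k\Omega_\xi\to\widehat\Omega$ in the local Hausdorff topology, $\widehat\Omega=\{\Real(z_0)>P(z_1,\dots,z_d)\}$ for a non-negative non-degenerate convex polynomial $P$ with $P(0)=0$, $A_k\Phi_\xi(u_{n_k})\to u_\infty\in\widehat\Omega$, and $(A_k\Omega_\xi)_k$ a locally $L$-convex sequence. As each $A_k\circ\Phi_\xi$ is an isometry of $(\Omega\cap V_\xi,d_{\Omega\cap V_\xi})$ onto $(A_k\Omega_\xi,d_{A_k\Omega_\xi})$, the $A_k\widehat\sigma_{n_k}$ are $K$-almost-geodesics through $A_k\Phi_\xi(u_{n_k})\to u_\infty$; a subsequence converges locally uniformly to a $K$-almost-geodesic $\widehat\sigma$ on an interval $I\ni0$ with $\widehat\sigma(0)=u_\infty\in\widehat\Omega$.

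Next one analyses the two ends of $\widehat\sigma$. Using the uniform finite-type estimate $\delta_{\Omega_\xi}(p;v)\le C\delta_{\Omega_\xi}(p)^{1/L}$ of the good chart together with Lemma~\ref{lem:convex_inf} and Lemma~\ref{lem:convex_lower_bd_2}, and the fact that the maps $A_k$ expand around $\Phi_\xi(\xi)$, one shows that each end of $\widehat\sigma$ is either (i) an honest forward or backward limit of the rescaled vertices $A_k\Phi_\xi(x_{n_k})$ or $A_k\Phi_\xi(y_{n_k})$ -- namely when $\sigma_{x_ny_n}$ does not leave $B_\epsilon(\xi)$ on that side, in which case $I=\Rb$ on that side -- or (ii) the point $\infty$, when $\sigma_{x_ny_n}$ leaves $B_\epsilon(\xi)$ (the exit point sits at fixed Euclidean distance from $\Phi_\xi(\xi)$, so its $A_k$-image escapes to $\infty$). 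In case (i) Proposition~\ref{prop:rough_geodesic_limits_exist_2} identifies the end and, when both ends are of type (i), Proposition~\ref{prop:rough_geodesics_well_behaved} shows they are distinct; in case (ii) one checks that a type-(i) end cannot equal $\infty$. In every case the two ends of $\widehat\sigma$ are distinct, hence the rescaled limit of a suitable point $w_{n_k}$ lying on the third side $\sigma_{z_{n_k}x_{n_k}}$ (or $\sigma_{y_{n_k}z_{n_k}}$) inside $B_\epsilon(\xi)$ -- whose existence is again extracted from the localization -- differs from at least one of them; by Proposition~\ref{prop:rough_geodesic_limits_exist} a reparametrization of the localized, rescaled third side converges locally uniformly to a $K$-almost-geodesic $\widehat\gamma:\Rb\to\widehat\Omega$ through $\lim_k A_k\Phi_\xi(w_{n_k})$, and then, using Theorem~\ref{thm:dist_conv} and $d_{\Omega\cap V_\xi}\ge d_\Omega$,
\[
d_{\widehat\Omega}(u_\infty,\widehat\gamma(0))=\lim_{k\to\infty}d_{\Omega\cap V_\xi}(u_{n_k},w_{n_k})\ge\limsup_{k\to\infty}d_\Omega\bigl(u_{n_k},\sigma_{z_{n_k}x_{n_k}}\cup\sigma_{y_{n_k}z_{n_k}}\bigr)=\infty,
\]
which is absurd because $u_\infty,\widehat\gamma(0)\in\widehat\Omega$. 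Hence $(\Omega,d_\Omega)$ is Gromov hyperbolic.

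The main obstacle is precisely the analysis of the ends of $\widehat\sigma$ and the location of the third side: because the charts are local while the edges of the triangle are long, one must track where each edge enters and leaves $B_\epsilon(\xi)$, split into sub-cases according to which of $\lim x_n,\lim y_n,\lim z_n$ coincide with $\xi$, and decide -- using only the uniform constants of the good atlas and the convex estimates available inside it -- whether a blown-up edge limits to a finite point of $\widehat\Omega$, to a finite boundary point, or to $\infty$. This bookkeeping, not any new conceptual ingredient, is the technical heart of the argument; the rest is a transcription of the proof of Theorem~\ref{thm:suff} with ``geodesic'' replaced by ``$K$-almost-geodesic'' and ``affine blow-up of $\Omega$'' replaced by ``affine blow-up of the local convex chart $\Omega_\xi$''.
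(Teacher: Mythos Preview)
Your outline is correct and follows the paper's proof essentially step for step: contradiction setup, the interior case $u_\infty\in\Omega$, then localize at $\xi=u_\infty\in\partial\Omega$, push through $\Phi_\xi$, rescale via Theorem~\ref{thm:gaussier}, and extract a limiting $K$-almost-geodesic in the polynomial domain $\wh\Omega$ to derive a contradiction from Proposition~\ref{prop:rough_geodesic_limits_exist}. Two small corrections to your sketch: (1) for $u_\infty\in\Omega$, Proposition~\ref{prop:gromov_prod_finite} is stated only for convex $\Omega$; the paper instead uses Lemma~\ref{lem:global_loc_2} directly to force $x_\infty\neq y_\infty$ (a geodesic with both endpoints in $B_\delta(\xi')$ cannot pass through the interior point $u_\infty$), and then works in the chart at $x_\infty$; (2) in the end-analysis your implication is inverted: one does \emph{not} argue that a ``type~(i)'' end is automatically finite, rather one first applies Proposition~\ref{prop:rough_geodesics_well_behaved} to the full limit $\wh\sigma:\Rb\to\wh\Omega$ (note $a_n',b_n'\to\mp\infty$ since $u_n\to\xi$) to get $\wh x_\infty\neq\wh y_\infty$, hence one of them is finite, and \emph{then} Observation~\ref{obs:stupid} forces the corresponding vertex to equal $\xi$, which is what lets you start localizing the adjacent side from inside $B_\delta(\xi)$. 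With these adjustments the cascade you describe is exactly the paper's.
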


\begin{proof}
First suppose for a contradiction that $\Omega$ is not Gromov hyperbolic. Then for all $n>0$ there exists points $x_n,y_n,z_n \in \Omega$,  geodesics $\sigma_{x_ny_n}, \sigma_{y_nz_n}, \sigma_{z_nx_n}$ joining them, and a point  $u_n \in \sigma_{x_ny_n}$ such that
\begin{align*}
d_{\Omega}(u_n, \sigma_{y_n z_n} \cup \sigma_{x_n z_n}) > n.
\end{align*}
By passing to a subsequence we may assume that $x_n,y_n,z_n,u_n \rightarrow x_{\infty},y_{\infty}, z_{\infty}, u_{\infty} \in \overline{\Omega}$. 

Fix a good convex atlas $\{(V_{\xi}, \Phi_{\xi}) : \xi \in \partial \Omega \}$ of $\Omega$. By Theorem~\ref{thm:inf_loc}, Lemma~\ref{lem:global_loc_2}, and Theorem~\ref{thm:global_loc} we can find $\epsilon, \delta,K>0$ such that 
\begin{enumerate}
\item if $\xi \in \partial \Omega$ then $B_{\epsilon}(\xi) \subset V_{\xi}$,
\item if $\xi \in \partial \Omega$, $p,q \in B_{\delta}(\xi)$, and $\sigma:[a,b] \rightarrow \Omega$ is a geodesic with $\sigma(a)=p$ and $\sigma(b)=q$ then $\sigma([a,b]) \subset B_{\epsilon}(\xi)$, and
\item if $\xi \in \partial \Omega$ and $\sigma:[a,b] \rightarrow \Omega$ is a geodesic with $\sigma([a,b]) \subset B_{\epsilon}(\xi)$ then $\sigma$ is a $K$-almost-geodesic in $(\Omega \cap V_{\xi}, d_{\Omega \cap V_{\xi}})$.
\end{enumerate}

\noindent \textbf{Special case 1:} Assume $u_{\infty} \in \Omega$. 

\begin{proof} Since 
\begin{align*}
d_{\Omega}(u_n, \{ x_n,y_n,z_n\}) > n
\end{align*}
we see that $x_{\infty}, y_{\infty}, z_{\infty} \in \partial \Omega$. By Lemma~\ref{lem:global_loc_2}, we must have $x_{\infty} \neq y_{\infty}$. Thus by relabeling we may assume that $z_{\infty} \neq x_{\infty}$. We can assume that $x_n \in B_{\epsilon}(x_{\infty})$ for all $n$. Parametrize $\sigma_{x_nz_n}:[0,T_n] \rightarrow \Omega$ such that $\sigma_{x_nz_n}(0) = x_n$ and let 
\begin{align*}
T_n^\prime = \sup\{ t \in [0,T_n] : \sigma_{x_nz_n}([0,t]) \subset B_{\epsilon}(x_{\infty})\}.
\end{align*}
Then $\sigma_{x_nz_n}|_{[0,T_n^\prime]}$ is a $K$-almost-geodesic in $\Omega \cap V_{x_{\infty}}$. Then Proposition~\ref{prop:rough_geodesic_limits_exist} implies that we may pass to a subsequence and find $\alpha_n \in [0,T_n^\prime]$ such that $t \rightarrow \sigma_{x_nz_n}( t+\alpha_n)$ converges locally uniformly to a $K$-almost-geodesic $\sigma:\Rb \rightarrow \Omega \cap V_{x_{\infty}}$. But then 
\begin{align*}
d_{\Omega}(u_\infty, \sigma(0)) = \lim_{n \rightarrow \infty} d_{\Omega}(u_n, \sigma_{x_nz_n}(\alpha_n)) \geq  \lim_{n \rightarrow \infty} d_{\Omega}(u_n, \sigma_{x_nz_n})=\infty
\end{align*}
which contradicts the fact that $u_\infty, \sigma(0) \in \Omega$.
 \end{proof}

We can now suppose $u_{\infty} = \xi \in \partial \Omega$. Then let $\Omega_{\xi} = \Phi_{\xi}( V_{\xi} \cap \Omega)$ and $\wh{u}_n = \Phi_{\xi}(u_n)$. By Theorem~\ref{thm:gaussier} we can pass to a subsequence so that there exists affine maps $A_n \in \Aff(\Cb^d)$ and a $\Cb$-proper open convex set $\wh{\Omega}$ of the form 
\begin{align*}
\wh{\Omega} = \{ (z_1, \dots, z_d) : \Imaginary(z_1) > P(z_2, \dots, z_d) \}
\end{align*}
where $P$ is a non-negative non-degenerate convex polynomial with $P(0)=0$ and
\begin{enumerate}
\item $A_n \Omega_{\xi} \rightarrow \wh{\Omega}$ in the local Hausdorff topology,
\item $A_n u_n \rightarrow \wh{u}_{\infty} \in \wh{\Omega}$,
\item $(A_n \Omega_{\xi})_{n \in \Nb}$ is locally L-convex sequence.
\end{enumerate}
\ \newline
\noindent \textbf{Special Case 2:} Assume $x_{\infty}=y_{\infty}=z_{\infty} =\xi$.

\begin{proof} 
By passing to a subsequence we may suppose that $x_n, y_n, z_n \in B_{\delta}(\xi)$ for all $n$, $A_n \Phi_{\xi}(x_n) \rightarrow \wh{x}_{\infty} \in \overline{\Cb^d}$, $A_n \Phi_{\xi}(y_n) \rightarrow \wh{y}_{\infty} \in \overline{\Cb^d}$, and $A_n \Phi_{\xi}(z_n) \rightarrow \wh{z}_{\infty} \in \overline{\Cb^d}$.

By our choice of $\delta>0$
\begin{align*}
\sigma_{x_ny_n}, \sigma_{y_nz_n}, \sigma_{z_nx_n} \subset B_{\epsilon}(\xi)
\end{align*}
hence 
\begin{align*}
\wh{\sigma}_{x_ny_n}:=\Phi_{\xi}(\sigma_{x_n y_n}), \quad \wh{\sigma}_{y_nz_n}:=\Phi_{\xi}(\sigma_{y_nz_n}), \quad \wh{\sigma}_{z_nx_n}:=\Phi_{\xi}(\sigma_{z_nx_n})
\end{align*}
are all $K$-almost-geodesics in $(\Omega_{\xi}, d_{\Omega_{\xi}})$. 

Now suppose $\wh{\sigma}_{x_n y_n}:[a_n,b_n] \rightarrow \Omega_{\xi}$ is parametrized so that $\wh{\sigma}_{x_n y_n}(0)=\wh{u}_n$. Then we can pass to a subsequence so that $A_n\wh{\sigma}_{x_ny_n}$ converges locally uniformly to a $K$-almost-geodesic $\wh{\sigma}:\Rb \rightarrow \wh{\Omega}$. By Proposition~\ref{prop:rough_geodesic_limits_exist_2}
\begin{align*}
\lim_{t \rightarrow -\infty} \wh{\sigma}(t) = \lim_{n \rightarrow \infty} A_n \Phi_{\xi}(x_n) = \wh{x}_{\infty}
\end{align*}
and 
\begin{align*}
\lim_{t \rightarrow +\infty} \wh{\sigma}(t) = \lim_{n \rightarrow \infty} A_n \Phi_{\xi}(y_n) = \wh{y}_{\infty}
\end{align*}
for some $\wh{x}_{\infty}, \wh{y}_{\infty} \in \overline{\Cb^d}$. By Proposition~\ref{prop:rough_geodesics_well_behaved} $\wh{x}_{\infty} \neq \wh{y}_{\infty}$. So by relabeling we can suppose that $\wh{x}_{\infty} \neq \wh{z}_{\infty}$. Then, by Proposition~\ref{prop:rough_geodesic_limits_exist}, there exists a parametrization of $A_n \wh{\sigma}_{x_n z_n}$ which converges locally uniformly to a $K$-almost-geodesic $\wh{\gamma}:\Rb \rightarrow \wh{\Omega}$. But then 
\begin{align*}
d_{\wh{\Omega}}(\wh{u}_{\infty}, \wh{\gamma}(0))
& = \lim_{n \rightarrow \infty} d_{A_n\Omega_{\xi}}(A_n\wh{u}_n, A_n \wh{\sigma}_{x_n z_n}(0)) = \lim_{n \rightarrow \infty} d_{\Omega_{\xi}}(\wh{u}_n, \wh{\sigma}_{x_n z_n}(0)) \\
&= \lim_{n \rightarrow \infty} d_{\Omega \cap V_{\xi}}(u_n, \sigma_{x_n z_n}(0)) \geq  \lim_{n \rightarrow \infty} d_{\Omega}(u_n, \sigma_{x_n z_n}) = \infty
\end{align*}
 which is a contradiction.
\end{proof}

We now prove the general case. Suppose $\sigma_{x_ny_n}:[a_n,b_n] \rightarrow \Omega$ is parametrized so that $\sigma_{x_ny_n}(0)=u_n$. Let 
\begin{align*}
a_n^\prime = \inf \{ t \in [a_n,b_n] : \sigma_{x_ny_n}([t,0]) \subset B_{\epsilon}(\xi)\}
\end{align*}
and 
\begin{align*}
b_n^\prime = \sup \{ t \in [a_n,b_n] : \sigma_{x_ny_n}([0,t]) \subset B_{\epsilon}(\xi)\}.
\end{align*}
Since $u_n \rightarrow \xi$, by Theorem~\ref{thm:global_loc} we have that $a_n^\prime \rightarrow -\infty$ and $b_n^\prime \rightarrow +\infty$. 

Also
\begin{align*}
\wh{\sigma}_n : =(A_n \circ \Phi_{\xi} \circ \sigma_{x_ny_n})|_{[a_n^\prime, b_n^\prime]}
\end{align*}
 is a $K$-almost-geodesic in $(A_n\Omega_{\xi}, d_{A_n\Omega_{\xi}})$. Hence we may pass to a subsequence such that $\wh{\sigma}_n$ converges locally uniformly to a $K$-almost-geodesic $\wh{\sigma} : \Rb \rightarrow \wh{\Omega}$. By passing to a subsequence we may assume that 
  \begin{align*}
 \lim_{n \rightarrow \infty} \wh{\sigma}_n(a_n^\prime) = \wh{x}_{\infty}
 \end{align*}
 and 
   \begin{align*}
 \lim_{n \rightarrow \infty} \wh{\sigma}_n(b_n^\prime) = \wh{y}_{\infty}
 \end{align*}
 for some $\wh{x}_{\infty}, \wh{y}_{\infty} \in \overline{\Cb^d}$. 
 
 The points $x_{\infty}, y_{\infty}$ and $\wh{x}_{\infty}, \wh{y}_{\infty}$ have the following relationship:
  
 \begin{observation}\label{obs:stupid} If $x_{\infty} \neq \xi$ then $\wh{x}_{\infty} = \infty$. Likewise, if $y_{\infty}\neq \xi$ then $\wh{y}_{\infty}  = \infty$.
 \end{observation} 
 
\begin{proof} 
If $x_{\infty} \neq \xi$ then
 \begin{align*}
 \lim_{n \rightarrow \infty} \norm{ \sigma_{x_ny_n}(a_n^\prime) - u_n} >0.
 \end{align*}
So $\wh{x}_{\infty} = \infty$ by the proof of Theorem~\ref{thm:gaussier}. The $y$ case is identical.
\end{proof}
 
Now by Proposition~\ref{prop:rough_geodesic_limits_exist_2}
 \begin{align*}
 \lim_{t \rightarrow -\infty} \wh{\sigma}(t)= \lim_{n \rightarrow \infty} \wh{\sigma}_n(a_n^\prime) = \wh{x}_{\infty}
 \end{align*}
 and 
  \begin{align*}
 \lim_{t \rightarrow +\infty} \wh{\sigma}(t) =  \lim_{n \rightarrow \infty} \wh{\sigma}_n(b_n^\prime) = \wh{y}_{\infty}.
 \end{align*}
 Hence $\wh{x}_{\infty} \neq \wh{y}_{\infty}$ by Proposition~\ref{prop:rough_geodesics_well_behaved}. So by relabeling we may assume that $\wh{x}_{\infty} \neq \infty$. This implies, by Observation~\ref{obs:stupid}, that $x_{\infty} = \xi$. So by passing to subsequence we can suppose that $x_n \in B_{\delta}(\xi)$ for all $n$. Then by our choice of $\delta$, $a_n^\prime = a_n$ for all $n$ and
 \begin{align*}
 \lim_{n \rightarrow\infty} A_n \Phi_\xi(x_n) = \lim_{n \rightarrow \infty}   \wh{\sigma}_n(a_n^\prime) = \wh{x}_{\infty}.
 \end{align*}
 
 Now suppose $\sigma_{x_n z_n} : [0,T_n] \rightarrow \Omega$ is parametrized so that $\sigma_{x_nz_n}(0) = x_n$. Let 
 \begin{align*}
 T_n^\prime = \sup \{ t \in [0,T_n] : \sigma_{x_nz_n}([0,t]) \subset B_{\epsilon}(\xi)\},
 \end{align*}
then
 \begin{align*}
\wh{\gamma}_n : =(A_n \circ \Phi_{\xi} \circ \sigma_{x_nz_n})|_{[0, T_n^\prime]}
\end{align*}
is an $K$-almost-geodesic in $(A_n\Omega_{\xi}, d_{A_n\Omega_{\xi}})$. By passing to a subsequence we may assume that 
  \begin{align*}
 \lim_{n \rightarrow \infty} \wh{\gamma}_n(T_n^\prime) = \wh{z}_{\infty}
 \end{align*}
for some $\wh{z}_{\infty} \in \overline{\Cb^d}$. 

If $\wh{z}_{\infty} \neq \wh{x}_{\infty}$ then, by Proposition~\ref{prop:rough_geodesic_limits_exist}, there exists some $\alpha_n \in [0,T_n^\prime]$ such that $t \rightarrow \wh{\gamma}_n(t+\alpha_n)$ converges to a  $K$-almost-geodesic $\wh{\gamma}:\Rb \rightarrow \wh{\Omega}$. But then 
  \begin{align*}
d_{\wh{\Omega}}(\wh{u}_{\infty}, \wh{\gamma}(0))
& = \lim_{n \rightarrow \infty} d_{A_n\Omega_{\xi}}(A_n\wh{u}_n, \wh{\gamma}_{n}(0))  = \lim_{n \rightarrow \infty} d_{\Omega \cap V_{\xi}}(u_n, \sigma_{x_n z_n}(\alpha_n)) \\
& \geq  \lim_{n \rightarrow \infty} d_{\Omega}(u_n, \sigma_{x_n z_n}) = \infty
\end{align*}
 which is a contradiction. 
 
It remains to consider the case where $\wh{z}_{\infty} = \wh{x}_{\infty}$. Then since $\wh{z}_{\infty} \neq \infty$ arguing as in Observation~\ref{obs:stupid} shows that $z_{\infty} = \xi$. So by passing to a subsequence we can suppose that $z_n \in B_{\delta}(\xi)$ for all $n$. Then by our choice of $\delta$, $T_n^\prime = T_n$.  So 
\begin{align*}
\lim_{n \rightarrow \infty} A_n \Phi_{\xi}(z_n) = \lim_{n \rightarrow \infty}\wh{\gamma}_n(T_n^\prime)= \wh{z}_{\infty}.
\end{align*} 
 
 Suppose $\sigma_{z_n y_n} : [0,S_n] \rightarrow \Omega$ is parametrized so that $\sigma_{z_ny_n}(0) = z_n$. Let 
 \begin{align*}
S_n^\prime = \sup \{ s \in [0,S_n] : \sigma_{z_ny_n}([0,s]) \subset B_{\epsilon}(\xi)\}.
 \end{align*}
Then 
 \begin{align*}
\wh{\eta}_n : =(A_n \circ \Phi_{\xi} \circ \sigma_{y_nz_n})|_{[0, S_n^\prime]}
\end{align*}
is an $K$-almost-geodesic in $(A_n\Omega_{\xi}, d_{A_n\Omega_{\xi}})$. By passing to a subsequence we may assume that 
  \begin{align*}
 \lim_{n \rightarrow \infty} \wh{\eta}_n(S_n^\prime) = \wh{w}_{\infty} \in \overline{\Cb^d}
 \end{align*}
 for some $\wh{w}_{\infty} \in \overline{\Cb^d}$. If $\wh{w}_{\infty} = \wh{z}_{\infty}$ then $\wh{w}_{\infty} \neq \infty$ and hence arguing as in Observation~\ref{obs:stupid} shows that $y_{\infty} = \xi$. But then we are in Special Case 2. 
 
If $\wh{z}_{\infty} \neq \wh{w}_{\infty}$ then, by Proposition~\ref{prop:rough_geodesic_limits_exist}, there exists some $\beta_n \in [0,S_n^\prime]$ such that $t \rightarrow \wh{\eta}_n(t+\beta_n)$ converges to a $K$-almost-geodesic $\wh{\eta}:\Rb \rightarrow \wh{\Omega}$. But then 
  \begin{align*}
d_{\wh{\Omega}}(\wh{u}_{\infty}, \wh{\eta}(0))
& = \lim_{n \rightarrow \infty} d_{A_n\Omega_{\xi}}(A_n\wh{u}_n, \wh{\eta}_{n}(0))  = \lim_{n \rightarrow \infty} d_{\Omega \cap V_{\xi}}(u_n, \sigma_{y_n z_n}(\beta_n)) \\
& \geq  \lim_{n \rightarrow \infty} d_{\Omega}(u_n, \sigma_{y_n z_n}) = \infty
\end{align*}
 which is a contradiction.  
 
 Thus $(\Omega, d_{\Omega})$ is Gromov hyperbolic. 
 \end{proof}
 
 \section{Geodesics and the Gromov product}
 
 Before proving the second part of Theorem~\ref{thm:loc_main} we need to establish some properties of geodesics in locally convexifiable domains. 
 
\begin{proposition}\label{prop:gromov_prod_infinite_loc}
Suppose $\Omega$ is locally convexifiable and has finite local line type $L$. Assume $p_n, q_n \subset \Omega$ are sequences of points such that $\lim_{n \rightarrow \infty} p_n = \xi_1 \in \partial \Omega$ and $\lim_{ n \rightarrow \infty} q_n = \xi_2 \in \partial \Omega$. If
\begin{align*}
\lim_{n \rightarrow \infty} (p_n|q_n)_o = \infty
\end{align*}
for some $o \in \Omega$, then $\xi_1 = \xi_2$.
\end{proposition}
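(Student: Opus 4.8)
The plan is to mimic the proof of Proposition~\ref{prop:gromov_prod_infinite}, replacing the global m-convexity hypothesis by the local structure of a locally convexifiable domain and using the localization theorems of the previous section. Suppose for a contradiction that $\xi_1 \neq \xi_2$. Since $\partial\Omega$ is compact we may pass to a subsequence and assume $p_n \to \xi_1$, $q_n \to \xi_2$ with $\xi_1 \neq \xi_2$. Fix a good convex atlas $\{(V_\xi,\Phi_\xi)\}$ and let $\epsilon,\delta,K$ be the constants produced by Theorem~\ref{thm:inf_loc}, Lemma~\ref{lem:global_loc_2}, and Theorem~\ref{thm:global_loc}, so that for each $\eta\in\partial\Omega$ any geodesic with endpoints in $B_\delta(\eta)$ stays in $B_\epsilon(\eta)$ and is a $K$-almost-geodesic in $(\Omega\cap V_\eta, d_{\Omega\cap V_\eta})$. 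Let $\sigma_n:[0,T_n]\to\Omega$ be a geodesic segment in $(\Omega,d_\Omega)$ with $\sigma_n(0)=p_n$ and $\sigma_n(T_n)=q_n$. As in the proof of Proposition~\ref{prop:gromov_prod_infinite}, since $\sigma_n$ is a geodesic we have, for any $\alpha_n\in[0,T_n]$,
\begin{align*}
(p_n|q_n)_o \leq d_\Omega(o,\sigma_n(\alpha_n)),
\end{align*}
so it suffices to produce $\alpha_n$ with $\sup_n d_\Omega(o,\sigma_n(\alpha_n)) < \infty$; this contradicts $(p_n|q_n)_o\to\infty$.

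To produce such $\alpha_n$, observe that because $\xi_1\neq\xi_2$, for $n$ large the curve $\sigma_n$ must leave $B_\delta(\xi_1)$ before it can reach $q_n$ (and, if $\xi_1$ and the relevant balls are chosen with radii summing to less than $|\xi_1-\xi_2|$, it cannot return); more precisely there is a sub-segment $\sigma_n|_{[a_n,b_n]}$ with $\sigma_n([a_n,b_n])\subset B_\epsilon(\eta)$ for a single boundary point $\eta$ (after passing to a subsequence fixing which chart the exit happens in), with $\sigma_n(a_n),\sigma_n(b_n)$ on the sphere of radius $\epsilon/2$ (say) about $\eta$, hence $\|\sigma_n(a_n)-\sigma_n(b_n)\|$ bounded below and $\sigma_n([a_n,b_n])$ bounded. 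Since $\Omega\cap V_\eta$ is biholomorphic, via $\Phi_\eta$, to a locally m-convex convex set and $\Phi_\eta\circ\sigma_n|_{[a_n,b_n]}$ is a $K$-almost-geodesic there, Proposition~\ref{prop:rough_geodesic_limits_exist} applies: after passing to a subsequence there is $T_n\in[a_n,b_n]$ so that $t\mapsto \Phi_\eta(\sigma_n(t+T_n))$ converges locally uniformly to a $K$-almost-geodesic $\sigma:\mathbb{R}\to\Phi_\eta(\Omega\cap V_\eta)$. In particular $\Phi_\eta(\sigma_n(T_n))$ converges to $\sigma(0)\in\Phi_\eta(\Omega\cap V_\eta)$, so $\sigma_n(T_n)$ converges to an \emph{interior} point $w\in\Omega\cap V_\eta$. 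Taking $\alpha_n=T_n$, we get $d_\Omega(o,\sigma_n(\alpha_n))\to d_\Omega(o,w)<\infty$, the desired bounded sequence, and hence the contradiction.

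The one point requiring care — and the main obstacle — is the dichotomy ``either both $\xi_i$ equal the same boundary point $\xi$ and we conclude immediately, or $\sigma_n$ genuinely exits some fixed small ball and stays in the corresponding chart long enough.'' If $\xi_1=\xi_2=\xi$ then there is nothing to prove, so we may assume $\xi_1\neq\xi_2$; then choose $r>0$ with $3r<|\xi_1-\xi_2|$ and $r<\delta$, pick boundary points so that $p_n\in B_r(\xi_1)$, $q_n\in B_r(\xi_2)$ for large $n$, and set $a_n = \sup\{t: \sigma_n([0,t])\subset B_{2r}(\xi_1)\}$. By convexity of $d_\Omega$-balls near the boundary (Lemma~\ref{lem:global_loc_2}) and the fact that $\sigma_n$ must travel from near $\xi_1$ to near $\xi_2$, the segment $\sigma_n|_{[0,a_n]}$ lies in $B_{2r}(\xi_1)\subset V_{\xi_1}$ with $\sigma_n(a_n)$ on $\partial B_{2r}(\xi_1)$, so $\|\sigma_n(0)-\sigma_n(a_n)\|\geq r$ and the segment is bounded. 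We may then apply Proposition~\ref{prop:rough_geodesic_limits_exist} to $\Phi_{\xi_1}\circ\sigma_n|_{[0,a_n]}$ in $\Omega_{\xi_1}=\Phi_{\xi_1}(\Omega\cap V_{\xi_1})$, which is locally $L$-convex by property (2) of a good convex atlas, and run the argument of the previous paragraph with $\eta=\xi_1$. This completes the proof.
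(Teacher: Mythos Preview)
Your argument is correct and is essentially the paper's own proof: contradict $\xi_1\neq\xi_2$ by restricting the geodesic $\sigma_n$ to the portion in $B_\delta(\xi_1)$, use the localization results to view it as a $K$-almost-geodesic in the convex chart $\Omega_{\xi_1}$, and apply Proposition~\ref{prop:rough_geodesic_limits_exist} to extract a limit point $\sigma_n(\alpha_n)\to w\in\Omega$, contradicting $(p_n|q_n)_o\leq d_\Omega(o,\sigma_n(\alpha_n))$. One small cleanup: the appeal to Lemma~\ref{lem:global_loc_2} (and the phrase ``convexity of $d_\Omega$-balls'') is unnecessary and mislabeled---the containment $\sigma_n|_{[0,a_n]}\subset B_{2r}(\xi_1)$ is immediate from your definition of $a_n$.
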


\begin{proof}
Suppose for a contradiction that $\xi_1 \neq \xi_2$. Let $\sigma_n : [0,T_n] \rightarrow \Omega$ be a geodesic such that $\sigma_n(0) = p_n$ and $\sigma_n(T_n) = q_n$ for some $T_n> 0$.

Let $\{ (V_{\xi}, \Phi_{\xi}) : \xi \in \partial \Omega\}$ be a good convex atlas. Let $K,\delta >0$ be parameters as in the conclusion of Theorem~\ref{thm:global_loc}. 

By passing to a subsequence we can assume that $p_n \in B_{\delta}(\xi_1)$ for all $n$. Define
\begin{align*}
T_n^\prime = \sup \{ t : \sigma_n([0,t]) \subset B_{\delta}(\xi_1) \}.
\end{align*}
Since $\xi_1 \neq \xi_2$ we see that 
\begin{align*}
\lim_{n \rightarrow \infty} \norm{ \sigma_n(0)-\sigma_n(T_n^\prime)} > 0.
\end{align*}
Moreover $\sigma_n|_{[0,T_n^\prime]}$ is a $K$-almost-geodesic in $(\Omega \cap V_{\xi_1}, d_{\Omega \cap V_{\xi_1}})$. So by Proposition~\ref{prop:rough_geodesic_limits_exist} we can pass to a subsequence and find $\alpha_n \in  [0,T_n^\prime]$ such that the $K$-almost-geodesics $t \rightarrow \sigma(t+\alpha_n)$ converge locally uniformly to a $K$-almost-geodesic $\wh{\sigma}:\Rb \rightarrow V_{\xi} \cap \Omega$. 

Now since $\sigma_n$ is a geodesic 
\begin{align*}
(p_n|q_n)_o 
&= \frac{1}{2} \left( d_{\Omega}(p_n,o)+ d_{\Omega}(o,q_n) - d_{\Omega}(p_n,q_n) \right) \\
&=  \frac{1}{2} \left( d_{\Omega}(p_n,o)+ d_{\Omega}(o,q_n) - d_{\Omega}(p_n,\sigma_n(\alpha_n)) - d_{\Omega}(\sigma_n(\alpha_n), q_n) \right)\\
& \leq d_{\Omega}(o, \sigma_n(\alpha_n)).
\end{align*}
But then
\begin{align*}
\infty =\lim_{n \rightarrow \infty} (p_n|q_n)_o  \leq  \lim_{n\rightarrow \infty}  d_{\Omega}(o, \sigma_n(\alpha_n))=  d_{\Omega}(o, \wh{\sigma}(0))
\end{align*}
which is a contradiction. 
\end{proof}

Proposition~\ref{prop:gromov_prod_infinite_loc} has two corollaries about the behavior of geodesics:

  \begin{corollary}\label{cor:geod_loc_1}
 Suppose $\Omega$ is locally convexifiable and has finite local line type $L$. If $\sigma :[0,\infty) \rightarrow \Omega$ is a geodesic ray then 
 \begin{align*}
 \lim_{t \rightarrow \infty} \sigma(t)
 \end{align*}
 exists in $\partial \Omega$.
 \end{corollary}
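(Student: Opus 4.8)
The plan is to mimic the argument already used in the convex case (see the proof of the surjectivity direction of Proposition~\ref{prop:main}), replacing Corollary~\ref{cor:geodesic_limits} and Proposition~\ref{prop:limits_exist} with their ``almost geodesic'' and ``localized'' analogues. So suppose $\sigma:[0,\infty) \to \Omega$ is a geodesic ray. First I would fix a good convex atlas $\{(V_\xi,\Phi_\xi) : \xi \in \partial\Omega\}$ of $\Omega$ (Proposition~\ref{prop:good_cover}) and fix the parameters $\epsilon,\delta,K>0$ coming from Theorem~\ref{thm:inf_loc}, Lemma~\ref{lem:global_loc_2}, and Theorem~\ref{thm:global_loc} as in the proof of Gromov hyperbolicity above: any geodesic that stays in a ball $B_\delta(\xi)$ stays in $B_\epsilon(\xi)$, and any geodesic contained in $B_\epsilon(\xi)$ is a $K$-almost-geodesic in $(\Omega\cap V_\xi, d_{\Omega\cap V_\xi})$.

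Next I would show that $\sigma(t)$ must leave every compact subset of $\Omega$ as $t \to \infty$ (otherwise there is a sequence $t_n \to \infty$ with $\sigma(t_n)$ relatively compact in $\Omega$; but then Proposition~\ref{prop:m_convex}, applied after a harmless translation, would produce a geodesic line $\wh\sigma:\Rb \to \Omega$ obtained as a locally uniform limit of $t\mapsto\sigma(t+T_n)$ with $d_\Omega(\sigma(0),\wh\sigma(0)) = \lim_n t_n = \infty$, a contradiction — here I use that $\Omega$ is locally $L$-convex by Proposition~\ref{prop:finite_type} and compactness of $\overline\Omega$). Hence any subsequential limit of $\sigma(t)$ lies in $\partial\Omega$; pick $t_n \to \infty$ with $\sigma(t_n) \to \xi \in \partial\Omega$. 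For large $n$, $\sigma(t_n) \in B_\delta(\xi)$, and using Theorem~\ref{thm:global_loc} together with the fact that $\sigma$ is a geodesic, the portion of $\sigma$ after some time $T$ is trapped in $B_\epsilon(\xi)$ — concretely, once $\sigma(s_0) \in B_\delta(\xi)$ for $s_0$ large, any $\sigma(s)$ with $s\geq s_0$ that were to exit $B_\epsilon(\xi)$ would force, via Lemma~\ref{lem:global_loc_2} applied to the segment $\sigma|_{[s_0, t_n]}$ (whose endpoints both lie in $B_\delta(\xi)$ for $n$ large), a contradiction. So there is $T$ with $\sigma([T,\infty)) \subset B_\epsilon(\xi)$, and then $\sigma|_{[T,\infty)}$ is a $K$-almost-geodesic ray in $(\Omega\cap V_\xi, d_{\Omega\cap V_\xi})$, hence $\Phi_\xi\circ\sigma|_{[T,\infty)}$ is a $K$-almost-geodesic ray in a locally $m$-convex domain $\Omega_\xi$; by Corollary~\ref{cor:rough_geodesic_limits} its forward limit exists in $\overline{\Cb^d}$, and since $\Omega_\xi$ is bounded this limit is a point of $\partial\Omega_\xi$, so $\lim_{t\to\infty}\sigma(t)$ exists and equals $\Phi_\xi^{-1}$ of that point, a point of $\partial\Omega$.

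Finally I would note that the limit point is independent of the chart used and lies in $\partial\Omega$ globally, which is automatic since it is literally $\lim_{t\to\infty}\sigma(t)$ in $\Cb^d$. The main obstacle is the trapping step: verifying that the geodesic ray, once it enters a small ball $B_\delta(\xi)$ near a subsequential limit point, cannot wander back out to distance $\epsilon$ — this requires combining the global localization estimate (Theorem~\ref{thm:global_loc}) with Lemma~\ref{lem:global_loc_2} correctly, and it is the one place where one must be careful that the relevant endpoints genuinely lie in $B_\delta(\xi)$ so that the localization hypotheses apply. Everything else is a routine transcription of the convex-case argument through the biholomorphic chart $\Phi_\xi$, using that a good convex atlas distorts Euclidean distances boundedly (condition (3) in the definition of good atlas) so that ``exits $B_\epsilon(\xi)$'' and ``exits a fixed Euclidean ball in $\Omega_\xi$'' are equivalent up to the constant $C$.
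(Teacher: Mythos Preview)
Your argument is essentially correct, but it takes a genuinely different route from the paper's proof, and there are two small imprecisions worth flagging.

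First, the imprecisions. Your justification for why $\sigma(t)$ leaves every compact set invokes Proposition~\ref{prop:m_convex}, but that proposition is stated for \emph{convex} domains and $\Omega$ here is only locally convexifiable; the appeal to ``$\Omega$ is locally $L$-convex by Proposition~\ref{prop:finite_type}'' does not help, since that proposition is likewise about convex sets. Fortunately this step is trivial for a simpler reason: $d_\Omega(\sigma(0),\sigma(t))=t\to\infty$, while $d_\Omega(\sigma(0),\cdot)$ is bounded on any compact subset of $\Omega$. Second, Corollary~\ref{cor:rough_geodesic_limits} is stated for almost-geodesics $\sigma:\Rb\to\Omega$, not rays; you are applying it to a ray $[T,\infty)\to\Omega_\xi$. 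This is harmless, since the forward-limit part of the argument behind that corollary uses only the forward direction, but it should be said. With those two points patched, your trapping argument via Lemma~\ref{lem:global_loc_2} is correct: once $\sigma(t_N)\in B_\delta(\xi)$, any later exit from $B_\epsilon(\xi)$ is ruled out by sandwiching the exit time between $t_N$ and a later $t_n$ with $\sigma(t_n)\in B_\delta(\xi)$.

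Now the comparison. The paper's proof is three lines: assuming two subsequential limits $\xi_1\neq\xi_2$ exist, one computes $(\sigma(s_n)\,|\,\sigma(t_n))_{\sigma(0)}=\min\{s_n,t_n\}\to\infty$ and invokes Proposition~\ref{prop:gromov_prod_infinite_loc} directly for a contradiction. All the localization and chart work you do explicitly is already packaged inside the proof of Proposition~\ref{prop:gromov_prod_infinite_loc}. Your approach instead \emph{inlines} that machinery: you trap the ray in a single chart and then read off the limit from Corollary~\ref{cor:rough_geodesic_limits}. What your route buys is a more geometric picture (the tail of the ray is literally contained in one coordinate ball), at the cost of length; what the paper's route buys is modularity---once Proposition~\ref{prop:gromov_prod_infinite_loc} is in hand, both this corollary and Corollary~\ref{cor:geod_loc_2} fall out immediately from Gromov-product computations, with no further chart-juggling.
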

 
 \begin{proof}
 Suppose for a contradiction that the limit does not exist. Then there exists sequences $s_n \rightarrow \infty$ and $t_n \rightarrow \infty$ such that $\sigma(s_n) \rightarrow \xi_1$, $\sigma_n(t_n) \rightarrow \xi_2$, and $\xi_1 \neq \xi_2$. But 
 \begin{align*}
 ( \sigma(s_n) | \sigma(t_n))_o = \min\{ t_n, s_n\}
 \end{align*}
 which contradicts Proposition~\ref{prop:gromov_prod_infinite_loc}.
 \end{proof}
 
   \begin{corollary}\label{cor:geod_loc_2}
 Suppose $\Omega$ is locally convexifiable and has finite local line type $L$. Assume $\sigma_n :[0,T_n] \rightarrow \Omega$ is a sequence of geodesics with $T_n \rightarrow \infty$ and $\sigma_n$ converges locally uniformly to a geodesic $\sigma:  [0,\infty) \rightarrow \Omega$. Then 
 \begin{align*}
 \lim_{t \rightarrow \infty} \sigma(t) = \lim_{n \rightarrow \infty} \sigma_n(T_n).
 \end{align*}
 \end{corollary}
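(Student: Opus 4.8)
The plan is to follow the template of Proposition~\ref{prop:limits_exist} from the convex case: use Corollary~\ref{cor:geod_loc_1} to produce the forward limit of $\sigma$, and then use Proposition~\ref{prop:gromov_prod_infinite_loc} to identify that limit with $\lim_{n\to\infty}\sigma_n(T_n)$. Write $o=\sigma(0)$; note that $\sigma_n(0)\to o$ by the local uniform convergence, and by Corollary~\ref{cor:geod_loc_1} the limit $\xi:=\lim_{t\to\infty}\sigma(t)$ exists and lies in $\partial\Omega$. Since $\overline{\Omega}$ is compact it suffices to show that every convergent subsequence of $(\sigma_n(T_n))_n$ has limit $\xi$; so suppose $\sigma_{n_k}(T_{n_k})\to\eta\in\overline{\Omega}$ and aim to prove $\eta=\xi$. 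First I would rule out $\eta\in\Omega$: since $d_\Omega(\sigma_n(0),\sigma_n(T_n))=T_n\to\infty$ and $\sigma_n(0)\to o$, we have $d_\Omega(o,\sigma_n(T_n))\to\infty$, whereas if $\eta\in\Omega$ the continuity of the Kobayashi distance on the bounded domain $\Omega$ would bound $d_\Omega(o,\sigma_{n_k}(T_{n_k}))$ over a compact neighbourhood of $\eta$. Hence $\eta\in\partial\Omega$.

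The main estimate is a lower bound for the Gromov product $(\sigma(t)\,|\,\sigma_n(T_n))_{o}$ with fixed $t\ge0$ and $n$ large (so $T_n\ge t$). Put $\epsilon_n=d_\Omega(o,\sigma_n(0))$ and $\delta_n(t)=d_\Omega(\sigma(t),\sigma_n(t))$. Using $d_\Omega(\sigma_n(0),\sigma_n(T_n))=T_n$, $d_\Omega(\sigma_n(t),\sigma_n(T_n))=T_n-t$, and the triangle inequality, one gets $|d_\Omega(o,\sigma_n(T_n))-T_n|\le\epsilon_n$ and $|d_\Omega(\sigma(t),\sigma_n(T_n))-(T_n-t)|\le\delta_n(t)$, and therefore
\[
(\sigma(t)\,|\,\sigma_n(T_n))_{o}=\tfrac12\big(d_\Omega(o,\sigma(t))+d_\Omega(o,\sigma_n(T_n))-d_\Omega(\sigma(t),\sigma_n(T_n))\big)\ge t-\tfrac12\big(\epsilon_n+\delta_n(t)\big).
\]
Since $\epsilon_n\to0$ and $\delta_n(t)\to0$ (local uniform convergence $\sigma_n\to\sigma$), this yields $\liminf_{k\to\infty}(\sigma(t)\,|\,\sigma_{n_k}(T_{n_k}))_{o}\ge t$ for every $t\ge0$.

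Finally I would do a diagonal extraction: choose indices $k_1<k_2<\cdots$ so that $(\sigma(j)\,|\,\sigma_{n_{k_j}}(T_{n_{k_j}}))_{o}\ge j-1$ for each $j\ge1$. Setting $p_j=\sigma(j)$ and $q_j=\sigma_{n_{k_j}}(T_{n_{k_j}})$, we have $p_j\to\xi$, $q_j\to\eta$, both in $\partial\Omega$, and $(p_j\,|\,q_j)_{o}\to\infty$; Proposition~\ref{prop:gromov_prod_infinite_loc} then forces $\xi=\eta$, which is what we wanted. I do not expect a serious obstacle here: the only points needing a little care are the verification that $\eta\in\partial\Omega$ (continuity of the Kobayashi distance on the bounded domain together with $T_n\to\infty$) and keeping the quantifiers straight in the diagonal extraction so that the Gromov product of the extracted pairs genuinely tends to infinity.
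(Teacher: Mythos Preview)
Your proof is correct and follows essentially the same strategy as the paper: both arguments reduce to Proposition~\ref{prop:gromov_prod_infinite_loc} by exhibiting sequences converging to the two putatively distinct boundary points whose Gromov product tends to infinity. The only tactical difference is that the paper places both sequence points on the geodesic $\sigma_n$ (transferring $\sigma(s_m')$ to nearby points $\sigma_n(s_n)$ via local uniform convergence and then reading off the Gromov product directly from the geodesic parametrization), whereas you keep one point on $\sigma$ and one on $\sigma_n$ and absorb the discrepancy into the error terms $\epsilon_n$ and $\delta_n(t)$; this is a bookkeeping variation, not a different idea.
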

 
 \begin{proof}
We can assume that 
 \begin{align*}
 \lim_{n \rightarrow \infty} \sigma_n(T_n) = \xi_1
 \end{align*}
 for some $\xi_1 \in \partial \Omega$. If 
  \begin{align*}
 \lim_{t \rightarrow \infty} \sigma(t) \neq \xi_1
 \end{align*}
 then we can find a sequence $s_m^\prime \rightarrow \infty$ such that $\sigma(s_m^\prime) \rightarrow \xi_2 \in \partial \Omega$ with $\xi_2 \neq \xi_1$. Then since $\sigma_n$ converges locally uniformly to $\sigma$ we can find $s_n \rightarrow \infty$ such that $\sigma_n(s_n) \rightarrow \xi_2$. Moreover there exists $R>0$ such that 
 \begin{align*}
 d_{\Omega}(\sigma_n(0),\sigma(0)) < R.
 \end{align*} 
Then
 \begin{align*}
 (\sigma_n(T_n)|\sigma_n(s_n))_{\sigma(0)} \geq \min\{s_n,T_n\} -2R
 \end{align*}
  which contradicts Proposition~\ref{prop:gromov_prod_infinite_loc}.
  \end{proof}
 
 \section{The Gromov boundary}

In this section we complete the proof of Theorem~\ref{thm:loc_main} (Theorem~\ref{thm:loc_main_i} in the introduction) by showing:

\begin{theorem}
Suppose $\Omega$ is locally convexifiable and has finite local line type $L$. Then the identity map $\Omega \rightarrow \Omega$ extends to a homeomorphism $\Omega \cup \Omega(\infty) \rightarrow \Omega \cup \partial \Omega$. 
\end{theorem}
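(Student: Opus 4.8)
The plan is to mimic the proof of Proposition~\ref{prop:main}, replacing the convex-domain inputs with their locally convexifiable analogues that have already been established. Define the map $\Phi: \Omega \cup \Omega(\infty) \rightarrow \Omega \cup \partial\Omega$ by $\Phi(\xi) = \xi$ for $\xi \in \Omega$ and $\Phi([\sigma]) = \lim_{t \rightarrow \infty} \sigma(t)$ for a geodesic ray $\sigma$. First I would check that $\Phi$ is well defined: Corollary~\ref{cor:geod_loc_1} guarantees that the limit of any geodesic ray exists and lies in $\partial\Omega$, and Proposition~\ref{prop:gromov_prod_infinite_loc} (applied to $p_n = \sigma_1(n)$, $q_n = \sigma_2(n)$, whose Gromov product tends to $\infty$ when $\sigma_1,\sigma_2$ are asymptotic) shows the limit depends only on the asymptotic class. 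Since $\Omega \cup \Omega(\infty)$ is compact (the Kobayashi metric is complete and proper here because $\Omega$ is Gromov hyperbolic with the identification just being stated; properness of $(\Omega,d_\Omega)$ follows from completeness plus the fact that closed balls are compact, which is standard for bounded domains) it suffices to prove $\Phi$ is continuous, injective, and surjective.

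For surjectivity, given $x \in \partial\Omega$ pick $x_n \in \Omega$ with $x_n \rightarrow x$, fix $o \in \Omega$, take geodesics $\sigma_n:[0,T_n] \rightarrow \Omega$ from $o$ to $x_n$, extract a locally uniformly convergent subsequence with limit a geodesic ray $\sigma:[0,\infty) \rightarrow \Omega$, and apply Corollary~\ref{cor:geod_loc_2} to conclude $\lim_{t \rightarrow \infty}\sigma(t) = \lim_{n} \sigma_n(T_n) = x$, so $\Phi([\sigma]) = x$. For injectivity, if $\Phi(\xi_1) = \Phi(\xi_2)$ with $\xi_1 \neq \xi_2$ in $\Omega(\infty)$, pick geodesic representatives $\sigma_1,\sigma_2$; then $\sigma_1(t),\sigma_2(t)$ converge to the same boundary point, so by Proposition~\ref{prop:gromov_prod_infinite_loc} again $\lim_{t}(\sigma_1(t)|\sigma_2(t))_o = \infty$, which by~\cite[Chapter III.H Lemma 3.13]{BH1999} forces $[\sigma_1] = [\sigma_2]$, a contradiction.

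For continuity, suppose $\xi_n \rightarrow \xi$ in $\Omega \cup \Omega(\infty)$; the case $\xi \in \Omega$ is trivial, so assume $\xi \in \Omega(\infty)$ and, after passing to a convergent subsequence of $\Phi(\xi_n)$ in the compact set $\Omega \cup \partial\Omega$, assume $\Phi(\xi_n) \rightarrow x \in \partial\Omega$ (boundary points are forced by the fact that $\Phi(\xi_n) \in \partial\Omega$). Fix $o \in \Omega$ and geodesics $\sigma_n$ from $o$ with $\lim_{t}\sigma_n(t) = \Phi(\xi_n)$ (allowing $T_n = \infty$); choose $T_n' \in (0,T_n)$ with $\sigma_n(T_n') \rightarrow x$, which is possible by Corollary~\ref{cor:geod_loc_2}-type reasoning. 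By the definition of the topology on $\Omega \cup \Omega(\infty)$ any locally uniform limit $\sigma$ of a subsequence of $\sigma_n$ represents $\xi$, and by Corollary~\ref{cor:geod_loc_2} (or directly Corollary~\ref{cor:geod_loc_1} plus the approximation argument) $\lim_{t}\sigma(t) = \lim_{n}\sigma_n(T_n') = x$, hence $\Phi(\xi) = x$. So $\Phi$ is continuous.

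The main obstacle I anticipate is purely bookkeeping rather than conceptual: one must verify that $(\Omega, d_\Omega)$ is a proper geodesic metric space so that $\Omega \cup \Omega(\infty)$ is compact and the Arzelà–Ascoli extractions are legitimate; for bounded $\Omega$ this follows from completeness of $d_\Omega$ (Proposition~\ref{prop:connecting} gives existence of geodesics, and closed balls are compact since $d_\Omega$ induces the Euclidean topology and is complete). A minor subtlety is the surjectivity step and the continuity step when $T_n = \infty$ — there one applies Corollary~\ref{cor:geod_loc_2} along a truncation $T_n'$ rather than the endpoint, exactly as in the proof of Proposition~\ref{prop:main}. Everything else is a direct transcription of that earlier argument with Corollaries~\ref{cor:geod_loc_1} and~\ref{cor:geod_loc_2} and Proposition~\ref{prop:gromov_prod_infinite_loc} substituted for Corollary~\ref{cor:geodesic_limits}, Proposition~\ref{prop:limits_exist}, and Propositions~\ref{prop:gromov_prod_finite}/\ref{prop:gromov_prod_infinite}.
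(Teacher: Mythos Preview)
Your outline is essentially the paper's own argument, and the well-definedness, surjectivity, and continuity steps are correct as written. However, the injectivity step has a genuine gap: you invoke Proposition~\ref{prop:gromov_prod_infinite_loc} to deduce that if $\sigma_1(t)$ and $\sigma_2(t)$ converge to the same boundary point then $(\sigma_1(t)\,|\,\sigma_2(t))_o \to \infty$, but that proposition gives only the \emph{opposite} implication (infinite Gromov product forces equal limits). You need the converse direction, and there is no locally-convexifiable analogue of Proposition~\ref{prop:gromov_prod_finite} already on record.

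The fix is to manufacture one via the localization Theorem~\ref{thm:global_loc}. Pick $o' \in \Omega \cap B_\delta(\xi)$; changing the basepoint alters the Gromov product by a bounded amount, so it suffices to show $(\sigma_1(t)\,|\,\sigma_2(t))_{o'}^{\Omega} \to \infty$. For large $t$ all three points $\sigma_1(t),\sigma_2(t),o'$ lie in $B_\delta(\xi)$, and Theorem~\ref{thm:global_loc} gives $\abs{d_\Omega - d_{\Omega\cap V_\xi}} \le K$ there, so the two Gromov products differ by at most a fixed constant. Now $\Omega\cap V_\xi$ is biholomorphic (hence Kobayashi-isometric) to the convex set $\Omega_\xi$, whose boundary is $C^2$ near $\Phi_\xi(\xi)$; Proposition~\ref{prop:gromov_prod_finite} applied there yields $(\sigma_1(t)\,|\,\sigma_2(t))_{o'}^{\Omega\cap V_\xi}\to\infty$, and the comparison finishes the job. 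This is exactly what the paper does (its citation of ``Proposition~\ref{prop:gromov_prod_infinite}'' in that line should be read as Proposition~\ref{prop:gromov_prod_finite}), so once you patch this one sentence your proof matches the paper's.
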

 
\begin{proof}
Suppose $\sigma:[0,\infty) \rightarrow \Omega$ is a geodesic ray. Then 
\begin{align*}
\lim_{t \rightarrow \infty} \sigma(t)
\end{align*}
exists by Corollary~\ref{cor:geod_loc_1}. We next claim that this limit only depends on the asymptotic class of $\sigma$. So suppose that $\sigma_1,\sigma_2: [0,\infty) \rightarrow \Omega$ are two asymptotic geodesic rays. Since 
 \begin{align*}
 \sup_{t \geq 0} d_{\Omega}(\sigma_1(t), \sigma_2(t)) < \infty
 \end{align*}
 we see that 
 \begin{align*}
\lim_{t \rightarrow \infty} (\sigma_1(t)|\sigma_2(t))_o = \infty.
\end{align*}
Then by Proposition~\ref{prop:gromov_prod_infinite_loc}
\begin{align*}
\lim_{t \rightarrow \infty} \sigma_1(t)=\lim_{t \rightarrow \infty} \sigma_2(t).
\end{align*}

So the map  $\Phi: \Omega \cup \Omega(\infty) \rightarrow \Omega \cup \partial \Omega$ given by 
\begin{align*}
\Phi(\xi) = \left\{ \begin{array}{ll}
\xi & \text{ if } \xi \in \Omega \\
 \lim_{t \rightarrow \infty} \sigma(t) & \text{ if } \xi=[\sigma] \in \Omega(\infty) 
\end{array}
\right.
\end{align*}
is well defined.

We claim that $\Phi$ is continuous, injective, and surjective. Since $\Omega \cup \Omega(\infty)$ is compact this will imply that $\Phi$ is a homeomorphism. 

\textbf{Surjective:} It is enough to show that for all $x \in \partial \Omega$ there exists a geodesic ray $\sigma:[0,\infty) \rightarrow \Omega$ such that 
\begin{align*}
\lim_{t \rightarrow \infty} \sigma(t) = x.
\end{align*}
Fix a point $o \in \Omega$ and a sequence $x_n \in \Omega$ such that $x_n \rightarrow x$. Then let $\sigma_n:[0,T_n] \rightarrow \Omega$ be a geodesic such that $\sigma_n(0)=o$ and $\sigma_n(T_n) = x_n$. Now we can pass to a subsequence so that $\sigma_n$ converges locally uniformly to a geodesic ray $\sigma : [0,\infty) \rightarrow \Omega$. Then by Corollary~\ref{cor:geod_loc_2} 
\begin{align*}
\lim_{t \rightarrow \infty} \sigma(t) =  \lim_{n \rightarrow \infty} \sigma_n(T_n) = x.
\end{align*}
Hence $\Phi$ is onto. 

\textbf{Continuous:} Suppose $\xi_n \rightarrow \xi$ in $\Omega \cup \Omega(\infty)$. If $\xi \in \Omega$ then clearly $\Phi(\xi_n) \rightarrow \Phi(\xi)$. So we can assume that $\xi_n \in \Omega(\infty)$. Since $\Omega \cup \partial\Omega$ is compact, it is enough to show that every convergent subsequence of $\Phi(\xi_n)$ converges to $\Phi(\xi)$. So we may also assume that $\Phi(\xi_n) \rightarrow x$ for some $x \in \partial \Omega$. 

Now fix $o \in \Omega$ and  let $\sigma_n:[0,T_n) \rightarrow \Omega$ be a geodesic with $\sigma_n(0)=o$ and 
\begin{align*}
\lim_{t \rightarrow T_n} \sigma_n(t)= \Phi(\xi_n).
\end{align*}
Notice that $T_n$ could be $\infty$. Now we can pick $T_n^\prime \in (0, T_n)$ such that 
\begin{align*}
\lim_{n \rightarrow \infty} \sigma_n(T_n^\prime) = \lim_{n \rightarrow \infty} \Phi(x_n) = x.
\end{align*}
Now by the definition of topology on $\Omega \cup \Omega(\infty)$, if $\sigma$ is the limit of a convergent subsequence $\sigma_{n_k}$ of $\sigma_n$ then $\xi = [\sigma]$. Moreover, by Corollary~\ref{cor:geod_loc_2}
\begin{align*}
 \lim_{t \rightarrow \infty} \sigma(t) =\lim_{k \rightarrow \infty} \sigma_{n_k}(T_{n_k}^\prime) = x.
 \end{align*}
Thus $\Phi(\xi) = x$. So $\Phi$ is continuous. 
 
 \textbf{Injective:} Suppose for a contradiction that $\Phi(\xi_1)=\Phi(\xi_2)$ for some $\xi_1 \neq \xi_2$ in $\Omega \cup \Omega(\infty)$. Since $\Phi|_{\Omega} = id$ we must have that $\xi_1,\xi_2 \in \Omega(\infty)$. Now let $\sigma_1, \sigma_2$ be geodesic representatives of $\xi_1, \xi_2$. Then 
 \begin{align*}
 \lim_{t \rightarrow \infty} \sigma_1(t) = \Phi(\xi_1) = \Phi(\xi_2)= \lim_{t \rightarrow \infty} \sigma_2(t).
 \end{align*}
This implies, by Proposition~\ref{prop:gromov_prod_infinite} and Theorem~\ref{thm:global_loc}, that 
\begin{align*}
\lim_{t \rightarrow \infty} (\sigma_1(t) | \sigma_2(t))_o = \infty.
\end{align*}
But by~\cite[Chapter III.H Lemma 3.13]{BH1999} this happens only if $\xi_1 = [\sigma_1]=[\sigma_2]=\xi_2$ which is a contradiction. 

\end{proof}

\bibliographystyle{alpha}
\bibliography{complex_kob}

\end{document}